\documentclass[8pt]{article}

\usepackage{amsmath,amsfonts,amssymb}
\usepackage{bbm}

\setcounter{page}{1}
\usepackage{lipsum}

\usepackage{xcolor}
\usepackage{ulem}

\frenchspacing
\textwidth              16 cm
\textheight             21.5 cm
\topmargin              -1.1  cm
\evensidemargin         0.0 cm
\oddsidemargin          0.0 cm


\newtheorem{defn}{Definition}[section]
\newtheorem{theorem}[defn]{Theorem}
\newtheorem{lemma}[defn]{Lemma}
\newtheorem{proposition}[defn]{Proposition}
\newtheorem{cor}[defn]{Corollary}
\newtheorem{remark}[defn]{Remark}
\newtheorem{example}[defn]{Example}

\newenvironment{proof}{{\bf Proof }}{{\vskip 0.1cm \hfill$\Box$}}

\def\N {{\mathbb N}}

\def\R {{\mathbb R}}
\def\E{{\mathbb E}}
\def\P{{\mathbb P}}
\def\M{{\mathbb M}}
\newcommand{\F}{\mathcal{F}}


\begin{document}


\noindent
{\Large \bf Uniqueness in law for singular degenerate SDEs with respect to a (sub-)invariant measure}
\\ \\
\bigskip
\noindent
{\bf Haesung Lee, Gerald Trutnau{\footnote{The research of Gerald Trutnau was supported by the National Research Foundation of Korea (NRF) grant funded by the Korea government (MSIP) : NRF-2016K2A9A2A13003815.}}}  \\
\noindent
{\bf Abstract.} We show weak existence and uniqueness in law for a  general class of stochastic differential equations in $\mathbb{R}^d$, $d\ge 1$, with prescribed sub-invariant measure $\widehat{\mu}$. The dispersion and drift coefficients of the stochastic differential  equation are allowed to be degenerate and discontinuous, and locally unbounded, respectively. 
Uniqueness in law is obtained  via $L^1(\R^d,\widehat{\mu})$-uniqueness in a subclass of continuous Markov processes, namely right processes that have $\widehat{\mu}$ as sub-invariant measure and have continuous paths for $\widehat{\mu}$-almost every starting point. Weak existence is obtained for a broader class via the martingale problem, by first constructing a sub-Markovian $C_0$-semigroup of contractions with respect to $\widehat{\mu}$ and then applying generalized Dirichlet form theory.\\ \\
\noindent
{Mathematics Subject Classification (2020): primary; 60H20, 47D07, 60J46; secondary: 
 60J40, 47B44, 60J35.}\\

\noindent 
{Keywords: degenerate stochastic differential  equation, weak existence, uniqueness in law, 
$L^1$-uniqueness, (generalized) Dirichlet form, invariant measure.}

\section{Introduction} \label{intro}
The main goal of this work is to provide a detailed analysis for a class of highly singular, possibly degenerate stochastic differential equations (hereafter SDE) on $\R^d$ with respect to a prescribed sub-invariant measure $\widehat{\mu}$. The analysis comprises weak existence via the martingale problem, $L^1(\R^d,\widehat{\mu})$-uniqueness for the corresponding generator, 
non-explosion results, in particular conditions for  $\widehat{\mu}$ to be an invariant measure, conditional uniqueness in law, and properties of the time reversal with respect to $\widehat{\mu}$. Our approach is functional analytic together with probabilistic techniques. In a subsequent work, building on the results developed here, we use additionally elliptic regularity results for PDEs and consider the same equations with some further regularity assumptions on the coefficients to provide a pointwise analysis for every starting point $x\in \R^d$.\\
Below, we first describe in detail our work. Then, we give some motivations and relate our work to existing literature. Finally, we explain the organization of this paper.\\
{\bf Weak existence, methods and implications:} 
Suppose that the set of assumptions {\bf (A)} of Section \ref{framework} on $\rho, \psi, A, \mathbf{B}$ holds unless otherwise stated and consider the SDE
\begin{equation}\label{weaksolutionintro}
X_t = X_0+  \int_0^t \widehat{\sigma}(X_s) \, dW_s+   \int^{t}_{0}   \mathbf{G}(X_s) \, ds, \quad 0\le  t <\infty,
\end{equation}
where $W$ is a $d$-dimensional standard Brownian motion, $\widehat{\sigma}= \sqrt{\frac{1}{\psi}}\cdot \sigma$, $\sigma=(\sigma_{ij})_{1 \le i,j \le d}$ is any matrix (possibly non-symmetric) consisting of locally bounded and measurable functions such that $\sigma \sigma^T(x) =A(x)$ for a.e. $x\in \R^d$, $\mathbf{G}=(g_1, \ldots, g_d)=\beta^{\rho, A,\psi} + \mathbf{B}$, and $\beta^{\rho, A,\psi}$ is the so-called logarithmic derivative of $\rho$, associated to $A$ and $\psi$ (cf. \eqref{logder 1}). Here the degeneracy of \eqref{weaksolutionintro} stems from the factor $\sqrt{\frac{1}{\psi}}$, which may have zeros and we note that the class of coefficients $\widehat{\sigma}$ and $\mathbf{G}$ for which we obtain weak existence to \eqref{weaksolutionintro}, at least for a.e. starting point $X_0=x\in \R^d$, is fairly large (see Proposition \ref{weakexfairlygeneral}). Now, consider the infinitesimal generator $L$ corresponding  to \eqref{weaksolutionintro} given by \eqref{definition of L}.   A main feature that we use for our analysis is the infinitesimal invariance of $\widehat{\mu}=\psi\rho dx$ for $(L,C_0^{\infty}(\R^d))$, i.e.
\begin{equation}\label{infinit invariance}
\int_{\R^d} Lf\, d\widehat{\mu}=0\qquad \text{for all }f\in C_0^{\infty}(\R^d).
\end{equation}
Following the main ideas of \cite{St99}, the infinitesimal invariance allows us to obtain some closed extension of $L$ on $L^1(\R^d,\widehat{\mu})$ that generates a $C_0$-semigroup of contractions $(\overline{T}_t)_{t>0}$ (also denoted by $(T_t)_{t>0}$ according to the space it acts on) by functional analytic means (cf. Theorem \ref{mainijcie}, which extends \cite[Theorem 1.5]{St99} for $\psi\not\equiv 1$). Note here, that we cannot apply \cite[Theorem 1.5]{St99}  directly, since $\psi$ may not be differentiable. Hereby the generalized Dirichlet form \eqref{fwpeokce} is constructed, which is shown to have some regularity properties (cf. proof of Proposition \ref{Huntex}) in order to obtain a weak solution to \eqref{weaksolutionintro} for a.e. starting point $X_0=x\in \R^d$ in Theorem \ref{prop:3.1.6}. Here of course, in order to ensure that the solution exists for all times, we need to impose a non-explosion condition (i.e. any condition that implies $T_t1=1$, $t>0$, for instance the explicit condition \eqref{growthconser 2}). The infinitesimal invariance \eqref{infinit invariance} implies further that $\widehat{\mu}$ is a sub-invariant measure for $(\overline{T}_t)_{t>0}$, hence also a sub-invariant measure for the weak solution $X$ (see \eqref{subinvprocdef}, and \cite[Remark 3.44]{LST22}). It is then easy to present explicit conditions for $\widehat{\mu}$ to become an invariant measure (for instance \eqref{growthconser 3}), namely invariance is equivalent to the conservativeness of the dual semigroup (cf. Theorem \ref{theoconserva}(i)). On the other hand, invariance is equivalent to $L^1$-uniqueness (see Theorem \ref{analycharin}). Another interesting feature provided in our approach is the Lyons-Zheng decomposition \cite{Tr9}, which holds if both semigroups, the original and the dual one, are conservative (see again \eqref{growthconser 2}, \eqref{growthconser 3} for explicit conditions that imply conservativeness of both semigroups). The Lyons-Zheng decomposition has several applications (see for instance \cite{Ta89}, \cite{TaTr}). In  particular, on any interval $[0,T]$, the time-reversed process and the co-process are identical in law under $\widehat{\mu}$ in this framework (cf. \cite[Lemma 2.1]{Tr9}).\\
{\bf Conditional uniqueness in law: }In order to explain our uniqueness in law result, let us suppose that additionally to the set of assumptions {\bf (A)}, all entries of the diffusion matrix $A=(a_{ij})_{1 \leq i,j \leq d}$ consist of locally H\"{o}lder continuous functions on $\mathbb{R}^d$ and that $\widehat{\mu}$ is a (possibly infinite) invariant measure for $(\overline{T}_t)_{t>0}$. Then $(L, C_0^{\infty}(\R^d))$ is $L^1(\mathbb{R}^d, \widehat{\mu})$-unique (cf. Theorem  \ref{analycharin} and Corollary \ref{cor4.4}, which are our main theorems about $L^1(\mathbb{R}^d, \widehat{\mu})$-uniqueness). Let $\widetilde{\M}$ be any right process (which is basically a right continuous strong Markov process with some lifetime) with sub-invariant measure $\widehat{\mu}$ which solves \eqref{weaksolutionintro} with respect to $\widehat{\mu}$ (see Definition \ref{defnofmartin}(iii)). Then $T_t1=1$, $t>0$, and for any $v \in L^1(\R^d,\widehat{\mu})_b$, $v\ge 0$, such that $\int_{\mathbb{R}^d} v d\widehat{\mu} = 1$, the law of $\widetilde{\M}$ with initial distribution $v\widehat{\mu}$ coincides with the law of the weak solution with initial distribution $v\widehat{\mu}$ constructed by generalized Dirichlet form theory (see Theorem \ref{uniqueness in  law}, which is our main general result about conditional uniqueness in law). Thus if $T_t1=1$, $t>0$, and the additional assumptions to {\bf (A)} hold as above, then we obtain a right process $\mathbb{M}$ that solves (1) with respect to $\widehat{\mu}$ and uniqueness in law for \eqref{weaksolutionintro} with respect to $\widehat{\mu}$, which is actually equivalent to uniqueness in law for a.e. starting point (see Definition \ref{defnofmartin}(iv)), holds among all right processes which admit $\widehat{\mu}$ as sub-invariant measure. Our main explicit result is then Theorem \ref{weakexfairlygeneral 2}, where explicit conditions for conditional uniqueness in law are given. We note that the conditional uniqueness in law as stated above is not trivial since $\sqrt{\frac{1}{\psi}}$, hence $\sqrt{\frac{1}{\psi}}\cdot \sigma_{ij}$, $1\le i,j\le d$,  can be discontinuous and even have zeros. In fact, already in dimension one, there exist elementary SDEs with continuous degenerate dispersion coefficient (satisfying our assumptions) for which uniqueness in law fails for every starting point of the state space (cf. e.g. \cite{PavShe20} and also references therein) and therefore uniqueness in law, even for only a.e. starting point $x\in \R^d$ and $d=1$, may fail to hold for \eqref{weaksolutionintro}. One is then naturally led to put conditions on the admissible solutions. For instance one can search for solutions in the class of strong Markov processes, and/or processes that spent zero time at the points of degeneracy, etc. (see again e.g. \cite{PavShe20}). Here, we are looking for uniqueness in the class of right processes with prescribed sub-invariant measure. Such processes automatically spend for a.e. starting point zero time at sets of Lebesgue measure zero (cf. \eqref{eq:subinv 1}). By this we provide tractability to highly singular SDEs, where classical techniques to obtain weak existence and uniqueness, for instance the Girsanov transformation, may not be at hand as the drift may merely be in $L^2_{loc}(\R^d,\R^d,dx)$.\\
{\bf Some motivations: }The motivation to consider a partial differential operator $L$ with suitable coefficients by the aid of a given (sub-)invariant measure stems from applications of SDEs, for instance to the sampling of probability distributions or more generally to ergodic control problems (see e.g. \cite{Borkar}, \cite{HHS05}, \cite{IKLT22}). SDEs with prescribed probability invariant measure have been used as a tool for stochastic gradient Markov chain Monte Carlo (MCMC) samplers in \cite{MCF15}. In particular, a main observation of  \cite{HHS05}, and also \cite{MCF15}, is faster convergence to equilibrium $\widehat{\mu}$ when $\mathbf{B}\not \equiv 0$ in \eqref{weaksolutionintro} for certain subclasses of $\mathbf{B}$ and coefficients of $L$. Note here that the equilibrium remains the same if $\mathbf{B}$ varies, since $\mathbf{B}$ is supposed to be divergence free with respect to $\widehat{\mu}$. Right processes with given (sub)-invariant (also called supermedian)  measure are extensively studied in \cite{BB04}. Conditions for convergence toward equilibrium, holding particularly also in the non-reversible case (i.e. when $\mathbf{B}\not \equiv 0$), were studied in \cite{BCR,LST22}.\\
{\bf Related work: }Finally, we discuss work related to the (functional) analytic uniqueness, i.e. the uniqueness of semigroups or bilinear forms, and probabilistic uniqueness, i.e. uniqueness of of laws, considered here. In 
\cite{AR93}, the martingale problem is considered in  the frame of $m$-special standard processes related to quasi-regular semi-Dirichlet forms and different kinds of Markov uniqueness.  In  \cite[Chaper 1 a)]{Eberle} the martingale problem is considered in a class of diffusion processes with sub-invariant measure $m$.
In \cite[Definition 2.5, Proposition 2.6]{St99}
the martingale problem is considered in the frame of right processes with given sub-invariant measure and related to $L^1$-uniqueness. This is the framework that we adopt in \cite{LST22} and which is also the basis for this work. Furthermore, $L^1$-uniqueness results related to our work can be found in \cite{RS11, OR12} and as already mentioned in \cite{St99}. In \cite{L97, LL06}, partial differential operators $\mathcal{A}$ with unbounded coefficients are analyzed in $L^2$-spaces with prescribed invariant measure in order to obtain optimal regularity results for $\mathcal{A}$ and the corresponding analytic semigroups and resolvents. \\
{\bf Organization of the paper:} In Section \ref{framework} we introduce the basic notations, our main assumption {\bf (A)}, and types of convergences (see e.g. \eqref{closablet}) that will be used throughout this text. In Section \ref{l1closed}, we construct in Theorem \ref{mainijcie} a generator that a solution to \eqref{weaksolutionintro} should have. Subsequently, we obtain the corresponding generalized Dirichlet form and then show that it has an associated process in Proposition \ref{Huntex}, which in fact solves \eqref{weaksolutionintro} (cf. Theorem \ref{prop:3.1.6}). In Section \ref{l1closeduni} we investigate the $L^1(\R^d,\widehat{\mu})$-uniqueness of the generator constructed in Section \ref{l1closed}. Section \ref{conditionaluniquness} is devoted to our uniqueness in law results and we present some examples (cf. Example \ref{example section 5}). In Section \ref{aux} we present auxiliary results that do not really fit in the main body of the text and are better presented at the end.
\section{Notations, conventions, framework}\label{framework}
{\bf In this section, we will introduce notations, conventions and the framework that will be (if not otherwise mentioned) in force throughout  this work.} Notations which are not mentioned in this text, will be those as defined in \cite{LST22}.
\begin{defn} \label{basidefn} 
(i) For a (possibly non-symmetric) matrix of locally integrable functions $B = (b_{ij})_{1 \leq i,j \leq d}$ on $\mathbb{R}^d$ and a vector field $\bold{E}=(e_1,\ldots, e_d) \in L^1_{loc}(\mathbb{R}^d, \mathbb{R}^d)$, we write ${\rm{div}}B = \bold{E}$ if
\begin{equation} \label{defndivma}
\int_{\mathbb{R}^d} \sum_{i,j=1}^d b_{ij} \partial_i \phi_j \, dx = - \int_{\mathbb{R}^d} \sum_{j=1}^d e_j \phi_j \, dx \quad \text{ for all $\phi_j \in C_0^{\infty}(\mathbb{R}^d)$, $j=1, \ldots, d$}.
\end{equation}
In other words, if we consider the family of column vectors
\[
\bold{b}_j=\begin{pmatrix} 
b_{1j} \\ \vdots \\  b_{dj}
\end{pmatrix},\quad 1\le j\le d,\  \text{ i.e.}\  B=(\bold{b}_1 | \ldots |\bold{b}_d),
\]
then \eqref{defndivma} is equivalent to 
\begin{equation*}
\int_{\mathbb{R}^d} \sum_{j=1}^d \langle \bold{b}_j, \nabla \phi_j \rangle dx =-\int_{\mathbb{R}^d} \langle \bold{E}, \Phi \rangle dx, \text{ \;\;\; for all $\Phi=(\phi_1, \ldots, \phi_d) \in C_0^{\infty}(\mathbb{R}^d)^d$.}
\end{equation*}
Let $B^T=(b^T_{ij})_{1 \leq i,j \leq d}=(b_{ji})_{1 \leq i,j \leq d}$ denote the transpose of $B$. For $\rho \in H^{1,1}_{loc}(\mathbb{R}^d)$ with $\rho>0$ a.e., we write
\begin{equation*}\label{logder 2}
\beta^{\rho, B^T} := \frac12{\rm{div}} B + \frac{1}{2\rho} B^T \nabla \rho.
\end{equation*}
Here, we adopt the convention that for $\varphi \in L^1_{loc}(\mathbb{R}^d)$,with $\varphi\not=0$ a.e., $\frac{1}{\varphi}$ denotes any Borel measurable function satisfying $\varphi \cdot \frac{1}{\varphi}=1$ a.e. For $\psi \in L^1_{loc}(\mathbb{R}^d)$, $\psi>0$ a.e. with $\frac{1}{\psi} \in L^{\infty}_{loc}(\mathbb{R}^d)$, we write
\begin{equation}\label{logder 1}
\beta^{\rho, B^T, \psi}:= \frac{1}{\psi} \beta^{\rho, B^T} = \frac{1}{2\psi} {\rm{div}} B + \frac{1}{2\psi\rho }  B^T \nabla \rho
\end{equation}
(ii) For $B=(b_{ij})_{1 \leq i,j \leq d}$, with $b_{ij} \in H^{1,1}_{loc}(\mathbb{R}^d)$, $1\le i,j\le d$, we define a vector field $\nabla B =\big(  (\nabla B)_1,  \ldots, (\nabla B)_d \big) $ on $\mathbb{R}^d$ by
$$
(\nabla B)_i = \sum_{j=1}^d \partial_j b_{ij}, \quad \; i=1, \ldots, d.
$$
\end{defn}

\begin{remark}  \label{rem1.2}
(i) For a matrix of locally integrable functions $B = (b_{ij})_{1 \leq i,j \leq d}$ on $\mathbb{R}^d$ and a vector field $\bold{E}=(e_1,\ldots, e_d) \in L^1_{loc}(\mathbb{R}^d, \mathbb{R}^d)$, assume that ${\rm{div}} B = \bold{E}$.  Let $\Phi=(\phi_1, \ldots, \phi_d) \in C_0^{\infty}(\mathbb{R}^d)^d$ be arbitrary. Then, there exists an open ball  $U$ in $\mathbb{R}^d$ such that 
$\text{supp} \, \phi_j \subset U$ for any $j=1, \ldots, d$. For each $1\le i,j \le d$, choose an arbitrary a sequence of functions $(b^n_{ij})_{n \geq 1}$ in $C^1(\overline{U})$ such that
$\lim_{n \rightarrow \infty} b^n_{ij} =b_{ij}$ in $L^1(U)$. Then, setting $B_n=(b^n_{ij})_{1\le i,j\le d}$, $n\ge 1$,
\begin{align*}
- \int_{\mathbb{R}^d} \sum_{j=1}^d e_j \phi_j \, dx&=\int_{\mathbb{R}^d} \sum_{i,j=1}^d b_{ij} \partial_i \phi_j \, dx = \lim_{n \rightarrow \infty} \int_{\mathbb{R}^d} \sum_{i,j=1}^d b^n_{ij} \partial_i \phi_j \, dx\\
&= \lim_{n \rightarrow \infty} -\int_{\mathbb{R}^d} \sum_{j=1}^d \Big(\sum_{i=1}^d \partial_i b^n_{ij}\Big)  \phi_j dx, 
\end{align*}
i.e. for $\Phi = (\phi_1, \ldots, \phi_d )$, it holds
\begin{equation} \label{divequation1}
\lim_{n \rightarrow \infty} \int_{\mathbb{R}^d} \langle \nabla B^T_n, \Phi \rangle dx =  \int_{\mathbb{R}^d} \langle \bold{E}, \Phi \rangle dx.
\end{equation}
In particular, if $b_{ij} \in H^{1,1}_{loc}(\mathbb{R}^d)$ for all $1\le i,j \le d$, we may choose $B_n=(b^n_{ij})_{1 \leq i,j \leq d}$ in such a way that additionally 
$\lim_{n \rightarrow \infty} \partial_i b^n_{ij} =\partial_i b_{ij}$ in $L^1(U)$ for any $1\le i,j \le d$. Hence in that case, we can conclude from \eqref{divequation1}, holding for arbitrary  $\Phi\in C_0^{\infty}(\mathbb{R}^d)^d$, that $\nabla B^T = {\rm{div}}\, B$.\\
(ii)
In order to define ${\rm{div}}\, B$ for $B=(b_{ij})_{1 \leq i,j \leq d}$, we do not need to impose the condition that $b_{ij} \in H^{1,1}_{loc}(\mathbb{R}^d)\cup C(\mathbb{R}^d)$ for each $1\le i,j \le d$. For instance, let $\zeta \in L^{\infty}(\mathbb{R})$ and assume that $\zeta$ is discontinuous and $\zeta \notin H^{1,1}_{loc}(\mathbb{R})$. Let $b_{11}(x_1, \ldots, x_d)=\zeta(x_2)$, $b_{22}(x_1, \ldots, x_d)=\zeta(x_3)$, \ldots, $b_{d-1 \, d-1}(x_1, \ldots, x_d)=\zeta(x_d)$, $b_{dd}(x_1, \ldots, x_d)=\zeta(x_1)$ and $b_{ij}=0$ if $i \neq j$. Then, it holds that $b_{ii} \notin H^{1,1}_{loc}(\mathbb{R}^d)$, $1\le i \le d$, but ${\rm{div}}\, B = 0$.
\end{remark}

\noindent
Consider the following set of assumptions:\\ \\
{\bf (A)}:\; For some $d \geq 1$, $\rho  = \varphi^2$ with $\varphi \in H^{1,2}_{loc}(\mathbb{R}^d)$, $\rho>0$ a.e. on $\mathbb{R}^d$ and $\mu:=\rho dx$. $\psi \in L^1_{loc}(\R^d)$ satisfies $\psi>0$ a.e., $\frac{1}{\psi} \in L_{loc}^{\infty}(\R^d)$, $\psi\rho\in L^1_{loc}(\mathbb{R}^d)$, and $\widehat{\mu}:=\psi\,d\mu$.
$A=(a_{ij})_{1 \leq i,j \leq d}$ satisfying $\text{div}\, A \in L^2_{loc}(\mathbb{R}^d, \mathbb{R}^d, \mu)$ is a symmetric matrix of measurable functions that is locally uniformly strictly elliptic, i.e.  for every open ball $B$, there exist constants $\lambda_B, \Lambda_B>0$ such that 
\begin{equation*}\label{ellipticity}
\lambda_{B} \|\xi\|^2 \leq \langle A(x) \xi, \xi \rangle \leq \Lambda_B \|\xi\|^2, \quad \text{ for all } \xi \in \R^d, \; x \in B.
\end{equation*}
Moreover, $\mathbf{B} \in L_{loc}^2(\R^d, \R^d, \widehat{\mu})$ satisfying $\psi \bold{B} \in L^2_{loc}(\mathbb{R}^d, \mathbb{R}^d, \mu)$
is weakly divergence free with respect to $\widehat{\mu}$, i.e.
\begin{equation} \label{divfree}
\int_{\R^d} \langle \mathbf{B}, \nabla u \rangle d\widehat{\mu} = 0, \quad \text{ for all } u \in C_0^{\infty}(\R^d),
\end{equation}
and as fixed notations throughout the text, we set
\begin{equation*} \label{defnhata}
\widehat{A} := \frac{1}{\psi} A, \quad \bold{G}:=\beta^{\rho, A,\psi}+\bold{B}.
\end{equation*}
\centerline{}
\begin{remark} \label{inclupro}
(i) Both measures $\mu$ and $\widehat{\mu}$ occurring in {\bf (A)} are locally finite and equivalent to the Lebesgue measure $dx$. Thus $L^{\infty}(\mathbb{R}^d, \mu)=L^{\infty}(\mathbb{R}^d, \widehat{\mu})=L^{\infty}(\mathbb{R}^d)$ and $L_{loc}^{\infty}(\mathbb{R}^d, \mu)=L_{loc}^{\infty}(\mathbb{R}^d, \widehat{\mu})=L_{loc}^{\infty}(\mathbb{R}^d)$.  {\bf Therefore from now on, we will consistently only use the notations} a.e., $L^{\infty}(\mathbb{R}^d)$, and $L_{loc}^{\infty}(\mathbb{R}^d)$. \\
(ii) Let $r \in [1, \infty)$. Since $\frac{1}{\psi} \in L^{\infty}_{loc}(\mathbb{R}^d)$, we get $L^r_{loc}(\mathbb{R}^d, \widehat{\mu}) \subset L^r_{loc}(\mathbb{R}^d, \mu)$. Indeed,  for any bounded open subset $V$ of $\mathbb{R}^d$ and $u \in L^{r}(V, \widehat{\mu})$, it holds $u \in L^r(V, \mu)$ and
$$
\|u\|_{L^r(V, \mu)} \leq  \left \| \frac{1}{\psi}\right \|_{L^{\infty}(V)}^{1/r} \|u \|_{L^{r}(V, \widehat{\mu})}.
$$
\end{remark}
Assume {\bf (A)} and let $U$ be an open subset of $\R^d$. Then, the following bilinear form 
$$
\frac12 \int_{U} \langle \nabla u, \nabla v \rangle \,d\mu, \quad u,v \in C_0^{\infty}(U)
$$ 
is closable in $L^2(U, \widehat{\mu})$ by \cite[I. Proposition 3.3]{MR}. Indeed, for each $u, v \in C_0^{\infty}(U)$
\begin{align*}
\frac12 \int_{U} \langle \nabla u, \nabla v \rangle d\mu &= -\int_{U} \Big(\frac12 \Delta u + \langle \frac{1}{2\rho} \nabla \rho, \nabla u \rangle \Big) v \rho\, dx  \\
&= -\int_{U} \Big(\frac{1}{2\psi} \Delta u + \langle  \frac{1}{2\rho \psi} \nabla \rho, \nabla u \rangle  \Big) v \,d \widehat{\mu}
\end{align*}
and $\frac{1}{2\psi} \Delta u + \langle  \frac{1}{2\rho \psi} \nabla \rho, \nabla u \rangle \in L^2(U, \widehat{\mu})$. 
\begin{defn}\label{defmaydirichletspace}
Let $\widehat{H}_0^{1,2}(U, \mu)$ be the closure of $C_0^{\infty}(U)$ in $L^2(U, \widehat{\mu})$ with respect to the norm 
\[
\left(\int_U \|\nabla u\|^2 d \mu + \int_{U} u^2 d \widehat{\mu} \right)^{1/2}.
\] 
\end{defn}
Consequently, $u \in \widehat{H}_0^{1,2}(U, \mu)$, if and only if there exists $(u_n)_{n \geq 1} \subset C_0^{\infty}(U)$  such that 
\begin{equation} \label{closablet}
\lim_{n \rightarrow \infty} u_n =u \; \text{ in } L^2(U, \widehat{\mu}), \;\quad \lim_{n,m \rightarrow \infty} \int_U \| \nabla (u_n-u_m) \|^2 d \mu =0,
\end{equation}
and moreover $\widehat{H}^{1,2}_0(U, \mu)$ is a Hilbert space equipped
with the inner product 
$$
\langle u, v \rangle_{\widehat{H}^{1,2}_0(U, \mu)}= \lim_{n \rightarrow \infty} \int_{U} \langle \nabla u_n, \nabla v_n \rangle d \mu + \int_{U} uv \,d\widehat{\mu},
$$
where $(u_n)_{n \geq 1}$ and $(v_n)_{n \geq 1} \subset C_0^{\infty}(U)$ are arbitrary sequences such that $(u_n)_{n \geq 1}$ satisfies
\eqref{closablet} and $(v_n)_{n \geq 1}$ satisfies the analogue of \eqref{closablet} with $u$, $u_n$, replaced by $v$, $v_n$, $n \geq 1$.
Likewise, considering $\psi \equiv 1$, we can define $H^{1,2}_0(U, \mu)$ as the closure of $C_{0}^{\infty}(U)$ in $L^2(U, \mu)$  with respect to the norm
$\left(\int_{U} \big (\|\nabla u \|^2 +u^2 \big )d\mu\right)^{1/2}$.\\
\noindent
Now consider a bounded open subset $V$ of $\R^d$. If $u \in \widehat{H}^{1,2}_0(V, \mu)$, then using Remark \ref{inclupro}(ii), $u \in H_0^{1,2}(V, \mu) \cap L^2(V, \widehat{\mu})$ and there exists $(u_n)_{n \geq 1} \subset C_0^{\infty}(V)$ such that
$$
\lim_{n \rightarrow \infty} u_n = u \;\; \text{ in } H_0^{1,2}(V, \mu)  \; \text{ and } \text{ in } L^2(V, \widehat{\mu}).
$$
Consider the symmetric bilinear form $(\mathcal{E}^0, C_0^{\infty}(\mathbb{R}^d))$ defined by
\begin{equation*} \label{weofijfh}
\mathcal{E}^0(f,g) := \frac12 \int_{\R^d} \langle \widehat{A} \nabla f, \nabla g \rangle d \widehat{\mu}, \quad f,g \in C_0^{\infty}(\R^d).
\end{equation*}
Then, by Lemma \ref{basicprop}
\begin{eqnarray*}
\mathcal{E}^0(f,g) =   - \int_{\R^d} \big( \frac12 {\rm trace} (\widehat{A}\nabla^2f ) + \langle \beta^{\rho, A, \psi}, \nabla f \rangle \big) g \,d \widehat{\mu}, \quad \text{  for all }  f, g \in C_0^{\infty}(\mathbb{R}^d).
\end{eqnarray*}
For each $f \in C_0^{\infty}(\mathbb{R}^d)$, $\frac12 \text{trace} (\widehat{A}\nabla^2f ) + \langle \beta^{\rho, A, \psi}, \nabla f \rangle \in L^2(\mathbb{R}^d, \widehat{\mu})$, hence $(\mathcal{E}^0, C_0^{\infty}(\mathbb{R}^d))$ is closable in $L^{2}(\R^d, \widehat{\mu})$ by \cite[I. Proposition 3.3]{MR} and its closure denoted by $(\mathcal{E}^0, D(\mathcal{E}^0))$ is a symmetric  Dirichlet form on $L^2(\R^d, \widehat{\mu})$ (see \cite[II. (2.18)]{MR}). For each $\alpha>0$, let 
$$
\mathcal{E}_{\alpha}^0(f,g) = \mathcal{E}^0(f,g) + \alpha \int_{\mathbb{R}^d} f g \,d\widehat{\mu},\,\quad f,g \in D(\mathcal{E}^0)
$$
and let $\|u\|_{D(\mathcal{E}^{0})}=\mathcal{E}_1^{0}(u,u)^{1/2}$, $u \in D(\mathcal{E}^0)$.
Denote the corresponding generator of $(\mathcal{E}^0, D(\mathcal{E}^0))$ by $(L^0,D(L^0))$. Then we obtain that $C_0^{2}(\R^d) \subset D(L^0)$ and for each $f \in C_0^{2}(\mathbb{R}^d)$
$$
L^0f =\frac12\text{trace}(\widehat{A} \nabla^2 f) + \langle \beta^{\rho, A, \psi}, \nabla f \rangle  \in L^2(\R^d, \widehat{\mu}).
$$ 
Let $(T^0_t)_{t>0}$ be the sub-Markovian $C_0$-semigroup of contractions on $L^2(\R^d, \widehat{\mu})$ associated with $(L^0,D(L^0))$. By Proposition \ref{extlempt}(i), \,$(T^0_t)_{t>0}|_{L^1(\R^d, \widehat{\mu})_b}$ can be uniquely extended to a sub-Markovian  $C_0$-semigroup of contractions $(\overline{T}^0_t)_{t>0}$ on $L^1(\R^d, \widehat{\mu})$.
Using Lemma \ref{applempn},  \eqref{divfree} can be extended to all $u \in \widehat{H}_0^{1,2}(\R^d, \mu)_{0,b}$ (where the latter subindex \lq\lq $0$\rq\rq\ means with compact $dx$-support and \lq\lq $b$\rq\rq\ means a.e. bounded, cf. \cite{LST22}), i.e.
\begin{equation} \label{divfreeext}
\int_{\R^d} \langle \mathbf{B}, \nabla u \rangle d\widehat{\mu} = 0, \quad \text{ for all } u \in \widehat{H}_0^{1,2}(\R^d, \mu)_{0,b},
\end{equation}
 and moreover
$$
\int_{\R^d} \langle \mathbf{B}, \nabla u \rangle v d \widehat{\mu} = -\int_{\R^d} \langle \mathbf{B}, \nabla v \rangle u d \widehat{\mu}, \quad \text{ for all } u,v \in \widehat{H}_0^{1,2}(\R^d, \mu)_{0,b}.
$$
Define 
$$
Lu := L^0 u+ \langle \mathbf{B}, \nabla u \rangle, \;\; \; u \in D(L^0)_{0,b}. 
$$
Then,  $(L, D(L^0)_{0,b})$ is an extension of
$$
\frac12 \text{trace}(\widehat{A} \nabla^2 u) + \langle \beta^{\rho, A,\psi} + \mathbf{B}, \nabla u \rangle, \quad u \in C_0^{\infty}(\R^d).
$$
For any bounded open subset $V$ of $\R^d$, define $(\mathcal{E}^{0,V}, C_0^{\infty}(V))$ by
$$
\mathcal{E}^{0,V}(f,g) := \frac12 \int_{V} \langle \widehat{A} \nabla f, \nabla g \rangle d \widehat{\mu}, \quad f,g \in C_0^{\infty}(V).
$$
Then, $(\mathcal{E}^{0,V}, C_0^{\infty}(V))$ is closable on $L^2(V, \widehat{\mu})$ by \cite[I. Proposition 3.3]{MR}. Denote by $(\mathcal{E}^{0,V}, D(\mathcal{E}^{0,V}))$ the closure of $(\mathcal{E}^{0,V}, C_0^{\infty}(V))$ in $L^2(V, \widehat{\mu})$.
Then, $(\mathcal{E}^{0,V}, D(\mathcal{E}^{0,V}))$ is a symmetric Dirichlet form (see \cite[II. (2.18)]{MR}).
Let 
$$
\mathcal{E}_{\alpha}^{0,V}(f,g) = \mathcal{E}^{0,V}(f,g) + \alpha \int_{V} f g \,d\widehat{\mu}, \;\quad  f,g \in D(\mathcal{E}^{0,V})
$$
and 
$\|u\|_{D(\mathcal{E}^{0,V})}=\mathcal{E}_1^{0,V}(u,u)^{1/2}$, $u \in D(\mathcal{E}^{0,V})$. Then, $D(\mathcal{E}^{0,V})= \widehat{H}_0^{1,2}(V, \mu)$ since the norms $\| \cdot \|_{D(\mathcal{E}^{0,V})}$ and  $\|\cdot\|_{\widehat{H}^{1,2}_0 (V, \mu)}$ are equivalent by the uniform strict ellipticity of $A$ on $V$ .
 Denote by $(L^{0,V}, D(L^{0,V}))$ the generator of $(\mathcal{E}^{0,V}, D(\mathcal{E}^{0,V}))$, by $(G^{0,V}_{\alpha})_{\alpha>0}$ the associated strongly continuous sub-Markovian resolvent of contractions on $L^2(V, \widehat{\mu})$, by $(T_t^{0,V})_{t>0}$ the associated sub-Markovian $C_0$-semigroup of contractions on $L^2(V, \widehat{\mu})$ 
(see \cite[I.2]{MR}). Let $(\overline{T}_t^{0,V})_{t>0}$ be the unique extension of $(T^{0,V}_t)_{t>0}|_{L^1(V, \widehat{\mu})_b}$  on $L^1(V, \widehat{\mu})$, which is a sub-Markovian  $C_0$-semigroup of contractions on $L^1(V, \widehat{\mu})$ (see Proposition \ref{extlempt}). Let $(\overline{L}^{0,V}, D(\overline{L}^{0,V}))$ be the generator corresponding to $(\overline{T}_t^{0,V})_{t>0}$.
By Proposition \ref{extlempt}, $(\overline{L}^{0,V}, D(\overline{L}^{0,V}))$ is the closure of $(L^{0,V}, D(L^{0,V}))$  on $L^1(V, \widehat{\mu})$. 
\centerline{}

\section{Existence of an $L^1(\mathbb{R}^d, \widehat{\mu})$-closed extension} \label{l1closed}
In this section, we show that there exists a closed extension $(\overline{L}, D(\overline{L}))$ of 
$$
Lu = L^0 u + \langle \mathbf{B}, \nabla u \rangle, \quad u \in D(L^0)_{0, b},
$$
that generates a sub-Markovian $C_0$-semigroup of contractions on $L^1(\mathbb{R}^d, \widehat{\mu})$.
Since $C_0^{\infty}(\mathbb{R}^d) \subset D(L^0)_{0,b}$, we thus obtain in particular a closed extension of $(L, C_0^{\infty}(\mathbb{R}^d))$. The proofs of the following Lemma \ref{existlem}, Proposition \ref{first1}, Lemma \ref{increlem}, Theorems \ref{mainijcie} and \ref{pojjjde} in order to obtain a closed extension are similar to those of \cite[Lemma 1.2, Proposition 1.1, Lemma 1.6, Theorem 1.5 and Remark 1.7(iii)]{St99}, respectively. But it is nonetheless necessary to check the detailed calculations related to the measure $\widehat{\mu}$ and therefore the presentation of the proofs is necessary. 

\begin{lemma} \label{existlem}
Assume {\bf (A)}. Let $V$ be a bounded open subset of $\R^d$. Then, the following hold:
\begin{itemize}
\item[(i)]
$D(\overline{L}^{0,V})_b \subset \widehat{H}_0^{1,2}(V, \mu)$.
\item[(ii)]
$\lim_{t \rightarrow 0+}T_t^{0,V}u = u$ \;\, in $\widehat{H}_0^{1,2}(V, \mu)$  \;for all  $u \in D(\overline{L}^{0,V})_b$.

\item[(iii)]
$\mathcal{E}^0(u,v) = - \int_{V} \overline{L}^{0,V} u\, v \,d\widehat{\mu}$ \,\;for all $u \in D(\overline{L}^{0,V})_b$ and $v \in \widehat{H}^{1,2}_0(V, \mu)_b$.

\item[(iv)]
Let $\varphi \in C^2(\R)$, $\varphi(0)=0$ and $u \in D(\overline{L}^{0,V})_b$. Then $\varphi(u) \in D(\overline{L}^{0,V})_b$ and
\begin{equation*} \label{cdsid}
\overline{L}^{0,V} \varphi(u) = \varphi'(u) \overline{L}^{0,V} u +  \frac12\varphi ''(u) \langle\widehat{A} \nabla u, \nabla u \rangle.
\end{equation*}
\end{itemize}
\end{lemma}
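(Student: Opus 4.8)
The plan is to prove the four items in the order (i), (iii), (ii), (iv), since each is used in the proof of the next, following the scheme of \cite[Lemma 1.2]{St99} but carrying the weight $\psi$ through the computations. Throughout I write $(G^{0,V}_\alpha)_{\alpha>0}$ and $(\overline{G}^{0,V}_\alpha)_{\alpha>0}=((\alpha-\overline{L}^{0,V})^{-1})_{\alpha>0}$ for the $L^2$- and $L^1$-resolvents, note that $\overline{G}^{0,V}_\alpha$ restricts to $G^{0,V}_\alpha$ on $L^2(V,\widehat{\mu})\cap L^1(V,\widehat{\mu})$, and use that $\mathcal{E}^0$ restricted to $\widehat{H}_0^{1,2}(V,\mu)$ coincides with $\mathcal{E}^{0,V}$. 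For (i), I would fix $u\in D(\overline{L}^{0,V})_b$ (so $u\in L^2(V,\widehat{\mu})$ since $\widehat{\mu}(V)<\infty$) and consider $u_n:=n\,G^{0,V}_n u\in D(L^{0,V})$. Then $u_n\to u$ in $L^2(V,\widehat{\mu})$, $\|u_n\|_\infty\le\|u\|_\infty$ by sub-Markovianity of the resolvent, and $L^{0,V}u_n=n(u_n-u)=n\,\overline{G}^{0,V}_n(\overline{L}^{0,V}u)$. Using the relation between $(\mathcal{E}^{0,V},D(\mathcal{E}^{0,V}))$ and its generator together with $\|n\,\overline{G}^{0,V}_n\|_{L^1(V,\widehat{\mu})\to L^1(V,\widehat{\mu})}\le1$ one bounds $\mathcal{E}^{0,V}(u_n,u_n)=-\int_V L^{0,V}u_n\,u_n\,d\widehat{\mu}\le\|u\|_\infty\,\|\overline{L}^{0,V}u\|_{L^1(V,\widehat{\mu})}$ uniformly in $n$, so $(u_n)_{n\ge1}$ is bounded in the Hilbert space $D(\mathcal{E}^{0,V})=\widehat{H}_0^{1,2}(V,\mu)$ (equivalent norms). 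Weak compactness plus $u_n\to u$ in $L^2(V,\widehat{\mu})$ forces the weak limit to be $u$, whence $u\in\widehat{H}_0^{1,2}(V,\mu)$ and in fact $u_n\rightharpoonup u$ weakly in $\widehat{H}_0^{1,2}(V,\mu)$ along the whole sequence.

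For (iii) I would reuse this $u_n$: for $v\in\widehat{H}_0^{1,2}(V,\mu)_b$, $\mathcal{E}^{0,V}(u_n,v)=-\int_V n\,\overline{G}^{0,V}_n(\overline{L}^{0,V}u)\,v\,d\widehat{\mu}$, and as $n\to\infty$ the right-hand side tends to $-\int_V\overline{L}^{0,V}u\,v\,d\widehat{\mu}$ (since $n\,\overline{G}^{0,V}_n g\to g$ in $L^1(V,\widehat{\mu})$ and $v\in L^\infty$) while the left-hand side tends to $\mathcal{E}^{0,V}(u,v)=\mathcal{E}^0(u,v)$ ($\mathcal{E}^{0,V}(\cdot,v)$ being $\widehat{H}_0^{1,2}(V,\mu)$-continuous and $u_n\rightharpoonup u$). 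Then (ii) follows from (iii): for $u\in D(\overline{L}^{0,V})_b$ one has $T^{0,V}_t u=\overline{T}^{0,V}_t u\in D(\overline{L}^{0,V})_b$ with $\overline{L}^{0,V}T^{0,V}_t u=\overline{T}^{0,V}_t\overline{L}^{0,V}u$, hence $T^{0,V}_t u-u\in D(\overline{L}^{0,V})_b\subset\widehat{H}_0^{1,2}(V,\mu)_b$ by (i); applying (iii) with both arguments equal to $T^{0,V}_t u-u$ gives $\mathcal{E}^{0,V}(T^{0,V}_t u-u,T^{0,V}_t u-u)=-\int_V(\overline{T}^{0,V}_t\overline{L}^{0,V}u-\overline{L}^{0,V}u)(T^{0,V}_t u-u)\,d\widehat{\mu}\to0$ as $t\to0+$, because $\overline{T}^{0,V}_t\overline{L}^{0,V}u\to\overline{L}^{0,V}u$ in $L^1(V,\widehat{\mu})$ and $\|T^{0,V}_t u-u\|_\infty\le2\|u\|_\infty$; combined with $T^{0,V}_t u\to u$ in $L^2(V,\widehat{\mu})$ and the norm equivalence this is the claim.

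For (iv): by (i) and standard calculus rules in $\widehat{H}_0^{1,2}(V,\mu)$ (stability under composition with $C^2$ functions vanishing at $0$ and under products of bounded elements, which applies after modifying $\varphi$ outside the range of $u$), $\varphi(u)$ and $\varphi'(u)h$ lie in $\widehat{H}_0^{1,2}(V,\mu)_b$ for $h\in D(L^{0,V})_b$, with $\nabla\varphi(u)=\varphi'(u)\nabla u$, while $g:=\varphi'(u)\overline{L}^{0,V}u+\tfrac12\varphi''(u)\langle\widehat{A}\nabla u,\nabla u\rangle\in L^1(V,\widehat{\mu})$ (here $\langle\widehat{A}\nabla u,\nabla u\rangle\in L^1(V,\widehat{\mu})$ by the local ellipticity of $A$ and $\nabla u\in L^2(V,\mu)$). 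Using integration by parts in $\mathcal{E}^{0,V}$, the Leibniz rule for $\nabla(\varphi'(u)h)$, and (iii) (applied with $w=u\in D(\overline{L}^{0,V})_b$ and $v=\varphi'(u)h\in\widehat{H}_0^{1,2}(V,\mu)_b$), one obtains $\int_V\varphi(u)\,L^{0,V}h\,d\widehat{\mu}=-\tfrac12\int_V\varphi'(u)\langle\widehat{A}\nabla u,\nabla h\rangle\,d\widehat{\mu}=\int_V g\,h\,d\widehat{\mu}$ for every $h\in D(L^{0,V})_b$. Specializing to $h=G^{0,V}_\alpha\zeta$ with $\zeta\in C_0^\infty(V)$ and using the duality $\int_V\overline{G}^{0,V}_\alpha f\cdot\zeta\,d\widehat{\mu}=\int_V f\cdot G^{0,V}_\alpha\zeta\,d\widehat{\mu}$ ($f\in L^1(V,\widehat{\mu})$, $\zeta\in L^\infty$, which follows by approximating $f$ by $L^1\cap L^2$-functions and self-adjointness of $G^{0,V}_\alpha$ on $L^2$), this identity rewrites as $\int_V\varphi(u)\zeta\,d\widehat{\mu}=\int_V\overline{G}^{0,V}_\alpha(\alpha\varphi(u)-g)\,\zeta\,d\widehat{\mu}$; since $C_0^\infty(V)$ separates $L^1(V,\widehat{\mu})$, this forces $\varphi(u)=\overline{G}^{0,V}_\alpha(\alpha\varphi(u)-g)$, i.e. $\varphi(u)\in D(\overline{L}^{0,V})_b$ with $\overline{L}^{0,V}\varphi(u)=g$.

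I expect (iv) to be the main obstacle. The crucial point is that $\langle\widehat{A}\nabla u,\nabla u\rangle$ is in general only $\widehat{\mu}$-integrable and not square integrable, so the right-hand side of the chain rule lives only in $L^1(V,\widehat{\mu})$; this is precisely why $\overline{L}^{0,V}$ and not $L^{0,V}$ is the natural object, and it forces the identification of $\varphi(u)$ as an element of $D(\overline{L}^{0,V})$ to be done through the resolvent/duality argument rather than directly in $L^2(V,\widehat{\mu})$. The weight $\psi$ enters only mildly, through $L^r_{loc}(\mathbb{R}^d,\widehat{\mu})\subset L^r_{loc}(\mathbb{R}^d,\mu)$, the bound $\langle\widehat{A}\nabla u,\nabla u\rangle\in L^1(V,\widehat{\mu})$, and the identification $D(\mathcal{E}^{0,V})=\widehat{H}_0^{1,2}(V,\mu)$ with equivalent norms.
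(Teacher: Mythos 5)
Your proof is correct, and while it establishes the same four statements with the same underlying duality estimates, it takes a noticeably different technical route from the paper. For (i) the paper uses analyticity of $(T_t^{0,V})_{t>0}$ on $L^2(V,\widehat{\mu})$ to get $T_t^{0,V}u\in D(L^{0,V})$ and shows that $(T_t^{0,V}u)_{t>0}$ is $\widehat{H}_0^{1,2}(V,\mu)$-Cauchy as $t\to 0+$, which yields (i) and (ii) simultaneously; you instead work with the resolvent approximation $u_n=nG_n^{0,V}u$, use $L^{0,V}u_n=n\overline{G}^{0,V}_n\overline{L}^{0,V}u$ and the $L^1$--$L^\infty$ bound to get a uniform energy estimate, and conclude by Banach--Alaoglu, which avoids invoking analyticity altogether; you then obtain (iii) by passing to the limit in $n$ (the paper passes to the limit in $t$ using (ii)), and recover (ii) from (iii) by essentially the same estimate the paper uses in its Cauchy argument, so there is no circularity. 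For (iv) the chain-rule/product-rule computation leading to $\mathcal{E}^{0,V}(\varphi(u),v)=-\int_V\big(\varphi'(u)\overline{L}^{0,V}u+\tfrac12\varphi''(u)\langle\widehat{A}\nabla u,\nabla u\rangle\big)v\,d\widehat{\mu}$ is identical to the paper's, but where the paper finishes by citing \cite[I. Lemma 4.2.2.1]{BH}, you carry out the identification by hand: testing against $h=G_\alpha^{0,V}\zeta$, $\zeta\in C_0^\infty(V)$, using the symmetry/duality of the resolvent between $L^1$ and $L^\infty$, and concluding $\varphi(u)=\overline{G}^{0,V}_\alpha(\alpha\varphi(u)-g)$, which amounts to reproving the Bouleau--Hirsch lemma in this special case. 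The paper's route is shorter given the two external facts it imports (analyticity and the BH lemma); yours is more self-contained and makes explicit exactly where the weight $\psi$ and the identification $D(\mathcal{E}^{0,V})=\widehat{H}_0^{1,2}(V,\mu)$ are used. The only places where you argue at the same (slightly informal) level as the paper are the chain and product rules in $\widehat{H}_0^{1,2}(V,\mu)$ (handling $\varphi'(u)h$ via $\varphi'(u)-\varphi'(0)$ and \cite[I. Corollary 4.15]{MR}), which is acceptable since the paper does the same.
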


\noindent
\begin{proof}
(i) and (ii): Let $u \in D(\overline{L}^{0,V})_b$. Since $(T^{0,V}_t)_{t>0}$ is an analytic semigroup on $L^2(V, \widehat{\mu})$, we get
$$
\overline{T}_t^{0,V} u =T^{0,V}_t u \in D(L^{0,V}) \;\; \text{ for all } t>0,
$$ 
and hence by Proposition \ref{extlempt},
$$
L^{0,V} T_t^{0,V}u = L^{0,V} \overline{T}_t^{0,V} u = \overline{L}^{0,V}\overline{T}_t^{0,V}u = \overline{T}_t^{0,V} \overline{L}^{0,V} u.
$$
Therefore
\begin{eqnarray*}
&&\mathcal{E}^{0,V}(T^{0,V}_t u - T_s^{0,V} u, T^{0,V}_t u - T_s^{0,V} u ) \\
&& \qquad \qquad = - \int_{V} L^{0,V} ( T^{0,V}_t u - T_s^{0,V} u ) \cdot (T^{0,V}_t u - T_s^{0,V} u ) d \widehat{\mu} \\
&&\qquad \qquad   =-\int_{V} ( \overline{T}^{0,V}_t \overline{L}^{0,V} u - \overline{T}_s^{0,V} \overline{L}^{0,V}  u ) \cdot (T^{0,V}_t u - T_s^{0,V} u) d \widehat{\mu} \\
&&  \qquad \qquad  \leq \| \overline{T}^{0,V}_t \overline{L}^{0,V} u - \overline{T}_s^{0,V} \overline{L}^{0,V}  u \|_{L^1(V, \widehat{\mu})} \cdot 2 \|u\|_{L^{\infty}(V)}\ \longrightarrow \ 0 \;\; \; \text{ as } \; t, s \rightarrow 0+.
\end{eqnarray*}
Thus $(T_t^{0,V} u)_{t>0}$ is an $\widehat{H}_0^{1,2}(V, \mu)$-Cauchy sequence as $t \rightarrow 0+$, which implies $u \in \widehat{H}_0^{1,2}(V, \mu)$ and  $\lim_{t \rightarrow 0+} T^{0,V}_t u = u \; \text{ in } \;  \widehat{H}_0^{1,2}(V, \mu)$.
Thus (i), (ii) are proved. \\ \\
(iii): Let $v \in \widehat{H}_0^{1,2}(V, \mu)_b$. Then by (ii)
\begin{eqnarray*}
\mathcal{E}^{0,V}(u,v) &=& \lim_{t \rightarrow 0+} \mathcal{E}^{0,V} (T^{0,V}_t u, v) =-\lim_{t \rightarrow 0+} \int_{V} \big(L^{0,V} T_t^{0,V} u \big) v\, d \widehat{\mu} \\
&=& -\lim_{t\rightarrow 0+} \int_V \big(  \overline{T}^{0,V}_t \overline{L}^{0,V} u \big) \; v \,d \widehat{\mu}   =  -\int_V \overline{L}^{0,V}u  \; v \,d \widehat{\mu},
\end{eqnarray*}
hence (iii) is proved.\\
\centerline{}
\quad (iv): By (i) $u \in D(\overline{L}^{0,V})_b \subset \widehat{H}_0^{1,2}(V, \mu)_b$.  Since normal contractions operate on $\widehat{H}_0^{1,2}(V, \mu)$, we obtain 
$\varphi(u) \in \widehat{H}^{1,2}_0(V, \mu)$, and $\varphi'(u)- \varphi'(0)\in \widehat{H}_0^{1,2}(V, \mu)$. Let $v \in \widehat{H}_0^{1,2}(V, \mu)_b$.
Then by \cite[I. Corollary 4.15]{MR}, $v \big (\varphi'(u)- \varphi'(0)\big ) \in \widehat{H}_0^{1,2}(V, \mu)_b$ and 
\begin{eqnarray*}
\mathcal{E}^{0,V}(\varphi(u), v)  &=&  \frac12 \int_V \langle \widehat{A} \nabla \varphi(u), \nabla v \rangle d \widehat{\mu} \\
&=&  \frac12 \int_V \langle \widehat{A} \nabla u, \nabla v \rangle \varphi'(u)d  \widehat{\mu} \\
&=&  \frac12 \int_V \langle \widehat{A} \nabla u, \nabla (v \varphi'(u))  \rangle d \widehat{\mu}  -  \frac12 \int_V \langle \widehat{A} \nabla u, \nabla u \rangle \varphi''(u)v \,d \widehat{\mu} \\
&=& -\int_V  \Big(\varphi'(u)\overline{L}^{0,V} u   +   \frac12\varphi''(u) \langle\widehat{A} \nabla u, \nabla u \rangle \Big) v\, d \widehat{\mu}.
\end{eqnarray*}
Since $ \varphi'(u)\overline{L}^{0,V} u   +   \frac12 \varphi''(u) \langle\widehat{A} \nabla u, \nabla u \rangle  \in L^1(V, \widehat{\mu})$, (iv) holds by \cite[I. Lemma 4.2.2.1]{BH}.
\end{proof}

\noindent Recall that a densely defined operator $(S,D(S))$ on a Banach space $X$ is called {\it dissipative} \;if for any $u \in D(S)$, there exists $l_u \in X'$ such that 
\begin{equation} \label{dissipativei}
\|l_u\|_{X'} = \|u\|_X,  \,\; l_u(u) = \|u\|_X^2  \;\text{ and  }\,  l_u(Su) \leq 0.
\end{equation}
\medskip
\begin{proposition} \label{first1}
Assume {\bf (A)}. Let $V$ be a bounded open subset of $\R^d$.  Then, the following hold:
\begin{itemize}
\item[(i)]
The operator $(L^V, D(L^{0,V})_b)$  defined by
$$
L^{V} u  := L^{0,V} u  + \langle \mathbf{B}, \nabla u \rangle,\;\;  u \in D(L^{0,V})_b,
$$ 
is dissipative, hence closable on $L^1(V, \widehat{\mu})$. The closure $(\overline{L}^V, D(\overline{L}^V))$ generates a sub-Markovian $C_0$-semigroup of contractions $(\overline{T}^V_t)_{t>0}$ on $L^1(V, \widehat{\mu})$ with associated resolvent $(\overline{G}_{\alpha}^V)_{\alpha>0}$.

\item[(ii)]
$D(\overline{L}^V)_b \subset \widehat{H}^{1,2}_0(V, \mu)$ and
\begin{equation} \label{stamid}
\mathcal{E}^{0,V}(u,v) - \int_{V} \left \langle \mathbf{B}, \nabla u  \right \rangle v d \widehat{\mu}  = -\int_V \overline{L}^V u \cdot v d \widehat{\mu} \; \;  \; \text{ for all } u \in D(\overline{L}^V)_b \; \text{ and } \; v \in \widehat{H}_0^{1,2}(V, \mu)_b.
\end{equation}

\item[(iii)]
If $f \in L^{\infty}(V)$ and $\alpha>0$, then it holds that  $\overline{G}_{\alpha}^V f \in D(\overline{L}^V)_b \subset \widehat{H}_0^{1,2}(V, \mu)_b$ and that  
$$
\mathcal{E}^{0, V}_{\alpha}(\overline{G}_{\alpha}^V f, \varphi)- \int_{V} \langle \mathbf{B}, \nabla \overline{G}_{\alpha}^V f \rangle \varphi \,d\widehat{\mu} = \int_{V} f \varphi d \widehat{\mu}, \quad \text{ for any $\varphi \in \widehat{H}^{1,2}_0(V, \mu)_b$}.
$$
\end{itemize}
\end{proposition}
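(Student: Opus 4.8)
The plan is to treat the three items in turn, the bulk of the work being in~(i). For \emph{dissipativity}, I would fix $u\in D(L^{0,V})_b$ and take $l_u:=\|u\|_{L^1(V,\widehat\mu)}\,\mathrm{sgn}(u)\in L^\infty(V)=(L^1(V,\widehat\mu))'$, where $\mathrm{sgn}(u)=u/|u|$ on $\{u\neq0\}$ and $=0$ elsewhere; then $\|l_u\|=\|u\|_{L^1(V,\widehat\mu)}$ and $l_u(u)=\|u\|_{L^1(V,\widehat\mu)}^2$, and it remains to check $\int_V (L^Vu)\,\mathrm{sgn}(u)\,d\widehat\mu\le0$. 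Approximate $\mathrm{sgn}$ by odd $\varphi_n\in C^2(\R)$ with $\varphi_n(0)=0$, $|\varphi_n|\le1$, $\varphi_n'\ge0$, $\varphi_n\to\mathrm{sgn}$ pointwise. By Lemma~\ref{existlem}(i) we have $u\in\widehat H_0^{1,2}(V,\mu)_b$, hence $\varphi_n(u)\in\widehat H_0^{1,2}(V,\mu)_b$ and its primitive $\Phi_n(u)$ (with $\Phi_n(0)=0$) lies in $\widehat H_0^{1,2}(V,\mu)_{0,b}$ with $\nabla\Phi_n(u)=\varphi_n(u)\nabla u$; using $L^{0,V}u=\overline L^{0,V}u$ on $D(L^{0,V})$ (Proposition~\ref{extlempt}) and Lemma~\ref{existlem}(iii),
$$
-\int_V (L^{0,V}u)\,\varphi_n(u)\,d\widehat\mu=\mathcal E^0(u,\varphi_n(u))=\tfrac12\int_V\varphi_n'(u)\,\langle\widehat A\nabla u,\nabla u\rangle\,d\widehat\mu\ge0
$$
by $\varphi_n'\ge0$ and nonnegativity of $\widehat A=\tfrac1\psi A$, while $\int_V\langle\mathbf B,\nabla u\rangle\varphi_n(u)\,d\widehat\mu=\int_V\langle\mathbf B,\nabla\Phi_n(u)\rangle\,d\widehat\mu=0$ by \eqref{divfreeext}. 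Adding and letting $n\to\infty$ (dominated convergence, $L^{0,V}u\in L^2(V,\widehat\mu)\subset L^1(V,\widehat\mu)$ as $\widehat\mu(V)<\infty$) gives the claim, so $(L^V,D(L^{0,V})_b)$ is dissipative, hence closable on $L^1(V,\widehat\mu)$, with dissipative closure $(\overline L^V,D(\overline L^V))$.

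To see that $\overline L^V$ \emph{generates} a $C_0$-semigroup of contractions, by the Lumer--Phillips theorem it suffices to check that $\mathrm{Ran}(\alpha-\overline L^V)$ is dense in $L^1(V,\widehat\mu)$ for some $\alpha>0$ (the range is then automatically all of $L^1(V,\widehat\mu)$, being closed for a closed dissipative operator). For $h\in L^\infty(V)$ the symmetric resolvent $G^{0,V}_\alpha h$ lies in $D(L^{0,V})_b$ (sub-Markovianity of $G^{0,V}_\alpha$), and $(\alpha-L^V)G^{0,V}_\alpha h=h-\langle\mathbf B,\nabla G^{0,V}_\alpha h\rangle$, so $S:=\{\,h-\langle\mathbf B,\nabla G^{0,V}_\alpha h\rangle:h\in L^\infty(V)\,\}\subset\mathrm{Ran}(\alpha-\overline L^V)$. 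If $g\in L^\infty(V)$ annihilates $S$, then $\int_V gh\,d\widehat\mu=\int_V\langle\psi\mathbf B g,\nabla G^{0,V}_\alpha h\rangle\,d\mu$ for all $h\in L^\infty(V)$; since $\psi\mathbf B g\in L^2(V,\mu)^d$, Riesz representation gives $w\in\widehat H_0^{1,2}(V,\mu)$ with $\int_V\langle\psi\mathbf B g,\nabla\phi\rangle\,d\mu=\mathcal E^{0,V}_\alpha(w,\phi)$ for all $\phi\in\widehat H_0^{1,2}(V,\mu)$, and using $\mathcal E^{0,V}_\alpha(w,G^{0,V}_\alpha h)=\langle w,h\rangle_{L^2(V,\widehat\mu)}$ one gets $g=w\in\widehat H_0^{1,2}(V,\mu)_b$. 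Then for $\phi\in\widehat H_0^{1,2}(V,\mu)_b$, $\mathcal E^{0,V}_\alpha(g,\phi)=\int_V\langle\psi\mathbf B g,\nabla\phi\rangle\,d\mu=\int_V\langle\mathbf B,g\nabla\phi\rangle\,d\widehat\mu$, and using $g\nabla\phi=\nabla(g\phi)-\phi\nabla g$ together with \eqref{divfreeext} applied to $g\phi\in\widehat H_0^{1,2}(V,\mu)_{0,b}$ one obtains $\mathcal E^{0,V}_\alpha(g,\phi)=-\int_V\langle\mathbf B,\nabla g\rangle\phi\,d\widehat\mu$; taking $\phi=g$, the $\mathbf B$-term vanishes by the skew-symmetry of $\langle\mathbf B,\nabla\cdot\rangle$ on $\widehat H_0^{1,2}(V,\mu)_{0,b}$, so $\mathcal E^{0,V}(g,g)+\alpha\|g\|_{L^2(V,\widehat\mu)}^2=0$ and hence $g=0$. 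Thus $S$ is dense, proving the range condition; that $(\overline T^V_t)_{t>0}$ is sub-Markovian follows once more from the sign/truncation argument (test the resolvent equation $\alpha\overline G_\alpha f-L^{0,V}\overline G_\alpha f-\langle\mathbf B,\nabla\overline G_\alpha f\rangle=f$ against $(\alpha\overline G_\alpha f)^-$ and $(\alpha\overline G_\alpha f-1)^+$ for $0\le f\le1$, using ellipticity and \eqref{divfreeext}). This generation step, reproducing \cite[Proposition 1.1]{St99} with the extra weight $\psi$, is where the delicate bookkeeping lies -- in particular $\langle\mathbf B,\nabla\cdot\rangle$ is \emph{not} form-bounded relative to $\mathcal E^{0,V}$ on all of $\widehat H_0^{1,2}(V,\mu)$ (only $\psi\mathbf B\in L^2(V,\mu)$ is assumed), so one genuinely works in $L^1$ rather than via Lax--Milgram in $L^2$.

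For (ii), the plan is: given $u\in D(\overline L^V)_b$, approximate it in $L^1(V,\widehat\mu)$ by $u_n\in D(L^{0,V})_b$ (via suitable truncations of a defining sequence, so that also $\sup_n\|u_n\|_{L^\infty(V)}<\infty$ and $L^Vu_n\to\overline L^Vu$ in $L^1(V,\widehat\mu)$ -- this controlled approximation, done as in \cite[Proposition 1.1]{St99}, is the one genuinely technical point of (ii)), then use the energy identity $\mathcal E^{0,V}(u_n,u_n)=-\int_V (L^Vu_n)u_n\,d\widehat\mu$ (valid by Lemma~\ref{existlem}(iii) and the vanishing of $\int_V\langle\mathbf B,\nabla u_n\rangle u_n\,d\widehat\mu$ via \eqref{divfreeext}), which together with $\sup_n\|u_n\|_{L^2(V,\widehat\mu)}\le\|u\|_{L^\infty(V)}\widehat\mu(V)^{1/2}$ and local ellipticity bounds $\|u_n\|_{\widehat H_0^{1,2}(V,\mu)}$ uniformly; a weakly convergent subsequence then has limit $u$, whence $u\in\widehat H_0^{1,2}(V,\mu)$ and $\nabla u_n\rightharpoonup\nabla u$ in $L^2(V,\mu)^d$, and passing to the limit in
$$
\mathcal E^{0,V}(u_n,v)-\int_V\langle\mathbf B,\nabla u_n\rangle v\,d\widehat\mu=-\int_V (L^Vu_n)\,v\,d\widehat\mu,\qquad v\in\widehat H_0^{1,2}(V,\mu)_b
$$
(left side: $A\nabla v,\ \psi\mathbf B v\in L^2(V,\mu)^d$; right side: $v\in L^\infty(V)$ and $L^Vu_n\to\overline L^Vu$ in $L^1(V,\widehat\mu)$) yields \eqref{stamid}. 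Finally (iii) is a direct specialization: for $f\in L^\infty(V)$ and $\alpha>0$, sub-Markovianity gives $\overline G_\alpha f\in D(\overline L^V)_b$, hence $\overline G_\alpha f\in\widehat H_0^{1,2}(V,\mu)_b$ by (ii), and inserting $u=\overline G_\alpha f$ and $\overline L^V\overline G_\alpha f=\alpha\overline G_\alpha f-f$ into \eqref{stamid} and rearranging gives the asserted identity.
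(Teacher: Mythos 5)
Most of your plan is sound and stays close to the paper's route. Your dissipativity argument via odd smooth approximations of the sign function, combined with Lemma \ref{existlem} and \eqref{divfreeext}, is a legitimate variant of the paper's Step 1 (which uses one-sided $\varphi_\varepsilon$ and the functional $\|u\|_{L^1}\bigl(1_{\{u>0\}}-1_{\{u<0\}}\bigr)$). Your dense-range argument is a clean reorganization of the paper's Step 2: you test only against the subset $(\alpha-L^V)G^{0,V}_\alpha\bigl(L^\infty(V)\bigr)$ and identify the annihilator $g$ with its Riesz representative in $\widehat H^{1,2}_0(V,\mu)$, then kill it by the product rule and the antisymmetry of $\langle\mathbf B,\nabla\cdot\rangle$; the paper instead tests against all $u\in D(L^{0,V})_b$ and computes $\mathcal E^{0,V}_1(h,h)=0$ via \cite[I. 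Theorem 2.13(ii)]{MR}. Both work. For (ii) and (iii) you follow the paper's strategy, but be aware that the step you delegate to \cite{St99} (approximants with uniformly bounded sup-norms and $L^1$-convergent images under $\overline L^V$) is exactly where the paper invests its Steps 2 and 3 of (ii): one composes the graph-norm approximants $u_n$ with a compactly supported $\varphi$, so the modified approximants lie in $D(\overline L^{0,V})_b$ rather than $D(L^{0,V})_b$, and the convergence $\overline L^V\varphi(u_n)\to\overline L^Vu$ in $L^1(V,\widehat\mu)$ hinges on the overshoot-energy estimate \eqref{imlemce}; this should not be treated as routine.

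The genuine gap is your sub-Markovianity step in (i). Testing the resolvent equation against $(\alpha\overline G^V_\alpha f)^-$ and $(\alpha\overline G^V_\alpha f-1)^+$ presupposes that $\overline G^V_\alpha f$ lies in $\widehat H^{1,2}_0(V,\mu)_b$ and satisfies the weak identity \eqref{stamid}. At that point you only know $\overline G^V_\alpha f\in D(\overline L^V)\subset L^1(V,\widehat\mu)$: boundedness of $\alpha\overline G^V_\alpha f$ for $0\le f\le 1$ is precisely the sub-Markov property you are trying to establish, and the weak identity is part (ii), which even in your own plan is proved only for \emph{bounded} elements of $D(\overline L^V)$. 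As written the argument is circular. The paper avoids this entirely: it extends the one-sided inequality $\int_V \overline L^V u\,1_{\{u>1\}}\,d\widehat\mu\le 0$ (coming from \eqref{subinvariant1}) from $D(L^{0,V})_b$ to all $u\in D(\overline L^V)$ by graph-norm approximation, applies it to $u=\alpha\overline G^V_\alpha f$ to get $u\le 1$ when $f\le 1$, obtains $u\ge 0$ from the scaling trick ($-nf\le 1$ for all $n$), and then transfers sub-Markovianity to $(\overline T^V_t)_{t>0}$ via the exponential formula. Note also that your sign-function version of dissipativity does not by itself produce this one-sided inequality; you need the one-sided $\varphi_\varepsilon$ of the paper's Step 1. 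The repair is straightforward, but it must replace your truncation-testing plan rather than supplement it.
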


\noindent
\begin{proof} (i)
\; {\bf Step 1:} For $u \in D(L^{0,V})_b$, we claim that $\int_V L^V u 1_{\{ u >1 \}} d \widehat{\mu} \leq 0$. \\
Let $\varphi_{\varepsilon}  \in C^2(\R)$, $\varepsilon>0$, be such that $\varphi_{\varepsilon}''\geq 0$,  $0 \leq \varphi_{\varepsilon}' \leq 1$ and  $\varphi_{\varepsilon}(t) = 0$ if $t<1$, $\varphi'_{\varepsilon}(t) = 1$ if $t \geq 1+\varepsilon$. Then $\varphi_{\varepsilon}(u) \in D(\overline{L}^{0,V})$ by Lemma \ref{existlem}(iv) and 
\begin{eqnarray}
\int_{V} L^{0,V} u \, \varphi'_{\varepsilon}(u) d \widehat{\mu} 
&\leq& \int_{V} L^{0,V}u \,\varphi'_{\varepsilon}(u)\,d \widehat{\mu} +\int_{V} \frac12 \varphi''_{\varepsilon}(u) \langle \widehat{A} \nabla u, \nabla u \rangle d \widehat{\mu}  \nonumber \\
& = & \int_{V} \overline{L}^{0,V} \varphi_{\varepsilon} (u) d \widehat{\mu}  
= \lim_{t \rightarrow 0+}\int_V \frac{\overline{T}^{0,V}_t \varphi_{\varepsilon} (u)- \varphi_{\varepsilon} (u)     }{t} d \widehat{\mu}
\ \leq\  0, \label{subinvariant1}
\end{eqnarray}
where the last inequality followed by the $L^1(V, \widehat{\mu})$-contraction property of $(\overline{T}_t^{0,V})_{t>0}$. \\
Since $\lim_{\varepsilon \rightarrow 0+} \varphi'_{\varepsilon}(t) = 1_{(1,\infty)}(t)$ for every $t \in \R$, we have
$$
\lim_{\varepsilon \rightarrow 0+}\varphi'_{\varepsilon}(u) = 1_{\{u>1\}} \,\;\; \text{a.e. on } V \; \text{ and } \; \| \varphi'_{\varepsilon} (u)\|_{L^{\infty}(V)} \leq 1.
$$
Thus by Lebesgue's Theorem 
$$
\int_V L^{0,V} u\, 1_{\{ u >1 \}} d \widehat{\mu} = \lim_{\varepsilon \rightarrow 0+} \int_{V} L^{0,V} u\;  \varphi_{\varepsilon}'(u) d \widehat{\mu} \leq 0.
$$
Similarly, since $\varphi_{\varepsilon}(u) \in \widehat{H}^{1,2}_0(V, \mu)$, using \eqref{divfree} we get
$$
\int_{V}\langle \mathbf{B}, \nabla u \rangle  1_{\{u >1\}} d \widehat{\mu} =  \lim_{\varepsilon \rightarrow 0+} \int_{V} \langle \mathbf{B}, \nabla u \rangle \varphi'_{\varepsilon}(u) d  \widehat{\mu}  = \lim_{\varepsilon \rightarrow 0+} \int_{V} \langle \mathbf{B}, \nabla \varphi_{\varepsilon}(u) \rangle d \widehat{\mu}  = 0.
$$
Therefore $\int_{V} L^{V} u 1_{\{u >1 \}} d \widehat{\mu} \leq 0$ and Step 1 is proved. \\[3pt]
Observe that by Step 1, for any $n \geq 1$
$$
\int_{V} \left(L^V n u \right)1_{\{ nu >1 \}} d \widehat{\mu} \leq 0 \, \Longrightarrow \, \int_{V} L^V  u \,1_{\{ u >\frac{1}{n} \}} d \widehat{\mu} \leq 0.
$$
Letting $n \rightarrow \infty$  it follows from Lebesgue's Theorem that
$\int_{V} L^V u \,1_{\{u >0 \}} d \widehat{\mu} \leq 0$. Replacing $u$ with $-u$, we have 
$$
-\int_{V} L^V u \,1_{\{u <0 \}} \,d \widehat{\mu} =  \int_{V} L^V (-u) \,1_{\{-u >0 \}} \,d \widehat{\mu} \leq 0,
$$
and hence
$$
\int_{V} L^V u \,(1_{\{u >0 \}} - 1_{ \{ u <0 \}}) \, d \widehat{\mu} \leq 0.
$$
Setting $l_u := \|u\|_{L^1(V, \widehat{\mu})}  (1_{\{u >0 \}} - 1_{ \{ u <0 \}})  \in L^{\infty}(V) =L^{\infty}(V, \widehat{\mu}) = (L^1(V, \widehat{\mu}))'$, \eqref{dissipativei} is satisfied. Since $C_0^{\infty}(V) \subset   D(L^{0,V})_b$, $(L^{0,V}, D(L^{0,V})_b)$ is densely defined on $L^1(V, \widehat{\mu})$. Consequently,
 $(L^{0,V}, D(L^{0,V})_b)$ is dissipative. \\
\centerline{}
{\bf Step 2:} \;We claim that $(1-L^V)(D(L^{0,V})_b) \subset L^1(V, \widehat{\mu})$  densely. \\
Let $h \in L^{\infty}(V) = L^{\infty}(V, \widehat{\mu})  = (L^1(V, \widehat{\mu}))'$ be such that $\int_{V} (1-L^V) u \, h d \widehat{\mu} = 0$ for all $u \in D(L^{0,V})_b$. Then,
\begin{equation} \label{operdivfree}
\int_{V} (1-L^{0,V}) u \cdot h \, d\widehat{\mu} = \int_{V} \langle \mathbf{B}, \nabla u \rangle h \, d\widehat{\mu}, \quad \text{ for all } u \in D(L^{0,V})_b.
\end{equation}
Now consider the map $\mathcal{T}: \widehat{H}^{1,2}_0(V, \mu)\to \R$ given by
$$
\mathcal{T} u=\int_{V} \langle \mathbf{B}, \nabla u \rangle h \, d\widehat{\mu}, \qquad u \in \widehat{H}^{1,2}_0(V, \mu).
$$
Then, $\mathcal{T}$ is continuous on $\widehat{H}^{1,2}_0(V, \widehat{\mu})$ since
\begin{eqnarray*}
\big | \int_{V} \langle \mathbf{B}, \nabla u \rangle h \, \psi\rho dx \big | &\leq& \|h\|_{L^{\infty}(V)} \| \psi \mathbf{B} \|_{L^2(V, \mu)} \| \nabla u\|_{L^2(V, \mu)} \\
&\leq& \|h\|_{L^{\infty}(V)} \| \psi \mathbf{B} \|_{L^2(V, \mu)} \|u\|_{\widehat{H}_0^{1,2}(V, \mu)}.
\end{eqnarray*}
Thus, by the Riesz representation Theorem, there exists $v \in \widehat{H}_0^{1,2}(V, \mu)$ such that
$$
\mathcal{E}_1^{0,V}(u,v) = \mathcal{T}u \quad \text{ for all } u \in \widehat{H}^{1,2}_0(V, \mu),
$$
which together with \eqref{operdivfree} implies that
\begin{equation} \label{den12}
\int_V (1-L^{0,V})u \cdot  (h-v) d \widehat{\mu} = 0  \quad \text{ for all } u \in D(L^{0,V})_b.
\end{equation}
Since $G^{0,V}_{1}(L^1(V, \widehat{\mu})_b)\subset D(L^{0,V})_b$ and $(1-L^{0,V})G^{0,V}_{1}g=g$ for any $g\in L^1(V, \widehat{\mu})_b$, \eqref{den12} implies $h-v = 0$. In particular, $h \in \widehat{H}_0^{1,2}(V, \mu)$ and by \cite[I. Theorem 2.13(ii)]{MR}
\begin{eqnarray*}
\mathcal{E}^{0,V}_1(h,h) &=& \lim_{\alpha \rightarrow \infty} \mathcal{E}^{0,V}_1(\alpha G^{0,V}_{\alpha} h, h) \\
&=& \lim_{\alpha \rightarrow \infty} \int_V (1-L^{0,V})(\alpha G^{0,V}_{\alpha} h) \, h d \widehat{\mu} \\
&=& \lim_{\alpha \rightarrow \infty} \int_{V} \big \langle \mathbf{B}, \nabla (\alpha G^{0,V}_{\alpha} h) \big \rangle h \psi\rho dx  \\
&=& \int_{V} \big \langle \psi \mathbf{B}, \nabla h \big \rangle h \rho dx =  \frac12 \int_{V} \big \langle  \mathbf{B}, \nabla h^2 \big \rangle d \widehat{\mu} = 0,
\end{eqnarray*}
therefore $h = 0$. Then applying the Hahn-Banach Theorem (see, e.g.\cite[Proposition 1.9]{BRE}), Step 2 is proved.  By the Lumer-Phillips Theorem \cite[Theorem 3.1]{LP61}, the closure $(\overline{L}^V, D(\overline{L}^V))$ of $(L^V, D(L^{0,V})_b)$ generates a $C_0$-semigroup $(\overline{T}^V_t)_{t>0}$ of contractions on $L^1(V, \mu)$.\\
\centerline{}
{\bf Step 3:} We claim that $(\overline{T}^V_t)_{t > 0}$ is sub-Markovian, i.e. we have $0 \leq \overline{T}^V_t f \leq 1$ if $f \in L^1(\mathbb{R}^d, \widehat{\mu})$ satisfies $0 \leq f \leq 1$. \\
Let $(\overline{G}_{\alpha}^V)_{\alpha>0}$ be the associated resolvent.  Observe that by construction 
$$
D(L^{0,V})_b \subset D(\overline{L}^V)  \text{ densely with respect to the graph norm } \| \cdot \|_{D(\overline{L}^V)}.
$$
Let $u \in D(\overline{L}^V)$ and take $u_n \in D(L^{0,V})_b$, $n\ge 1$, satisfying $\lim_{n \rightarrow \infty}u_n =u$ in $D(\overline{L}^V)$ and $\lim_{n \rightarrow \infty} u_n =u$, a.e. on $V$. Let $\varepsilon>0$ and $\varphi_{\varepsilon}$ be as in Step 1. Then by \eqref{subinvariant1}
$$
\int_{V} \overline{L}^V u \,1_{\{ u>1 \}} d \widehat{\mu} =\lim_{\varepsilon \rightarrow 0+}\int_{V} \overline{L}^V  u \, \varphi_{\varepsilon}'(u) d \widehat{\mu} \leq 0.
$$
Let $f \in L^1(V,  \widehat{\mu})$ and $u:= \alpha \overline{G}^V_{\alpha} f \in D(\overline{L}^V)$. If $f \leq 1$, then
\begin{eqnarray*}
\alpha \int_V u 1_{\{ u >1 \}} d \widehat{\mu} \leq \int_V (\alpha u - \overline{L}^V u )1_{\{ u >1 \}} d \widehat{\mu} = \alpha \int_V f 1_{\{ u >1 \}} d  \widehat{\mu} \leq \alpha \int_V 1_{\{ u >1 \}} d \widehat{\mu}. 
\end{eqnarray*}
Therefore, $\alpha \int_V (u-1)1_{\{ u >1 \}} d \widehat{\mu} \leq 0$, which implies $u \leq 1$. If $f \geq 0$, then $-nf \leq 1$ for all $n \in \N$, and hence $-nu \leq 1$ for all $n \in \N$. Thus $u \geq 0$. Therefore  $(\overline{G}^V_{\alpha})_{\alpha>0}$ is sub-Markovian. Since $\overline{T}^V_t u = \displaystyle \lim_{\alpha \rightarrow \infty} \exp\big( t \alpha ( \alpha \overline{G}^V_{\alpha}u -u )   \big)$ in $L^1(V, \widehat{\mu})$ for any $u\in L^1(V, \widehat{\mu})$ by the proof of Hille-Yosida (cf. \cite[I. Theorem 1.12]{MR}), we obtain that $(\overline{T}^V_t)_{t > 0}$ is sub-Markovian too.\\
\centerline{}
(ii) \; {\bf Step 1: }\; We claim that $D(\overline{L}^{0,V})_b \subset D(\overline{L}^V) \text{ \;and\; } \overline{L}^V u = \overline{L}^{0,V} u + \langle   \mathbf{B}, \nabla u \rangle, \; u \in D(\overline{L}^{0,V})_b$.\\
 Let $u \in D(\overline{L}^{0,V})_b$. \;Since $(T_t^{0,V})_{t>0}$ is an analytic semigroup, $T_t^{0,V} u \in D(L^{0,V})_b \subset D(\overline{L}^V)$ and $\overline{L}^V T_t^{0,V} u = L^{0,V} T_t^{0,V} u + \langle \mathbf{B}, \nabla T_t^{0,V}u \rangle  = \overline{T}_t^{0,V} \overline{L}^{0,V} u + \langle  \mathbf{B}, \nabla T_t^{0,V} u  \rangle$. By Lemma \ref{existlem}(ii), $\lim_{t \rightarrow 0+}T^{0,V}_t u =u$ in $\widehat{H}^{1,2}_0(V, \mu)$, which implies that 
\begin{equation*}
\lim_{t \rightarrow 0+} \overline{L}^V T_t^{0,V} u = \overline{L}^{0,V} u + \langle \mathbf{B}, \nabla u \rangle \quad \text{ in }\; L^1(V, \widehat{\mu}),
\end{equation*}
because $\psi \bold{B} \in L^2_{loc}(\mathbb{R}^d, \mathbb{R}^d, \mu)$. \,Since $\lim_{t \rightarrow 0+} T^{0,V}_t u = \lim_{t \rightarrow 0+} \overline{T}^{0,V}_t u =u$\;  in \,$L^1(V, \widehat{\mu})$ and $(\overline{L}^V, D(\overline{L}^V))$ is a closed operator on $L^1(V, \widehat{\mu})$, we obtain
$$
u \in D(\overline{L}^V), \quad  \; \overline{L}^V u = \overline{L}^{0,V} u + \langle \mathbf{B}, \nabla u \rangle \quad \text{ on \;$L^1(V, \widehat{\mu})$}.
$$ 
Thus Step 1 is proved.\\
{\bf Step 2: } Let $u \in D(\overline{L}^V)_b$ and take $u_n \in D(L^{0,V})_b$, $n\ge 1$,  as in the proof of (i) Step 3. Let $M_1, M_2>0$ be such that $\|u\|_{L^{\infty}(V)}<M_1 < M_2$.  Then, we claim that
\begin{equation} \label{imlemce}
\lim_{n \rightarrow \infty} \int_{\{ M_1 \leq |u_n| \leq M_2 \}} \langle \widehat{A} \nabla u_n, \nabla u_n \rangle d \widehat{\mu} = 0.
\end{equation}
Indeed, let $\varphi \in C^1(\R)$ be such that $\varphi'(t)= (t-M_1)^+ \wedge (M_2-M_1)$ with $\varphi(0)=0$. Then by Lemma \ref{existlem}(i) and (iv), we have $\varphi (u_n) \in \widehat{H}_0^{1,2}(V, \mu)$. Observe that $\varphi'(u) = 0$, a.e. on $V$ and
\begin{eqnarray*}
&&\int_{\{ M_1 \leq u_n \leq M_2 \}} \langle \widehat{A} \nabla u_n, \nabla u_n \rangle d \widehat{\mu}  = \int_{V} \big \langle \widehat{A} \nabla u_n, \nabla \varphi'(u_n) \big \rangle d \widehat{\mu}  \\
&&= \mathcal{E}^{0,V}(u_n, \varphi'(u_n)) = -\int_V L^{0,V} u_n \, \varphi'(u_n) d \widehat{\mu} \\
&&= - \int_V L^{0,V} u_n \,\varphi '(u_n)d  \widehat{\mu} - \int_V \langle \mathbf{B}, \nabla \varphi(u_n) \rangle d \widehat{\mu}  \\
&&= - \int_V L^V u_n\, \varphi '(u_n)d \widehat{\mu} \longrightarrow - \int_V  \overline{L}^V u\, \varphi '(u)d  \widehat{\mu}  =  0, \quad \text{ as } n \rightarrow \infty,
\end{eqnarray*}
where the convergence of the last limit holds by Lebesgue's Theorem, since
$$
\lim_{n \rightarrow \infty} \varphi'(u_n) = \varphi'(u) = 0, \; \text{ a.e. and boundedly on $\R^d$ }
$$
and
\begin{eqnarray*}
&&\left | \int_V L^V u_n\cdot \varphi '(u_n)d \widehat{\mu} - \int_V \overline{L}^V u\cdot \varphi '(u)d  \widehat{\mu} \right | \\
&\leq& \|\varphi' \|_{L^{\infty}(V)}\int_{V}   \big|  (L^V u_n-\overline{L}^V u)\big|  d \widehat{\mu} + \int_{V} |\overline{L}^V u|\cdot |\varphi'(u_n)-\varphi'(u)| d \widehat{\mu} \\
&& \; \longrightarrow 0 \; \text{ as } n \rightarrow \infty.
\end{eqnarray*}
Similarly, 
$$
\int_{\{-M_2 \leq u_n \leq -M_1\}} \langle \widehat{A} \nabla u_n, \nabla u_n \rangle d \widehat{\mu} =\int_{\{M_1 \leq -u_n \leq M_2\}} \langle \widehat{A} \nabla (-u_n), \nabla (-u_n) \rangle d \widehat{\mu} =0,
$$
hence \eqref{imlemce} is proved. \\
\centerline{}
{\bf Step 3: } Let $u \in D(\overline{L}^V)_b$ and take $u_n \in D(L^{0,V})_b$, $n\ge 1$, as in the proof of (i) Step 3.  Let $\varphi \in C_0^2(\R)$ be such that $\varphi(t) = t$ if $|t|< \|u\|_{L^{\infty}(V)}+1$ and $\varphi(t) = 0$ if $|t| \geq \|u\|_{L^{\infty}(V)}+2$.  Using  Step 2 with $M_1 = \|u\|_{L^{\infty}(V)}+1$, $M_2=\|u\|_{L^{\infty}(V)}+2$, Lemma \ref{existlem}(iv), and Lebesgue's Theorem
\begin{eqnarray*}
\overline{L}^V \varphi (u_n)  = \varphi'(u_n)L^V u_n + \frac12\varphi''(u_n) \langle \widehat{A} \nabla u_n, \nabla u_n \rangle \longrightarrow \overline{L}^V u \;\; \text{ in } L^1(V, \widehat{\mu}) \; \text{ as } n \rightarrow \infty.
\end{eqnarray*}
Therefore,
\begin{eqnarray*}
&&\mathcal{E}^{0,V}(\varphi(u_n)-\varphi(u_m), \,\varphi(u_n)-\varphi(u_m)) \\
&& \qquad \quad = - \int_V \overline{L}^V \big(  \varphi(u_n)- \varphi(u_m)  \big)   \cdot \big(  \varphi(u_n)- \varphi(u_m)  \big) d \widehat{\mu} \\
&& \qquad \quad \leq 2 \|\varphi \|_{L^{\infty}(\R^d)}  \| \overline{L}^V \varphi(u_n) - \overline{L}^V \varphi(u_m)  \|_{L^1(V,  \widehat{\mu})} \longrightarrow 0 \;\; \text{ as } \; n,m \rightarrow \infty.
\end{eqnarray*}
Thus $\lim_{n \rightarrow \infty} \varphi(u_n) = u$  \,in\, $\widehat{H}_0^{1,2}(V, \mu)$ by the completeness of $\widehat{H}_0^{1,2}(V, \mu)$. Then, for any $v \in \widehat{H}_0^{1,2}(V, \mu)_b$, 
\begin{eqnarray*}
&&\mathcal{E}^{0,V}(u,v) - \int_{V} \langle  \mathbf{B}, \nabla u  \rangle \, v d \widehat{\mu} = \lim_{n \rightarrow \infty} (\mathcal{E}^{0,V}\left(\varphi(u_n), v \right) - \int_{V} \langle  \mathbf{B}, \nabla \varphi(u_n) \rangle d \widehat{\mu}  \\
&& \qquad \qquad = - \lim_{n \rightarrow \infty} \int_{V} \overline{L}^V \varphi(u_n)  \cdot v d \widehat{\mu} = - \int_{V} \overline{L}^V u \cdot v d \widehat{\mu},
\end{eqnarray*}
which completes the proof of (ii).\\
(iii)
Replacing $u$ and $v$ by $\overline{G}_{\alpha}^V f$ and $\varphi$, respectively and using the fact that $(\alpha-\overline{L}^V) \overline{G}_{\alpha}^V f = f$, the assertion follows.
\end{proof}

\begin{remark} \label{remimpco}
Assume {\bf (A)}. Let $V$ be a bounded open subset of $\R^d$. Define 
$$
{L'^{,V}} u:= L^{0,V}+\langle -\mathbf{B}, \nabla u \rangle, \quad u \in D(L^{0,V})_b.
$$
Note that $-\mathbf{B}$ has the same structural properties as $\mathbf{B}$ since $\psi \mathbf{B} \in L_{loc}^2(\R^d, \R^d, \mu)$  and \eqref{divfree} holds. Thus Proposition \ref{first1} holds equally with $\mathbf{B}$ replaced by $-\mathbf{B}$. In particular, there exists a  closed extension $(\overline{L}'^{,V}, D(\overline{L}'^{,V}))$ of $({L'^{,V}}, D(L^{0,V})_b)$ on $L^1(V, \widehat{\mu})$, which generates a strongly continuous sub-Markovian resolvent of contractions $(\overline{G}_{\alpha}'^{,V} )_{\alpha>0}$ on $L^1(V, \widehat{\mu})$ and
\begin{equation*}
\mathcal{E}^{0,V}(u,v) + \int_{V} \langle \mathbf{B}, \nabla u \rangle v d \widehat{\mu}= - \int_{V} \overline{L}'^{,V} u\, \cdot v d\widehat{\mu}, \quad u \in D(\overline{L}'^{,V} )_b, \; v \in \widehat{H}^{1,2}_0(V, \mu).
\end{equation*}
Let $(\overline{G}_{\alpha}^V)_{\alpha>0}$ and $(\overline{G}_{\alpha}'^{,V})_{\alpha>0}$ 
 be the strongly continuous sub-Markovian resolvents of contractions on $L^1(V, \widehat{\mu})$ associated with
 $(\overline{L}^V, D(\overline{L}^V))$ and  $(\overline{L}'^{,V}, D(\overline{L}'^{,V}))$, respectively.
 By Riesz-Thorin interpolation, $(\overline{G}_{\alpha}^V)_{\alpha>0}|_{L^1(V, \widehat{\mu})_b}$ and $(\overline{G}_{\alpha}'^{, V})_{\alpha>0}|_{L^1(V, \widehat{\mu})_b}$ are extended to strongly continuous sub-Markovian resolvents of contractions on $L^2(V, \widehat{\mu})$, denoted by
$(G^{V}_{\alpha})_{\alpha>0}$ and $(G'^{, V}_{\alpha})_{\alpha>0}$, respectively. Let  $(L^V, D(L^V))$ and $(L'^{,V}, D(L'^{,V}))$ be the generators on 
$L^2(V, \widehat{\mu})$ associated with $(G^{V}_{\alpha})_{\alpha>0}$ and $(G'^{, V}_{\alpha})_{\alpha>0}$, respectively. Then, using the same arguments as in the proof of Proposition \ref{extlempt}, we obtain that
$(L^V, D(L^V)) \subset (\overline{L}^V, D(\overline{L}^V))$ and $(L'^{,V}, D(L'^{,V})) \subset (\overline{L}'^{,V}, D(\overline{L}'^{,V}))$, hence
for any $u \in D(L^V)_b$, $v \in D(L'^{,V})_b$
\begin{eqnarray}
-\int_{V} L^V u\cdot v d \widehat{\mu} &=& \mathcal{E}^{0,V}(u,v) - \int_{V} \langle \mathbf{B}, \nabla u \rangle vd \widehat{\mu} \nonumber \\
&=& \mathcal{E}^{0,V}(v,u) + \int_{V} \langle \mathbf{B}, \nabla v \rangle u \, d \widehat{\mu}  \nonumber\\
&=& - \int_{V}  L'^{,V} v \cdot ud \widehat{\mu}. \label{fewstard}
\end{eqnarray}
Thus, for any $f, g \in L^{\infty}(V)$,
\begin{eqnarray}
\int_{V} G_{\alpha}^V f \cdot g d\mu  &=& \int_{V} G_{\alpha}^V f \cdot (\alpha - L'^{,V} )\, G_{\alpha}'^{,V} g \,d \widehat{\mu} \nonumber  \\
&\underset{\rm{by } \eqref{fewstard}}{=}& \int_{V} (\alpha-L^V) G^V_{\alpha} f \cdot G_{\alpha}'^{,V} g \,d\widehat{\mu} \nonumber  \\
&=& \int_{V} f \cdot G_{\alpha}'^{,V} g \,d \widehat{\mu}. \label{adjoinidko}
\end{eqnarray}
By the denseness of $L^{\infty}(V)$ in $L^2(V,\widehat{\mu})$, \eqref{adjoinidko} extends to all $f,g \in L^2(V, \widehat{\mu})$.  Thus for each $\alpha>0$,\; $G_{\alpha}'^{,V}$ is the adjoint operator of $G_{\alpha}^V$ on $L^2(V, \widehat{\mu})$.
\end{remark}
Now let $V$ be a bounded open subset of $\R^d$.
Denote by $(\overline{G}_{\alpha}^V)_{\alpha>0}$ the resolvent associated with $(\overline{L}^V, D(\overline{L}^V))$ on $L^1(V, \widehat{\mu})$. Then $(\overline{G}^V_{\alpha})_{\alpha>0}$ can be extended  to $L^1(\R^d, \widehat{\mu})$ by
\begin{equation} \label{extdefpj}
\overline{G}_{\alpha}^{V} f := 
\; \; \left\{\begin{matrix}
&\overline{G}^V_{\alpha} (f 1_V) &\text{ on }\; V \\ 
&\quad 0   &\qquad \text{ on } \, \R^d \setminus V,
\end{matrix}\right. \qquad \;\;
f \in L^1(\R^d, \widehat{\mu}).
\end{equation}
Let $g \in L^1(\R^d, \widehat{\mu})_b$. Then $\overline{G}^V_{\alpha}(g1_V)\in D(\overline{L}^V)_b \subset \widehat{H}^{1,2}_0(V, \mu)$, hence $\overline{G}^V_{\alpha} g \in \widehat{H}^{1,2}_0(V, \mu)$. Note that if $u \in D(\mathcal{E}^{0,V})$, then by definition it holds $u \in D(\mathcal{E}^0)$ and $\mathcal{E}^{0,V}(u,u) = \mathcal{E}^{0}(u,u)$. Therefore, we obtain that
\begin{equation} \label{fkppmeew}
\mathcal{E}^0(\overline{G}_{\alpha}^{V}g,  \, \overline{G}_{\alpha}^{V}g ) = \mathcal{E}^{0,V}\big(\overline{G}_{\alpha}^{V}(g1_{V}),  \, \overline{G}_{\alpha}^{V} (g1_{V}) \big).
\end{equation}

\begin{lemma} \label{increlem}
Assume {\bf (A)}.
Let $V_1$, $V_2$ be bounded open subsets of $\R^d$ and $\overline{V}_1 \subset V_2$. Let $u \in L^1(\R^d, \widehat{\mu})$, $u \geq 0$ and $\alpha>0$. Then $\overline{G}^{V_1}_{\alpha} u \leq \overline{G}^{V_2}_{\alpha} u$.
\end{lemma}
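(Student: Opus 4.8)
The plan is to obtain the inequality from the resolvent (weak) equations of Proposition~\ref{first1}(iii) on $V_1$ and on $V_2$, subtracting them and testing against the positive part of the difference; a truncation argument first reduces matters to a bounded integrand.

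I would first reduce to $u=g\in L^1(\R^d,\widehat\mu)_b$ with $g\ge0$: each $\overline G^{V_i}_{\alpha}$ is sub-Markovian (Proposition~\ref{first1}(i)), hence positivity preserving, and an $L^1(\R^d,\widehat\mu)$-contraction, so applying the inequality to the truncations $u\wedge n\uparrow u$ and letting $n\to\infty$ (the $\overline G^{V_i}_{\alpha}(u\wedge n)$ increase a.e.\ in $n$ and converge in $L^1$, hence a.e., to $\overline G^{V_i}_{\alpha}u$) gives the general statement. So fix such a $g$ and set $f_i:=\overline G^{V_i}_{\alpha}g$, extended by $0$ off $V_i$ as in \eqref{extdefpj}. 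By Proposition~\ref{first1}(iii), $f_i\in D(\overline L^{V_i})_b\subset\widehat H^{1,2}_0(V_i,\mu)_b$ and $f_i\ge0$; since $V_1\subset V_2$ and $C_0^{\infty}(V_1)\subset C_0^{\infty}(V_2)$ with unchanged $\widehat H^{1,2}_0$-norm, also $f_1\in\widehat H^{1,2}_0(V_2,\mu)_b$. Put $w:=f_1-f_2\in\widehat H^{1,2}_0(V_2,\mu)_b$; then $w^+\in\widehat H^{1,2}_0(V_2,\mu)_b$ by the normal-contraction property and $0\le w^+\le f_1$ a.e.\ because $f_1,f_2\ge0$.

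The technical core is the claim $w^+\in\widehat H^{1,2}_0(V_1,\mu)_b$. I would argue it via the standard characterization of the part of a regular symmetric Dirichlet form on an open set: since $(\mathcal E^{0,V_2},\widehat H^{1,2}_0(V_2,\mu))$ is regular on $L^2(V_2,\widehat\mu)$ (it contains $C_0^{\infty}(V_2)$, which is dense both in the form and, uniformly, in $C_0(V_2)$), one has $\widehat H^{1,2}_0(V_1,\mu)=\{h\in\widehat H^{1,2}_0(V_2,\mu):\widetilde h=0\text{ q.e.\ on }V_2\setminus V_1\}$, the quasi-continuous version and q.e.\ being taken with respect to $\mathcal E^{0,V_2}$. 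Now $f_1\in\widehat H^{1,2}_0(V_1,\mu)$ gives $\widetilde f_1=0$ q.e.\ on $V_2\setminus V_1$, and the a.e.\ estimate $0\le w^+\le f_1$ yields $0\le\widetilde{w^+}\le\widetilde f_1$ q.e., so $\widetilde{w^+}=0$ q.e.\ on $V_2\setminus V_1$; together with $w^+\in\widehat H^{1,2}_0(V_2,\mu)_b$ this gives $w^+\in\widehat H^{1,2}_0(V_1,\mu)_b$. (Alternatively one approximates $w^+$ in $\widehat H^{1,2}_0(V_1,\mu)$ by the functions $w^+\wedge|f_{1,n}|$, $f_{1,n}\in C_0^{\infty}(V_1)$, $f_{1,n}\to f_1$, using cut-offs and weak compactness.)

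With this, I would apply Proposition~\ref{first1}(iii) with test function $w^+$ on $V_1$ and on $V_2$ (admissible on $V_2$ as $\widehat H^{1,2}_0(V_1,\mu)_b\subset\widehat H^{1,2}_0(V_2,\mu)_b$) and subtract. Using that $\mathcal E^{0,V_i}$ agrees with $\mathcal E^0$ on $\widehat H^{1,2}_0(V_i,\mu)$ (the remark before \eqref{fkppmeew}) and that $w^+=0$ a.e.\ off $V_1$, so the right-hand sides $\int g\,w^+\,d\widehat\mu$ cancel, this yields
\begin{equation*}
\mathcal E^0_{\alpha}(w,w^+)-\int_{\R^d}\langle\mathbf B,\nabla w\rangle\,w^+\,d\widehat\mu=0 .
\end{equation*}
Since $\widehat A$ is symmetric and nonnegative and $\nabla w^+=1_{\{w>0\}}\nabla w$, $\mathcal E^0(w,w^+)=\frac12\int_{\{w>0\}}\langle\widehat A\nabla w,\nabla w\rangle\,d\widehat\mu\ge0$; moreover $\alpha\int w\,w^+\,d\widehat\mu=\alpha\int(w^+)^2\,d\widehat\mu$, and $\int\langle\mathbf B,\nabla w\rangle w^+\,d\widehat\mu=\frac12\int\langle\mathbf B,\nabla(w^+)^2\rangle\,d\widehat\mu=0$ by \eqref{divfreeext}, as $(w^+)^2\in\widehat H^{1,2}_0(\R^d,\mu)_{0,b}$. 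Hence $\alpha\int(w^+)^2\,d\widehat\mu\le0$, so $w^+=0$, i.e.\ $f_1\le f_2$ a.e., and the reduction gives $\overline G^{V_1}_{\alpha}u\le\overline G^{V_2}_{\alpha}u$ for all $u\ge0$. The only genuinely delicate point is the membership $w^+\in\widehat H^{1,2}_0(V_1,\mu)$; everything else is a routine computation exploiting the weak divergence-freeness of $\mathbf B$ to kill the first-order term.
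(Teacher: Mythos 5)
Your overall skeleton is the same as the paper's: reduce to bounded $u$, set $w=\overline G^{V_1}_{\alpha}u-\overline G^{V_2}_{\alpha}u$, test the weak resolvent identities of Proposition \ref{first1}(iii) on $V_1$ and $V_2$ against $w^+$, kill the first-order term by the divergence-freeness of $\mathbf B$ (cf.\ \eqref{divfreeext} and Lemma \ref{teclemma}), and use positivity of the symmetric form to conclude $w^+=0$. The genuine difference lies in how you establish the crucial membership $w^+\in\widehat H^{1,2}_0(V_1,\mu)$: the paper does this by the elementary Lemma \ref{tecpjp} (truncation by smooth functions supported in $V_1$, Lemma \ref{applempn}, and Banach--Alaoglu), whereas your primary argument invokes the part-space characterization of a regular Dirichlet form via quasi-continuous versions. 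That route can be made to work, but as written it has a gap: regularity of $(\mathcal E^{0,V_2},\widehat H^{1,2}_0(V_2,\mu))$ gives that $\{h:\widetilde h=0\ \text{q.e. on }V_2\setminus V_1\}$ has $\widehat H^{1,2}_0(V_2,\mu)\cap C_0(V_1)$ as a form core, but identifying this part space with $\widehat H^{1,2}_0(V_1,\mu)$ (the closure of $C_0^{\infty}(V_1)$) still requires showing that such continuous compactly supported elements can be approximated by $C_0^{\infty}(V_1)$-functions in the mixed norm; this is exactly where $\tfrac1\psi\in L^{\infty}_{loc}$ enters through a cut-off argument and is the content of Lemma \ref{applempn} — it does not follow from regularity of the $V_2$-form alone. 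Your parenthetical alternative ($w^+\wedge|f_{1,n}|$ with $f_{1,n}\in C_0^{\infty}(V_1)$, cut-offs, weak compactness) is essentially the paper's Lemma \ref{tecpjp} and closes this point cleanly.

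Two further small points. First, you use the pointwise chain rules $\nabla w^+=1_{\{w>0\}}\nabla w$ and $\nabla (w^+)^2=2w^+\nabla w^+$ for elements of the abstract closure $\widehat H^{1,2}_0(V_2,\mu)$; since $L^2(\mu)$-convergence of gradients does not give $L^1_{loc}(dx)$-control when $\rho$ degenerates, these identities should be justified by approximation (as the paper does in Lemma \ref{teclemma}), or avoided: the inequality $\mathcal E^0(w,w^+)\ge 0$ already follows from $\mathcal E^{0,V_2}(w^+,w^-)\le 0$, and the vanishing of the $\mathbf B$-term follows from Lemma \ref{teclemma} together with the antisymmetry identity stated after \eqref{divfreeext}. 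Second, your observation that the right-hand sides cancel exactly because $w^+\le f_1=0$ off $V_1$ is a harmless simplification of the paper's estimate $\int_{V_1}u\,w^+\,d\widehat\mu-\int_{V_2}u\,w^+\,d\widehat\mu\le 0$. With the key-step membership argued as in your alternative (or with the missing density step supplied), the proof is correct.
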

\begin{proof}
By approximation in $L^1(\R^d, \widehat{\mu})$, we may assume $u \in L^1(\R^d, \widehat{\mu})_b$. Let $w_{\alpha}:= \overline{G}_{\alpha}^{V_1} u- \overline{G}^{V_2}_{\alpha} u$.  Then, since $\widehat{H}_0^{1,2}(V_1, \mu)\subset \widehat{H}_0^{1,2}(V_2, \mu)$, clearly $w_{\alpha} \in \widehat{H}_0^{1,2}(V_2, \mu)$. Observe that $w^+_{\alpha} \leq \overline{G}_{\alpha}^{V_1}u$ on $\R^d$, so that $w^+_{\alpha} \in \widehat{H}^{1,2}_0(V_1, \mu)$ by Lemma \ref{tecpjp}.
By Lemma \ref{teclemma}, we obtain
\begin{equation*} \label{evpwoek}
\int_{V_2} \langle \mathbf{B}, \nabla w_{\alpha} \rangle w_{\alpha}^+ d \widehat{\mu} = \int_{V_2} \langle \mathbf{B}, \nabla w_{\alpha}^+ \rangle w^{+}_{\alpha}d  \widehat{\mu} = 0.
\end{equation*}
Since $\mathcal{E}^{0,V_2}$ is a symmetric Dirichlet form, $\mathcal{E}^{0,V_2}(w_{\alpha}^{-}, w^+_{\alpha}) = \mathcal{E}^{0,V_2}(w_{\alpha}^{+}, w^-_{\alpha}) \leq 0$.
Therefore
\begin{eqnarray*}
\mathcal{E}^{0,V_2}( w^+_{\alpha},  w^+_{\alpha}) &\leq& \mathcal{E}^{0,V_2}_{\alpha}( w_{\alpha},  w^+_{\alpha}) - \int_{V_2} \langle \mathbf{B}, \nabla w_{\alpha} \rangle w_{\alpha}^+ d \widehat{\mu} \\
&\leq& \Big( \mathcal{E}^{0,V_1}_{\alpha}( \overline{G}_{\alpha}^{V_1} u,  w^+_{\alpha}) - \int_{V_1} \langle \mathbf{B}, \nabla \overline{G}_{\alpha}^{V_1}u \rangle w_{\alpha}^+ d \widehat{\mu} \Big) \\
&& \qquad \qquad - \Big(  \mathcal{E}^{0,V_2}_{\alpha}( \overline{G}_{\alpha}^{V_2} u,  w^+_{\alpha}) - \int_{V_2} \langle \mathbf{B}, \nabla \overline{G}_{\alpha}^{V_2} u \rangle w_{\alpha}^+ d \widehat{\mu} \Big) \\
&\leq& \int_{V_1}  (\alpha- \overline{L}^{V_1}) \overline{G}_{\alpha}^{V_1} u \, w^{+}_{\alpha} d \widehat{\mu} - \int_{V_2}  (\alpha- \overline{L}^{V_2}) \overline{G}_{\alpha}^{V_2} u \, w^{+}_{\alpha} d \widehat{\mu} \\
&=& \int_{V_1} u w^{+}_{\alpha} d \widehat{\mu} -\int_{V_2} u w^{+}_{\alpha} d \widehat{\mu} \le 0.
\end{eqnarray*}
Therefore $w_{\alpha}^+= 0$ in $\R^d$, hence $\overline{G}_{\alpha}^{V_1} u \leq   \overline{G}_{\alpha}^{V_2} u$ a.e. on $\R^d$.
\end{proof}\\
\centerline{}
We are now going to define a family of operators $(\overline{G}_{\alpha})_{\alpha>0}$ which will turn out to be a strongly continuous contraction resolvent on $L^1(\R^d, \widehat{\mu})$ that is associated to a closed extension  of $(L, D(L^0)_{0,b})$ (hence also to a closed extension  of $(L, C_0^{\infty}(\mathbb{R}^d))$). \\
First, let $(V_n)_{n \geq 1}$ be an arbitrary but fixed family of bounded open subsets of $\R^d$ satisfying $\overline{V}_n \subset V_{n+1}$ and $\R^d= \bigcup_{n \geq 1}  V_n$, and let $f \in L^1(\R^d, \widehat{\mu})$ with $f \geq 0$ \, a.e. Using Lemma \ref{increlem}, we can define for any $\alpha>0$
$$
\overline{G}_{\alpha} f := \lim_{n \rightarrow \infty} \overline{G}_{\alpha}^{V_n} f \quad \;  \text{a.e. on } \R^d. 
$$
Using the $L^1$-contraction property, $\int_{\R^d} \alpha \overline{G}_{\alpha}^{V_n} f \,d \widehat{\mu}  = \int_{V_n} \alpha \overline{G}_{\alpha}^{V_n} (f1_{V_n})\,d \widehat{\mu} \leq \int_{V_n} f d \widehat{\mu} \leq \int_{\R^d} f d \widehat{\mu}$. Thus by monotone integration, $\overline{G}_{\alpha} f \in L^1(\R^d, \widehat{\mu})$ with
\begin{eqnarray} \label{contracl1}
\int_{\R^d} \alpha \overline{G}_{\alpha} f d \widehat{\mu} \leq \int_{\R^d} f d \widehat{\mu},
\end{eqnarray}
and by Lebesgue's Theorem, $\lim_{n \rightarrow \infty} \overline{G}_{\alpha}^{V_n} f  =  \overline{G}_{\alpha} f$ in $L^1(\R^d, \widehat{\mu})$. For general $f \in L^1(\R^d, \widehat{\mu})$, define 
\[
\overline{G}_{\alpha} f := \overline{G}_{\alpha} f^+ - \overline{G}_{\alpha} f^-.
\] 

\begin{theorem} \label{mainijcie}
Assume {\bf (A)}. Then $(\overline{G}_{\alpha})_{\alpha>0}$ is a strongly continuous contraction resolvent on $L^1(\R^d, \widehat{\mu})$ and sub-Markovian. Its generator $(\overline{L}, D(\overline{L}))$ is a  closed extension of
$$
Lu:= L^0 u + \langle \mathbf{B}, \nabla u \rangle, \quad \text{ $u \in D(L^0)_{0,b}$  }
$$
on $L^1(\R^d, \widehat{\mu})$ and generates a sub-Markovian $C_0$-semigroup of contractions $(\overline{T}_t)_{t>0}$ on $L^1(\R^d, \widehat{\mu})$. Moreover, the following properties are satisfied:

\begin{itemize}
\item[(i)] For each $\alpha>0$ and $f \in L^1(\mathbb{R}^d, \widehat{\mu})_b$, it holds $\overline{G}_{\alpha} f \in D(\mathcal{E}^0)$, 
\begin{equation} \label{weakineqsom}
\mathcal{E}^{0}_{\alpha}(\overline{G}_{\alpha} f, \overline{G}_{\alpha} f )\leq \int_{\R^d} f \overline{G}_{\alpha} f d \widehat{\mu}
\end{equation}
and
\begin{equation} \label{stampale}
\mathcal{E}_{\alpha}^{0}(\overline{G}_{\alpha}f, v) - \int_{\R^d} \langle \mathbf{B}, \nabla \overline{G}_{\alpha} f \rangle v \, d \widehat{\mu}=\int_{\R^d}fv \,d \widehat{\mu}, \quad \text{ for any $v \in H^{1,2}_0(\mathbb{R}^d, \mu)_{0,b}$}.
\end{equation}

\item[(ii)]
Let $(U_n)_{n \geq 1}$ be a family of bounded open subsets of $\R^d$ satisfying $\overline{U}_n \subset U_{n+1}$ and $\R^d= \bigcup_{n \geq 1}  U_n$. Let $\alpha>0$ and $f \in L^1(\mathbb{R}^d, \widehat{\mu})$. Then, $\lim_{n \rightarrow \infty} \overline{G}_{\alpha}^{U_n} f = (\alpha- \overline{L})^{-1} f$ in $L^1(\R^d, \widehat{\mu})$ and a.e. Moreover, for $f \in L^1(\R^d, \widehat{\mu})_b$, we have $\lim_{n \rightarrow \infty} \overline{G}_{\alpha}^{U_n}f =\overline{G}_{\alpha}f$ weakly in $D(\mathcal{E}^0)$.

\item[(iii)] $D(\overline{L})_b \subset D(\mathcal{E}^0)$ and  $\text{ for all }  u \in D(\overline{L})_b,\, v \in \widehat{H}^{1,2}_0(\R^d, \mu)_{0,b}$ it holds 
\begin{eqnarray*}
\mathcal{E}^0(u,u)  &\leq& -\int_{\R^d} \overline{L}u\cdot u d  \widehat{\mu},  \qquad \qquad \\
\hspace{-5em}\mathcal{E}^{0}(u,v) - \int_{\R^d} \langle \mathbf{B}, \nabla u \rangle v d \widehat{\mu} &=& - \int_{\R^d} \overline{L} u\cdot v d \widehat{\mu}. \quad  
\end{eqnarray*}
Moreover, $\lim_{\alpha\to \infty}\alpha \overline{G}_{\alpha}u=u$ in $D(\mathcal{E}^0)$ for any $u\in D(\overline{L})_b$.
\end{itemize}
\end{theorem}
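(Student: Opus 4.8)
The plan is to obtain $(\overline{G}_\alpha)_{\alpha>0}$ as a limit of the local resolvents $(\overline{G}^{V_n}_\alpha)_{\alpha>0}$ of Proposition~\ref{first1} and to transfer their properties to this limit. First I would verify that $(\overline{G}_\alpha)_{\alpha>0}$ is a sub-Markovian contraction resolvent on $L^1(\R^d,\widehat{\mu})$: the $L^1$-contraction estimate is already \eqref{contracl1} and extends to signed $f$ via $\overline{G}_\alpha f=\overline{G}_\alpha f^+-\overline{G}_\alpha f^-$; the resolvent equation $\overline{G}_\alpha-\overline{G}_\beta=(\beta-\alpha)\overline{G}_\alpha\overline{G}_\beta$ is inherited from the $(\overline{G}^{V_n}_\alpha)$, which satisfy it on $V_n$, by passing to the monotone limit in $n$ for $f\ge0$ (positivity together with Lemma~\ref{increlem} gives $\overline{G}^{V_n}_\alpha\overline{G}^{V_n}_\beta f\uparrow\overline{G}_\alpha\overline{G}_\beta f$), and then by linearity; sub-Markovianity passes to the limit in the same way. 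A short interleaving argument with Lemma~\ref{increlem} shows that $\lim_n\overline{G}^{U_n}_\alpha f$ does not depend on the chosen exhaustion, which is the first assertion of~(ii).

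For item~(i), on each $V_n$ Proposition~\ref{first1}(iii) with test function $\varphi=\overline{G}^{V_n}_\alpha(f1_{V_n})$, combined with $\int_{V_n}\langle\mathbf{B},\nabla w\rangle w\,d\widehat{\mu}=\tfrac12\int_{V_n}\langle\mathbf{B},\nabla w^2\rangle d\widehat{\mu}=0$ (by \eqref{divfreeext}), yields $\mathcal{E}^{0,V_n}_\alpha(\overline{G}^{V_n}_\alpha f,\overline{G}^{V_n}_\alpha f)=\int_{V_n}f\,\overline{G}^{V_n}_\alpha(f1_{V_n})\,d\widehat{\mu}$, which by \eqref{fkppmeew} equals $\mathcal{E}^0_\alpha(\overline{G}^{V_n}_\alpha f,\overline{G}^{V_n}_\alpha f)$. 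For $f\in L^1(\R^d,\widehat{\mu})_b$ the right-hand side is bounded in $n$ and converges to $\int f\,\overline{G}_\alpha f\,d\widehat{\mu}$ by dominated convergence (since $\overline{G}_\alpha|f|$ is bounded). Hence $(\overline{G}^{V_n}_\alpha f)_n$ is bounded in the Hilbert space $(D(\mathcal{E}^0),\mathcal{E}^0_\alpha)$; a weakly convergent subsequence has weak limit $\overline{G}_\alpha f$, identified through the $L^1$- and a.e.-convergence, so $\overline{G}_\alpha f\in D(\mathcal{E}^0)$, $\overline{G}^{U_n}_\alpha f\rightharpoonup\overline{G}_\alpha f$ weakly in $D(\mathcal{E}^0)$ (the second assertion of~(ii)), and \eqref{weakineqsom} follows from the weak lower semicontinuity of $w\mapsto\mathcal{E}^0_\alpha(w,w)^{1/2}$. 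Passing to the limit in the identity of Proposition~\ref{first1}(iii) against a fixed $v\in C_0^\infty(\R^d)$ --- using that both $w\mapsto\mathcal{E}^0_\alpha(w,v)$ and $w\mapsto\int\langle\mathbf{B},\nabla w\rangle v\,d\widehat{\mu}$ are continuous on $D(\mathcal{E}^0)$, the latter by local strict ellipticity of $A$ and the compact support of $v$ --- gives \eqref{stampale} for $v\in C_0^\infty(\R^d)$, and then for all $v\in H^{1,2}_0(\R^d,\mu)_{0,b}$ by the same continuity estimates.

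For strong continuity and the identification of the generator: for $v\in C_0^\infty(\R^d)$ and $V_n\supset\operatorname{supp}v$ one has $L^{V_n}v=Lv$ on $V_n$, hence $\overline{G}^{V_n}_\alpha\big((\alpha-L)v\big)=v$; letting $n\to\infty$ gives $\overline{G}_\alpha\big((\alpha-L)v\big)=v$, so $\alpha\overline{G}_\alpha v=v+\overline{G}_\alpha(Lv)\to v$ in $L^1(\R^d,\widehat{\mu})$; as $C_0^\infty(\R^d)$ is dense in $L^1(\R^d,\widehat{\mu})$ and $(\alpha\overline{G}_\alpha)$ is equibounded, $(\overline{G}_\alpha)_{\alpha>0}$ is strongly continuous. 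By the Hille--Yosida theory its generator $(\overline{L},D(\overline{L}))$ is closed and generates a $C_0$-semigroup of contractions $(\overline{T}_t)_{t>0}$, which is sub-Markovian since the resolvent is (as in Step~3 of the proof of Proposition~\ref{first1}), and $\overline{L}v=Lv$ for $v\in C_0^\infty(\R^d)$. To see that $(\overline{L},D(\overline{L}))$ extends $(L,D(L^0)_{0,b})$, for $u\in D(L^0)_{0,b}$ with $\operatorname{supp}u\subset V_n$ I would use that $u\in\widehat{H}^{1,2}_0(V_n,\mu)$ and that the identity $\mathcal{E}^{0,V_n}(u,v)=\mathcal{E}^0(u,v)=-\int L^0u\cdot v\,d\widehat{\mu}$ for all $v\in\widehat{H}^{1,2}_0(V_n,\mu)$ forces $u\in D(L^{0,V_n})_b$ with $L^{0,V_n}u=L^0u|_{V_n}$; then Step~1 in the proof of Proposition~\ref{first1}(ii) gives $u\in D(\overline{L}^{V_n})$ with $\overline{L}^{V_n}u=Lu$, whence $\overline{G}^{V_n}_\alpha((\alpha-L)u)=u$ and the limit $n\to\infty$ yields $u\in D(\overline{L})$, $\overline{L}u=Lu$.

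Finally, for~(iii) let $u\in D(\overline{L})_b$ and set $a_\beta:=\beta\overline{G}_\beta u=\overline{G}_\beta(\beta u)$ with $\beta u\in L^1(\R^d,\widehat{\mu})_b$. By~(i), $a_\beta\in D(\mathcal{E}^0)$ and $\mathcal{E}^0_\beta(a_\beta,a_\beta)\le\beta\int u\,a_\beta\,d\widehat{\mu}$; writing $h_\beta:=a_\beta-u=\overline{G}_\beta(\overline{L}u)$ and using $\mathcal{E}^0_\beta=\mathcal{E}^0+\beta\langle\cdot,\cdot\rangle_{L^2(\widehat{\mu})}$ this rearranges to $\mathcal{E}^0(a_\beta,a_\beta)\le-\int(\beta h_\beta)\,a_\beta\,d\widehat{\mu}$, and since $\beta h_\beta=\beta\overline{G}_\beta(\overline{L}u)\to\overline{L}u$ in $L^1$ while $a_\beta\to u$ boundedly and a.e., $\limsup_\beta\mathcal{E}^0(a_\beta,a_\beta)\le-\int\overline{L}u\cdot u\,d\widehat{\mu}$. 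As $a_\beta\to u$ also in $L^2(\R^d,\widehat{\mu})$, $(a_\beta)$ is bounded in $D(\mathcal{E}^0)$, a weak limit point equals $u$, so $u\in D(\mathcal{E}^0)$ and $\mathcal{E}^0(u,u)\le-\int\overline{L}u\cdot u\,d\widehat{\mu}$ by weak lower semicontinuity. The bilinear relation for $u$ follows by applying \eqref{stampale} to $a_\beta=\overline{G}_\beta(\beta u)$, rewriting it via $\overline{L}a_\beta=\beta a_\beta-\beta u$, and letting $\beta\to\infty$ against a fixed $v\in\widehat{H}^{1,2}_0(\R^d,\mu)_{0,b}$, exactly as for \eqref{stampale} above. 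The remaining claim $\alpha\overline{G}_\alpha u\to u$ in $D(\mathcal{E}^0)$ will then follow from the weak convergence $a_\beta\rightharpoonup u$ once $\mathcal{E}^0_\alpha(a_\beta,a_\beta)\to\mathcal{E}^0_\alpha(u,u)$, which by the estimates above reduces to the energy identity $\mathcal{E}^0(u,u)=-\int\overline{L}u\cdot u\,d\widehat{\mu}$; I would derive this from the bilinear relation by inserting $v=\chi_n^2u$ for suitable cut-offs $\chi_n\uparrow1$ and passing to the limit, the $\mathbf{B}$-contribution being turned into $-\int u^2\chi_n\langle\mathbf{B},\nabla\chi_n\rangle d\widehat{\mu}$ by \eqref{divfreeext}. \textbf{I expect this last step --- the energy identity on $D(\overline{L})_b$, hence the $D(\mathcal{E}^0)$-convergence of $\alpha\overline{G}_\alpha u$ --- to be the main obstacle}, because the cut-offs must be chosen so that simultaneously the cross term $\int u\langle\widehat{A}\nabla u,\nabla\chi_n\rangle d\widehat{\mu}$ and the $\mathbf{B}$-remainder vanish in the limit, which is where the structural hypotheses in {\bf (A)} (in particular the weak divergence-freeness of $\mathbf{B}$) have to be used carefully; everything preceding it is a fairly mechanical transfer of Proposition~\ref{first1} and Lemma~\ref{increlem} to the global resolvent.
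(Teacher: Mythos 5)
Up to the last claim of (iii) your proposal is essentially the proof in the paper: the same transfer of Proposition \ref{first1} and Lemma \ref{increlem} to the global resolvent, the identity \eqref{fkppmeew}, the Banach--Alaoglu/lower-semicontinuity argument for \eqref{weakineqsom}, the passage to the limit for \eqref{stampale}, the identity $\overline{G}_\alpha(\alpha-L)u=u$ for $u\in D(L^0)_{0,b}$ giving strong continuity and the extension property, and the interleaving argument for the exhaustion-independence in (ii). All of that is correct.

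The genuine gap is precisely the step you flagged, and your proposed route for it does not work. First, the energy identity $\mathcal{E}^0(u,u)=-\int_{\R^d}\overline{L}u\cdot u\,d\widehat{\mu}$ for all $u\in D(\overline{L})_b$ is not something you can extract from the bilinear relation by cut-offs under {\bf (A)} alone: inserting $v=\chi_n^2u$ and using \eqref{divfreeext} leaves the error terms $\int_{\R^d} u\chi_n\langle\widehat{A}\nabla u,\nabla\chi_n\rangle\,d\widehat{\mu}$ and $\int_{\R^d} u^2\chi_n\langle\mathbf{B},\nabla\chi_n\rangle\,d\widehat{\mu}$, and since $\mathbf{B}$ (resp.\ $\psi\mathbf{B}$) is only locally square-integrable and no growth of $A$, $\mathbf{B}$ or $\widehat{\mu}$ at infinity is assumed, there is no admissible choice of $\chi_n$ forcing these to vanish; controlling such terms is exactly the kind of global (conservativeness/invariance-type) condition that the paper treats as an \emph{additional} hypothesis in Theorem \ref{analycharin} and Theorem \ref{theoconserva}, not a consequence of {\bf (A)}. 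In effect the identity would require $\int_{\R^d}\langle\mathbf{B},\nabla u\rangle u\,d\widehat{\mu}=0$ for $u$ without compact support, which is neither guaranteed nor even well defined here, and only the inequality in (iii) is asserted by the theorem. Second, the identity is not needed: the missing idea is to apply the already-established inequality $\mathcal{E}^0(w,w)\le-\int_{\R^d}\overline{L}w\cdot w\,d\widehat{\mu}$ to $w=\alpha\overline{G}_\alpha u-u$, which belongs to $D(\overline{L})_b$ because $\|\alpha\overline{G}_\alpha u\|_{L^\infty(\R^d)}\le\|u\|_{L^\infty(\R^d)}$ by sub-Markovianity. Since $\overline{L}(\alpha\overline{G}_\alpha u-u)=\alpha\overline{G}_\alpha\overline{L}u-\overline{L}u\to0$ in $L^1(\R^d,\widehat{\mu})$ and $|w|\le2\|u\|_{L^\infty(\R^d)}$, this gives $\mathcal{E}^0(\alpha\overline{G}_\alpha u-u,\alpha\overline{G}_\alpha u-u)\le2\|u\|_{L^\infty(\R^d)}\|\alpha\overline{G}_\alpha\overline{L}u-\overline{L}u\|_{L^1(\R^d,\widehat{\mu})}\to0$, and together with $\alpha\overline{G}_\alpha u\to u$ in $L^2(\R^d,\widehat{\mu})$ (from $L^1$-convergence plus the uniform bound) you obtain convergence in $D(\mathcal{E}^0)$ directly, without any Radon--Riesz argument or energy identity. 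This is the paper's argument and it removes your ``main obstacle'' entirely.
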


\noindent
\begin{proof}
First we will show (i) and that $(\overline{G}_{\alpha})_{\alpha>0}$ is a strongly continuous contraction resolvent on $L^1(\R^d, \widehat{\mu})$ and sub-Markovian. Let $\alpha>0$. Then $\alpha \overline{G}_{\alpha}$ is a contraction on $L^1(\R^d, \widehat{\mu})$, since by \eqref{contracl1}
$$
\int_{\R^d} | \alpha \overline{G}_{\alpha} f | d \widehat{\mu} \leq \int_{\R^d} (\alpha \overline{G}_{\alpha} f^+ + \alpha \overline{G}_{\alpha} f^- )d \widehat{\mu} \leq \int_{\R^d} f^+ d \widehat{\mu} + \int_{\R^d} f^- d \widehat{\mu} = \int_{\R^d} |f| d \widehat{\mu}.
$$
Thus,
$$
\lim_{n \rightarrow \infty} \overline{G}_{\alpha}^{V_n} f  =  \overline{G}_{\alpha} f \; \text{ in  }L^1(\R^d, \widehat{\mu}) \ \text{ and }\ \text{a.e. on } \R^d. 
$$ 
Clearly, $(\overline{G}_{\alpha})_{\alpha>0}$ is sub-Markovian, since $(\overline{G}_{\alpha}^{V_n})_{\alpha>0}$ is sub-Markovian on $L^1(V_n, \widehat{\mu})$ for any $n \geq 1$. By the $L^1(\R^d, \widehat{\mu})$-contraction property, for any $\alpha, \beta >0$
\begin{equation*} \label{limitidt}
\lim_{n \rightarrow \infty} \| \overline{G}_{\alpha}^{V_n} \overline{G}_{\beta} f  - \overline{G}_{\alpha}^{V_n} \overline{G}_{\beta}^{V_n} f \|_{L^1(\R^d, \widehat{\mu})} \leq \lim_{n \rightarrow \infty} \frac{1}{\alpha} \| \overline{G}_{\beta} f - \overline{G}^{V_n}_{\beta} f  \|_{L^1(\R^d, \widehat{\mu})} = 0.
\end{equation*}
Using the latter
and the resolvent equation for $(\overline{G}^{V_n}_{\alpha})_{\alpha>0}$, we obtain for any $\alpha, \beta>0$
\begin{eqnarray*}
(\beta- \alpha) \overline{G}_{\alpha} \overline{G}_{\beta} f &=& \lim_{n \rightarrow \infty} (\beta- \alpha) \overline{G}_{\alpha}^{V_n} \overline{G}_{\beta} f = \lim_{n \rightarrow \infty} (\beta-\alpha) \overline{G}_{\alpha}^{V_n} \overline{G}_{\beta}^{V_n} f  \\
&=& \lim_{n \rightarrow \infty} \big (\overline{G}^{V_n}_{\alpha} f - \overline{G}_{\beta}^{V_n} f\big ) = \overline{G}_{\alpha} f - \overline{G}_{\beta} f \quad\text{ in } L^1(\R^d, \widehat{\mu}).
\end{eqnarray*}
Let $f \in L^1(\R^d, \widehat{\mu})_b$ and $\alpha>0$.\; By \eqref{stamid}, $\overline{G}_{\alpha}^{V_n}(f 1_{V_n}) \in D(\overline{L}^V)_b \subset \widehat{H}^{1,2}_0(V_n, \mu)_b$. Using \eqref{fkppmeew},
\begin{eqnarray}
\mathcal{E}^0_{\alpha}(\overline{G}_{\alpha}^{V_n}f,  \, \overline{G}_{\alpha}^{V_n}f ) &=& \mathcal{E}^{0,V_n}_{\alpha}\big(\overline{G}_{\alpha}^{V_n}(f 1_{V_n}),  \, \overline{G}_{\alpha}^{V_n}(f 1_{V_n}) \big) \nonumber \\
&=& - \int_{V_n} \overline{L}^{V_n}\overline{G}_{\alpha}^{V_n}(f 1_{V_n}) \cdot \overline{G}_{\alpha}^{V_n}(f1_{V_n}) d \widehat{\mu} + \int_{V_n} \alpha \overline{G}_{\alpha}^{V_n}(f1_{V_n}) \cdot \overline{G}_{\alpha}^{V_n}(f1_{V_n}) d \widehat{\mu} \nonumber \\
&=& \int_{V_n}  (f1_{V_n}) \cdot \overline{G}_{\alpha}^{V_n}(f1_{V_n}) d \widehat{\mu} \nonumber \\
&\leq& \int_{\R^d} f  \cdot \overline{G}_{\alpha} f d \widehat{\mu} \label{boundwq} \\
&\leq& \frac{1}{\alpha} \|f\|_{L^{\infty}(\R^d)} \|f\|_{L^1(\R^d, \widehat{\mu})}.  \nonumber
\end{eqnarray}
Observe that $\lim_{ n \rightarrow \infty} \overline{G}^{V_{n}}_{\alpha} f  = \overline{G}_{\alpha} f$  \; in $L^2(\R^d, \widehat{\mu})$ by Lebesgue's Theorem. Thus by the Banach-Alaoglu Theorem, $\overline{G}_{\alpha} f \in D(\mathcal{E}^0)$ and
there exists subsequence of $(\overline{G}_{\alpha}^{V_n} f)_{n \geq 1}$, say again $(\overline{G}_{\alpha}^{V_n} f)_{n \geq 1}$, such that
\begin{equation} \label{weakcomp}
\lim_{n \rightarrow \infty} \overline{G}^{V_{n}}_{\alpha} f = \overline{G}_{\alpha} f \quad \text{ weakly in } D(\mathcal{E}^0).
\end{equation}
Since the above line of arguments works also for any subsequence of  $(\overline{G}_{\alpha}^{U_n}f)_{n \geq 1}$, i.e. any subsequence of $(\overline{G}_{\alpha}^{U_n}f)_{n \geq 1}$ has a further subsequence that converges weakly in $D(\mathcal{E}^0)$ to $\overline{G}_{\alpha} f$, \eqref{weakcomp} holds without referring to a subsequence.\\
Since the norm $\mathcal{E}^0_{\alpha}(\cdot,\cdot)^{\frac{1}{2}}$ is lower semicontinuous with respect to weak convergence, using \eqref{boundwq}
\begin{equation*} 
\mathcal{E}^{0}_{\alpha}(\overline{G}_{\alpha} f, \overline{G}_{\alpha} f ) \leq \liminf_{n \rightarrow \infty} \mathcal{E}^0_{\alpha} (\overline{G}^{V_{n}}_{\alpha} f,\overline{G}^{V_{n}}_{\alpha} f) \leq \int_{\R^d} f \overline{G}_{\alpha} f d \widehat{\mu},
\end{equation*}
hence \eqref{weakineqsom} follows. Let $v \in \widehat{H}^{1,2}_0(\R^d, \mu)_{0,b}$. Take $N \in \mathbb{N}$ so that $\text{supp}(v) \subset V_N$.
Then, by Lemma \ref{applempn}, $v \in \widehat{H}^{1,2}_0(V_n, \mu)$ for all $n \geq N$.
 Using \eqref{weakcomp},
\begin{eqnarray*} 
&&\qquad \mathcal{E}_{\alpha}^{0}(\overline{G}_{\alpha}f, v) - \int_{\R^d} \langle \mathbf{B}, \nabla \overline{G}_{\alpha} f \rangle v \, d \widehat{\mu}  \nonumber \\
&&=\lim_{n \rightarrow \infty}\big( \mathcal{E}_{\alpha}^{0}(\overline{G}^{V_{n}}_{\alpha}f, v) - \int_{\R^d} \langle \psi \mathbf{B}, \nabla \overline{G}^{V_{n}}_{\alpha} f \rangle v \, d\mu \big)   \nonumber \\
&&=\lim_{n \rightarrow \infty}\big( \mathcal{E}_{\alpha}^{0,V_{n}}(\overline{G}^{V_{n}}_{\alpha}(f 1_{V_n}), v) - \int_{V_n} \big \langle \mathbf{B}, \nabla \overline{G}^{V_{n}}_{\alpha} (f1_{V_n}) \big \rangle\, v \, d \widehat{\mu} \big) \nonumber \\
&&= \lim_{n \rightarrow \infty} \int_{V_n} (\alpha-\overline{L}^{V_n}) \overline{G}^{V_n}_{\alpha} (f1_{V_n}) \cdot v d \widehat{\mu} = \lim_{n \rightarrow \infty} \int_{V_n} f v  d \widehat{\mu}= \int_{\R^d} f v d \widehat{\mu},
\end{eqnarray*}
and hence \eqref{stampale} follows. Let $u \in D(L^0)_{0,b}$ be given and take $j \in \N$ satisfying $\text{supp}\, u \subset V_j$. Then by Lemma \ref{applempn}, $u \in \widehat{H}^{1,2}_0(V_j, \mu)$. Observe that $\text{supp}\, (Lu) \subset V_j$ and for any $n \geq j$, $u1_{V_n} \in D(L^{0, V_n})_b$, $L^{V_n} (u1_{V_n})  = L u$ on  $V_n$, hence $\overline{G}^{V_n}_{\alpha} (\alpha-L) u  = u$ on $\R^d$. Letting $n \rightarrow \infty$ we have
\begin{equation} \label{extiden}
 u = \overline{G}_{\alpha} (\alpha-L) u.
\end{equation}
Note that
\begin{eqnarray}
\| \alpha \overline{G}_{\alpha} u  - u \|_{L^1(\R^d, \widehat{\mu})} 
&=& \left \| \alpha \overline{G}_{\alpha} u - \overline{G}_{\alpha} (\alpha-L) u  \right \|_{L^1(\R^d, \widehat{\mu})} \ =\  \left \| \overline{G}_{\alpha} L u \right \|_{L^1(\R^d, \widehat{\mu})}  \nonumber \\
&\leq& \frac{1}{\alpha} \| Lu \|_{L^1(\R^d, \widehat{\mu})} \longrightarrow 0 \;\; \text{ as } \alpha \rightarrow \infty. \label{stconr}
\end{eqnarray}
Since $C_0^{\infty}(\R^d) \subset D(L^0)_{0,b}$, the strong continuity of $(\overline{G}_{\alpha})_{\alpha>0}$ on $D(L^0)_{0,b}$ of \eqref{stconr} extends to all $u \in L^1(\R^d, \widehat{\mu})$, which shows the strong continuity of $(\overline{G}_{\alpha})_{\alpha>0}$ on $L^1(\R^d, \widehat{\mu})$.
Let $(\overline{L}, D(\overline{L}))$ be the generator of $(\overline{G}_{\alpha})_{\alpha>0}$. Then \eqref{extiden} implies $\overline{L} u = Lu$  for all $u \in D(L^0)_{0,b}$. Thus $(\overline{L}, D(\overline{L}))$ is a closed extension of $(L, D(L^0)_{0,b})$ on $L^1(\R^d, \widehat{\mu})$. By the Hille-Yosida Theorem, $(\overline{L}, D(\overline{L}))$ generates a $C_0$-semigroup of contractions $(\overline{T}_t)_{t> 0}$ on $L^1(\R^d, \widehat{\mu})$. \\
Since $\overline{T}_t u = \lim_{\alpha \rightarrow \infty} \exp \big( t\alpha (\alpha \overline{G}_{\alpha}u - u ) \big) $ in $L^1(\R^d, \widehat{\mu})$,  $(\overline{T}_t)_{t>0}$ is also sub-Markovian, hence all statements including and up to (i) are proven.\\ \\
(ii)
Let $(U_n)_{n \geq 1}$ be a family of bounded open subsets of $\R^d$ such that $\overline{U}_{n} \subset U_{n+1}$ for all $n \in \N$ and $\R^d  = \bigcup_{n \geq 1} U_n$. Let $f \in L^1(\R^d, \widehat{\mu})$ with $f \geq 0$. By the compactness of $\overline{V}_n$ in $\R^d$, there exists $n_0 \in \N$ such that $\overline{V}_n \subset U_{n_0}$, so that $\overline{G}^{V_n} f \leq \overline{G}^{U_{n_0}} f \leq \lim_{n \rightarrow \infty} \overline{G}^{U_n}_{\alpha}f$. 
Letting $n \rightarrow \infty$, we obtain $\overline{G}_{\alpha} f \leq \lim_{n \rightarrow \infty} \overline{G}^{U_n}_{\alpha}f$. Similarly we have $\lim_{n \rightarrow \infty} \overline{G}^{U_n}_{\alpha}f \leq \overline{G}_{\alpha} f$. The rest follows as in  (i).
\\ \\
(iii)  Let $u \in D(\overline{L})_b$ and $(\alpha_n)_{n \geq 1}$ be any sequence of strictly positive real numbers with $\lim_{n \rightarrow \infty}\alpha_n = \infty$. Then by \eqref{weakcomp}, $\alpha_n \overline{G}_{\alpha_n} u \in D(\mathcal{E}^0)$ and by \eqref{weakineqsom}
\begin{eqnarray}
\mathcal{E}^{0}(\alpha_n \overline{G}_{\alpha_n} u, \alpha _n\overline{G}_{\alpha_n} u  ) &\leq& \int_{\R^d} \alpha_n u \cdot \alpha_n  \overline{G}_{\alpha_n} u d \widehat{\mu} - \alpha_n \int_{\R^d} \alpha_n \overline{G}_{\alpha_n} u \cdot \alpha_n \overline{G}_{\alpha_n} u \,d \widehat{\mu}  \nonumber \\
&=& \int_{\R^d} \alpha_n \big( u - \alpha_n \overline{G}_{\alpha_n} u  \big) \cdot \alpha_n \overline{G}_{\alpha_n} u \, d \widehat{\mu}  \nonumber \\
&=& \int_{\R^d} - \alpha_n \overline{L}\,  \overline{G}_{\alpha_n} u \cdot \alpha_n \overline{G}_{\alpha_n} u \,d \widehat{\mu} \nonumber \\
&=& \int_{\R^d} - \alpha_n \overline{G}_{\alpha_n} \overline{L} u \cdot \alpha_n \overline{G}_{\alpha_n} u d \widehat{\mu} \label{bddjwnvr14} \\
&\leq& \| \overline{L} u \|_{L^1(\R^d, \widehat{\mu})} \|u \|_{L^{\infty}(\R^d)}. \nonumber
\end{eqnarray}
Therefore $\sup_{n\ge 1} \mathcal{E}^{0}(\alpha_n \overline{G}_{\alpha_n}u, \alpha_n \overline{G}_{\alpha_n}u ) < \infty$. By the Banach-Alaoglu theorem, $u \in D(\mathcal{E}^0)$ and there exists a subsequence of $(\alpha_n \overline{G}_{\alpha_n}u)_{n\ge 1}$, say $(\alpha_{n_k} \overline{G}_{\alpha_{n_k}}u)_{k\ge 1}$, such $\lim_{k\rightarrow \infty} \alpha_{n_k} \overline{G}_{\alpha_{n_k}}u= u$ weakly in $D(\mathcal{E}^0)$. Since the argument works for any subsequence, we obtain that $\lim_{\alpha \rightarrow \infty} \alpha \overline{G}_{\alpha} u = u$ weakly in $D(\mathcal{E}^0)$. Consequently,  by the property of weak convergence, \eqref{bddjwnvr14} and Lebesgue's Theorem,
\begin{eqnarray*}
\mathcal{E}^{0}(u,u) &\leq& \liminf_{\alpha \rightarrow \infty} \mathcal{E}^{0}(\alpha \overline{G}_{\alpha} u, \alpha \overline{G}_{\alpha} u  )  \leq \liminf_{\alpha \rightarrow \infty} \big(-\int_{\R^d} \alpha \overline{G}_{\alpha} \overline{L} u \cdot \alpha \overline{G}_{\alpha} u \, d \widehat{\mu}\big)\\
&=& -\int_{\R^d} \overline{L}u \, u \,d \widehat{\mu}. \;\; \;
\end{eqnarray*}
The latter further implies
\begin{eqnarray*}
\mathcal{E}^0(\alpha \overline{G}_{\alpha} u - u, \alpha \overline{G}_{\alpha} u - u ) &\leq& -\int_{\R^d} \overline{L} (\alpha \overline{G}_{\alpha} u - u) \cdot (\alpha \overline{G}_{\alpha} u - u) d \widehat{\mu} \\
&\leq& 2\|u\|_{L^{\infty}(\R^d)} \|\alpha \overline{G}_{\alpha} \overline{L}u -\overline{L}u \|_{L^1(\R^d, \widehat{\mu})} \\
&& \;\longrightarrow 0 \quad \text{ as } \alpha \rightarrow \infty.
\end{eqnarray*}
Finally, if $v \in \widehat{H}^{1,2}(\R^d, \mu)_{0,b}$, then by \eqref{stampale}
\begin{eqnarray*}
&&\mathcal{E}^0(u,v) - \int_{\R^d} \langle \mathbf{B}, \nabla u \rangle v d \widehat{\mu} = \lim_{\alpha \rightarrow \infty} \big( \mathcal{E}^0(\alpha \overline{G}_{\alpha} u, v)- \int_{\R^d} \langle  \mathbf{B}, \nabla \alpha \overline{G}_{\alpha} u   \rangle v d\widehat{\mu}  \big) \\
&& =  \lim_{\alpha\rightarrow \infty} \big( \mathcal{E}_{\alpha}^0(\alpha \overline{G}_{\alpha} u, v)- \int_{\R^d} \langle  \mathbf{B}, \nabla \alpha \overline{G}_{\alpha} u   \rangle v d \widehat{\mu}   - \alpha \int_{\R^d} \alpha \overline{G}_{\alpha} u \cdot v d \widehat{\mu} \big) \\
&& = \lim_{\alpha \rightarrow \infty} \int_{\R^d} \alpha(u- \alpha \overline{G}_{\alpha} u) v d \widehat{\mu} =  \lim_{\alpha \rightarrow \infty} \int_{\R^d} -\alpha \overline{G}_{\alpha} \overline{L} u \cdot v d \widehat{\mu}  = - \int_{\R^d} \overline{L} u \cdot v d \widehat{\mu},
\end{eqnarray*}
as desired.
\end{proof}

\begin{remark} \label{resestdual}
In the same manner as in Theorem \ref{mainijcie}, one can construct an $L^1(\R^d, \widehat{\mu})$ closed extension $(\overline{L}', D(\overline{L}'))$ of $L^0 u + \langle -\mathbf{B}, \nabla u \rangle$, $u \in D(L^0)_{0,b}$ from the associated strongly continuous sub-Markovian resolvent of contractions $(\overline{G}_{\alpha}' )_{\alpha>0}$ on $L^1(\R^d, \widehat{\mu})$. Let  $(U_n)_{n \geq 1}$ be as in Theorem \ref{mainijcie}(ii). Observe that by Remark \ref{remimpco} 
\begin{equation} \label{localresid}
\int_{\R^d} \overline{G}_{\alpha}^{U_n} u \cdot v \, d \widehat{\mu}  = \int_{\R^d} u \cdot {\overline{G}_{\alpha}'^{,U_n}} v \,d \widehat{\mu}, \;\; \text{ for all } u,v \in L^1(\R^d, \widehat{\mu})_b,
\end{equation}
where $(\overline{G}_{\alpha}'^{,U_n})_{\alpha>0}$ is the resolvent associated to $(\overline{L}'^{,U_n}, D(\overline{L}'^{,U_n}))$ on $L^1(U_n, \widehat{\mu})$, which is trivially extended to $\R^d$ as in \eqref{extdefpj}.
\;Letting $n \rightarrow \infty$ in \eqref{localresid}, 
\begin{equation*} \label{glbresid}
\int_{\R^d} \overline{G}_{\alpha} u \, v \, d \widehat{\mu}  = \int_{\R^d} u \, \overline{G}_{\alpha}' v d \widehat{\mu}, \;\; \text{ for all } u,v \in L^1(\R^d, \widehat{\mu})_b. 
\end{equation*}
\end{remark}
\bigskip
The following Theorem \ref{pojjjde} which shows that $D(\overline{L})_b$ is an algebra is one of the main properties that are needed to construct a Hunt process corresponding to the strict capacity (see, {\bf SD3} in \cite{Tr5}) and will be used later. The proof of Theorem \ref{pojjjde} is based on a similar line of arguments as in \cite[Remark 1.7 (iii)]{St99}, but we include its proof since the framework considered here is different and we have to check in detail some approximation arguments.
\begin{theorem} \label{pojjjde}
Assume {\bf (A)}. Let $(\overline{L}, D(\overline{L}))$ be defined in Theorem \ref{mainijcie}. 
Then, $D(\overline{L})_b$ is an algebra and $\overline{L}u^2 = 2u \overline{L}u +  \langle \widehat{A} \nabla u, \nabla u \rangle$ for any $u \in D(\overline{L})_b$.
\end{theorem}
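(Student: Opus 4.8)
The plan is to prove the identity $\overline{L}u^{2}=2u\overline{L}u+\langle\widehat{A}\nabla u,\nabla u\rangle$ for $u\in D(\overline{L})_{b}$; the algebra property is then immediate by polarization, writing $uv=\tfrac12\big((u+v)^{2}-u^{2}-v^{2}\big)$ with $u,v,u+v\in D(\overline{L})_{b}$.

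\emph{Step 1 (a local product rule).} I would first show that for a bounded open $V$ and $w\in D(\overline{L}^{V})_{b}$ one has $w^{2}\in D(\overline{L}^{V})$ with $\overline{L}^{V}w^{2}=2w\,\overline{L}^{V}w+\langle\widehat{A}\nabla w,\nabla w\rangle$. For $w\in D(\overline{L}^{0,V})_{b}$ this follows from Lemma~\ref{existlem}(iv) applied with $\varphi(t)=t^{2}$ (here $\varphi\in C^{2}(\R)$, $\varphi(0)=0$), together with the identity $\overline{L}^{V}=\overline{L}^{0,V}+\langle\mathbf{B},\nabla\cdot\rangle$ on $D(\overline{L}^{0,V})_{b}$ established in the proof of Proposition~\ref{first1}(ii), which accounts for the drift term via $2w\langle\mathbf{B},\nabla w\rangle=\langle\mathbf{B},\nabla w^{2}\rangle$. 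For general $w\in D(\overline{L}^{V})_{b}$ I would repeat the approximation from the proof of Proposition~\ref{first1}(ii): choose $w_{j}\in D(L^{0,V})_{b}$ with $w_{j}\to w$ in the graph norm and a.e., and $\varphi\in C_{0}^{2}(\R)$ with $\varphi(0)=0$ and $\varphi(t)=t^{2}$ for $|t|\le\|w\|_{L^{\infty}(V)}+1$; then $\varphi(w_{j})\to w^{2}$ in $L^{1}(V,\widehat{\mu})$ while, discarding the energy of $w_{j}$ on the level set $\{\|w\|_{L^{\infty}(V)}+1\le|w_{j}|\le\|w\|_{L^{\infty}(V)}+2\}$ via \eqref{imlemce} and using Lebesgue's theorem, $\overline{L}^{V}\varphi(w_{j})=\varphi'(w_{j})L^{V}w_{j}+\tfrac12\varphi''(w_{j})\langle\widehat{A}\nabla w_{j},\nabla w_{j}\rangle\to 2w\,\overline{L}^{V}w+\langle\widehat{A}\nabla w,\nabla w\rangle$ in $L^{1}(V,\widehat{\mu})$; closedness of $\overline{L}^{V}$ concludes.

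\emph{Step 2 (reduction to resolvents of bounded functions).} For $u\in D(\overline{L})_{b}$ put $u_{m}:=m\overline{G}_{m}u=\overline{G}_{m}(mu)$, with $mu\in L^{1}(\R^{d},\widehat{\mu})_{b}$. By Theorem~\ref{mainijcie}(iii), $u_{m}\to u$ in $D(\mathcal{E}^{0})$, hence $\langle\widehat{A}\nabla u_{m},\nabla u_{m}\rangle\to\langle\widehat{A}\nabla u,\nabla u\rangle$ in $L^{1}(\R^{d},\widehat{\mu})$ by the Cauchy--Schwarz inequality for $\mathcal{E}^{0}$; also $u_{m}\to u$ in $L^{1}$, $\|u_{m}\|_{\infty}\le\|u\|_{\infty}$, and $\overline{L}u_{m}=m\overline{G}_{m}\overline{L}u\to\overline{L}u$ in $L^{1}$, so $u_{m}^{2}\to u^{2}$ and $2u_{m}\overline{L}u_{m}\to 2u\overline{L}u$ in $L^{1}$ (dominated convergence, using the uniform bound). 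By closedness of $\overline{L}$ on $L^{1}(\R^{d},\widehat{\mu})$ it therefore suffices to prove the statement for $u=\overline{G}_{\alpha}f$ with $f\in L^{1}(\R^{d},\widehat{\mu})_{b}$.

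\emph{Step 3 (limit passage; the main obstacle).} Let $(V_{n})$ be as in the construction of $(\overline{G}_{\alpha})$ and put $w_{n}:=\overline{G}^{V_{n}}_{\alpha}(f1_{V_{n}})\in D(\overline{L}^{V_{n}})_{b}\subset\widehat{H}^{1,2}_{0}(V_{n},\mu)_{b}$, so that $\overline{L}^{V_{n}}w_{n}=\alpha w_{n}-f1_{V_{n}}$. Step~1 and rearranging $(\alpha-\overline{L}^{V_{n}})w_{n}^{2}$ give
\[
w_{n}^{2}=\overline{G}^{V_{n}}_{\alpha}\big(2w_{n}f1_{V_{n}}-\alpha w_{n}^{2}-\langle\widehat{A}\nabla w_{n},\nabla w_{n}\rangle\big).
\]
Since $2w_{n}f1_{V_{n}}-\alpha w_{n}^{2}\to 2uf-\alpha u^{2}$ in $L^{1}$, Theorem~\ref{mainijcie}(ii) and the $L^{1}$-contractivity of the $\overline{G}^{V_{n}}_{\alpha}$ close the argument once it is known that $\langle\widehat{A}\nabla w_{n},\nabla w_{n}\rangle\to\langle\widehat{A}\nabla u,\nabla u\rangle$ in $L^{1}(\R^{d},\widehat{\mu})$: then $w_{n}^{2}\to\overline{G}_{\alpha}\big(2uf-\alpha u^{2}-\langle\widehat{A}\nabla u,\nabla u\rangle\big)$ while $w_{n}^{2}\to u^{2}$, so $u^{2}\in D(\overline{L})$ and, with $f=\alpha u-\overline{L}u$, $\overline{L}u^{2}=2u\overline{L}u+\langle\widehat{A}\nabla u,\nabla u\rangle$. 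This $L^{1}$-convergence of the quadratic term is the crux. By \eqref{weakcomp}, $w_{n}\rightharpoonup u$ weakly in $D(\mathcal{E}^{0})$, hence $\nabla w_{n}\rightharpoonup\nabla u$ weakly in $L^{2}$ on each bounded set (closedness of the weak gradient and local ellipticity), and local elliptic regularity excludes concentration; so the claim amounts to absence of energy escaping to infinity, i.e. to strong convergence $w_{n}\to u$ in $D(\mathcal{E}^{0})$, i.e. to $\mathcal{E}^{0}_{\alpha}(w_{n},w_{n})\to\mathcal{E}^{0}_{\alpha}(u,u)$. Testing \eqref{stamid} for $w_{n}$ against $w_{n}$, with $\int\langle\mathbf{B},\nabla w_{n}\rangle w_{n}\,d\widehat{\mu}=\tfrac12\int\langle\mathbf{B},\nabla w_{n}^{2}\rangle\,d\widehat{\mu}=0$ by \eqref{divfreeext} (as $w_{n}^{2}\in\widehat{H}^{1,2}_{0}(\R^{d},\mu)_{0,b}$), gives $\mathcal{E}^{0}_{\alpha}(w_{n},w_{n})=\int f w_{n}1_{V_{n}}\,d\widehat{\mu}\to\int f u\,d\widehat{\mu}$; combined with $\mathcal{E}^{0}_{\alpha}(u,u)\le\int f u\,d\widehat{\mu}$ from \eqref{weakineqsom}, it remains to prove the equality $\mathcal{E}^{0}_{\alpha}(\overline{G}_{\alpha}f,\overline{G}_{\alpha}f)=\int f\,\overline{G}_{\alpha}f\,d\widehat{\mu}$ for $f\in L^{1}(\R^{d},\widehat{\mu})_{b}$. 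Testing \eqref{stampale} against $w_{n}$ (with $\mathcal{E}^{0}_{\alpha}(u,w_{n})\to\mathcal{E}^{0}_{\alpha}(u,u)$) reduces this to $\int\langle\mathbf{B},\nabla u\rangle w_{n}\,d\widehat{\mu}\to 0$; I would rewrite this integral as $-\int\langle\mathbf{B},\nabla w_{n}\rangle u\,d\widehat{\mu}$ (permissible because $w_{n}$ has compact support, after truncating $u$ by $\chi_{k}\equiv1$ near $\overline{V}_{n}$ and using Lemma~\ref{applempn}) and then combine the antisymmetry \eqref{divfreeext} of the divergence-free drift with the parallel identities for the dual resolvent $\overline{G}'_{\alpha}$ of Remark~\ref{resestdual}. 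Making the drift contributions at spatial infinity cancel is the delicate point of the whole proof.
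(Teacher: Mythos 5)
Your Steps 1 and 2 are fine and run parallel to the paper (Step 2 is essentially the paper's own reduction via $\alpha\overline{G}_{\alpha}u$), but Step 3 contains a genuine gap at exactly the point you flag. Your argument stands or falls with the strong convergence $\overline{G}^{V_n}_{\alpha}(f1_{V_n})\to \overline{G}_{\alpha}f$ in $D(\mathcal{E}^0)$, i.e.\ with the energy \emph{identity} $\mathcal{E}^0_{\alpha}(\overline{G}_{\alpha}f,\overline{G}_{\alpha}f)=\int_{\R^d} f\,\overline{G}_{\alpha}f\,d\widehat{\mu}$, and this is neither established by you nor available from the framework: the construction under {\bf (A)} yields only the inequality \eqref{weakineqsom}, precisely because $u=\overline{G}_{\alpha}f$ has no compact support, so the cancellation $\int\langle\mathbf{B},\nabla v\rangle v\,d\widehat{\mu}=0$ from \eqref{divfreeext} cannot be applied with $v=u$; the boundary terms one picks up from cutoffs, such as $\int\langle\mathbf{B},\nabla\chi_k\rangle u^2\,d\widehat{\mu}$, are not controllable since {\bf (A)} imposes no growth conditions on $\mathbf{B}$ or $\widehat{A}$ at infinity (control of such terms is exactly what the conservativeness/uniqueness conditions of Sections 4--5 provide, and the theorem is supposed to hold without them). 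Your concluding sentence ("combine the antisymmetry with the parallel identities for the dual resolvent \ldots making the drift contributions at infinity cancel") is a hope rather than a proof: after rewriting $\int\langle\mathbf{B},\nabla u\rangle w_n\,d\widehat{\mu}=-\int\langle\mathbf{B},\nabla w_n\rangle u\,d\widehat{\mu}$ you cannot test the equation for $w_n$ against $u$, because $u|_{V_n}\notin\widehat{H}^{1,2}_0(V_n,\mu)$, and no identity in Remark~\ref{resestdual} produces the needed cancellation. (The side remark that weak gradient convergence plus "local elliptic regularity" excludes concentration is also unjustified for merely measurable $A$, but your argument does not ultimately rely on it.)

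The paper avoids this obstacle by never requiring $L^1(\R^d,\widehat{\mu})$-convergence of the full carr\'e du champ $\langle\widehat{A}\nabla w_n,\nabla w_n\rangle$. It proves the identity in the dual weak form $\int_{\R^d}\overline{L}'\,\overline{G}'_1h\cdot u^2\,d\widehat{\mu}=\int_{\R^d}g\,\overline{G}'_1h\,d\widehat{\mu}$ for all $h\in L^1(\R^d,\widehat{\mu})_b$ and then recovers $u^2\in D(\overline{L})$ from the adjoint relation of Remark~\ref{resestdual} (i.e.\ $u^2=\overline{G}_1(u^2-g)$), not from closedness of $\overline{L}$ along the exhaustion. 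The decisive trick is the choice of test functions $v_n=\overline{G}_1'^{,U_n}h$ with $h\ge 0$: they satisfy $0\le v_n\le v=\overline{G}'_1h$ and converge a.e., so the mixed term $\int\langle\widehat{A}\nabla u_n,\nabla u\rangle v_n\,d\widehat{\mu}$ converges (see \eqref{weakconvlen}--\eqref{limitcje}) using only weak $D(\mathcal{E}^0)$-convergence of $u_n$, the uniform energy bound, Cauchy--Schwarz and dominated convergence; no energy identity at infinity is needed. To repair your route you would either have to prove equality in \eqref{weakineqsom} (which requires additional hypotheses and would weaken the theorem) or switch to a dual-testing argument of this type.
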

\begin{proof}
Let $u \in D(\overline{L})_b$. In order to show that $D(\overline{L})_b$ is an algebra, since $D(\overline{L})_b$ is a linear space, it suffices to show $u^2 \in D(\overline{L})_b$. Let $(\overline{L}', D(\overline{L}'))$, $(\overline{G}'_{\alpha} )_{\alpha>0}$ be as in Remark \ref{resestdual} and set $g:=2u \overline{L}u + \langle \widehat{A} \nabla u, \nabla u \rangle$. In order to show $u^2 \in D(\overline{L})_b$ and $\overline{L}u^2=g$, it suffices to show the following:
\begin{equation} \label{sufficeoce}
\int_{\R^d} (\overline{L}' \, \overline{G}'_1 h) \, u^2 d \widehat{\mu}  = \int_{\R^d} g\, \overline{G}'_1 h \,d \widehat{\mu}, \;\; \text{ for all } h \in L^1(\R^d, \widehat{\mu})_b.
\end{equation}
Indeed, if we can show \eqref{sufficeoce},
then 
\begin{eqnarray*}
\int_{\R^d} \overline{G}_1(u^2-g) \, h d \widehat{\mu}  &=& \int_{\R^d} (u^2-g) \overline{G}'_{1} h\, d \widehat{\mu} \underset{\eqref{sufficeoce}}{=} \int_{\R^d} u^2  (  \overline{G}'_{1} h - \overline{L}' \,\overline{G}_{1}' h  ) d \widehat{\mu}\\
&=& \int_{\R^d} u^2 h \,d \widehat{\mu}, \;\; \text{ for all } \; h \in L^1(\R^d,  \widehat{\mu})_b,
\end{eqnarray*}
hence $u^2  = \overline{G}_1(u^2-g) \in D(\overline{L})_b$  and $\overline{L}u^2= (1-\overline{L}) \overline{G}_1(g-u^2)- \overline{G}_1(g-u^2) = g-u^2+u^2=g$, as desired. \\
To prove \eqref{sufficeoce}, we proceed in two steps.\\
{\bf Step 1: } First assume $u = \overline{G}_1 f$ for some $f \in L^1(\R^d, \widehat{\mu})_b$. Fix $v =\overline{G}'_1 h$ for some $h \in L^1(\R^d, \widehat{\mu})_b$ with $h \geq 0$. Let $(U_n)_{n \geq 1}$ be as in Theorem \ref{mainijcie}(ii) and $u_n:= \overline{G}^{U_n}_1 f$, \;$v_n:= \overline{G}'^{, U_n}_{1} h$. 
Note that $u, v_n \in D(\mathcal{E}^0)$ and $u u_n \in D(\mathcal{E}^0)$ by Theorem \ref{mainijcie}(iii) and \cite[I. Corollary 4.15]{MR}.
Then by Proposition \ref{first1} and Theorem \ref{mainijcie},
\begin{eqnarray}
&&\int_{\R^d} (\overline{L}'^{, U_n} v_n) \, u u_n d \widehat{\mu} 
= - \mathcal{E}^0(v_n,  u u_n) - \int_{\R^d} \langle \mathbf{B}, \nabla v_n \rangle u u_n d \widehat{\mu} \nonumber \\
&=& - \frac12 \int_{\R^d} \langle \widehat{A} \nabla v_n, \nabla u \rangle u_n  d\mu- \frac12 \int_{\R^d} \langle \widehat{A} \nabla v_n, \nabla u_n \rangle u  d \widehat{\mu}  + \int_{\R^d} \langle \mathbf{B}, \nabla (u u_n) \rangle v_n d \widehat{\mu}  \nonumber \\
&=& - \frac12 \int_{\R^d} \langle \widehat{A} \nabla (v_n u_n), \nabla u    \rangle d \widehat{\mu} + \frac12 \int_{\R^d} \langle \widehat{A} \nabla u_n,  \nabla u \rangle v_n d \widehat{\mu} - \frac12\int_{\R^d} \langle \widehat{A} \nabla v_n, \nabla u_n \rangle u  d \widehat{\mu} \nonumber \\
&& \qquad +\int_{\R^d} \langle \mathbf{B}, \nabla u \rangle v_n u_n \,d \widehat{\mu}+ \int_{\R^d} \langle \mathbf{B}, \nabla u_n \rangle v_n u d \widehat{\mu} \nonumber \\
&=& - \frac12 \int_{\R^d} \langle \widehat{A} \nabla u, \nabla (v_n u_n)  \rangle d \widehat{\mu} +\int_{\R^d} \langle \mathbf{B}, \nabla u \rangle v_n u_n \,d \widehat{\mu}  + \frac12 \int_{\R^d} \langle \widehat{A} \nabla u_n, \nabla u \rangle v_n  d \widehat{\mu} \nonumber \\
&& \qquad  - \frac12 \int_{\R^d} \langle \widehat{A} \nabla u_n, \nabla (v_n u)\rangle d \widehat{\mu}+ \int_{\R^d} \langle \mathbf{B}, \nabla u_n \rangle v_n u d \widehat{\mu} + \frac12 \int_{\R^d} \langle \widehat{A} \nabla u_n, \nabla u \rangle v_n d \widehat{\mu} \nonumber \\
&=& \int_{\R^d} \overline{L} u \cdot v_n u_n d \widehat{\mu} + \int_{\R^d} \overline{L}^{U_n} u_n \cdot v_n u d \widehat{\mu} + \int_{\R^d} \langle \widehat{A} \nabla u_n, \nabla u \rangle v_n d \widehat{\mu}. \label{intepce}
\end{eqnarray}
Observe that
\begin{eqnarray}
&&  \big |\int_{\R^d} \langle \widehat{A} \nabla u, \nabla u   \rangle v d \widehat{\mu} - \int_{\R^d} \langle \widehat{A} \nabla u_n, \nabla u \rangle v_n d \widehat{\mu} \big | \nonumber \\
&& \leq \underbrace{\big | \int_{\R^d} \langle \widehat{A} \nabla (u-u_n), \nabla u \rangle v   d \widehat{\mu}  \big|}_{=:I_n} + \underbrace{\big| \int_{\R^d}  \langle \widehat{A} \nabla u_n, \nabla u    \rangle \, (v-v_n) \, d \widehat{\mu} \big| }_{=:J_n}. \label{weakconvlen} 
\end{eqnarray}
Since $\lim_{n \rightarrow \infty} u_n = u$ \, weakly in $D(\mathcal{E}^0)$ and $v$ is bounded on $\R^d$, we have $\lim_{n \rightarrow \infty} I_n =0$.
Note that $0 \leq v_n = \overline{G}'^{, \,U_n}_1 h \leq \overline{G}'_1 h=v$,\; $\sup_{n \geq 1}\mathcal{E}^0(u_n, u_n)<\infty$,\; $|v_n| \leq |v| \in L^{\infty}(\R^d)$ and 
$$
\lim_{n \rightarrow \infty}u_n = u \text{ a.e. on $\R^d$,  }
$$ 
hence we obtain from the Cauchy–Schwarz inequality,
\begin{eqnarray*}
J_n&&\leq   \big(  \int_{\R^d}  \langle \widehat{A} \nabla u_n, \nabla u_n    \rangle \, |v-v_n| \, d \widehat{\mu} \big)^{1/2} \big(  \int_{\R^d}  \langle \widehat{A} \nabla u, \nabla u    \rangle \, |v-v_n| \, d \widehat{\mu} \big)^{1/2} \\
&&\leq \sqrt{2}\|v\|_{L^{\infty}(\R^d)}^{1/2} \sup_{n \geq 1}\mathcal{E}^0(u_n, u_n)^{1/2} \big(  \int_{\R^d}  \langle \widehat{A} \nabla u, \nabla u    \rangle \, (v-v_n) \, d \widehat{\mu} \big)^{1/2} \\
&&\;\; \longrightarrow 0 \quad \text{ as } n \rightarrow \infty,
\end{eqnarray*}
where for the latter convergence to zero, which holds by Lebesgue's Theorem for which we use
$$
\big|\langle \widehat{A} \nabla u, \nabla u    \rangle \, (v-v_n)  \big| \leq 2\|v\|_{L^{\infty}(\R^d)}  \langle \widehat{A} \nabla u, \nabla u    \rangle \in L^1(\R^d, \widehat{\mu}),  \; \text{\,  a.e. on $\R^d$  }
$$
and
$$
\lim_{n \rightarrow \infty}  \langle \widehat{A} \nabla u, \nabla u    \rangle \, (v-v_n) = 0, \;\; \text{ a.e. on $\R^d$}.
$$
Therefore it follows by \eqref{weakconvlen} that
\begin{eqnarray} \label{limitcje}
\lim_{n \rightarrow \infty} \int_{\R^d} \langle \widehat{A} \nabla u_n, \nabla u \rangle v_n d \widehat{\mu} =\int_{\R^d} \langle \widehat{A} \nabla u, \nabla u \rangle v d \widehat{\mu}.
\end{eqnarray}
Using Lebesgue's Theorem 
\begin{eqnarray}
\int_{\R^d} \overline{L}' v\cdot u^2 d \widehat{\mu}  &= &\int_{\R^d} \big( \overline{G}'_1 h - h\big) u^2d \widehat{\mu} = \lim_{n \rightarrow \infty} \int_{\R^d} \big(\overline{G}'^{,  U_n}_1 h - h \big) u u_n \,d \widehat{\mu} = \lim_{n \rightarrow \infty} \int_{\R^d} (\overline{L}'^{,U_n} v_n)  u u_n d \widehat{\mu} \nonumber  \\
&\underset{\eqref{intepce}}{=}&\lim_{n \rightarrow \infty} \int_{\R^d} \overline{L} u \cdot v_n u_n d \widehat{\mu} + \int_{\R^d} (\overline{G}_1^{U_n} f -f) \cdot v_n u d \widehat{\mu} +\int_{\R^d} \langle \widehat{A} \nabla u_n, \nabla u \rangle v_n d \widehat{\mu}  \nonumber \\
&\underset{\eqref{limitcje}}{=}&  \int_{\R^d} \overline{L} u \cdot v u d \widehat{\mu} + \int_{\R^d} \overline{L}u\cdot vu d \widehat{\mu} + \int_{\R^d} \langle \widehat{A} \nabla u, \nabla u \rangle v d \widehat{\mu}  =\int_{\R^d} g v d \widehat{\mu}. \label{gvidendt}
\end{eqnarray}
In the case of general $h \in L^1(\R^d, \widehat{\mu})_b$, we also obtain \eqref{gvidendt} using $h=h^+-h^-$ and linearity.\\
\centerline{}
{\bf Step 2: } Let $ u \in D(\overline{L})_b$ be arbitrary. Set
$$
g_{\alpha}:= 2(\alpha \overline{G}_{\alpha} u) \overline{L} (\alpha \overline{G}_{\alpha} u) +  \langle \widehat{A} \nabla \alpha  \overline{G}_{\alpha} u, \nabla \alpha \overline{G}_{\alpha} u \rangle, \quad \alpha>0.
$$
By Theorem \ref{mainijcie}(iii), $\lim_{\alpha\to \infty}\alpha \overline{G}_{\alpha}u=u$ in $D(\mathcal{E}^0)$.
Moreover, since $\|\alpha G_{\alpha} u\|_{L^{\infty}(\mathbb{R}^d)} \leq \| u \|_{L^{\infty}(\mathbb{R}^d)}$ and $\lim_{\alpha \rightarrow \infty} \overline{L} (\alpha \overline{G}_{\alpha} u) = \lim_{\alpha \rightarrow \infty} \alpha \overline{G}_{\alpha} \overline{L}u = \overline{L}u$ in $L^1(\mathbb{R}^d, \widehat{\mu})$, it follows from Lebesgue's theorem that
$$
\lim_{\alpha \rightarrow \infty }2(\alpha \overline{G}_{\alpha} u) \overline{L} (\alpha \overline{G}_{\alpha} u) =2u \overline{L}u \quad \text{ in $L^1(\mathbb{R}^d, \widehat{\mu})$}.
$$
Therefore, we get $\lim_{\alpha \rightarrow \infty} g_{\alpha} = g$ \; in $L^1(\R^d, \widehat{\mu})$. Observe that by the resolvent equation
\begin{eqnarray*}
\overline{G}_{\alpha} u =\overline{G}_1\big( (1-\alpha) \overline{G}_{\alpha} u + u  \big)
\end{eqnarray*}
and  $(1-\alpha) \overline{G}_{\alpha} u +  u \in L^1(\R^d, \widehat{\mu})_b$. Replacing $u$ in \eqref{gvidendt} with $\alpha \overline{G}_{\alpha} u$
\begin{eqnarray*}
\int_{\R^d} \overline{L}'\, v\, \big(\alpha \overline{G}_{\alpha}u \big)^2 d \widehat{\mu}=\int_{\R^d} g_{\alpha} v d \widehat{\mu}.
\end{eqnarray*}
Letting $\alpha \rightarrow \infty$, we finally obtain by Lebesgue's Theorem
$$
\int_{\R^d} \overline{L}' v\cdot u^2 d \widehat{\mu}=\int_{\R^d} g v d \widehat{\mu},
$$
which concludes the proof.
\end{proof}
\centerline{}
\noindent
By Theorem \ref{mainijcie} there exists a closed extension $(\overline{L}, D(\overline{L}))$ of
$$
Lf = L^0 f + \langle \mathbf{B}, \nabla f \rangle, \;\quad  f \in D(L^0)_{0,b}
$$
on $L^1(\R^d, \widehat{\mu})$ which generates a sub-Markovian $C_0$-semigroup of contractions $(\overline{T}_t)_{t>0}$ on $L^1(\R^d, \widehat{\mu})$.  Restricting $(\overline{T}_t)_{t> 0}$ to $L^1(\R^d, \widehat{\mu})_b$, it follows by Riesz-Thorin interpolation that $(\overline{T}_t)_{t> 0}$ can be extended to a sub-Markovian $C_0$-semigroup of contractions $(T_t)_{t>0}$ on each $L^r(\R^d, \widehat{\mu})$, $r\in [1,\infty)$. Denote by $(L_r, D(L_r))$, $r \in [1, \infty)$  the corresponding closed generator with graph norm
$$
\|f\|_{D(L_r)}:=\|f\|_{L^r(\R^d, \widehat{\mu})}+ \|L_r f\|_{L^r(\R^d, \widehat{\mu})},
$$
and by $(G_{\alpha})_{\alpha>0}$ the corresponding strongly continuous sub-Markovian resolvent of contractions on $L^r(\mathbb{R}^d, \widehat{\mu})$. Also 
$(\overline{T}_t)_{t>0}$ and $(\overline{G}_{\alpha})_{\alpha>0}$ restricted to $L^1(\mathbb{R}^d, \widehat{\mu})_b$ extend to a sub-Markovian semigroup and a resolvent on $L^{\infty}(\mathbb{R}^d)$ which are denoted again by $(T_t)_{t>0}$ and $(G_{\alpha})_{\alpha>0}$, respectively.
\\
For $f \in C_0^{\infty}(\R^d)$, we have
\begin{equation}\label{definition of L}
Lf = L^0 f + \langle \mathbf{B}, \nabla f \rangle=\frac12 \text{trace}(\widehat{A} \nabla ^2 f) + \langle \beta^{\rho, A, \psi}+\mathbf{B}, \nabla f \rangle.
\end{equation}
Define
\begin{equation}\label{definition of L'}
L' f := L^0 f- \langle \mathbf{B}, \nabla f \rangle= \frac12\text{trace}(\widehat{A}\nabla^2 f)+\langle \beta^{\rho, A, \psi}-\mathbf{B}, \nabla f \rangle, \qquad f \in C_0^{\infty}(\R^d).
\end{equation}
We see that $L$ and $L'$ have the same structural properties, i.e. they are given as the sum of a symmetric second-order elliptic partial differential operator $L^0$ and a divergence-free first-order perturbation\,$\langle \mathbf{B}, \nabla \cdot \,\rangle$ \, or $\langle -\mathbf{B}, \nabla \cdot \, \rangle$, respectively, with same integrability condition\,$\psi \mathbf{B} \in L_{loc}^2(\R^d, \R^d,\mu)$. Therefore all that will be derived below and what has been derived above for $L$ holds analogously for $L'$ (see for instance  Remarks \ref{remimpco}, \ref{resestdual}). Let $(\overline{L}', D(\overline{L}'))$ be the closed extension of $(L', D(L^0)_{0,b})$ which generates a sub-Markovian $C_0$-semigroup and a resolvent of contractions on $L^1(\mathbb{R}^d, \widehat{\mu})$, denoted by $(\overline{T}'_t)_{t>0}$ and $(\overline{G}'_{\alpha})_{\alpha>0}$, respectively (cf. Theorem \ref{mainijcie} and Remark \ref{resestdual}). Analogously to the above $(\overline{G}'_{\alpha})_{\alpha>0}$ and $(\overline{T}'_t)_{t>0}$ restricted to $L^1(\mathbb{R}^d, \widehat{\mu})_b$ extend to a strongly continuous sub-Markovian resolvent and to a sub-Markovian $C_0$-semigroup of contractions on $L^r(\mathbb{R}^d, \widehat{\mu})$, $r \in [1, \infty)$, denoted by $(G'_{\alpha})_{\alpha>0}$ and $(T'_t)_{t>0}$, respectively. Denote by $(L'_r, D(L'_r))$ the generator corresponding to $(G'_{\alpha})_{\alpha>0}$ on $L^r(\mathbb{R}^d, \widehat{\mu})$. We obtain a corresponding bilinear form with domain 
$\big(D(L_2) \times L^2(\R^d, \widehat{\mu})\big) \cup \big(L^2(\R^d,\widehat{\mu}) \times D(L'_2)\big)$ by
\begin{equation} \label{fwpeokce}
{\mathcal{E}}(f,g):= \left\{ \begin{array}{r@{\quad\quad}l}
  -\int_{\R^d} L_2 f \cdot g \,d \widehat{\mu} & \mbox{ for}\ f\in D(L_2), \ g\in L^2(\R^d, \widehat{\mu}),  \\ 
            -\int_{\R^d} f\cdot L'_2 g \,d \widehat{\mu}  & \mbox{ for}\ f\in L^2(\R^d, \widehat{\mu}), \ g\in D(L'_2). \end{array} \right .
\end{equation}
 $\mathcal{E}$ with domain $\mathcal{F}:=D(L_2)$ is called the {\it generalized Dirichlet form associated with} $(L_2,D(L_2))$ (\cite[I.4.9(ii)]{St99diss}).\\
\subsection{The Hunt process associated to the generalized Dirichlet form $\mathcal{E}$}
In order to construct a Hunt process with informal generator $L$, we need to introduce some analytic potential theory related to the generalized Dirichlet form $\mathcal{E}$  with domain $\mathcal{F}=D(L_2)$.  Let us first introduce the notion of $1$-reduced function and some notation from \cite{St99diss, Tr5}. \\
$f\in L^2(\R^d, \widehat{\mu})$ is called $1$-excessive if $\beta G_{\beta+1}f\le f$ for all $\beta\ge 0$. Choosing $\beta=0$, we see that $1$-excessive functions are positive. For $V\subset \R^d$, $V$ open and  $f\in L^2(\R^d, \widehat{\mu})$ such that there exists $u\in D(L_2)$ with $u\ge f 1_V$, there exists a unique $1$-excessive function $f_V$ such that $f 1_V\le f_V\le v$ for any  $1$-excessive function $v$ with $v\ge f 1_V$ (\cite[III. Proposition 1.7]{St99diss}). $f_V$ is called the $1$-reduced function of $f$ on $V$. One easily deduces that for $f,g$ for which $f_V,g_V$  as described above exist, one has that $f\le g$ implies $f_V\le g_V$. Moreover, if $f$ is $1$-excessive, then $f_V=f$ on $V$. Furthermore, $G_1 h$ is $1$-excessive if $h\in L^2(\R^d,\widehat{\mu})$, $h\ge 0$.\\
Now let us introduce a space of functions for which we will consider $1$-reduced functions and which we will use crucially to construct the above mentioned process:
\begin{eqnarray}\label{defG}
{\cal{G}}:=\{u\in D(\overline{L})_b\,:\, \exists u_{\text{dom}}\in L^1(\R^d,\widehat{\mu})_b,\ | u|\le G_1 u_{\text{dom}}  \}\subset L^{2}(\R^d,\widehat{\mu})_b.
\end{eqnarray}
The function $u_{\text{dom}}$ above is not unique, so when we use this kind of notation below, we mean by it an arbitrary function with the given property. Obviously, ${\cal{G}}$ is a linear space and in order to show that ${\cal{G}}$ is an algebra of functions it is hence enough to show that with $u\in{\cal{G}}$ we also have $u^2\in{\cal{G}}$. But this is clear since by the sub-Markov property and linearity of $G_1$,
\[
|u^2|\le \big (G_1 u_{\text{dom}}\big )^2\le G_1\big ( \|u_{\text{dom}}\|_{L^{\infty}(\R^d)} u_{\text{dom}}\big ),
\]
so that we can choose $\|u_{\text{dom}}\|_{L^{\infty}(\R^d)} \cdot u_{\text{dom}}$ as $(u^2)_{\text{dom}}$, and $u^2\in D(\overline{L})_b$ by Theorem \ref{pojjjde}. \\
For $f\in {\cal{G}}$ and arbitrary $V\subset \R^d$, $V$ open, $f_V\in  L^{2}(\R^d,\widehat{\mu})$ exists, since $G_1(f_{\text{dom}})\in D(L_2)$ and $G_1(f_{\text{dom}})\ge |f|\ge f 1_V$. Let $\alpha >0$ be arbitrary. By \cite[III. Proposition 1.6]{St99diss}, let $f_V^{\alpha}\in D(L_2)$ be the unique solution to
\[
(1-L_2)f_V^{\alpha}=\alpha\big ( f_V^{\alpha} -f 1_V\big )^- \quad \text{in } L^2(\R^d,\widehat{\mu}).
\]
Then $f_V^{\alpha}=G_1(\alpha\big ( f_V^{\alpha} -f 1_V\big )^-)$ is $1$-excessive, hence $f_V^{\alpha}\ge 0$, and moreover $f_V^{\alpha}\nearrow$ as $\alpha\nearrow$,  and 
$\lim_{\alpha\to \infty}f_V^{\alpha}=f_V$ in $L^{2}(\R^d,\widehat{\mu})$ (cf. \cite[III. Proposition 1.7]{St99diss}) and its proof). In particular, we deduce
\[
0\le f_V^{\alpha}\le f_V\le \big ( G_1 f_{\text{dom}}  \big )_V\le G_1 \big ((f_{\text{dom}})^+\big )\le \|G_1 \big ((f_{\text{dom}})^+\big )\|_{L^{\infty}(\R^d)}\le const.
\]
In order to further exploit properties of the space  ${\cal{G}}$, we need the following lemma.
\begin{lemma}\label{SD3sqr}
(i) We have $D(L_2)\subset D(\mathcal{E}^0)$ and
\[
\mathcal{E}^0_1(f,f)\le \int_{\R^d}(1-L_2)f f\,d\widehat{\mu}, \quad \forall f\in D(L_2).
\]
In particular, since $G_1\big (L^2(\R^d,\widehat{\mu})_b\big )\subset G_1\big (L^2(\R^d,\widehat{\mu})\big )=D(L_2)$ densely and $C_0^{\infty}(\R^d)\subset D(L_2)$, we have that $G_1\big (L^2(\R^d,\widehat{\mu})_b\big)\subset D(\mathcal{E}^0)$ densely.\\
(ii) For $f\in {\cal{G}}$ (${\cal{G}}$ as in \eqref{defG}), we have $f_V\in D(\mathcal{E}^0)_b$, 
\begin{equation*} 
\mathcal{E}_{1}^{0}(f_V, g) - \int_{\R^d} \langle \mathbf{B}, \nabla f_V \rangle g \, d \widehat{\mu}=0, \quad \text{ for any $g \in D(\mathcal{E}^0)_{0,b}$ with $g=0$ on V},
\end{equation*}
and
\begin{equation*} 
\mathcal{E}_{1}^{0}(f_V, f_V)\le 2 \|(1-\overline{L})f\|_{L^{1}(\R^d,\widehat{\mu})}\big (  \|G_1 \big ((f_{\rm{dom}})^+\big )\|_{L^{\infty}(\R^d)} +2\|f\|_{L^{\infty}(\R^d)}\big ).
\end{equation*}\\
(iii) ${\cal{G}}$ (as in \eqref{defG})  satisfies condition {\bf SD3} of \cite{Tr5}. 
\end{lemma}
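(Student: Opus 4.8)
The plan is to prove the three parts in order, with (i) and (ii) laying the groundwork for (iii). For part (i), I would start from the inequality $\mathcal{E}^0(u,u)\le -\int_{\R^d}\overline{L}u\cdot u\,d\widehat{\mu}$ of Theorem \ref{mainijcie}(iii), which holds for $u\in D(\overline{L})_b$. To upgrade it to all $f\in D(L_2)$, first take $f\in D(L_2)_b$ and approximate: since $\overline{T}_t$ restricted to $L^1\cap L^2$ extends compatibly, $\alpha G_\alpha f\to f$ in $D(L_2)$ and, being bounded uniformly, $\alpha G_\alpha f\in D(\overline{L})_b$; apply Theorem \ref{mainijcie}(iii) to each $\alpha G_\alpha f$, note $\sup_\alpha\mathcal{E}^0_1(\alpha G_\alpha f,\alpha G_\alpha f)<\infty$ (bounded by $\int(1-L_2)(\alpha G_\alpha f)\cdot\alpha G_\alpha f\,d\widehat\mu$, which converges), extract a weak limit in $D(\mathcal{E}^0)$, and use lower semicontinuity of $\mathcal{E}^0_1(\cdot,\cdot)^{1/2}$ together with $L^2$-convergence to identify the limit as $f$ and pass the inequality to the limit. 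Then remove the boundedness restriction by truncating $f$ via normal contractions (which operate on $D(\mathcal{E}^0)$) combined with the fact that $G_1(L^2(\R^d,\widehat\mu)_b)$ is dense in $D(L_2)$; the last sentence of (i) then follows immediately from $C_0^\infty(\R^d)\subset D(L_2)\subset D(\mathcal{E}^0)$ and the density of $G_1(L^2_b)$ in $D(L_2)$.

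For part (ii), fix $f\in{\cal{G}}$ and recall from the discussion preceding the lemma that $f_V^\alpha=G_1(\alpha(f_V^\alpha-f1_V)^-)\in D(L_2)$, that $0\le f_V^\alpha\le f_V\le\|G_1((f_{\mathrm{dom}})^+)\|_{L^\infty}$ uniformly, and that $f_V^\alpha\nearrow f_V$ in $L^2(\R^d,\widehat\mu)$. Since $(1-L_2)f_V^\alpha=\alpha(f_V^\alpha-f1_V)^-\in L^2_b$, part (i) gives $\mathcal{E}^0_1(f_V^\alpha,f_V^\alpha)\le\int_{\R^d}(1-L_2)f_V^\alpha\cdot f_V^\alpha\,d\widehat\mu$. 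The key is to bound the right-hand side uniformly in $\alpha$: write $(1-L_2)f_V^\alpha=\alpha(f_V^\alpha-f1_V)^-$, which is supported on $\{f_V^\alpha<f1_V\}\subset\{f>0\}\cap V$, and there $f_V^\alpha\le f$; using $(1-\overline{L})f\ge$ controls coming from $\|(1-\overline L)f\|_{L^1}$ and the $L^\infty$-bounds on $f_V^\alpha$ and $f$ one arrives, after an integration-by-parts/comparison argument against $f1_V$ (exploiting that $f=\overline G_1((1-\overline L)f)$ and $\int\alpha(f_V^\alpha-f1_V)^-\cdot(f_V^\alpha-f)\,d\widehat\mu\le 0$), at the stated bound $\mathcal{E}^0_1(f_V,f_V)\le 2\|(1-\overline L)f\|_{L^1(\R^d,\widehat\mu)}(\|G_1((f_{\mathrm{dom}})^+)\|_{L^\infty(\R^d)}+2\|f\|_{L^\infty(\R^d)})$; then extract a weak $D(\mathcal{E}^0)$-limit of $(f_V^\alpha)$ as in (i) to transfer the bound to $f_V$ itself and conclude $f_V\in D(\mathcal{E}^0)_b$. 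For the variational identity, note that for $g\in D(\mathcal{E}^0)_{0,b}$ vanishing on $V$ one has $\int(f_V^\alpha-f1_V)^-g\,d\widehat\mu=0$ (the support condition again), so $\int(1-L_2)f_V^\alpha\cdot g\,d\widehat\mu=0$; combine with part (i)'s identity $\mathcal{E}^0_1(f_V^\alpha,g)-\int\langle\mathbf{B},\nabla f_V^\alpha\rangle g\,d\widehat\mu=\int(1-L_2)f_V^\alpha\cdot g\,d\widehat\mu$ (valid since $f_V^\alpha\in D(L_2)$, via Theorem \ref{mainijcie}(iii) and \eqref{stampale} applied to $\alpha G_\beta$-approximations), and let $\alpha\to\infty$ using the weak convergence established above.

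Part (iii) then amounts to checking the list of requirements bundled in condition {\bf SD3} of \cite{Tr5} for the space ${\cal{G}}$: that ${\cal{G}}$ is a dense subalgebra of $D(L_2)$ (equivalently $D(\mathcal{F})$) consisting of bounded functions, stable under the operations needed there (products, and $f\mapsto f_V$ landing in $D(\mathcal{E}^0)_b$ with a uniform energy estimate of the type just proved), together with the compatibility of the $1$-reduced functions with the generalized Dirichlet form structure. The algebra property of ${\cal{G}}$ and $u^2\in D(\overline L)_b$ were already verified in the text preceding the lemma via Theorem \ref{pojjjde}; density of ${\cal{G}}$ follows because $G_1(L^2(\R^d,\widehat\mu)_b)\subset{\cal{G}}$ (taking $u_{\mathrm{dom}}=|h|$ for $u=G_1h$) and this set is dense in $D(L_2)$ by part (i); and the remaining analytic input — that $f_V\in D(\mathcal{E}^0)_b$ with the energy bound and the variational characterization — is exactly parts (i) and (ii). I expect the main obstacle to be part (ii): getting the \emph{uniform-in-$\alpha$} energy estimate for $f_V^\alpha$ with the precise constants stated, since one must carefully track the support of $(f_V^\alpha-f1_V)^-$, use the $1$-excessivity to convert $L^2$-energy control into the $L^1$-norm of $(1-\overline L)f$, and handle the non-symmetric drift term $\langle\mathbf{B},\nabla\cdot\rangle$ (which drops out against $f_V$ on the relevant set because $\mathbf{B}$ is $\widehat\mu$-divergence-free, cf. \eqref{divfreeext}); the other two parts are comparatively routine once this estimate is in hand.
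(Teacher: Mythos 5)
Your overall strategy is the paper's: derive (i) from Theorem \ref{mainijcie}(iii) (resp. \eqref{weakineqsom}) by approximation, Banach--Alaoglu and lower semicontinuity, feed (i) into a uniform-in-$\alpha$ energy bound for $f_V^\alpha$ to get (ii), and reduce (iii) to the algebra/density facts established before the lemma together with (ii). However, two steps as you wrote them do not go through. In (i), the claim that $\alpha G_\alpha f\in D(\overline L)_b$ for $f\in D(L_2)_b$ "being bounded uniformly" is false in general: boundedness does not give membership in the $L^1$-generator domain, and when $\widehat\mu$ is infinite a bounded $f\in D(L_2)$ need not be integrable, so neither is $G_\alpha f$ and Theorem \ref{mainijcie}(iii) does not apply to it. The subsequent removal of boundedness "by truncating $f$ via normal contractions" is circular (normal contractions operate on $D(\mathcal{E}^0)$, membership in which is what you are proving, and truncates need not stay in $D(L_2)$). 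The correct route, which is the paper's, is to pick $g_n\in L^1(\R^d,\widehat\mu)_b$ with $g_n\to(1-L_2)f$ in $L^2(\R^d,\widehat\mu)$, so that $G_1g_n\in D(\overline L)_b\cap D(L_2)$, apply Theorem \ref{mainijcie}(iii) to $G_1g_n$, and pass to the limit; density of $G_1(L^2(\R^d,\widehat\mu)_b)$ alone does not help, since $G_1h$ with $h\in L^2_b\setminus L^1$ is again not known to lie in $D(\overline L)_b$.

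In (ii) the decisive uniform estimate is precisely the part you leave as a black box, and the path you indicate stalls because of the non-symmetry. From $\mathcal{E}^0_1(f_V^\alpha,f_V^\alpha)\le\alpha\int(f_V^\alpha-f1_V)^-f_V^\alpha\,d\widehat\mu$ and the sign fact you quote, you are left with $\alpha\int(f_V^\alpha-f1_V)^-f\,d\widehat\mu=\int\alpha(f_V^\alpha-f1_V)^-\,\overline G_1\big((1-\overline L)f\big)\,d\widehat\mu$; to extract $\|(1-\overline L)f\|_{L^1(\R^d,\widehat\mu)}$ you would have to dualize onto the co-resolvent $G_1'$, and there is no uniform $L^\infty$-bound for $G_1'\big(\alpha(f_V^\alpha-f1_V)^-\big)$ (the available bound $f_V^\alpha=G_1\big(\alpha(f_V^\alpha-f1_V)^-\big)\le\|G_1((f_{\rm dom})^+)\|_{L^\infty(\R^d)}$ concerns $G_1$, not $G_1'$; only for $\mathbf{B}=0$ do they coincide). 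The paper avoids this by estimating the \emph{difference}: it bounds $\mathcal{E}^0_1(f_V^\alpha-f,f_V^\alpha-f)$ via the $nG_n$-approximations of $f_V^\alpha-f$ and (i), so that in the limit the right-hand side splits into $\alpha\int(f_V^\alpha-f1_V)^-(f_V^\alpha-f)\,d\widehat\mu\le 0$ plus $-\int(1-\overline L)f\,(f_V^\alpha-f)\,d\widehat\mu$, the latter being bounded by $\|(1-\overline L)f\|_{L^1(\R^d,\widehat\mu)}\big(\|G_1((f_{\rm dom})^+)\|_{L^\infty(\R^d)}+\|f\|_{L^\infty(\R^d)}\big)$ with no duality needed; the stated bound for $f_V$ then follows from $\mathcal{E}^0_1(f_V,f_V)\le\liminf_\alpha\big(2\mathcal{E}^0_1(f_V^\alpha-f,f_V^\alpha-f)+2\mathcal{E}^0_1(f,f)\big)$ together with (i) applied to $f$. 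Your treatment of the variational identity (support of $(f_V^\alpha-f1_V)^-$ inside $V$, so the pairing with $g$ vanishes, plus \eqref{stampale}) and of part (iii) does match the paper.
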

\begin{proof}
(i) This follows easily from  Theorem \ref{mainijcie}(iii) using the denseness of $G_1\big (L^{1}(\R^d,\widehat{\mu})_b\big )$ in $D(L_2)$. Indeed for $f\in D(L_2)$, we may choose $g_n\in L^{1}(\R^d,\widehat{\mu})_b$ such that $\lim_{n\to\infty}g_n=(1-L_2)f$ in $L^{2}(\R^d,\widehat{\mu})$, hence $\lim_{n\to\infty}G_1g_n=f$ in $L^{2}(\R^d,\widehat{\mu})$, replace $u$ by $G_1g_n$ in  Theorem \ref{mainijcie}(iii) and let $n\to\infty$.\\
(ii) Fix arbitrary $\alpha>0$. By (i) and  the preliminary facts about $f_V^{\alpha}$, it holds  $f_V^{\alpha}\in D(L_2)_b\subset D(\mathcal{E}^0)_b$, and $\sup_{n\ge1}\mathcal{E}_{1}^{0}(nG_n f_V^{\alpha},nG_n f_V^{\alpha})<\infty$, hence $\lim_{n\to \infty}nG_n f_V^{\alpha}=f_V^{\alpha}$ weakly in $D(\mathcal{E}^0)$.  By Theorem \ref{mainijcie}(iii), $f\in D(\overline{L})_b\subset D(\mathcal{E}^0)_b$ and by {\bf Step 2} in the proof of Theorem \ref{pojjjde}, $\lim_{n\to \infty}nG_n f=f$ weakly in $D(\mathcal{E}^0)$. Thus, using (i)
\begin{eqnarray*}
\mathcal{E}_{1}^{0}(f_V^{\alpha}-f, f_V^{\alpha}-f)&\le&\liminf_{n\to\infty}\mathcal{E}_{1}^{0}(nG_n(f_V^{\alpha}-f),nG_n( f_V^{\alpha}-f))\\
&\le&\liminf_{n\to\infty}\int_{\R^d}(1-L_2)nG_n(f_V^{\alpha}-f) \, nG_n(f_V^{\alpha}-f)\,d\widehat{\mu}\\
&=& \alpha\int_{\R^d}\big ( f_V^{\alpha} -f 1_V\big )^-\big ( f_V^{\alpha}-f\big )\,d\widehat{\mu}-\int_{\R^d}(1-\overline{L})f\, \big ( f_V^{\alpha}-f\big )\,d\widehat{\mu}\\
&\le&\|(1-\overline{L})f\|_{L^{1}(\R^d,\widehat{\mu})}\big(   \|G_1 \big ((f_{\rm{dom}})^+\big )\|_{L^{\infty}(\R^d)} +\|f\|_{L^{\infty}(\R^d)}\big ).
\end{eqnarray*}
It follows $\sup_{\alpha>0}\mathcal{E}_{1}^{0}(f_V^{\alpha}-f, f_V^{\alpha}-f)<\infty$. Thus $f_V\in D(\mathcal{E}^0)_b$ and $\lim_{\alpha\to\infty}f_V^{\alpha}=f_V$ weakly in $D(\mathcal{E}^0)$. Subsequently,
\begin{eqnarray*}
\mathcal{E}_{1}^{0}(f_V, f_V)&\le&\liminf_{\alpha\to\infty}\mathcal{E}_{1}^{0}(f_V^{\alpha},f_V^{\alpha})\\
&\le&\liminf_{\alpha\to\infty}2\mathcal{E}_{1}^{0}(f_V^{\alpha}-f,f_V^{\alpha}-f)+2\mathcal{E}_{1}^{0}(f,f)\\
&\le&2\|(1-\overline{L})f\|_{L^{1}(\R^d,\widehat{\mu})}\big(   \|G_1 \big ((f_{\rm{dom}})^+\big )\|_{L^{\infty}(\R^d)} +\|f\|_{L^{\infty}(\R^d)}\big )+2\int_{\R^d}(1-\overline{L})f\, f\, d\widehat{\mu}\\
&\le& 2 \|(1-\overline{L})f\|_{L^{1}(\R^d,\widehat{\mu})}\big (  \|G_1 \big ((f_{\rm{dom}})^+\big )\|_{L^{\infty}(\R^d)} +2\|f\|_{L^{\infty}(\R^d)}\big ).
\end{eqnarray*}
Finally, let $g$ be as in the statement of (ii). Then, using \eqref{stampale}
\[
\mathcal{E}_{1}^{0}(f_V, g) - \int_{\R^d} \langle \mathbf{B}, \nabla f_V \rangle g\, d \widehat{\mu}=\lim_{\alpha\to\infty}\Big (\mathcal{E}_{1}^{0}(f_V^{\alpha}, g) - \int_{\R^d} \langle \psi\mathbf{B}, \nabla f_V^{\alpha} \rangle g\, d \mu\Big )
\]
\[
=\lim_{\alpha\to\infty}\int_{\R^d}\big (1-L_2\big)f_V^{\alpha}g\, d \widehat{\mu}=\lim_{\alpha\to\infty}\alpha\int_{\R^d}\big ( f_V^{\alpha} -f 1_V\big )^-\, g \, d \widehat{\mu}=0.
\]
(iii) We already know that ${\cal{G}}$ is an algebra. Since ${\cal{G}}\cap D(L_2)\supset G_1\big (L^1(\R^d,\widehat{\mu})_b\big )$ and $G_1\big (L^1(\R^d,\widehat{\mu})_b\big)\subset G_1\big (L^2(\R^d,\widehat{\mu})\big))=D(L_2)$ densely, we obtain that ${\cal{G}}\cap D(L_2)$ is dense in  $D(L_2)$. Noting that for $u\in {\cal{G}}$, we also have that $u-\alpha G_\alpha u\in {\cal{G}}$, with $|u-\alpha G_\alpha u|\le G_1 (u_{\text{dom}}+\alpha G_\alpha u_{\text{dom}} )$, we thus get by (ii), 
\begin{eqnarray*}
&&\mathcal{E}_{1}^{0}((u-\alpha G_\alpha u)_{\R^d}, (u-\alpha G_\alpha u )_{\R^d})\\
&\le& 2 \|(1-\overline{L})(u-\alpha G_\alpha u)\|_{L^{1}(\R^d,\widehat{\mu})}\big (  \|G_1 \big ((u_{\text{dom}}+\alpha G_\alpha u_{\text{dom}} )^+\big )\|_{L^{\infty}(\R^d)} +2\|u-\alpha G_\alpha u\|_{L^{\infty}(\R^d)}\big )\\
&\le&  const.\|\alpha\overline{G}_\alpha(1-\overline{L})u-(1-\overline{L})u\|_{L^{1}(\R^d,\widehat{\mu})}\longrightarrow 0  \ \text{ as } \alpha\to\infty.
\end{eqnarray*}
Thus {\bf SD3} of \cite{Tr5} holds.
\end{proof}
\begin{defn}
An increasing sequence of closed  subsets $(F_k)_{k\ge 1}$ of $\R^d$ is called an $\mathcal{E}$-nest (resp. $\mathcal{E}^0$-nest), if $\lim_{k\to \infty}f_{F_k^c}=0$ in $L^{2}(\R^d,\widehat{\mu})$ for all $f\in D(L_2)$, $f$ $1$-excessive (resp.  $\bigcup_{k\ge 1}D(\mathcal{E}^0)_{F_k}\subset D(\mathcal{E}^0)$ densely, where $D(\mathcal{E}^0)_{F_k}:=\{v\in D(\mathcal{E}^0)\, :\, v=0 \text{ on } F_k\}$, $k\ge 1$).
\end{defn}
Now exactly as in \cite[Lemma 3.4]{St99}, we can show:
\begin{lemma}\label{e-nest/e0-nest}
An increasing sequence of closed  subsets $(F_k)_{k\ge 1}$ of $\R^d$ is an $\mathcal{E}$-nest, if and only if it is an 
$\mathcal{E}^0$-nest.
\end{lemma}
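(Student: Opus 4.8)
The plan is to reduce both notions of nest to statements about the $1$-reduced functions $f_{F_k^c}$ of the potentials $f=G_1h$ with $h\in L^1(\R^d,\widehat\mu)_b$, $h\ge 0$: such an $f$ lies in ${\cal G}$, is $1$-excessive, and satisfies $f_{F_k^c}=f$ on $F_k^c$ together with $f_{F_k^c}\le f$. For such $f$ and any open $V\subset\R^d$, Lemma \ref{SD3sqr}(ii) supplies the three facts I will keep using: $f_V\in D(\mathcal E^0)_b$; $f_V$ is $\mathcal E_1$-harmonic off $V$, i.e.
\[
\mathcal E^0_1(f_V,g)-\int_{\R^d}\langle\mathbf B,\nabla f_V\rangle g\,d\widehat\mu=0\qquad\text{for all }g\in D(\mathcal E^0)_{0,b}\text{ with }g=0\text{ on }V;
\]
and the energy bound $\mathcal E^0_1(f_V,f_V)\le C(f)$ with $C(f)$ \emph{independent of $V$}. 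Everything is applied with $V=F_k^c$, and for an $\mathcal E^0$-nest I read $D(\mathcal E^0)_{F_k}=\{v\in D(\mathcal E^0):v=0\text{ on }F_k^c\}$.

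First I would prove that an $\mathcal E$-nest is an $\mathcal E^0$-nest. Fix $f=G_1h$ as above. Since $f$ is $1$-excessive, $f_{F_k^c}=f$ on $F_k^c$, so $f-f_{F_k^c}\in D(\mathcal E^0)$ vanishes on $F_k^c$, i.e.\ $f-f_{F_k^c}\in D(\mathcal E^0)_{F_k}$. By hypothesis $f_{F_k^c}\to 0$ in $L^2(\R^d,\widehat\mu)$, and by the uniform bound $\mathcal E^0_1(f_{F_k^c},f_{F_k^c})\le C(f)$ the sequence is bounded in the Hilbert space $D(\mathcal E^0)$; hence $f_{F_k^c}\to 0$ weakly in $D(\mathcal E^0)$ and $f-f_{F_k^c}\to f$ weakly in $D(\mathcal E^0)$. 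Thus $f$ lies in the weak, hence (the subspace being convex) also in the $\mathcal E^0_1$-strong, closure of $\bigcup_kD(\mathcal E^0)_{F_k}$. The linear span of all such $f$ equals $G_1\big(L^1(\R^d,\widehat\mu)_b\big)$, which is dense in $D(L_2)$ and therefore, by Lemma \ref{SD3sqr}(i), in $D(\mathcal E^0)$; hence $\bigcup_kD(\mathcal E^0)_{F_k}$ is dense in $D(\mathcal E^0)$, i.e.\ $(F_k)$ is an $\mathcal E^0$-nest.

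For the converse, let $(F_k)$ be an $\mathcal E^0$-nest; I would first establish $f_{F_k^c}\to 0$ in $L^2$ for $f=G_1h$ as above. The sequence $(f_{F_k^c})_k$ is $\ge 0$, decreasing in $k$ (monotonicity of reduced functions in the set), and dominated in $L^2$ by $G_1\big((f_{\mathrm{dom}})^+\big)$, so it converges $\widehat\mu$-a.e.\ and in $L^2$ to some $g_\infty\ge 0$; by the uniform energy bound $g_\infty\in D(\mathcal E^0)_b$ with $f_{F_k^c}\to g_\infty$ weakly in $D(\mathcal E^0)$. To see $g_\infty=0$, fix $g\in D(\mathcal E^0)_{0,b}$ with $g=0$ on $F_{k_0}^c$ for some $k_0$; for $k\ge k_0$ the harmonicity relation gives $\mathcal E^0_1(f_{F_k^c},g)=\int_{\R^d}\langle\mathbf B,\nabla f_{F_k^c}\rangle g\,d\widehat\mu$, and letting $k\to\infty$ — weak $D(\mathcal E^0)$-convergence on the left, and $\mathbf B g\in L^2(\R^d,\widehat\mu)^d$ on the right since $g$ is bounded with compact support and $\mathbf B\in L^2_{\mathrm{loc}}$ — yields
\[
\mathcal E^0_1(g_\infty,g)-\int_{\R^d}\langle\mathbf B,\nabla g_\infty\rangle g\,d\widehat\mu=0
\]
for all such $g$. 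These $g$ are dense in $D(\mathcal E^0)$ by the $\mathcal E^0$-nest hypothesis (after truncation and suitable cut-offs); approximating $g_\infty$ by them and using \eqref{divfreeext}, which gives $\int_{\R^d}\langle\mathbf B,\nabla g_\infty\rangle g_\infty\,d\widehat\mu=\tfrac12\int_{\R^d}\langle\mathbf B,\nabla g_\infty^2\rangle\,d\widehat\mu=0$, forces $\mathcal E^0_1(g_\infty,g_\infty)=0$, so $g_\infty=0$. Finally, for a general $1$-excessive $f\in D(L_2)$ I would write $f=G_1\bar h$ with $\bar h:=(1-L_2)f\ge 0$, set $\bar h_n:=(\bar h\wedge n)1_{B_n}\in L^1(\R^d,\widehat\mu)_b$ (so $G_1\bar h_n\in{\cal G}$ and $\bar h_n\uparrow\bar h$ in $L^2$), and combine the subadditivity $(\phi+\chi)_V\le\phi_V+\chi_V$ for $\phi,\chi\ge 0$ with $p_V\le p$ for $1$-excessive $p$ to get $f_{F_k^c}\le(G_1\bar h_n)_{F_k^c}+(f-G_1\bar h_n)$; letting $k\to\infty$ (using the case already treated for $G_1\bar h_n$) and then $n\to\infty$ gives $f_{F_k^c}\to 0$ in $L^2$, i.e.\ $(F_k)$ is an $\mathcal E$-nest.

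The hard part will be the two passages involving $\mathbf B$: taking $k\to\infty$ in $\int_{\R^d}\langle\mathbf B,\nabla f_{F_k^c}\rangle g\,d\widehat\mu$ and, above all, testing $g_\infty$ against itself, since the functional $g\mapsto\int_{\R^d}\langle\mathbf B,\nabla g_\infty\rangle g\,d\widehat\mu$ is not $\mathcal E^0_1$-continuous on all of $D(\mathcal E^0)$. Because $\mathbf B$ is only in $L^2_{\mathrm{loc}}(\R^d,\R^d,\widehat\mu)$ (equivalently $\psi\mathbf B\in L^2_{\mathrm{loc}}(\R^d,\R^d,\mu)$) and the functions at hand need not be compactly supported, these steps require truncation together with cut-off functions adapted to the local ellipticity constants $\lambda_B,\Lambda_B$ of $A$ (so that $v\,\nabla\chi_m\to 0$ in the $\mathcal E^0_1$-sense), together with the extended divergence-freeness \eqref{divfreeext} and its antisymmetric form on $\widehat H^{1,2}_0(\R^d,\mu)_{0,b}$; this is exactly the bookkeeping carried out in \cite[Lemma 3.4]{St99}, which transfers since $L$ and $\mathcal E^0$ here have the same structural properties as there. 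The remaining ingredients — the monotone dependence of $1$-reduced functions on the function and on the set, the coincidence of weak and strong closures of the subspace $\bigcup_kD(\mathcal E^0)_{F_k}$, and the density of $G_1\big(L^1(\R^d,\widehat\mu)_b\big)$ in $D(L_2)$ — are routine.
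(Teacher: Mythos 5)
Your proposal is correct in substance and follows essentially the same route as the paper, which gives no independent argument for Lemma \ref{e-nest/e0-nest} but simply invokes that it follows ``exactly as in'' \cite[Lemma 3.4]{St99}: your sketch reconstructs precisely that argument, resting on Lemma \ref{SD3sqr}(i)--(ii) (the $V$-independent energy bound, the harmonicity of $f_V$ off $V$, density of the potentials $G_1(L^1(\R^d,\widehat\mu)_b)$ in $D(\mathcal{E}^0)$) and on the correct reading of $D(\mathcal{E}^0)_{F_k}$ as the functions vanishing on $F_k^c$. The parts you leave open --- the truncation/cut-off bookkeeping needed to pass to bounded, compactly supported test functions and to handle the $\mathbf{B}$-terms when testing against the non-compactly supported limit $g_\infty$ --- are exactly the parts the paper itself defers to \cite{St99}, and the steps you do spell out are sound.
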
In the following proposition, notions related to a strict capacity as defined in \cite{Tr5} are introduced. For further reading, we suggest \cite{Tr5} and \cite{PTr11} (see also the proof of the following proposition).
\begin{proposition}\label{Huntex}
Assume {\bf (A)}.
There exists a Hunt process 
$$
\M= (\Omega, \F, (\F_t)_{t \ge 0}, ({X}_t)_{t \ge 0}, ({\P}_x)_{x \in \R^d \cup \{ \Delta \} })
$$ 
with life time $\zeta:=\inf\{t\ge 0\,: \, {X}_t=\Delta\}$, shift operator $({\vartheta}_t)_{t\ge 0}$, and cemetery $\Delta$, such that $x\mapsto R_{\alpha} f(x):={\E}_x[\int_0^{\infty}e^{-{\alpha t}}f({X}_t)dt]$ is a strictly $\mathcal{E}$-quasi-continuous (in short q.c.) $dx$-version (which is the same as a $\widehat{\mu}$-version by Remark \ref{inclupro}(i)) of $G_{\alpha}f$ for any $f\in L^{2}(\R^d,\widehat{\mu})$, $\alpha >0$, 
and moreover for strictly $\mathcal{E}$-quasi-every (in short q.e.) $x \in \R^d$,
$$
{\P}_{x} \big( \big\{ \omega \in {\Omega} \, : \,   {X}_{\cdot}(\omega) \in C\big([0, \infty), \R^d_{\Delta}\big),\, {X}_{t}(\omega) = \Delta, \, \forall t \geq \zeta(\omega)   \big\} \big) =1.
$$
where $\R^d_{\Delta}$ denotes the one-point-compactification of $\R^d$ with $\Delta$.
\end{proposition}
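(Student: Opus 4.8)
The plan is to fit the generalized Dirichlet form $(\mathcal{E},\mathcal{F})$ of \eqref{fwpeokce}, with $\mathcal{F}=D(L_2)$, into the general construction of \cite{Tr5}, which produces a Hunt process properly associated with a generalized Dirichlet form as soon as (a) an algebra of functions satisfying condition \textbf{SD3} is exhibited, and (b) the form is strictly quasi-regular, i.e. there are a countable point-separating, topology-generating family in $\mathcal{F}$ with strictly quasi-continuous versions, together with a nest of compact sets. Part (a) is already in place: by Lemma \ref{SD3sqr}(iii) the space $\mathcal{G}$ of \eqref{defG} is an algebra satisfying \textbf{SD3}, and $\mathcal{G}\cap D(L_2)\supset G_1\big(L^1(\R^d,\widehat{\mu})_b\big)$ is $\|\cdot\|_{D(L_2)}$-dense in $\mathcal{F}$ by Lemma \ref{SD3sqr}(i). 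So the remaining analytic input is (b).

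The separating family is immediate: any countable $\|\cdot\|_{D(L_2)}$-dense subset of $C_0^{\infty}(\R^d)\subset D(L_2)$ works, since its members are continuous — hence strictly $\mathcal{E}$-quasi-continuous — and separate the points of $\R^d$ and generate its topology. The compact nest is the point where I would localize. By Lemma \ref{e-nest/e0-nest} an $\mathcal{E}$-nest coincides with an $\mathcal{E}^0$-nest, and $(\mathcal{E}^0,D(\mathcal{E}^0))$ is a symmetric Dirichlet form on $L^2(\R^d,\widehat{\mu})$ with core $C_0^{\infty}(\R^d)$. For every bounded open ball $V$ the restricted form $\big(\mathcal{E}^{0,V},\widehat{H}^{1,2}_0(V,\mu)\big)$ is a \emph{regular} symmetric Dirichlet form on $L^2(V,\widehat{\mu})$: its core $C_0^{\infty}(V)$ is dense in $D(\mathcal{E}^{0,V})$ by construction and uniformly dense in $C_0(V)$, while $\widehat{\mu}$ is Radon on $V$ because $\psi\rho\in L^1_{loc}$. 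A regular Dirichlet form admits a nest of compact sets; exhausting $\R^d$ by balls $B_k$ with $\overline{B}_k\subset B_{k+1}$ and patching the local nests for $\mathcal{E}^{0,B_{k+1}}$ by means of cut-off functions $\chi_k\in C_0^{\infty}(\R^d)\subset D(\mathcal{E}^0)$ with $\chi_k\equiv1$ on $B_k$, one assembles a global $\mathcal{E}^0$-nest, hence an $\mathcal{E}$-nest, consisting of compact subsets of $\R^d$.

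Granting (b), the main existence theorem of \cite{Tr5} (see also \cite{PTr11}) supplies a Hunt process $\M=(\Omega,\F,(\F_t),(X_t),(\P_x))$ with cemetery $\Delta$ and life time $\zeta$ such that $x\mapsto R_\alpha f(x)=\E_x\big[\int_0^\infty e^{-\alpha t}f(X_t)\,dt\big]$ is a strictly $\mathcal{E}$-quasi-continuous $\widehat{\mu}$-version of $G_\alpha f$ for all $f\in L^2(\R^d,\widehat{\mu})$ and $\alpha>0$ — equivalently, by Remark \ref{inclupro}(i), a $dx$-version. This establishes the resolvent identification, and it remains to upgrade the right-continuity of a Hunt process to continuity of the paths, i.e. to show that $\M$ is a diffusion up to its life time.

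For the path continuity the two decisive facts are that $\mathcal{E}^0$ has no killing part (it is a Dirichlet form with $C_0^{\infty}(\R^d)$ in its domain, and the drift $\langle\mathbf{B},\nabla\cdot\,\rangle$ is antisymmetric, cf. \eqref{divfreeext}, so no mass is destroyed inside $\R^d$), and the square-field identity $\overline{L}u^2=2u\overline{L}u+\langle\widehat{A}\nabla u,\nabla u\rangle$ of Theorem \ref{pojjjde}, which shows that the singular drift $\mathbf{B}$ contributes nothing to the energy and leaves $\mathcal{E}$ strictly local. Concretely, for $u$ in the countable separating family the additive-functional decomposition attached to $(\mathcal{E},\mathcal{F})$ (cf. \cite{Tr5}, \cite{PTr11}) reads $u(X_t)-u(X_0)=M^{[u]}_t+N^{[u]}_t$ with $\langle M^{[u]}\rangle_t=\int_0^t\langle\widehat{A}\nabla u,\nabla u\rangle(X_s)\,ds$, a continuous additive functional; hence $M^{[u]}$ is a continuous martingale and $N^{[u]}$, being of zero energy and carrying no jumping part, is continuous as well, so $t\mapsto u(X_t)$ is $\P_x$-a.s. continuous on $[0,\zeta)$ for strictly q.e. $x$. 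Since countably many such $u$ separate the points of $\R^d$ and generate its topology, $t\mapsto X_t$ is $\P_x$-a.s. continuous on $[0,\zeta)$, and as there is no killing inside, $X_t\to\Delta$ in $\R^d_\Delta$ as $t\uparrow\zeta$ whenever $\zeta<\infty$, which is exactly the stated path property. The main obstacle I anticipate is precisely the construction of the compact $\mathcal{E}$-nest and the bookkeeping of strict quasi-regularity in the sense of \cite{Tr5}: since $\widehat{\mu}$ may be infinite and the coefficients $A,\psi,\rho$ are only \emph{locally} controlled, no global regularity of $\mathcal{E}^0$ is at hand, so one must genuinely run the localization — using the regularity of the restricted forms $\mathcal{E}^{0,V}$, the nest equivalence of Lemma \ref{e-nest/e0-nest}, and a careful patching argument — while keeping track throughout of the merely $L^2_{loc}$ drift $\mathbf{B}$; its potential effect on path regularity is, however, already neutralized by Theorem \ref{pojjjde}.
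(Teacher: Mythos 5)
Your overall strategy --- verify \textbf{SD3} via Lemma \ref{SD3sqr}(iii), establish strict quasi-regularity in the sense of \cite[Definition 2]{Tr5}, invoke the existence theorem of \cite{Tr5}, and finally get path continuity from locality --- is indeed the paper's strategy, but you have misplaced the difficulty and thereby skipped the step that actually carries the proof. Strict quasi-regularity requires, besides a strict nest and a countable family of strictly $\mathcal{E}$-q.c.\ elements separating points, that a \emph{dense subset of} $D(L_2)$ admit strictly $\mathcal{E}$-quasi-continuous $dx$-versions. Your formulation of condition (b) omits this requirement, and nothing in your argument produces it: the continuity of $C_0^{\infty}$-functions only takes care of the countable separating family, not of a dense subset of the domain, and for a general element of $D(L_2)$ there is no a priori reason for a strictly q.c.\ version to exist. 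This is exactly the part the paper flags as ``more involved'' and settles by following the proof of \cite[Theorem 6]{Tr5}: strictly $\mathcal{E}$-q.c.\ versions are obtained from ordinary $\mathcal{E}$-q.c.\ versions, which in turn requires first proving (ordinary) quasi-regularity of $(\mathcal{E},D(L_2))$ and the existence of an associated $\widehat{\mu}$-tight special standard process; these follow from \cite[IV. Theorem 2.2]{St99diss} and \cite[Proposition 2.1(i)]{PTr11} together with the quasi-regularity of $(\mathcal{E}^0,D(\mathcal{E}^0))$, the inclusion $C_0^{\infty}(\R^d)\subset D(L_2)\subset D(\mathcal{E}^0)$ (Lemma \ref{SD3sqr}(i)), the nest equivalence of Lemma \ref{e-nest/e0-nest}, and \textbf{SD3}. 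Without this chain, \cite[Theorem 3]{Tr5} cannot be applied, so your proposal has a genuine gap.

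By contrast, the point you single out as the main obstacle is not one. In the strict framework of \cite{Tr5} the nest can simply be taken to be $E_k\equiv\R^d$, as the paper does, so no compact nest has to be built at that stage; and for the ordinary quasi-regularity of $(\mathcal{E}^0,D(\mathcal{E}^0))$ no localization or patching over balls is needed, since $C_0^{\infty}(\R^d)$ is by construction a core of $\mathcal{E}^0$ and $\widehat{\mu}$ is locally finite, so $(\mathcal{E}^0,D(\mathcal{E}^0))$ is already a regular symmetric Dirichlet form on $L^2(\R^d,\widehat{\mu})$. Finally, your continuity argument via a Fukushima-type decomposition (continuity of the zero-energy part $N^{[u]}$) is not available off the shelf for generalized Dirichlet forms; the paper instead deduces continuity of the sample paths from the strong local property of $(\mathcal{E},D(L_2))$, exactly as in \cite[Proposition 3.6]{LST22}, which is the route you should take as well.
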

\begin{proof}
One first shows the existence of a Hunt process and then continuity of its sample paths. The strategy to obtain the Hunt process is to apply \cite[Theorem 3]{Tr5}. For this, we need to show the strict quasi-regularity of the generalized Dirichlet form $(\mathcal{E},D(L_2))$ associated with $(L_2,D(L_2))$ (cf. \cite[Definition 2]{Tr5}) and condition {\bf SD3}. By Lemma \ref{SD3sqr}(iii), {\bf SD3} is satisfied.   Defining $E_k\equiv \R^d$, $k\ge 1$, we obtain a strict $\mathcal{E}$-nest. Since $C_0^{\infty}(\R^d)\subset D(L_2)$, it is also clear how to find a countable set of strictly $\mathcal{E}$-q.c. functions in $D(L_2)$ that separate the points of $\R^d$ up to a strictly $\mathcal{E}$-exceptional set. However, showing that a dense subset of $D(L_2)$ admits strictly $\mathcal{E}$-quasi continuous $dx$-versions turns out to be more involved. We therefore follow the strategy of the proof of \cite[Theorem 6]{Tr5}, which is also applicable here. There the existence of strictly $\mathcal{E}$-q.c. $dx$-versions is shown from the existence of $\mathcal{E}$-q.c. $dx$-versions, more precisely from the quasi-regularity of $(\mathcal{E},D(L_2))$ and the existence of an $\widehat{\mu}$-tight special standard process associated with $(\mathcal{E},D(L_2))$. The quasi-regularity of $(\mathcal{E},D(L_2))$ and hence the existence of an $\widehat{\mu}$-tight special standard process associated with $(\mathcal{E},D(L_2))$ follows by \cite[IV. Theorem 2.2]{St99diss} and \cite[Proposition 2.1(i)]{PTr11}  from the quasi-regularity of $(\mathcal{E}^0, D(\mathcal{E}^0))$, $C_0^{\infty}(\R^d)\subset D(L_2)\subset D(\mathcal{E}^0)$ (see Lemma \ref{SD3sqr}(i)), Lemma \ref{e-nest/e0-nest}, and {\bf SD3}, i.e. Lemma \ref{SD3sqr}(iii).\\
The rest of the proof, i.e. the stated continuity of the sample paths,  follows from the strong local property of $(\mathcal{E},D(L_2))$, exactly as in the proof of \cite[Proposition 3.6]{LST22}.
\end{proof}
\begin{remark}\label{remstrictlyex}
(i) Since $T_t u\in D(L_2)$ for any $u\in D(L_2)$, it is easy to see that $x\mapsto P_{t} \tilde{u}(x):={\E}_x[\tilde{u}({X}_t)]$ is a strictly $\mathcal{E}$-q.c. $dx$-version of $T_t u$ for any strictly $\mathcal{E}$-q.c. $dx$-version $\tilde{u}$ of $u$, $t >0$. However, it is unclear whether  for general $u\in L^{2}(\R^d,\widehat{\mu})$, $T_t u$ has a  strictly $\mathcal{E}$-q.c. $dx$-version.\\
(ii) For every strictly $\mathcal{E}$-q.c. $dx$-version $\tilde{u}$ of $u\in D(L_2)$, the process
\[
A_t^u:=\tilde{u}({X}_t)-\tilde{u}({X}_0), \quad t\ge 0,
\]
is finite and uniquely defined (i.e. independent of the chosen $\tilde{u}$) $\P_x$-a.s. for strictly $\mathcal{E}$-q.e. $x\in \R^d$.\\
(iii) By the strict version of \cite[III. Corollary 3.4]{St99diss}, $R_{1}f$, $f\in L^{2}(\R^d,\widehat{\mu})$, is strictly  $\mathcal{E}$-q.e. independent of the Borel version chosen for $f$. Thus, using the inequality
\[
\E_x[\int_0^t  |f|(X_s)ds]\le e^t \E_x[\int_0^{t} e^{-s}|f|(X_s) ds] \le   e^t R_{1}|f|(x)
\]
we see that the integral process
\[
\Big (\int_0^t  f(X_s)ds\Big )_{t\ge 0}
\]
is uniquely defined (i.e. independent of the chosen Borel measurable version $f\in L^{2}(\R^d,\widehat{\mu})$) $\P_x$-a.s. for strictly $\mathcal{E}$-q.e. $x\in \R^d$. In particular, for $u\in D(L_2)$, the process
\[
N_t^u:=\int_0^t  L_2u(X_s)ds, \quad t\ge 0,
\]
is unique $\P_x$-a.s. for strictly $\mathcal{E}$-q.e. $x\in \R^d$, no matter which Borel measurable version $L_2 u$ is chosen in the integral. \\
(iv) By \cite[Remark 1(ii)]{Tr5} every strictly $\mathcal{E}$-exceptional set is $\mathcal{E}$-exceptional (the strict capacity is in general stronger) and by 
\cite[Lemma 1(i)]{Tr5}, we have that every $\mathcal{E}$-exceptional set is strictly $\mathcal{E}$-exceptional, if 
$\M$ is non-explosive for a.e. starting point $x\in \R^d$ (i.e. if ${\P}_{x} ({\zeta}=\infty)=1$  for a.e. $x\in \R^d$). Thus in the latter case the term \lq\lq strict\rq\rq\ can be dropped in Proposition \ref{Huntex}. 
\end{remark}

\begin{theorem}\label{prop:3.1.6}
(i) Let $\M$ be as in Proposition \ref{Huntex}. Let $u \in D(L_2)$. Then
$$
M_t ^u: = u(X_t) -u(X_0)- \int_0^t L_2 u(X_s) \, ds , \quad t \ge 0,
$$ 
is a continuous $(\mathcal{F}_t)_{t \ge 0}$-martingale under $\P_x$  for strictly $\mathcal{E}$-q.e. $x \in \R^d$. \\
(ii) Let $u\in C_0^{2}(\R^d)$. Then the quadratic variation process $\langle M^u \rangle$ of $M^u$ satisfies for strictly $\mathcal{E}$-q.e. $x \in \R^d$
$$
\langle M^u \rangle_t=\int_0^t \langle \widehat{A}\nabla u, \nabla u\rangle(X_s)ds, \quad t\ge 0, \quad \P_x\text{-a.s.}
$$
and therefore (cf. Remark \ref{remstrictlyex}(iii)) $(M_t ^u)_{t\ge 0}$ is $\P_x$-square integrable for strictly $\mathcal{E}$-q.e. $x \in \R^d$. \\
(iii) Let $\mathbf{G}=(g_1, \ldots, g_d)=\beta^{\rho, A,\psi} + \mathbf{B}$.
Consider the Hunt process $\M$ from Proposition \ref{Huntex} with coordinates $X_t=(X_t^1,\ldots,X_t^d)$.
Suppose that  $\M$ is non-explosive for strictly $\mathcal{E}$-q.e. starting point $x \in \R^d$, or equivalently that $ (T_t)_{t>0}$ is conservative (sufficient conditions for this can be derived as for instance in \cite[Section 3.2.1]{LST22}, or by using Theorem \ref{theoconserva}, or see also \eqref{growthconser 2} for a standard explicit condition).
Let $(\widehat\sigma_{ij})_{1 \le i,j \le d}$ be any matrix (possibly non-symmetric) consisting of locally bounded and measurable functions such that $\widehat\sigma \widehat\sigma^T(x) =\widehat A(x)$ for a.e. $x\in \R^d$. Then it holds $\P_x$-a.s. and for strictly $\mathcal{E}$-q.e. (hence a.e.) $x=(x_1,\ldots,x_d)\in \R^d$, and any $i=1,\ldots,d$, that
\begin{equation}\label{weaksolution}
X_t^i = X_0^i+ \sum_{j=1}^d \int_0^t \widehat\sigma_{ij} (X_s) \, dW_s^j +   \int^{t}_{0}   g_i(X_s) \, ds, \quad 0\le  t <\infty,
\end{equation}
where $W = (W^1,\dots,W^d)$ is a $d$-dimensional standard $(\mathcal{F}_t)_{t \geq 0}$-Brownian motion starting from zero and $\P_x(X_0^i=x_i)=1$.
\end{theorem}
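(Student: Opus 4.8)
Here is how I would approach the proof.

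The plan is to establish the Fukushima-type decomposition in (i) directly — first for resolvent functions, then by approximation — , to read off the exact form of the quadratic variation in (ii) from It\^o's formula applied to $u^2$, and finally in (iii) to localise the coordinate maps against $C_0^2$-cut-offs and invoke a standard martingale representation theorem. Throughout, ``q.e.'' means strictly $\mathcal{E}$-quasi-everywhere, and pointwise values of functions in $D(L_2)$ always refer to the strictly $\mathcal{E}$-q.c. versions supplied by Proposition \ref{Huntex}, while continuous functions are identified with themselves. For (i), I would first take $u=G_1f$ with $f\in L^2(\R^d,\widehat\mu)_b$, so that $L_2u=u-f$. Writing $\widetilde u:=R_1 f$ for the q.c. version of $u$, the Markov property of $\M$ gives that $Y_t:=e^{-t}\widetilde u(X_t)+\int_0^t e^{-s}f(X_s)\,ds$ is, for q.e. $x$, a bounded $(\F_t)$-martingale under $\P_x$ (here the time integral is well defined q.e. by Remark \ref{remstrictlyex}(iii), and $\|\widetilde u\|_{L^\infty(\R^d)}\le\|f\|_{L^\infty(\R^d)}$). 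A deterministic integration by parts in the time variable, using Fubini for the resulting double time-integral, then yields $M_t^u=\int_0^t e^s\,dY_s$, which is therefore an integrable continuous martingale.

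For general $u\in D(L_2)$ I would write $u=G_1g$ with $g=(1-L_2)u$ and pick $g_n\in L^2(\R^d,\widehat\mu)_b$ with $g_n\to g$ in $L^2(\R^d,\widehat\mu)$ and $\sum_n\|g_{n+1}-g_n\|_{L^2(\R^d,\widehat\mu)}<\infty$; then $u_n:=G_1g_n\to u$ and $L_2u_n\to L_2u$ in $L^2(\R^d,\widehat\mu)$, and by Lemma \ref{SD3sqr}(i) also $u_n\to u$ in $D(\mathcal{E}^0)$. By a standard capacity/Borel--Cantelli argument, along a subsequence there is an $\mathcal{E}$-nest on which $\widetilde u_n\to\widetilde u$ uniformly and $R_1|L_2u_n-L_2u|\to0$ q.e.; combined with the bound $\E_x\big[\int_0^T|h|(X_s)\,ds\big]\le e^T R_1|h|(x)$ from Remark \ref{remstrictlyex}(iii) and the fact that for q.e. starting point $\M$ a.s. stays in the nest for all $t$, this gives $\E_x\big[\sup_{t\le T}|M_t^{u_n}-M_t^u|\big]\to0$ for q.e. $x$. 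Hence $M^u$ is an integrable continuous martingale, being a uniform-on-compacts $L^1(\P_x)$-limit of such; path-continuity is inherited from that of $t\mapsto X_t$ on $[0,\zeta)$ (Proposition \ref{Huntex}).

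For (ii), fix $u\in C_0^2(\R^d)\subset D(L_2)$, so that $u^2\in C_0^2\subset D(L_2)$ and $L_2v=\tfrac12\mathrm{trace}(\widehat A\nabla^2 v)+\langle\beta^{\rho,A,\psi}+\mathbf B,\nabla v\rangle$ on $C_0^2(\R^d)$. By (i), $u(X_t)=u(X_0)+M_t^u+\int_0^t L_2u(X_s)\,ds$ is a continuous semimartingale whose drift part is genuinely of finite variation since $\int_0^t|L_2u|(X_s)\,ds<\infty$ q.e. (Remark \ref{remstrictlyex}(iii)). Applying It\^o's formula to $u(X_t)^2$ and subtracting it from the decomposition of (i) applied to $u^2$, I would conclude that $\langle M^u\rangle_t-\int_0^t\big(L_2(u^2)-2uL_2u\big)(X_s)\,ds$ is a continuous local martingale of finite variation vanishing at $0$, hence is identically $0$. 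A direct computation using $\nabla(u^2)=2u\nabla u$, $\nabla^2(u^2)=2\nabla u\otimes\nabla u+2u\nabla^2u$ and the symmetry of $\widehat A$ gives $L_2(u^2)-2uL_2u=\langle\widehat A\nabla u,\nabla u\rangle$, which is the claimed formula. Since $\langle\widehat A\nabla u,\nabla u\rangle$ is a bounded function with compact support, $\E_x[(M_t^u)^2]=\E_x[\langle M^u\rangle_t]\le e^t R_1\big(\langle\widehat A\nabla u,\nabla u\rangle\big)(x)<\infty$ for q.e. $x$ by Remark \ref{remstrictlyex}(iii).

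For (iii), assume $(T_t)_{t>0}$ conservative, so $\zeta=\infty$ and $X_\cdot\in C([0,\infty),\R^d)$ $\P_x$-a.s. for q.e. $x$. For $N\in\N$ choose $\chi_i^{N}\in C_0^2(\R^d)$ with $\chi_i^{N}(y)=y_i$ on $\{|y|\le N\}$ and set $\sigma_N:=\inf\{t\ge0:|X_t|\ge N\}\nearrow\infty$; on $[0,\sigma_N)$ one has $L_2\chi_i^{N}=g_i$, $\nabla\chi_i^{N}=e_i$ and $\nabla^2\chi_i^{N}=0$. Thus by (i) and (ii) the process $M_t^{(i)}:=X_t^i-X_0^i-\int_0^t g_i(X_s)\,ds$ is well defined (as $\sigma_N\uparrow\infty$) and a continuous local martingale with $\langle M^{(i)},M^{(j)}\rangle_t=\int_0^t\widehat A_{ij}(X_s)\,ds$, the cross-variations following by polarisation from $\langle\widehat A(e_i+e_j),e_i+e_j\rangle=\widehat A_{ii}+\widehat A_{jj}+2\widehat A_{ij}$. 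Since $\widehat\sigma\widehat\sigma^{T}=\widehat A$ with $\widehat\sigma$ locally bounded and measurable, a standard representation theorem for continuous local martingales with absolutely continuous covariation provides, possibly after enlarging the probability space, a $d$-dimensional standard $(\F_t)$-Brownian motion $W$ with $W_0=0$ such that $M_t^{(i)}=\sum_{j=1}^d\int_0^t\widehat\sigma_{ij}(X_s)\,dW_s^j$ for all $t\ge0$, $\P_x$-a.s. for q.e. $x$; since $\P_x(X_0^i=x_i)=1$, this is exactly \eqref{weaksolution}. I expect the main obstacle to be the approximation step in (i): controlling $M^{u_n}-M^u$ uniformly on compact time intervals requires carefully combining convergence of q.c. versions along an $\mathcal{E}$-nest with the potential-theoretic estimate of Remark \ref{remstrictlyex}(iii); once (i) is secured, (ii) and (iii) are routine applications of stochastic calculus and of the martingale representation lemma.
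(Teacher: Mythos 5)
Your proposal is correct and follows essentially the same route as the paper: the martingale property is first established for resolvent functions $R_1f$ and then transferred to general $u\in D(L_2)$ by $L^2$-approximation using the potential-theoretic bound of Remark \ref{remstrictlyex}(iii), the quadratic variation in (ii) is identified through the identity $L(u^2)=2uLu+\langle \widehat{A}\nabla u,\nabla u\rangle$ together with It\^o/Doob--Meyer uniqueness, and (iii) is obtained by localizing the coordinate maps with $C_0^2$-cut-offs and invoking the martingale representation theorem, exactly as in the proof the paper delegates to \cite[Proposition 3.19, Theorem 3.22(i)]{LST22}. The only noteworthy deviation is the base case of (i): you get the martingale property of $M^{R_1f}$ for all bounded $f$ from the closed martingale $Y_t=\E_x\big[\int_0^\infty e^{-s}f(X_s)\,ds\,\big|\,\F_t\big]$ and a pathwise integration by parts, whereas the paper reduces to $f\in C_0^\infty(\R^d)$, verifies $\E_x[M_t^{R_1f}]=0$ by a direct computation and then combines additivity of $M^{R_1f}$ with the Markov property --- an equivalent and equally valid argument.
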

\begin{remark}\label{class of drifts}
For any $A$ as in  {\bf (A)}, a symmetric positive definite matrix of functions $\sigma$ with $\sigma^2=A$ always exists by \cite[Lemma 2.1]{CH97}. On the other hand, any (possibly non-symmetric) matrix consisting of locally bounded and measurable functions $(\widehat\sigma_{ij})_{1 \le i,j \le d}$ such that $A:=\psi\widehat\sigma \widehat\sigma^T$ where $\psi, A$ are as in {\bf (A)} can be the starting point of our consideration.\\
\end{remark}
\begin{proof} {\bf of Theorem \ref{prop:3.1.6}}
(i) We will use the facts listed in Remark \ref{remstrictlyex} without explicitly referring to them. First $\int_0^t L_2 u(X_s) \, ds$,  $t \ge 0$, is $\P_x$-a.s. for strictly $\mathcal{E}$-q.e. $x \in \R^d$ independent of the chosen version $L_2u$. Let $f\in L^{2}(\R^d,\widehat{\mu})$ and $u=R_1 f$.  Then  $R_1f -f\in \mathcal{B}(\R^d)$ is a $dx$-version of $L_2 u$. Using in particular the Markov property for the second inequality right below, the following estimate for strictly $\mathcal{E}$-q.e. $x \in \R^d$
\begin{eqnarray}\label{estmart}
\E_x \big [ |M_t^{R_1f}|\big ]&\le& \E_x \big [R_1 |f | (X_t) \big ]+ R_1 |f | (x)+\E_x \big [\int_0^t\big  (R_1 |f |+| f|\big ) (X_s) ds\big ]\nonumber\\
&\le& e^t R_1 |f | (x) + R_1 |f | (x)+e^t R_1\big (R_1 |f |+| f|\big ) (x)\\ \nonumber
 \end{eqnarray}
is easily derived. From \eqref{estmart}, we deduce that $(M_t^{R_1f})_{t\ge 0}$ is $\P_x$-integrable for strictly $\mathcal{E}$-q.e. $x \in \R^d$ and since it is obviously adapted it remains to show the martingale property. For this consider $(f_n)_{n\ge 1}\subset L^{2}(\R^d,\widehat{\mu})_b$, such that $\lim_{n\to \infty }f_n=f$ in $L^{2}(\R^d,\widehat{\mu})$. Then \eqref{estmart}, the fact that $M_t^{R_1f}-M_t^{R_1f_n}=M_t^{R_1(f-f_n)}$, and \cite[III. Corollary 3.8]{St99diss}, imply that for each $t\ge 0$, $\lim_{k\to \infty}M_t^{R_1f_{n_k}}=M_t^{R_1f}$ in  $L^1(\Omega,\F_t, \P_x)$ for some subsequence $(n_k)_{k\ge 1}$ and strictly $\mathcal{E}$-q.e. $x \in \R^d$. Therefore, it is enough to show the martingale property of $(M_t^{R_1f})_{t\ge 0}$, when 
$f\in C^{\infty}_0(\R^d)$, which we assume from now on. We get for strictly $\mathcal{E}$-q.e. $x \in \R^d$
\begin{eqnarray*}
	\E_x\big [M_t^{R_1f}\big ]
	&=& \E_x \big [\E_{X_t} \big [\int_0^\infty e^{-s} f(X_s)ds\big ]- \int_0^\infty e^{-s} f(X_s)ds\big ]\\
	&&\qquad -\int_0^t \big (\E_x\Big [\E_{X_s} [\int_0^\infty e^{-u}f(X_u) du ]-f(X_s)\Big ]\big )ds\\
	&=&\E_x\big [e^t \int_t^\infty e^{-s} f(X_s)ds - \int_0^\infty e^{-s} f(X_s) ds\big ]\\
	&&\qquad -\int_0^t \E_x \Big [e^s \int_s^\infty e^{-u} f(X_u) du - e^s e^{-s} f(X_s)\Big ]ds,
	\end{eqnarray*}
	and since
	\begin{eqnarray*}
	-\int_0^t e^s e^{-s} f(X_s) ds
	&= &-\int_0^t e^s d\big (\int_0^s e^{-u} f(X_u)du\big )\\
	&= &\int_0^t \int_0^s e^{-u} f(X_u) du\, e^s\, ds - e^t \int_0^t e^{-u} f(X_u) du
	\end{eqnarray*}
	we obtain
	\[ 
	\E_x\big [M_t^{R_1f}\big ] = \E_x\Big [e^t \int_0^\infty e^{-s} f(X_s) ds - \int_0^\infty e^{-s}f(X_s)ds - \underbrace{\int_0^t e^s \big (\int_0^\infty e^{-u} f(X_u) du\big )ds}_{=(e^t - 1)\int_0^\infty e^{-u} f(X_u)du}\Big ]=0.
	\]
We have hence derived that there exists a strict $\mathcal{E}$-nest $(F_k)_{k\ge 1}$ and a set $N\subset \R^d$ with $N\subset \cap_{k\ge 1}F_k^c$, such that  $\E_x\big [M_t^{R_1f}\big ]=0$ for all $x\in \R^d\setminus N$ and $t\ge 0$. Since the intersection of two strict $\mathcal{E}$-nests is again a strict $\mathcal{E}$-nest, making $(F_k)_{k\ge 1}$ smaller if necessary, by \cite[Lemma 1(ii)]{Tr5}, we may assume that $\P_x(\lim_{k\to\infty}\sigma_{F_k^c}=\infty)=1$ for all $x \in \R^d\setminus N$. We thus obtain that 
\begin{eqnarray}\label{averageproperty}
\P_x(\E_{X_t}[M_s^{R_1f}]=0) = 1\qquad \text{for all } t,s\ge   0 \text{ and } x \in \R^d\setminus N.
\end{eqnarray}
It holds $M_{t+s}^{R_1f} =M_t^{R_1f} + M_s^{R_1f} \circ \vartheta_t$ for any $t,s\ge 0$, $\P_x$-a.s. for all $x\in \R^d$, 
since the same is true for $A^{R_1f}$ and $N^{R_1f}$. 
Then by the Markov property  and \eqref{averageproperty}, it holds $\P_x$-a.s. for all $x \in \R^d\setminus N$
\[
\E_x\left[M_{t+s}^{R_1f} \, |\,  \mathcal{F}_t\right]\ =\  \E_x \big[M_t^{R_1f} + M_s^{R_1f} \circ \vartheta_t \, | \, \mathcal{F}_t\big]\  =\ M_t^{R_1f} + \E_{X_t}[M_s^{R_1f}] \ =\ M_t^{R_1f}
\]
and (i) is shown.\\
(ii) In view of part (i) and Theorem \ref{pojjjde}, more precisely that for $u\in C_0^{2}(\R^d)$, we have $Lu^2 = 2u Lu +  \langle \widehat{A} \nabla u, \nabla u \rangle$, the statement can be proved as in \cite[Proposition 3.19]{LST22}.\\
(iii) This follows as in \cite[Theorem 3.22(i)]{LST22}.
\end{proof}

\section{Uniqueness of the $L^1(\mathbb{R}^d, \widehat{\mu})$-closed extension} \label{l1closeduni}
In order to investigate the uniqueness (in law) of weak solutions to stochastic differential equations, one possibility is to study the $L^1(\mathbb{R}^d, \widehat{\mu})$-uniqueness of $(L, C_0^{\infty}(\mathbb{R}^d))$.
We will here follow the standard line of arguments as outlined in \cite{St99} but we emphasize that it is necessary to present detailed proofs since in our case, we consider convergences with respect to two measures, $\mu$ and $\widehat{\mu}$ (see for instance \eqref{closablet}). Here, let us recall the definition of $L^1(\mathbb{R}^d, \widehat{\mu})$-uniqueness. A densely defined operator $(L, \mathcal{D})$ on $L^1(\mathbb{R}^d, \widehat{\mu})$ is said to be $L^1(\mathbb{R}^d, \widehat{\mu})$-unique if there exists only one closed extension on $L^1(\mathbb{R}^d, \widehat{\mu})$ that generates a $C_0$-semigroup on $L^1(\mathbb{R}^d, \widehat{\mu})$. For $\mathcal{D} \subset D(L^0)_{0,b}$ such that $\mathcal{D} \subset L^1(\mathbb{R}^d, \widehat{\mu})$ densely, it is well-known that the dense range condition is equivalent to the $L^1(\mathbb{R}^d, \widehat{\mu})$-uniqueness of $(L, \mathcal{D})$. By a simple application of the Hahn-Banach Theorem the dense range condition is in turn equivalent to the following: \\ \\
\centerline{\it
$\exists \ \alpha>0$ such that if $h\in L^\infty(\mathbb{R}^d, \widehat{\mu})$ and $\displaystyle \int_{\mathbb{R}^d} (\alpha-L) v \cdot h \, d \widehat{\mu}=0$ \;for all $v \in \mathcal{D}$, then $h=0$ a.e.
}
\text{}\\
For further details on the equivalence mentioned above, we refer to \cite[Remark 3.4]{LT22in}. 

\begin{theorem} \label{analycharin}
Assume {\bf (A)}. Let $(\overline{L}, D(\overline{L}))$, $(L, D(L^0)_{0,b})$ and $(\overline{T}_t)_{t>0}$ be defined as in Theorem \ref{mainijcie}. The following statements (i)--(iv) are equivalent:
\begin{itemize}
\item[(i)]
There exist $\alpha>0$ and $\chi_n \in \widehat{H}^{1,2}_{loc}(\mathbb{R}^d, \mu)$ (i.e. $ \chi_n\phi \in \widehat{H}^{1,2}_0(\mathbb{R}^d, \mu)$ for all $\phi \in C_0^{\infty}(\mathbb{R}^d)$), $n\ge 1$, such that $(\chi_n-1)^- \in \widehat{H}^{1,2}_0(\mathbb{R}^d, \mu)_{0,b}$, $\lim_{n \rightarrow \infty} \chi_n =0$ a.e. and
\begin{equation} \label{invinequ}
\mathcal{E}^0_{\alpha}(\chi_n, v) + \int_{\mathbb{R}^d} \langle \mathbf{B}, \nabla \chi_n \rangle v d \widehat{\mu} \geq 0 \quad \text{ for all } v \in \widehat{H}^{1,2}_0(\mathbb{R}^d, \mu)_{0,b}, \; v \geq 0.
\end{equation}

\item[(ii)]
$(L, D(L^{0})_{0,b})$ is $L^1(\mathbb{R}^d, \widehat{\mu})$-unique.

\item[(iii)]
$\widehat{\mu}$ is an invariant measure for $(\overline{L}, D(\overline{L}))$, i.e.
$$
\int_{\mathbb{R}^d} \overline{L}u \, d \widehat{\mu}=0, \quad \text{ for all $u \in D(\overline{L})$}.
$$

\item[(iv)]
$\widehat{\mu}$ is $(\overline{T}_t)_{t>0}$-invariant, i.e.
$$
\int_{\mathbb{R}^d} \overline{T}_t u d\widehat{\mu} = \int_{\mathbb{R}^d} u d\widehat{\mu} \quad \text{ for all $u \in L^1(\mathbb{R}^d, \widehat{\mu})$}.
$$
\end{itemize}
Moreover, if $\widehat{\mu}$ is $(\overline{T}_t)_{t>0}$-invariant, then {\bf for any} $\alpha>0$ there exists $\chi_n \in \widehat{H}^{1,2}_{loc}(\mathbb{R}^d, \mu)$ such that $(\chi_n-1)^- \in \widehat{H}^{1,2}_0(\mathbb{R}^d, \mu)_{0,b}$, $\lim_{n \rightarrow \infty} \chi_n =0$ a.e. and \eqref{invinequ} holds.
\end{theorem}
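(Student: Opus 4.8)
The plan is to establish the cycle $(i)\Rightarrow(ii)\Rightarrow(iii)\Rightarrow(iv)\Rightarrow(i)$, and to obtain the final (``moreover'') assertion by proving the last arrow in the stronger form: \emph{if $\widehat{\mu}$ is $(\overline{T}_t)_{t>0}$-invariant, then $(i)$ holds for every $\alpha>0$.} The soft implications are $(ii)\Rightarrow(iii)\Leftrightarrow(iv)$. If $(L,D(L^0)_{0,b})$ is $L^1(\R^d,\widehat{\mu})$-unique, then the only generating closed extension is the closure of $(L,D(L^0)_{0,b})$; since the extension $(\overline{L},D(\overline{L}))$ of Theorem \ref{mainijcie} is generating, it coincides with this closure, so $D(L^0)_{0,b}$ is a core for $\overline{L}$. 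On this core $\int_{\R^d}Lu\,d\widehat{\mu}=0$, because the first order part vanishes by \eqref{divfreeext} and the second order part vanishes since it does on $C_0^\infty(\R^d)$ by \eqref{infinit invariance} and $C_0^\infty(\R^d)$ is dense in $D(L^0)_{0,b}$ for the graph norm; as $u\mapsto\int_{\R^d}\overline{L}u\,d\widehat{\mu}$ is graph-norm continuous on $D(\overline{L})$, this gives $(iii)$. For $(iii)\Leftrightarrow(iv)$ one uses that $t\mapsto\int_{\R^d}\overline{T}_t u\,d\widehat{\mu}$ is differentiable with derivative $\int_{\R^d}\overline{L}\,\overline{T}_t u\,d\widehat{\mu}$, integrates (resp. differentiates at $t=0$), and passes between $D(\overline{L})$ and $L^1(\R^d,\widehat{\mu})$ by density and $L^1$-contractivity of $(\overline{T}_t)_{t>0}$.

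For $(iv)\Rightarrow(i)$ for every $\alpha>0$: Laplace transforming $(iv)$ and using the duality $\int_{\R^d}\overline{G}_\alpha u\,v\,d\widehat{\mu}=\int_{\R^d}u\,\overline{G}'_\alpha v\,d\widehat{\mu}$ of Remark \ref{resestdual} (extended to $v\equiv 1$) yields $\alpha\overline{G}'_\alpha 1=1$, i.e. conservativeness of the dual semigroup. Fix $\alpha>0$, choose $\phi_n\in C_0^\infty(\R^d)$ with $0\le\phi_n\le1$, $\phi_n\uparrow 1$, $\mathrm{supp}\,\phi_n\subset V_n$, and set $\chi_n:=1-\alpha\overline{G}_\alpha'^{,V_n}\phi_n$, extended by $1$ off $V_n$. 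By Proposition \ref{first1}(iii) applied to $-\mathbf{B}$ (cf. Remark \ref{remimpco}), $\alpha\overline{G}_\alpha'^{,V_n}\phi_n\in\widehat{H}_0^{1,2}(V_n,\mu)_b$, hence $\chi_n\in\widehat{H}^{1,2}_{loc}(\R^d,\mu)$ and $(\chi_n-1)^-=\alpha\overline{G}_\alpha'^{,V_n}\phi_n\in\widehat{H}_0^{1,2}(\R^d,\mu)_{0,b}$; moreover $\chi_n\downarrow 1-\alpha\overline{G}'_\alpha 1=0$ a.e. by monotone convergence (the dual analogue of Lemma \ref{increlem}) together with $(iv)$. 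Finally, for $v\in\widehat{H}_0^{1,2}(\R^d,\mu)_{0,b}$ with $v\ge0$, taking $n$ so large that $v\in\widehat{H}_0^{1,2}(V_n,\mu)$ and using $\nabla 1=0$ together with the resolvent identity of Proposition \ref{first1}(iii) for $-\mathbf{B}$, one computes
\[
\mathcal{E}^0_\alpha(\chi_n,v)+\int_{\R^d}\langle\mathbf{B},\nabla\chi_n\rangle v\,d\widehat{\mu}=\alpha\int_{\R^d}(1-\phi_n)v\,d\widehat{\mu}\ \ge\ 0,
\]
which is \eqref{invinequ}. This proves $(iv)\Rightarrow(i)$ for every $\alpha$, and in particular the last assertion of the theorem.

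The substantial implication is $(i)\Rightarrow(ii)$, and the main obstacle sits inside it. By the Hahn--Banach/dense range criterion recalled before the theorem it suffices to show that, with the $\alpha$ of $(i)$, any $h\in L^\infty(\R^d)$ with $\int_{\R^d}(\alpha-L)v\,h\,d\widehat{\mu}=0$ for all $v\in D(L^0)_{0,b}$ vanishes. The first step is to prove $h\in\widehat{H}^{1,2}_{loc}(\R^d,\mu)$: $h$ is a bounded weak solution of the formal adjoint equation $(\alpha-L)^{\ast}h=0$, and local $\widehat{H}^{1,2}$-regularity follows from a Caccioppoli-type energy estimate using the local uniform ellipticity of $A$ and the local $L^2$-integrability of $\mathrm{div}\,A$ and $\psi\mathbf{B}$; this is the place where the two weights $\rho$ and $\psi$ have to be bookkept carefully, and I expect it to be the hardest part of the whole argument. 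Granting it, integration by parts turns the hypothesis into $\mathcal{E}^0_\alpha(v,h)-\int_{\R^d}\langle\mathbf{B},\nabla v\rangle h\,d\widehat{\mu}=0$ for $v\in D(L^0)_{0,b}$, and since the left-hand side is continuous in $v$ for the $\widehat{H}^{1,2}_0(V,\mu)$-norm on each bounded $V$, it extends to all $v\in\widehat{H}_0^{1,2}(\R^d,\mu)_{0,b}$.

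To finish, set $C:=\|h\|_{L^\infty(\R^d)}$ (if $C=0$ we are done) and $v_n:=(h-C\chi_n)^+$. On $\{\chi_n\ge1\}$ one has $C\chi_n\ge C\ge h$, so $v_n=0$ there; hence $\{v_n>0\}\subset\{\chi_n<1\}=\mathrm{supp}\,(\chi_n-1)^-$, which is relatively compact, and $v_n$ is bounded there because $(\chi_n-1)^-$ is; thus $v_n\in\widehat{H}_0^{1,2}(\R^d,\mu)_{0,b}$ with $v_n\ge0$. Using $\nabla v_n=1_{\{h>C\chi_n\}}\nabla(h-C\chi_n)$, the divergence-freeness \eqref{divfreeext} for $v_n^2$, the symmetry of $\mathcal{E}^0$ and the antisymmetry of the $\mathbf{B}$-term, one obtains
\[
\mathcal{E}^0_\alpha(v_n,v_n)=\Big(\mathcal{E}^0_\alpha(v_n,h)-\int_{\R^d}\langle\mathbf{B},\nabla v_n\rangle h\,d\widehat{\mu}\Big)-C\Big(\mathcal{E}^0_\alpha(\chi_n,v_n)+\int_{\R^d}\langle\mathbf{B},\nabla\chi_n\rangle v_n\,d\widehat{\mu}\Big)\ \le\ 0,
\]
because the first bracket is $0$ by the extended weak equation and the second is $\ge0$ by \eqref{invinequ}. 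Hence $\mathcal{E}^0_\alpha(v_n,v_n)=0$, so $v_n=0$, i.e. $h\le C\chi_n$ a.e.; letting $n\to\infty$ and using $\chi_n\to0$ a.e. gives $h\le0$, and the same argument applied to $-h$ (which also annihilates the range) gives $h\ge0$. Thus $h=0$, proving $L^1(\R^d,\widehat{\mu})$-uniqueness and closing the cycle.
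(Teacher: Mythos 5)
Your overall architecture coincides with the paper's (same cycle, the test function $(\,\|h\|_{L^\infty}\chi_n-h)^-$ in (i)$\Rightarrow$(ii), the construction $\chi_n=1-\alpha\overline{G}_\alpha'^{,V_n}(\cdot)$ in (iv)$\Rightarrow$(i)), but the two hardest steps are not actually carried out, and in both places the plan you sketch would not go through as stated. First, in (i)$\Rightarrow$(ii) the assertion $h\in\widehat{H}^{1,2}_{loc}(\mathbb{R}^d,\mu)$ together with the weak identity \eqref{invinequ2} is exactly the content of the paper's Lemma \ref{appenlem}, and it cannot be obtained by a ``Caccioppoli-type energy estimate'': a Caccioppoli inequality presupposes that $h$ is already weakly differentiable, whereas here $h$ is only a bounded function annihilating $(\alpha-L)(D(L^0)_{0,b})$, i.e.\ a very weak (adjoint) solution. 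The paper's mechanism is different and decisive: one defines a bounded linear functional on $D(\mathcal{E}^0)$, produces $v\in D(\mathcal{E}^0)$ by Riesz representation with $\mathcal{E}^0_\alpha(u,v)=\mathcal{T}u$, and identifies $\chi h=v$ using that $(\alpha-L^0)(D(L^0)_b)$ is dense in $L^2(\mathbb{R}^d,\widehat\mu)$ --- crucially exploiting that $h$ is tested against the full class $D(L^0)_{0,b}$ and not only $C_0^\infty(\mathbb{R}^d)$ (with $C_0^\infty$ test functions this regularity needs H\"older continuous $a_{ij}$ and Krylov's $W^{2,p}$ theory, cf.\ Theorem \ref{regulsthm} and Corollary \ref{cor4.4}). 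Leaving this step to an inapplicable PDE estimate is a genuine gap. (Smaller fixable points in the same implication chain: the graph-norm density of $C_0^\infty(\mathbb{R}^d)$ in $D(L^0)_{0,b}$ that you invoke in (ii)$\Rightarrow$(iii) is not available; the paper instead uses the cutoff identity $-\int \overline{L}u\,\chi\,d\widehat\mu=\int\langle\widehat A\nabla u,\nabla\chi\rangle d\widehat\mu-\int\langle\mathbf{B},\nabla u\rangle\chi\,d\widehat\mu$ with $\chi\equiv1$ on ${\rm supp}\,u$; and the duality $\int \overline{G}_\alpha u\,v\,d\widehat\mu=\int u\,\overline{G}'_\alpha v\,d\widehat\mu$ ``extended to $v\equiv1$'' needs the $L^\infty$-extension of the dual resolvent when $\widehat\mu$ is infinite.)

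Second, in (iv)$\Rightarrow$(i) your verification of \eqref{invinequ} is done only after ``taking $n$ so large that $v\in\widehat H^{1,2}_0(V_n,\mu)$''; this is not permitted, since statement (i) (and the final ``for any $\alpha$'' assertion) requires the inequality for \emph{every} nonnegative $v\in\widehat H^{1,2}_0(\mathbb{R}^d,\mu)_{0,b}$ and \emph{every fixed} $n$. For $v$ charging a neighbourhood of $\partial V_n$ or the complement, Proposition \ref{first1}(iii) on $V_n$ is not applicable, and the zero-extension of $\alpha\overline{G}_\alpha'^{,V_n}\phi_n$ is a global weak subsolution only because of a boundary contribution of favourable sign, which cannot be extracted by boundary regularity for merely measurable, degenerate coefficients. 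This is precisely why the paper's Step~2 introduces the penalized approximants $w_\beta=\beta\overline{G}'_{\beta+\alpha}\big(\alpha\overline{G}_\alpha'^{,B_n}(1_{B_n})\big)$, uses the \emph{global} identity \eqref{identisob} valid for all $v\in\widehat H^{1,2}_0(\mathbb{R}^d,\mu)_{0,b}$, the weak compactness \eqref{eq52} and the one-sided bound \eqref{imptineq}, and only obtains an inequality (not the equality $\alpha\int(1-\phi_n)v\,d\widehat\mu$ you claim) in the limit $\beta\to\infty$. As written, your argument proves a weaker statement than (i), so the cycle does not close at this point either.
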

\begin{proof}
((i) $\Rightarrow$ (ii)):  Assume that $(i)$ holds. Let $\alpha>0$ and assume that $h \in L^{\infty}(\mathbb{R}^d)$ satisfies
$$
\int_{\mathbb{R}^d} (\alpha-L)v \cdot h\, d\widehat{\mu}=0 \quad \text{ for all $v \in D(L^0)_{0,b}$}.
$$
To show (ii), it suffices to show that $h=0$ a.e. First, Lemma \ref{appenlem} yields that $h\in \widehat{H}^{1,2}_{loc}(\mathbb{R}^d, \mu)$ and 
\begin{align} \label{invinequ2}
\mathcal{E}_{\alpha}^0(v, h) - \int_{\mathbb{R}^d} \langle \bold{B}, \nabla v \rangle h \, d \widehat{\mu}=0 \quad \text{ for all $v \in \widehat{H}^{1,2}_0(\mathbb{R}^d, \mu)_0$}.
\end{align}
Now let $v_n= \|h\|_{L^{\infty}(\mathbb{R}^d)} \chi_n -h$. Since $\|h\|_{L^{\infty}(\mathbb{R}^d)}(\chi_{n}-1) \leq v_n$, we get
$\|h\|_{L^{\infty}(\mathbb{R}^d)}(\chi_{n}-1)^- \geq v_n^- \geq 0$, so that $v_n^- \in \widehat{H}^{1,2}_0(\mathbb{R}^d, \mu)_{0,b}$. Substituting $v_n^-$ for $v$ in \eqref{invinequ} and \eqref{invinequ2},  we get
\begin{align} 
&\mathcal{E}_{\alpha}^0(v_n^-, \chi_n) - \int_{\mathbb{R}^d} \langle \mathbf{B}, \nabla v_n^- \rangle \chi_n d \widehat{\mu} \geq 0  \label{equa1} \\
&  \mathcal{E}_{\alpha}^0(v_n^-, h) - \int_{\mathbb{R}^d} \langle \bold{B}, \nabla v_n^- \rangle h \, d \widehat{\mu}=0 . \label{equa2}
\end{align} 
Subtracting \eqref{equa2} from \eqref{equa1} multiplied by $\|h\|_{L^{\infty}(\mathbb{R}^d)}$, we get
\begin{equation} \label{invid1}
\mathcal{E}_{\alpha}^0(v_n^-, v_n) - \int_{\mathbb{R}^d} \langle \mathbf{B}, \nabla v_n^- \rangle v_n d \widehat{\mu} \geq 0.
\end{equation}
By Lemma \ref{teclemma} and \eqref{divfreeext}
\begin{equation} \label{invid2}
- \int_{\mathbb{R}^d} \langle \mathbf{B}, \nabla v_n^- \rangle v_n d \widehat{\mu} = \int_{\mathbb{R}^d} \left \langle \mathbf{B}, \nabla (-v_n)^+ \right \rangle (-v_n) d \widehat{\mu} = \int_{\mathbb{R}^d} \left \langle \mathbf{B}, \nabla (-v_n)^+ \right \rangle (-v_n)^+ d \widehat{\mu} =0.
\end{equation}
Furthermore, since $(\mathcal{E}^0, D(\mathcal{E}^0))$ is a symmetric Dirichlet form, it holds $\mathcal{E}^0(v_n^+, v_n^-)\leq 0$ (see \cite[Chapter I. (4.1)]{MR}), and hence
\begin{align}
\mathcal{E}_{\alpha}^0(v_n^-, v_n)&=\mathcal{E}^0(v_n, v_n^-) + \alpha \int_{\mathbb{R}^d} v_n v_n^- d\widehat{\mu}= \mathcal{E}^0(v_n^+,v_n^-)-\mathcal{E}^0(v^{-}_n, v^{-}_n) - \alpha \|v_n^-\|^2_{L^2(\mathbb{R}^d, \widehat{\mu})} \nonumber  \\
&\leq - \alpha \|v_n^-\|^2_{L^2(\mathbb{R}^d, \widehat{\mu})} \label{invid3}.
\end{align}
Thus, \eqref{invid1}, \eqref{invid2} and \eqref{invid3} imply that $0 \leq  - \alpha \|v_n^-\|^2_{L^2(\mathbb{R}^d, \widehat{\mu})}$, so that $v_n^-=0$, equivalently $\|h\|_{L^{\infty}(\mathbb{R}^d)} \chi_n  \geq h$. Similarly, replacing $h$ with $-h$, we get
$-\|h\|_{L^{\infty}(\mathbb{R}^d)} \chi_n  \leq h$. Letting $n \rightarrow \infty$, we finally get $h=0$ a.e. \\ \\
((ii) $\Rightarrow$ (iii)): Let $u \in D(L^0)_{0,b}$ and $\chi \in C_0^{\infty}(\mathbb{R}^d)$. Then, it holds that
\begin{align*}
-\int_{\mathbb{R}^d} \overline {L} u \cdot \chi \, d \widehat{\mu} &=  -\int_{\mathbb{R}^d} L^0 u \cdot \chi \, d \widehat{\mu}-\int_{\mathbb{R}^d} \langle \mathbf{B}, \nabla u \rangle  \cdot \chi \, d \widehat{\mu} \\
&= \int_{\mathbb{R}^d} \langle \widehat{A} \nabla u, \nabla \chi \rangle d \widehat{\mu} - \int_{\mathbb{R}^d} \langle \mathbf{B}, \nabla u \rangle  \cdot \chi \, d \widehat{\mu}.
\end{align*}
Taking $\chi$ so that $\chi=1$ on $\text{supp}(u)$, we get
$$
\int_{\mathbb{R}^d} \overline{L} u\, d\widehat{\mu} = 0, \quad \text{ for all $u \in D(L^0)_{0,b}$}.
$$
Since $(\overline{L}, D(\overline{L}))$ is the closure of $(L, D(L^0)_{0,b})$ in $L^1(\mathbb{R}^d, \widehat{\mu})$, the above implies that
$$
\int_{\mathbb{R}^d} \overline{L} u\, d\widehat{\mu} = 0, \quad \text{ for all $u \in D(\overline{L})$}.
$$
((iii) $\Rightarrow$ (iv)): Using the Fundamental Theorem of Calculus for Bochner integrals,
$$
\overline{T}_t u - u = \int_0^t \overline{L} \,\overline{T}_s u \,ds \;\; \text{ on $L^1(\mathbb{R}^d, \widehat{\mu})$}
\quad \text{ for all $u \in D(\overline{L})$}.
$$
Taking integrals with respect to $\widehat{\mu}$ on both sides above and using Fubini's theorem,  we have 
$$
\int_{\mathbb{R}^d} \big (\overline{T}_t u - u \big )\, d\widehat{\mu} =  \int_{\mathbb{R}^d} \int_0^t \overline{L} \,\overline{T}_s u \,ds\, d \widehat{\mu}= \int_0^t   \int_{\mathbb{R}^d} \overline{L} \,\overline{T}_s u \, d \widehat{\mu}\, ds=0, \quad \text{for all $u\in D(\overline{L})$},
$$
where the last equality follows from (iii) and the fact that $\overline{T}_s u \in D(\overline{L})$ for all $s>0$. Since $D(\overline{L})$ is dense in $L^1(\mathbb{R}^d, \widehat{\mu})$, the assertion follows. \\ \\
((iv) $\Rightarrow$ (i)): We will show the final assertion. Let $\alpha>0$ be given. For each $n \geq 1$, let $B_n= \{ x \in \mathbb{R}^d:  \|x\|<n \}$ and
$$
\chi_n= 1-\alpha \overline{G}_{\alpha}'^{,B_n} (1_{B_n}),
$$
where $(\overline{G}_{\alpha}'^{, B_n})_{\alpha>0}$ is the resolvent defined in Remark \ref{remimpco}. \\
{\bf (Step 1):}  First, we will show that $\lim_{n \rightarrow \infty} \chi_n=0$ a.e. \\
Notice that $(\chi_n)_{n \geq 1}$ is decreasing  by Lemma \ref{increlem} and $\chi_n \geq 0$ for all $n \geq 1$. Thus, there exists $\chi_{\infty} \in \mathcal{B}_b(\mathbb{R}^d)$ such that $\lim_{n \rightarrow \infty} \chi_n = \chi_{\infty}$ a.e. Since $\widehat{\mu}$ is $(\overline{T}_t)_{t>0}$-invariant, Laplace transformation of $(\overline{T}_t)_{t>0}$
 implies that
$$
\int_{\mathbb{R}^d} \alpha \overline{G}_{\alpha}g \,d\widehat{\mu} = \int_{\mathbb{R}^d} g \, d \widehat{\mu} \quad \text{ for all $g \in L^1(\mathbb{R}^d, \widehat{\mu})_b$}.
$$
Thus, for each $g \in L^1(\mathbb{R}^d, \widehat{\mu})_b$,
\begin{align*}
\int_{\mathbb{R}^d} g \chi_{\infty} \, d \widehat{\mu}  &= \lim_{n \rightarrow \infty} \int_{\mathbb{R}^d}g \chi_n \, d \widehat{\mu} = \lim_{n \rightarrow \infty}\int_{\mathbb{R}^d} g d\widehat{\mu} - \int_{\mathbb{R}^d} g \cdot \alpha \overline{G}_{\alpha}'^{, B_n} (1_{B_n}) d\widehat{\mu} \\
&=\lim_{n \rightarrow \infty} \int_{\mathbb{R}^d} g d\widehat{\mu} - \int_{\mathbb{R}^d} \alpha \overline{G}_{\alpha}^{B_n} g \cdot 1_{B_n} d \widehat{\mu}= \int_{\mathbb{R}^d} g d\widehat{\mu} - \int_{\mathbb{R}^d} \alpha \overline{G}_{\alpha} g\, d\widehat{\mu}=0,
\end{align*}
where Theorem \ref{mainijcie}(ii) and Lebesgue's theorem are used for the last part. Thus, $\chi_{\infty}=0$ a.e. as desired.
\\
{\bf (Step 2):} Note that $\chi_n \in \widehat{H}^{1,2}_{loc}(\mathbb{R}^d, \mu)$ and $(\chi_n-1)^- \in \widehat{H}^{1,2}_0(\mathbb{R}^d, \mu)_0$ since $\overline{G}_{\alpha}'^{, B_n}(1_{B_n})\in \widehat{H}^{1.2}_0(\mathbb{R}^d, \mu)_{0}$ with $\text{supp}\big (\overline{G}_{\alpha}'^{, B_n}(1_{B_n})\big ) \subset B_n$. Now, we will show that \eqref{invinequ} is satisfied. Fix $n \geq 1$ and let
$$
w_{\beta} = \beta \overline{G}_{\beta+\alpha}' \alpha \overline{G}_{\alpha}'^{, B_n} (1_{B_n}), \quad \beta>0.
$$
Using \eqref{weakineqsom},
\begin{align}
\mathcal{E}^{0}_{\beta+\alpha} (w_{\beta}, w_{\beta}) &= \mathcal{E}^0_{\beta+\alpha} \big(  \beta \overline{G}'_{\beta+\alpha} \alpha \overline{G}_{\alpha}'^{, B_n} (1_{B_n}),  \beta \overline{G}_{\beta+\alpha} \alpha \overline{G}_{\alpha}'^{, B_n} (1_{B_n})  \big)\nonumber \\
& \leq  \beta \big( \alpha \overline{G}_{\alpha}'^{, B_n} (1_{B_n}), w_{\beta}   \big)_{L^2(\mathbb{R}^d, \widehat{\mu})}. \label{varineq1}
\end{align}
Moreover, we have
$$
0 \leq \big( \alpha \overline{G}_{\alpha}'^{, B_n} (1_{B_n}) -w_{\beta}, \, \alpha \overline{G}_{\alpha}'^{, B_n} (1_{B_n}) -w_{\beta}		\big)_{L^2(\mathbb{R}^d, \widehat{\mu})},
$$
which implies that
\begin{align} \label{inequ3}
\big(\alpha \overline{G}_{\alpha}'^{, B_n} (1_{B_n}) -w_{\beta}, \, \alpha \overline{G}_{\alpha}'^{, B_n} (1_{B_n})		 \big)_{L^2(\mathbb{R}^d, \widehat{\mu})}  \geq\,  \big(\alpha \overline{G}_{\alpha}'^{, B_n} (1_{B_n}) -w_{\beta}, \, w_{\beta}  \big)_{L^2(\mathbb{R}^d, \widehat{\mu})}. 
\end{align}
Therefore, \eqref{varineq1} and \eqref{inequ3} lead us to obtain
\begin{align}\label{inequ3bis}
\mathcal{E}^0_{\alpha} ( w_{\beta}, w_{\beta}  ) &\leq \beta \big( \alpha \overline{G}_{\alpha}'^{, B_n} (1_{B_n})-w_{\beta},   w_{\beta} \big)_{L^2(\mathbb{R}^d, \widehat{\mu})}\nonumber \\
&\leq \beta \big( \alpha \overline{G}_{\alpha}'^{, B_n} (1_{B_n})-w_{\beta},\,  \alpha \overline{G}_{\alpha}'^{, B_n} (1_{B_n})   	\big)_{L^2(\mathbb{R}^d, \widehat{\mu})}.
\end{align}
Observe that by using \eqref{stampale}, for each $v \in \widehat{H}^{1,2}_0(\mathbb{R}^d, \mu)_{0, b}$
\begin{align*}
&\mathcal{E}^0_{\beta+\alpha} ( w_{\beta},  v )  +  \int_{\mathbb{R}^d} \langle \mathbf{B}, \nabla w_{\beta} \rangle v\, d\widehat{\mu} \\
&= \mathcal{E}^0_{\beta+\alpha} \big(\beta \overline{G}_{\beta+\alpha}' \alpha \overline{G}_{\alpha}'^{, B_n} (1_{B_n}), \, v \big)  + \int_{\mathbb{R}^d} \big\langle \mathbf{B}, \nabla \beta \overline{G}_{\beta+\alpha}' \alpha \overline{G}_{\alpha}'^{, B_n} (1_{B_n})    \big\rangle v \, d \widehat{\mu} \\
&= \beta \big( \alpha \overline{G}_{\alpha}'^{, B_n} (1_{B_n}), v  \big)_{L^2(\mathbb{R}^d, \widehat{\mu})},
\end{align*}
which implies that
\begin{equation} \label{identisob}
\mathcal{E}^0_{\alpha} \big(  w_{\beta}, v  \big)  + \int_{\mathbb{R}^d} \langle \mathbf{B}, \nabla w_{\beta} \rangle vd \widehat{\mu}  = \beta \big( \alpha \overline{G}_{\alpha}'^{, B_n} (1_{B_n}) - w_{\beta}, v  \big)_{L^2(\mathbb{R}^d, \widehat{\mu})} \quad \text{ for all $v \in  \widehat{H}^{1,2}_0(\mathbb{R}^d, \mu)_{0,b}$}.
\end{equation}
Thus, replacing $v$ with $\alpha \overline{G}_{\alpha}'^{, B_n} (1_{B_n})$ in \eqref{identisob} and using  \eqref{inequ3bis} and the Cauchy-Schwarz inequality, it holds
\begin{align*}
\mathcal{E}^0_{\alpha} ( w_{\beta}, w_{\beta}  ) & \leq \beta \big( \alpha \overline{G}_{\alpha}'^{, B_n} (1_{B_n}) -w_{\beta},\, \alpha \overline{G}_{\alpha}'^{, B_n} (1_{B_n})  \big)_{L^2(\mathbb{R}^d, \widehat{\mu})}\\
&=\mathcal{E}^0_{\alpha} \big(  w_{\beta}, \alpha \overline{G}_{\alpha}'^{, B_n} (1_{B_n})   \big)  + \int_{\mathbb{R}^d} \langle \mathbf{B}, \nabla w_{\beta} \rangle \alpha \overline{G}_{\alpha}'^{, B_n} (1_{B_n})  \,d \widehat{\mu}  \\
& \leq \mathcal{E}^0_{\alpha} (w_{\beta}, w_{\beta})^{1/2} \cdot \mathcal{E}^0_{\alpha} \big( \alpha \overline{G}_{\alpha}'^{, B_n} (1_{B_n}) , \alpha \overline{G}_{\alpha}'^{, B_n} (1_{B_n})   \big)^{1/2} + \| \psi \mathbf{B} \|_{L^2(B_n, \mu)} \| \nabla w_{\beta} \|_{L^2(B_n, \mu)} \\
& \leq \mathcal{E}^0_{\alpha} (w_{\beta}, w_{\beta})^{1/2} \big(    \|\alpha 1_{B_n}\|_{L^2(\mathbb{R}^d, \widehat{\mu})} \|\alpha \overline{G}_{\alpha}'^{, B_n}(1_{B_n})\|_{L^2(\mathbb{R}^d, \widehat{\mu})}  +\sqrt{2\lambda_{B_n}^{-1}} \| \psi \bold{B} \|_{L^2(B_n, \mu)}   \big) \\
&\leq  \mathcal{E}^0_{\alpha} (w_{\beta}, w_{\beta})^{1/2} \big(  \alpha \widehat{\mu}(B_n)  +\sqrt{2\lambda_{B_n}^{-1}} \| \psi \bold{B} \|_{L^2(B_n, \mu)}   \big).
\end{align*}
Thus, using the Banach-Alaoglu Theorem and the fact that
\begin{align*}
&\lim_{\beta\rightarrow \infty} w_{\beta} = \lim_{\beta \rightarrow \infty} \beta \overline{G}'_{\beta+\alpha}\alpha \overline{G}_{\alpha}'^{, B_n}(1_{B_n}) \\
&=  \lim_{\beta \rightarrow \infty} \frac{\beta}{\beta+\alpha}  (\beta+\alpha) \overline{G}'_{\beta+\alpha} \alpha \overline{G}_{\alpha}'^{, B_n}(1_{B_n}) = \alpha \overline{G}_{\alpha}'^{, B_n} (1_{B_n}) \quad \text{ in $L^2(\mathbb{R}^d, \widehat{\mu})$},
\end{align*}
there exists a subsequence of $(w_{\beta})_{\beta>0}$, say again, $(w_{\beta})_{\beta>0}$ such that
\begin{equation}\label{eq52}
\lim_{\beta \rightarrow \infty} w_{\beta} = \alpha \overline{G}_{\alpha}'^{, B_n} (1_{B_n}) \quad \text{ \; weakly in $D(\mathcal{E}^0)$}.
\end{equation}
Meanwhile, it follows from Lemma \ref{increlem}, the resolvent equation and the sub-Markovian property of $(\overline{G}_{\alpha}'^{, B_n})_{\alpha>0}$ that 
\begin{align*}
w_{\beta} &\geq \beta \overline{G}_{\beta+\alpha}'^{, B_n} \alpha \overline{G}_{\alpha}'^{, B_n} (1_{B_n}) = \overline{G}_{\alpha}'^{, B_n} (\alpha 1_{B_n}) - \overline{G}_{\beta+\alpha}'^{, B_n} (\alpha 1_{B_n})   \geq \alpha \overline{G}_{\alpha}'^{, B_n} (1_{B_n})  - \frac{\alpha}{\beta+\alpha}, \;\qquad \beta>0,
\end{align*}
so that
\begin{equation} \label{imptineq}
 \alpha \overline{G}_{\alpha}'^{, B_n} (1_{B_n}) -w_{\beta} \leq \frac{\alpha}{\beta+\alpha}, \;\quad \beta>0.
\end{equation}
Hence, we finally obtain that for each $v \in \widehat{H}^{1,2}_0(\mathbb{R}^d, \mu)_{0,b}$, $v\ge 0$,
\begin{align*}
	&\mathcal{E}^0_{\alpha} (\chi_n, v) + \int_{\mathbb{R}^d} \langle \mathbf{B}, \nabla \chi_n \rangle v d\widehat{\mu} \\
	&= \alpha \int_{\mathbb{R}^d} v\, d \widehat{\mu} - \mathcal{E}^0_{\alpha} \big(  \alpha \overline{G}_{\alpha}'^{, B_n} (1_{B_n}), \, v \big ) - \int_{\mathbb{R}^d} \langle \mathbf{B}, \alpha \overline{G}_{\alpha}'^{, B_n} (1_{B_n}) \rangle v d \widehat{\mu} \\
	& \underset{\text{by } \eqref{eq52}}{=}\big (\lim_{\beta \rightarrow \infty} \alpha \int_{\mathbb{R}^d} v \, d \widehat{\mu} - \mathcal{E}^0_{\alpha} \left( w_{\beta}, \, v     \right) - \int_{\mathbb{R}^d} \langle \mathbf{B}, w_{\beta} \rangle v d \widehat{\mu}\big )  \\
	& \underset{\text{by } \eqref{identisob}}{=} \,\lim_{\beta \rightarrow \infty}\big ( \alpha \int_{\mathbb{R}^d} v \, d \widehat{\mu} - \beta \int_{\mathbb{R}^d} \big( \alpha \overline{G}_{\alpha}'^{, B_n} (1_{B_n}) - w_{\beta}\big) v\, d\widehat{\mu} \big )\\
	& \underset{\text{by \eqref{imptineq}}}{\geq}  \,\lim_{\beta \rightarrow \infty} \alpha \int_{\mathbb{R}^d} v \, d \widehat{\mu} -\frac{\beta \alpha}{\beta+\alpha} \int_{\mathbb{R}^d} v d\widehat{\mu}=0,
\end{align*}
as desired.
\end{proof}

\begin{theorem}\label{theoconserva}
Assume {\bf (A)}. Let $N_0 \in \mathbb{N}$. Let $u \in C^2(\mathbb{R}^d \setminus \overline{B}_{N_0}) \cap C(\mathbb{R}^d)$, $u  \geq 0$ with
$$
\lim_{\|x\| \rightarrow \infty} u(x) = \infty.
$$
By obvious interpretation, we can define $Lu$ and $L'u$ as functions on $\mathbb{R}^d \setminus \overline{B}_{N_0}$, where $L$ and $L'$ are given as in \eqref{definition of L} and \eqref{definition of L'}. Then, the following hold:
\begin{itemize}
\item[(i)]
If there exists a constant $M>0$ such that
$$
L' u \leq M u \quad \text{ a.e. on $\mathbb{R}^d \setminus \overline{B}_{N_0}$},
$$
then $\widehat{\mu}$ is $(\overline{T}_t)_{t>0}$-invariant, equivalently $(T'_t)_{t>0}$ is conservative, i.e. $T'_t1=1$ for some and hence all $t>0$ (cf. \cite[Lemma 3.3(i)]{LT22in}).

\item[(ii)]
If there exists a constant $M>0$ such that
$$
L u \leq M u  \quad \text{ a.e. on $\mathbb{R}^d \setminus \overline{B}_{N_0}$},
$$
then $\widehat{\mu}$ is $(\overline{T}'_t)_{t>0}$-invariant, equivalently $(T_t)_{t>0}$ is conservative, i.e. $T_t1=1$ for some and hence all $t>0$ (cf. \cite[Lemma 3.3(ii)]{LT22in}).
\end{itemize}
\end{theorem}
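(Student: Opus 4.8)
The plan is to deduce Theorem~\ref{theoconserva} from the analytic characterisation of invariance in Theorem~\ref{analycharin}, via a Lyapunov truncation. First, part~(ii) is part~(i) applied with $\mathbf B$ replaced by $-\mathbf B$: this substitution interchanges $L$ with $L'$, $(\overline L,D(\overline L))$ with $(\overline L',D(\overline L'))$, and $(\overline T_t)_{t>0},(T_t)_{t>0}$ with $(\overline T'_t)_{t>0},(T'_t)_{t>0}$, so that the hypothesis $Lu\le Mu$ of~(ii) becomes the hypothesis of~(i) for the new data and the conclusion of~(i) becomes the conclusion of~(ii). Hence it suffices to prove~(i), for which, by Theorem~\ref{analycharin} (implication (i)$\Rightarrow$(iv)), it is enough to produce, for some $\alpha>0$, functions $\chi_n\in\widehat H^{1,2}_{loc}(\R^d,\mu)$ with $(\chi_n-1)^-\in\widehat H^{1,2}_0(\R^d,\mu)_{0,b}$, $\chi_n\to 0$ a.e., and \eqref{invinequ}. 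Throughout we use that, for $\chi\in\widehat H^{1,2}_{loc}(\R^d,\mu)$ with $L'\chi\in L^2_{loc}(\R^d,\widehat\mu)$, Proposition~\ref{basicprop} and density of $C_0^\infty(\R^d)$ give $\mathcal E^0_\alpha(\chi,v)+\int_{\R^d}\langle\mathbf B,\nabla\chi\rangle v\,d\widehat\mu=\int_{\R^d}(\alpha\chi-L'\chi)\,v\,d\widehat\mu$ for $v\in\widehat H^{1,2}_0(\R^d,\mu)_{0,b}$, so that \eqref{invinequ} is precisely $L'\chi_n\le\alpha\chi_n$ in this weak sense.

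For the construction, fix a non-increasing $\zeta\in C^\infty([0,\infty))$ with $\zeta(0)=1$, $\zeta\equiv 0$ on $[1,\infty)$, put $c:=(\int_0^1\zeta)^{-1}$, extend $u$ to some $\widetilde u\in C^2(\R^d)$ with $\widetilde u=u$ on $\R^d\setminus B_{N_0}$, and pick $S_0\ge 0$ with $\widetilde u\ge -S_0$ on $\R^d$. For $a>0$ set $\varphi_a(t):=c\int_0^{(t+S_0)/a}\zeta(r)\,dr$ ($t\ge -S_0$); then $\varphi_a\in C^2$ is non-decreasing and concave, $\varphi_a\equiv 1$ on $[a-S_0,\infty)$, $0\le\varphi'_a\le c/a$, $|\varphi''_a|\le(c\|\zeta'\|_\infty)/a^2$, and concavity gives $t\varphi'_a(t)\le\varphi_a(t)$ for $t\ge 0$. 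Choose $a_n\uparrow\infty$, put $\alpha:=M+1$ and
\[
g_n:=(\alpha-L')\varphi_{a_n}(\widetilde u)=\alpha\varphi_{a_n}(\widetilde u)-\varphi'_{a_n}(\widetilde u)L'\widetilde u-\tfrac12\varphi''_{a_n}(\widetilde u)\langle\widehat A\nabla\widetilde u,\nabla\widetilde u\rangle\in L^2_{loc}(\R^d,\widehat\mu),
\]
where the chain rule is valid a.e.\ because $\widetilde u,\varphi_{a_n}\in C^2$. On $\R^d\setminus\overline B_{N_0}$ we have $\widetilde u=u\ge 0$, $L'\widetilde u=L'u\le Mu$, $\varphi'_{a_n}\ge 0$, $\varphi''_{a_n}\le 0$, $\widehat A\ge 0$, so $g_n\ge\alpha\varphi_{a_n}(u)-M\,u\,\varphi'_{a_n}(u)\ge(\alpha-M)\varphi_{a_n}(u)\ge 0$ there; hence $g_n^-$ is supported in $\overline B_{N_0}$, and since $\varphi_{a_n},\varphi'_{a_n},\varphi''_{a_n}=O(a_n^{-1})$ while $L'\widetilde u\in L^1(\overline B_{N_0},\widehat\mu)$ and $\langle\widehat A\nabla\widetilde u,\nabla\widetilde u\rangle$ is locally bounded, $\|g_n^-\|_{L^1(\R^d,\widehat\mu)}=O(a_n^{-1})$. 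Now set
\[
\chi_n:=\varphi_{a_n}(\widetilde u)+\overline{G}'_{\alpha}(g_n^-),
\]
with $\overline{G}'_{\alpha}$ the resolvent of $(\overline L',D(\overline L'))$ (cf.\ Remark~\ref{resestdual}); since $\overline{G}'_{\alpha}(g_n^-)\in D(\mathcal E^0)\subset\widehat H^{1,2}_{loc}(\R^d,\mu)$ (by the $L'$-version of Lemma~\ref{SD3sqr}(i) and local ellipticity), $\chi_n\in\widehat H^{1,2}_{loc}(\R^d,\mu)$.

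It remains to verify the three requirements. Combining the identity above for $\varphi_{a_n}(\widetilde u)$ with the $L'$-version of \eqref{stampale} (Remark~\ref{resestdual}, Theorem~\ref{mainijcie}(i)) applied to $\overline{G}'_{\alpha}(g_n^-)$, we get for $v\in\widehat H^{1,2}_0(\R^d,\mu)_{0,b}$, $v\ge 0$,
\[
\mathcal E^0_\alpha(\chi_n,v)+\int_{\R^d}\langle\mathbf B,\nabla\chi_n\rangle v\,d\widehat\mu=\int_{\R^d}g_n v\,d\widehat\mu+\int_{\R^d}g_n^- v\,d\widehat\mu=\int_{\R^d}g_n^+ v\,d\widehat\mu\ge 0,
\]
which is \eqref{invinequ}. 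Since $\varphi_{a_n}\le 1$ and $\varphi_{a_n}(\widetilde u)=1$ outside the bounded set $\{\widetilde u<a_n-S_0\}$, while $\overline{G}'_{\alpha}(g_n^-)\ge 0$, the function $(\chi_n-1)^-=(1-\chi_n)^+$ is bounded with bounded support, hence lies in $\widehat H^{1,2}_0(\R^d,\mu)_{0,b}$. Finally $\varphi_{a_n}(\widetilde u)\le c(\widetilde u+S_0)/a_n\to 0$ pointwise and $\|\overline{G}'_{\alpha}(g_n^-)\|_{L^1(\R^d,\widehat\mu)}\le\alpha^{-1}\|g_n^-\|_{L^1(\R^d,\widehat\mu)}=O(a_n^{-1})\to 0$, so after passing to a subsequence $\chi_n\to 0$ a.e. Theorem~\ref{analycharin} then gives that $\widehat\mu$ is $(\overline T_t)_{t>0}$-invariant, and its equivalence with conservativeness of $(T'_t)_{t>0}$ is \cite[Lemma 3.3(i)]{LT22in}.

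The point that forces this slightly indirect construction — and which I expect to be the main obstacle — is that $L'u\le Mu$ is assumed only on $\R^d\setminus\overline B_{N_0}$, where moreover $u$ need not be in $\widehat H^{1,2}_{loc}$, whereas on a compact set $L'$ applied to a $C^2$ function lies only in $L^2_{loc}(\widehat\mu)$ and is not locally bounded; thus the naive choice $\chi_n=\varphi_{a_n}(u)$ is inadmissible and $(\alpha-L')\chi_n$ cannot be controlled pointwise on $\overline B_{N_0}$. Passing to the $C^2$-extension $\widetilde u$ repairs the regularity, and adding the resolvent correction $\overline{G}'_{\alpha}(g_n^-)$ restores the sign of $(\alpha-L')\chi_n$ there while perturbing $\chi_n$ only by an $L^1$-quantity of order $a_n^{-1}$, which is harmless both near infinity and for the a.e.\ limit. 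The remaining work is routine but has to be carried out in the two-measure setting: justifying the weak chain rule and the localized integration by parts against compactly supported test functions, and a standard truncation to make \eqref{stampale} applicable to $g_n^-\in L^1(\R^d,\widehat\mu)\cap L^2(\R^d,\widehat\mu)$.
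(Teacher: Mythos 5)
Your argument is correct in substance and reduces, as the paper does, to verifying criterion (i) of Theorem \ref{analycharin} (and handling part (ii) by the substitution $\mathbf B\to-\mathbf B$, which is also how the paper treats it), but the way you manufacture the functions $\chi_n$ is genuinely different. The paper first invokes the modification method of \cite[Lemma 3.26]{LST22} to replace $u$ by a single global $u_1\in C^2(\R^d)$ with $u_1\to\infty$ and $L'u_1\le Mu_1$ a.e.\ on all of $\R^d$, and then simply takes $\chi_n=u_1/n$, for which $(\chi_n-1)^-\in\widehat H^{1,2}_0(\R^d,\mu)_{0,b}$, $\chi_n\to0$ a.e.\ and \eqref{invinequ} (with $\alpha=M$) follow at once by integration by parts. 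You instead keep an arbitrary $C^2$ extension $\widetilde u$, accept that the Lyapunov inequality fails on $\overline B_{N_0}$, truncate with the bounded concave $\varphi_{a_n}$ (playing the role of the scaling $u_1/n$), and repair the sign of $(\alpha-L')\chi_n$ by adding the resolvent correction $\overline G'_\alpha(g_n^-)$, whose $L^1(\widehat\mu)$-size $O(a_n^{-1})$ is harmless; the inequality \eqref{invinequ} then comes from combining the pointwise computation for $\varphi_{a_n}(\widetilde u)$ with the $L'$-version of \eqref{stampale}. What the paper's route buys is brevity, at the price of outsourcing the extension of the differential inequality across the ball to the external lemma; what your route buys is self-containedness within the tools already proved here (Theorem \ref{mainijcie}, Remark \ref{resestdual}), at the price of extra bookkeeping that you correctly flag but should carry out if this were written in full: the truncation argument extending \eqref{stampale} from $L^1(\R^d,\widehat\mu)_b$ to $g_n^-\in L^1\cap L^2$ with compact support (via \eqref{weakineqsom} and weak convergence in $D(\mathcal E^0)$), the justification that $\mathcal E^0_\alpha(\varphi_{a_n}(\widetilde u),v)$ is meaningful and that the integration by parts holds for $\varphi_{a_n}(\widetilde u)=1+(\varphi_{a_n}(\widetilde u)-1)$ with $\varphi_{a_n}(\widetilde u)-1\in C^2_0(\R^d)\subset D(L^0)$, and the verification that the compactly supported bounded element $(\chi_n-1)^-$ of $D(\mathcal E^0)$ indeed lies in $\widehat H^{1,2}_0(\R^d,\mu)_{0,b}$ (local uniform ellipticity plus a localization as in Lemma \ref{applempn}). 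None of these is an obstruction, so your proposal stands as a valid alternative proof.
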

\begin{proof}
(i) 
Assume that there exists a constant $M>0$ such that
$$
L' u \leq M u \quad \text{ a.e. on $\mathbb{R}^d \setminus \overline{B}_{N_0}$}.
$$
According to the method in the proof of \cite[Lemma 3.26]{LST22}, there exists $u_1 \in C^2(\mathbb{R}^d)$ such that
$$
\lim_{\|x\| \rightarrow \infty} u_1(x) = \infty, \qquad \quad L'u_1 \leq M u_1 \quad \text{ a.e. on $\mathbb{R}^d$}.
$$
For each $n \geq 1$, let $\chi_n = \frac{u_1}{n}$. Then,  $\chi_n \in C^2(\mathbb{R}^d)\subset \widehat{H}_{loc}^{1,2}(\mathbb{R}^d, \mu)$ for each $n \geq 1$. Since $\lim_{\|x\| \rightarrow \infty} \chi_n(x) = \infty$, we get $(\chi_n -1)^- \in \widehat{H}^{1,2}_0(\mathbb{R}^d, \mu)_{0, b}$ for each $n \geq 1$. Moreover, $\lim_{n \rightarrow \infty} \chi_n =0$ a.e. Since
$$
\int_{\mathbb{R}^d} (M \chi_n - L' \chi_n )\,v \, d \widehat{\mu}  \geq 0,\quad \text{ for all $v \in \widehat{H}^{1,2}_0(\mathbb{R}^d, \mu)_{0,b}$, $v\ge 0$,  and $n \geq 1$},
$$
it follows from integration by parts that
\begin{equation*}
\mathcal{E}^0_{M} (\chi_n, v) + \int_{\mathbb{R}^d} \langle \mathbf{B}, \nabla \chi_n \rangle v \,d \widehat{\mu} \geq 0 \quad \text{ for all $v \in \widehat{H}^{1,2}_0(\mathbb{R}^d, \mu)_{0,b}$, $v\ge 0$,  \,and\, $n \geq 1$}.
\end{equation*}
Then, by Theorem \ref{analycharin}, $\widehat{\mu}$ is $(\overline{T}_t)_{t>0}$-invariant. \\
(ii) Analogously to (i), there exists $\widehat{\chi}_n \in C^2(\mathbb{R}^d)\subset \widehat{H}_{loc}^{1,2}(\mathbb{R}^d, \mu)$ for each $n \geq 1$ such that 
$$
\lim_{\|x\| \rightarrow \infty} \widehat{\chi}_n(x) = \infty, \; \; \; \; (\widehat{\chi}_n -1)^- \in \widehat{H}^{1,2}_0(\mathbb{R}^d, \mu)_{0, b}  \text{ for each $n \geq 1$}
$$
and that
\begin{equation*}
\mathcal{E}^0_{M} ( \widehat{\chi}_n, v) + \int_{\mathbb{R}^d} \langle -\mathbf{B}, \nabla \widehat{\chi}_n \rangle v\, d \widehat{\mu} \geq 0 \quad \text{ for all $v \in \widehat{H}^{1,2}_0(\mathbb{R}^d, \mu)_{0,b}$, $v\ge 0$,  \,and\, $n \geq 1$}.
\end{equation*}
Therefore, $\widehat{\mu}$ is $(\overline{T}'_t)_{t>0}$-invariant.
\end{proof}

\noindent
The following proposition illustrates that weak existence of a solution to \eqref{weaksolution} can be obtained for a large class of coefficients.
\begin{proposition}\label{weakexfairlygeneral}
Let $p>d$ and $p \geq 2$.  Let $A=(a_{ij})_{1 \leq i,j \leq d}$ be a symmetric matrix of measurable functions on $\mathbb{R}^d$ which is locally uniformly strictly elliptic on $\mathbb{R}^d$ and satisfies ${\rm div}A \in L^p_{loc}(\mathbb{R}^d, \mathbb{R}^d)$ (which holds for instance if $a_{ij} \in H_{loc}^{1,p}(\R^d)$, $1 \le i,j \le d$). Let $\psi \in L^1_{loc}(\mathbb{R}^d)$ with $\frac{1}{\psi} \in L^{\infty}_{loc}(\mathbb{R}^d)$, $\widehat{A} = \frac{1}{\psi} A$ and $\widehat{\bold{G}}$ be  a measurable vector field with $\psi \widehat{\bold{G}} \in L^p_{loc}(\mathbb{R}^d, \mathbb{R}^d)$. Then, the following hold:
\begin{itemize}
\item[(i)] There exists $\rho \in H^{1,2}_{loc}(\mathbb{R}^d)\cap C(\mathbb{R}^d)$ with $\rho(x)>0$ for all $x\in \mathbb{R}^d$ such that $\mathbf{B}:=\widehat{\bold{G}}-\beta^{\rho, A, \psi}$ satisfies $\bold{B} \in L^2_{loc}(\mathbb{R}^d, \mathbb{R}^d, \psi\rho dx)$, $\psi \bold{B} \in L^2_{loc}(\mathbb{R}^d, \mathbb{R}^d, \rho dx)$ and 
$$
\int_{\mathbb{R}^d} \langle \mathbf{B}, \nabla f \rangle \rho \psi dx = 0, \quad \text{ for all $f \in C_0^{\infty}(\mathbb{R}^d)$}.
$$
In particular, defining $\mu:=\rho dx$ and $\widehat{\mu}:=\psi d\mu$, {\bf (A)} is fulfilled.
\item[(ii)]
Let $\widehat{\mu}$ be defined as in  (i). If there exists a constant $M>0$ and $N_0 \in \mathbb{N}$, such that
\begin{equation}\label{growthconser}
-\frac{\langle \widehat{A}(x)x, x\rangle }{\|x\|^2} + \frac12 {\rm trace}(\widehat{A}(x))  + \langle \widehat{\bold{G}}(x), x \rangle \leq M \|x\|^2 \left( \ln \|x\| +1   \right), \quad \text{ for a.e. $x \in \mathbb{R}^d \setminus B_{N_0}$},
\end{equation}
then $(T_t)_{t>0}$ as defined in the paragraph right after Theorem \ref{pojjjde} is conservative.  Moreover, $\M$ as in Proposition \ref{Huntex} solves \eqref{weaksolution} with $\widehat{\sigma}\widehat{\sigma}^T=\widehat{A}$ and $\bold{G}=\widehat{\bold{G}}$ for strictly $\mathcal{E}$-q.e. (hence a.e.) $x \in \mathbb{R}^d$.
\end{itemize}
\end{proposition}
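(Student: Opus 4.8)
The plan is to prove the two assertions separately; (i) rests on an external existence theorem for invariant densities, and (ii) on Theorems~\ref{theoconserva} and~\ref{prop:3.1.6}. For (i), consider the \emph{non-degenerate} elliptic operator $L_0 u:=\frac12{\rm trace}(A\nabla^2 u)+\langle\psi\widehat{\mathbf G},\nabla u\rangle$, $u\in C_0^{\infty}(\R^d)$, whose coefficients satisfy precisely the standard hypotheses: $A$ is locally uniformly strictly elliptic, ${\rm div}\,A\in L^p_{loc}(\R^d,\R^d)$ and $\psi\widehat{\mathbf G}\in L^p_{loc}(\R^d,\R^d)$ with $p>d$. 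By the existence theory for infinitesimally invariant measures of such operators---which, since $p>d$, furnishes a density in $H^{1,p}_{loc}(\R^d)$ admitting a continuous, strictly positive version (Sobolev embedding and the Harnack inequality)---there is $\rho\in H^{1,p}_{loc}(\R^d)\cap C(\R^d)$, $\rho>0$ on $\R^d$, with $\int_{\R^d}L_0 u\,\rho\,dx=0$ for all $u\in C_0^{\infty}(\R^d)$. Set $\mu:=\rho\,dx$, $\widehat\mu:=\psi\,\mu$ and $\mathbf B:=\widehat{\mathbf G}-\beta^{\rho,A,\psi}$. Integrating by parts the symmetric second-order part of $L_0$ against $\rho$ (legitimate since $\rho\in H^{1,1}_{loc}$ and ${\rm div}\,A\in L^1_{loc}$, with the boundary term removed by inserting a cutoff equal to $1$ on ${\rm supp}(u)$) turns the invariance identity, using $\psi\mathbf B=\psi\widehat{\mathbf G}-\beta^{\rho,A}$, into
\[
0=\int_{\R^d}L_0 u\,\rho\,dx=\int_{\R^d}\langle\psi\widehat{\mathbf G}-\beta^{\rho,A},\nabla u\rangle\,\rho\,dx=\int_{\R^d}\langle\mathbf B,\nabla u\rangle\,d\widehat\mu,\qquad u\in C_0^{\infty}(\R^d),
\]
which is~\eqref{divfree}.

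It remains to check the integrability requirements of {\bf (A)}, which is routine bookkeeping: $\varphi:=\sqrt{\rho}\in H^{1,2}_{loc}$ by the chain rule, since $\rho$ is continuous and strictly positive, hence locally bounded away from $0$ and above; $\psi\rho\in L^1_{loc}$ and ${\rm div}\,A\in L^2_{loc}(\R^d,\R^d,\mu)$ because $\rho\in L^{\infty}_{loc}$ and ${\rm div}\,A\in L^p_{loc}\subset L^2_{loc}$ (as $p\ge2$); and $\psi\mathbf B=\psi\widehat{\mathbf G}-\beta^{\rho,A}\in L^2_{loc}(\R^d,\R^d,dx)$, since $\psi\widehat{\mathbf G}\in L^p_{loc}\subset L^2_{loc}$ and $\beta^{\rho,A}=\frac12{\rm div}\,A+\frac{1}{2\rho}A\nabla\rho$ with ${\rm div}\,A\in L^2_{loc}$, $\frac{1}{\rho},A\in L^{\infty}_{loc}$, $\nabla\rho\in L^2_{loc}$. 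From the last fact $\psi\mathbf B\in L^2_{loc}(\R^d,\R^d,\mu)$, and for every bounded open $V\subset\R^d$
\[
\int_V|\mathbf B|^2\,d\widehat\mu=\int_V|\psi\mathbf B|^2\,\frac{1}{\psi}\,\rho\,dx\le\big\|\tfrac{1}{\psi}\big\|_{L^{\infty}(V)}\|\rho\|_{L^{\infty}(V)}\int_V|\psi\mathbf B|^2\,dx<\infty,
\]
so $\mathbf B\in L^2_{loc}(\R^d,\R^d,\widehat\mu)$; hence {\bf (A)} holds for $\rho,\mu,\widehat\mu,\mathbf B$.

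For (ii), note that $\beta^{\rho,A,\psi}+\mathbf B=\widehat{\mathbf G}$ by construction, so $Lu=\frac12{\rm trace}(\widehat A\nabla^2 u)+\langle\widehat{\mathbf G},\nabla u\rangle$ in the sense of Theorem~\ref{theoconserva}. Assuming~\eqref{growthconser} and enlarging $N_0$ to $\max(N_0,2)$ (harmless, since~\eqref{growthconser} then still holds on the smaller set $\R^d\setminus B_{N_0}$), I would use the Lyapunov function $u(x):=(\ln\|x\|^2)^+$: it is nonnegative, lies in $C(\R^d)$, equals the $C^{\infty}$ function $\ln\|x\|^2$ on $\R^d\setminus\overline{B}_{N_0}$, and tends to $+\infty$. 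With $\nabla u=2x/\|x\|^2$ and $\nabla^2 u=\frac{2}{\|x\|^2}I-\frac{4}{\|x\|^4}x\otimes x$, a direct computation gives, a.e. on $\R^d\setminus\overline{B}_{N_0}$,
\[
Lu=\frac{2}{\|x\|^2}\Big(-\frac{\langle\widehat A x,x\rangle}{\|x\|^2}+\frac12{\rm trace}(\widehat A)+\langle\widehat{\mathbf G},x\rangle\Big)\le 2M(\ln\|x\|+1)=Mu+2M\le 3Mu,
\]
using~\eqref{growthconser} for the middle inequality and $u=\ln\|x\|^2>1$ on $\R^d\setminus\overline{B}_{N_0}$ (as $N_0\ge2$) for the last. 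Theorem~\ref{theoconserva}(ii) then gives that $(T_t)_{t>0}$ is conservative. Since $(T_t)_{t>0}$ is conservative---equivalently, $\M$ is non-explosive for strictly $\mathcal E$-q.e.\ starting point---Theorem~\ref{prop:3.1.6}(iii) applies directly: for any locally bounded measurable matrix $\widehat\sigma$ with $\widehat\sigma\widehat\sigma^T=\widehat A$ (such $\widehat\sigma$ exist, cf.\ Remark~\ref{class of drifts}), $\M$ solves~\eqref{weaksolution} for strictly $\mathcal E$-q.e.\ (hence a.e.) $x$, with drift $\mathbf G=\beta^{\rho,A,\psi}+\mathbf B=\widehat{\mathbf G}$.

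The only substantive obstacle is the existence of the invariant density $\rho$ in part (i): this is not internal to the present paper but is supplied by the well-developed elliptic/Fokker--Planck regularity theory for operators with locally $p$-integrable coefficients, which for $p>d$ delivers a continuous, strictly positive density in $H^{1,p}_{loc}$. Given $\rho$, part (i) reduces to a single integration by parts together with bookkeeping of local-integrability exponents, and part (ii) is a Lyapunov-function verification feeding the already-established Theorems~\ref{theoconserva} and~\ref{prop:3.1.6}.
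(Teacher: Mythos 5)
Your proposal is correct and follows essentially the same route as the paper: part (i) rests on the external existence result for a continuous, strictly positive infinitesimally invariant density of the auxiliary non-degenerate operator with drift $\psi\widehat{\mathbf{G}}$ (the paper cites \cite[Theorem 2.27(i)]{LST22}, stated for $d\ge 2$, and treats $d=1$ by an explicit exponential formula), followed by the same integration by parts and the same integrability bookkeeping for {\bf (A)}, while part (ii) is the same logarithmic Lyapunov-function verification fed into Theorem \ref{theoconserva}(ii) and Theorem \ref{prop:3.1.6}(iii). The only differences are cosmetic: you use $u=(\ln\|x\|^2)^+$ with constant $3M$ instead of the paper's $\ln(\|x\|^2\vee N_0^2)+2$ with constant $M$, and you do not single out $d=1$, where the appealed-to existence theory requires either a reference covering that case or the paper's elementary explicit construction.
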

\begin{proof}
(i) First consider the case of $d \geq 2$.
By \cite[Theorem 2.27(i)]{LST22}, there exists $\rho \in H^{1,2}_{loc}(\mathbb{R}^d) \cap C(\mathbb{R}^d)$ with $\rho(x)>0$ for all $x \in \mathbb{R}^d$ such that
$$
\int_{\mathbb{R}^d} \Big \langle \frac{1}{2} A \nabla \rho-\rho\big( \psi \widehat{\bold{G}} - \frac12 {\rm div} A   \big), \nabla f   \Big \rangle dx =0, \quad \text{ for all $f \in C_0^{\infty}(\mathbb{R}^d)$}.
$$
Thus, for all $f \in C_0^{\infty}(\mathbb{R}^d)$ it holds that
$$
\int_{\mathbb{R}^d} \langle \widehat{\bold{G}} - \beta^{\rho, A, \psi}, \nabla f \rangle d\widehat{\mu} = \int_{\mathbb{R}^d} \Big \langle \rho \big(  \psi \widehat{\bold{G}}-\frac12 {\rm div} A \big) - \frac12 A \nabla \rho, \nabla f  \Big \rangle dx = 0.
$$
Moreover, $\psi \mathbf{B}=\psi \widehat{\bold{G}}-\frac12 {\rm div} A -\frac{1}{2\rho} A \nabla \rho \in L^2_{loc}(\mathbb{R}^d, \mathbb{R}^d)$. Since $\rho$ is locally bounded below and above by positive constants and $\frac{1}{\psi} \in L^{\infty}_{loc}(\mathbb{R}^d)$, we get $\psi \bold{B} \in L^2_{loc}(\mathbb{R}^d, \mathbb{R}^d, \rho dx)$ and $\bold{B}=(\frac{1}{\psi})\psi \bold{B} \in L^2_{loc}(\mathbb{R}^d, \mathbb{R}^d, \psi\rho dx)$. The assertion hence follows for $d\ge 2$. Next, consider the case of $d=1$. Then $A$ consists of exactly one function, which we denote by $a$ and moreover ${\rm div}A$ coincides with the weak derivative $a'$. We also write $\widehat{g}=\widehat{\bold{G}}$. Our goal is to find $\rho \in H^{1,2}_{loc}(\mathbb{R})\cap C(\mathbb{R})$ with $\rho(x)>0$ for all $x \in \mathbb{R}$, with $\bold{B}=0$, i.e.
$$
\psi \bold{B}=\psi \widehat{g}-\frac12 a' - \frac{1}{2\rho}  a \rho'=0.
$$
Now let 
$$
\rho(x):=\exp\left( \int_0^x \frac{2}{a(y)}\Big( \psi(y)\widehat{g}(y)-\frac12 a'(y)  \Big) dy \right), \quad x \in \mathbb{R}.
$$
Then, by the fundamental theorem of calculus, $\bold{B}=0$ and $\rho \in H^{1,p}_{loc}(\mathbb{R})\cap C(\mathbb{R})$ with $\rho(x)>0$ for all $x \in \mathbb{R}$, so that the assertion follows. \\
(ii) Define $u(x):=\ln (\|x\|^2 \vee N_0^2)$+2, $x \in \mathbb{R}^d$. Then, $u \in C^{\infty}(\mathbb{R}^d \setminus \overline{B}_{N_0}) \cap C(\mathbb{R}^d)$. Observe that $Lu \leq Mu$ a.e. on $\mathbb{R}^d \setminus \overline{B}_{N_0}$ is equivalent to \eqref{growthconser} since
\begin{equation*} \label{specifical}
Lu = -2 \frac{\langle \widehat{A}(x)x, x \rangle}{\|x\|^4} + \frac{{\rm trace} (  \widehat{A}(x) )}{\|x\|^2} + \frac{2 \langle \widehat{\bold{G}}(x),x  \rangle}{\|x\|^2} \quad \text{ a.e. on $\mathbb{R}^d \setminus \overline{B}_{N_0}$}.
\end{equation*}
Thus, the assertion follows from Theorem \ref{theoconserva}(ii). 
\end{proof}

\text{}\\
\noindent The proof of the following Theorem \ref{regulsthm} is based on the proof of \cite[Theorem 2.1]{St99}, but there are minor differences in details.
For instance, unlike the proof of \cite[Theorem 2.1]{St99}, we will not use the resolvent kernel via the Riesz representation theorem. Rather we will use $W^{2,p}$-regularity results developed by Krylov in \cite{Kry08}. We mention that it may not be possible to directly apply \cite[Theorem 2.1]{St99} to obtain $h \in \widehat{H}^{1,2}_{loc}(\mathbb{R}^d, \mu)$ in Theorem \ref{regulsthm}.
Therefore, we present a detailed proof by checking the calculation involving the measure $\widehat{\mu}$. Additionally, we consider in Theorem \ref{regulsthm} $h \in L_{loc}^{\infty}(\mathbb{R}^d)$ which is a slightly more general assumption than $h \in L^{\infty} (\mathbb{R}^d)$ in \cite[Theorem 2.1]{St99}.
\begin{theorem}\label{regulsthm}
Assume {\bf (A)} and consider the framework in Section \ref{framework}. Assume that for each $1 \leq i,j \leq d$, we have that $a_{ij}$ is locally H\"{o}lder continuous on $\mathbb{R}^d$, i.e.
for each open ball $B$ there exist $L_B>0$ and $\gamma_B \in (0,1]$ such that
$$
|a_{ij}(x)-a_{ij}(y)| \leq L_{B}\|x-y\|^{\gamma_B}, \quad \text{ for all $x,y \in \overline{B}$}.
$$
Let $c \in L_{loc}^1(\mathbb{R}^d, \widehat{\mu})$ and $\bold{G} \in L^2_{loc}(\mathbb{R}^d, \mathbb{R}^d, \widehat{\mu})$ with $\psi \mathbf{G} \in L^2_{loc}(\mathbb{R}^d, \mathbb{R}^d, \mu)$.
Assume that $h \in L_{loc}^{\infty}(\mathbb{R}^d)$
satisfies
\begin{equation}\label{equationlstarhmuhat}
\int_{\mathbb{R}^d}  \Big ( \frac12 \sum_{i,j=1}^d \frac{1}{\psi} a_{ij} \partial_{i} \partial_{j} u +
 \langle \mathbf{G}, \nabla u    \rangle  + cu \Big ) h\, d \widehat{\mu} = 0, \quad \text{ for all $u \in C_0^{\infty}(\mathbb{R}^d)$}.
\end{equation}
Then, $h \in \widehat{H}^{1,2}_{loc}(\mathbb{R}^d, \mu)$ and
$$
\mathcal{E}^0 (u, h) - \int_{\mathbb{R}^d} \langle \mathbf{G} - \beta^{\rho, A, \psi}, \nabla u \rangle h \, d \widehat{\mu} - \int_{\mathbb{R}^d}cuh \,d \widehat{\mu} = 0 \quad \text{ for all $u \in \widehat{H}^{1,2}_0(\mathbb{R}^d, \mu)_0$}.
$$
\end{theorem}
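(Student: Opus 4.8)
The plan is to adapt the proof of \cite[Theorem 2.1]{St99}, replacing the resolvent kernel used there (obtained via Riesz representation) by Krylov's $W^{2,p}$-regularity estimates \cite{Kry08}, and to carry the bookkeeping through the three mutually equivalent measures $dx$, $\mu=\rho\,dx$ and $\widehat\mu=\psi\rho\,dx$. First note that the conclusion is local: by the definition of $\widehat H^{1,2}_{loc}(\mathbb{R}^d,\mu)$ it suffices to show $\phi h\in\widehat H^{1,2}_0(\mathbb{R}^d,\mu)$ for every $\phi\in C_0^\infty(\mathbb{R}^d)$, which — since $\phi h$ has compact support and $h\in L^\infty_{loc}(\mathbb{R}^d)$ — amounts to showing that $h$ has a weak gradient with $\int_{B'}\|\nabla h\|^2\,d\mu<\infty$ for every ball $B'$; and the claimed form identity, once proved for $u\in C_0^\infty(\mathbb{R}^d)$, extends to $\widehat H^{1,2}_0(\mathbb{R}^d,\mu)_0$ by approximation. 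As a preparation I would fix $p>d$, $p\ge 2$ large, and extend the hypothesis \eqref{equationlstarhmuhat} from $u\in C_0^\infty(\mathbb{R}^d)$ to all compactly supported $u\in W^{2,p}(\mathbb{R}^d)$: mollifying $u$, one has $\partial_i\partial_j(u*\eta_\varepsilon)\to\partial_i\partial_j u$ in $L^p_{loc}$ and, by Sobolev embedding, $\nabla(u*\eta_\varepsilon)\to\nabla u$, $u*\eta_\varepsilon\to u$ locally uniformly; converting the three integrals to integrals against $dx$ and using $h\in L^\infty_{loc}$, $\tfrac1\psi\in L^\infty_{loc}$, local boundedness of $a_{ij}$, $\mathbf{G}h,\,ch\in L^1_{loc}(\widehat\mu)$ together with $\rho\in L^s_{loc}$ for some $s>1$ (Sobolev, $\varphi\in H^{1,2}_{loc}$), each term passes to the limit.

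The heart of the argument is the interior regularity of $h$, and this is where Krylov's estimates enter. Fix concentric balls $B'\Subset B''\Subset B$. Since $(a_{ij})$ is locally uniformly elliptic and locally H\"older continuous (in particular uniformly continuous, hence VMO, on $\overline{B''}$), Krylov's $W^{2,p}$-solvability of the Dirichlet problem yields, for every $g\in C_0^\infty(B')$, a function $w=w_g\in W^{2,p}(B'')\cap W^{1,p}_0(B'')$ with $\tfrac12\sum_{i,j}a_{ij}\partial_i\partial_j w=g$ in $B''$, $w=0$ on $\partial B''$, and $\|w\|_{W^{2,p}(B'')}\le C\|g\|_{L^p(B'')}$; by Sobolev embedding $w\in C^1(\overline{B''})$. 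Multiplying by $\chi\in C_0^\infty(B'')$ with $\chi\equiv 1$ on $B'$ gives a compactly supported $W^{2,p}(\mathbb{R}^d)$ function $\chi w$, admissible in the extended identity. Using $\tfrac1\psi a_{ij}\,d\widehat\mu=a_{ij}\,d\mu$, $\langle\mathbf{G},\cdot\rangle\,d\widehat\mu=\langle\psi\mathbf{G},\cdot\rangle\,d\mu$, $c\,d\widehat\mu=c\psi\,d\mu$, and $\tfrac12\sum a_{ij}\partial_i\partial_j(\chi w)=g+R$ with $R\in L^p(B'')$ supported in $B''\setminus B'$ and $\|R\|_{L^p}\le C\|g\|_{L^p}$, one obtains a representation of $\int_{B'}g\,h\,d\mu$ through $g$, the cutoff error $R$, and the quantities $w,\nabla w$ (bounded on $B''$). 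Running this over $g\in C_0^\infty(B')$ and feeding it back into the scheme of \cite[Theorem 2.1]{St99} — exploiting the divergence structure implicit in the data ($\mathrm{div}\,A\in L^2_{loc}(\mu)$, $\psi\mathbf{G}\in L^2_{loc}(\mu)$, and $\rho=\varphi^2$ with $\varphi\in H^{1,2}_{loc}$) together with the above duality — produces $\int_{B'}\|\nabla h\|^2\,d\mu<\infty$, i.e.\ $h\in\widehat H^{1,2}_{loc}(\mathbb{R}^d,\mu)$. The cutoff and boundary contributions are precisely what keep the conclusion from collapsing to $h\equiv 0$ and what encode the adjoint equation.

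Once $h\in\widehat H^{1,2}_{loc}(\mathbb{R}^d,\mu)$ is known, the form identity follows cleanly. For $u\in C_0^\infty(\mathbb{R}^d)$ choose $\chi\in C_0^\infty(\mathbb{R}^d)$ with $\chi\equiv 1$ near $\mathrm{supp}\,u$; then $\chi h\in\widehat H^{1,2}_0(\mathbb{R}^d,\mu)\subset D(\mathcal E^0)$, and since $u\in C_0^2(\mathbb{R}^d)\subset D(L^0)$ with $L^0 u=\tfrac12\mathrm{trace}(\widehat A\nabla^2 u)+\langle\beta^{\rho,A,\psi},\nabla u\rangle$, one has $\int_{\mathbb{R}^d}\bigl(\tfrac12\mathrm{trace}(\widehat A\nabla^2 u)+\langle\beta^{\rho,A,\psi},\nabla u\rangle\bigr)h\,d\widehat\mu=\int_{\mathbb{R}^d}L^0u\cdot\chi h\,d\widehat\mu=-\mathcal E^0(u,\chi h)=-\mathcal E^0(u,h)$. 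Substituting into \eqref{equationlstarhmuhat} and rearranging gives $\mathcal E^0(u,h)-\int_{\mathbb{R}^d}\langle\mathbf{G}-\beta^{\rho,A,\psi},\nabla u\rangle h\,d\widehat\mu-\int_{\mathbb{R}^d}cuh\,d\widehat\mu=0$ for all $u\in C_0^\infty(\mathbb{R}^d)$. For general $u\in\widehat H^{1,2}_0(\mathbb{R}^d,\mu)_0$ I would approximate by $u_n\in C_0^\infty(\mathbb{R}^d)$ with support in a fixed ball and $u_n\to u$ in $\widehat H^{1,2}_0(\mathbb{R}^d,\mu)$: the first term converges because $\nabla h\in L^2_{loc}(\mu)$, the second because $\psi(\mathbf{G}-\beta^{\rho,A,\psi})h\in L^2_{loc}(\mathbb{R}^d,\mathbb{R}^d,\mu)$ (here one checks $\psi\beta^{\rho,A,\psi}=\tfrac12\mathrm{div}\,A+\tfrac1{2\rho}A\nabla\rho\in L^2_{loc}(\mu)$ via $\nabla\rho=2\varphi\nabla\varphi$), and the third because $c\psi h\in L^1_{loc}(\mathbb{R}^d,\mu)$ (with $u$ bounded, or after truncation).

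The main obstacle is the regularity step. The principal part of $L^0$, re-weighted against $\widehat\mu$, is $\tfrac\rho2\sum a_{ij}\partial_i\partial_j$, and since $\tfrac1\psi$ is merely in $L^\infty_{loc}$ (not VMO) and $\rho$ is only of the form $\varphi^2$ with $\varphi\in H^{1,2}_{loc}$, Krylov's estimates can be applied only to the bare operator $\tfrac12\sum a_{ij}\partial_i\partial_j$; transporting the resulting $W^{2,p}$-information through the rough weights $\psi,\rho$ and through the generalized-Dirichlet-form formalism, while keeping the cutoff error terms under control, is the delicate part and the point at which the proof genuinely departs from \cite[Theorem 2.1]{St99}.
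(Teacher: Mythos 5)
Your proposal is incomplete exactly at the step the theorem is about: the proof that $\nabla h\in L^2_{loc}(\mu)$ (equivalently $\chi h\in D(\mathcal{E}^0)$) is never carried out. You set up Krylov's $W^{2,p}$-solvability of the Dirichlet problem $\tfrac12\sum a_{ij}\partial_i\partial_j w_g=g$ for test data $g\in C_0^\infty(B')$ and then assert that ``feeding it back into the scheme of \cite[Theorem 2.1]{St99}\ldots produces $\int_{B'}\|\nabla h\|^2\,d\mu<\infty$'', finally conceding in your last paragraph that transporting the $W^{2,p}$ information through the rough weights is ``the delicate part.'' That is the whole content of the theorem, and the duality scheme you sketch does not obviously deliver it: representing $\int_{B'} g\,h\,d\mu$ through $w_g,\nabla w_g$ with $\|w_g\|_{W^{2,p}}\le C\|g\|_{L^p}$ only yields bounds on the pairing of $h$ against arbitrary $g$, i.e.\ weighted integrability of $h$, not an estimate on $\nabla h$; to extract gradient information you would have to take $g$ of divergence form, at which point the $L^p$-norm of $g$ costs a derivative and the gain from $W^{2,p}$-regularity is lost. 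So as written there is a genuine gap, not just a missing computation.

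The paper's argument is structured differently and avoids this problem: Krylov's results are used to build the \emph{resolvent} $V_\alpha$ of $L^A=\tfrac12\sum a_{ij}\partial_i\partial_j$ on the ball, i.e.\ $(\alpha-L^A)V_\alpha f=f$ with the maximum-principle bound $\|\alpha V_\alpha f\|_{C(\overline B_r)}\le\|f\|_{L^\infty(B_r)}$, and $\alpha V_\alpha$ is applied to $h$ itself via smooth approximations $f_n\to h$ with $\|f_n\|_\infty\le\|h\|_{L^\infty(B_r)}$ (the $C^2(\overline B_r)$-regularity of $V_\alpha f_n$ makes $\chi^2\alpha V_\alpha f_n$ an admissible test function in \eqref{equationlstarhmuhat} without any extension of the hypothesis to $W^{2,p}$ functions). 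The key energy estimate is then obtained by combining the elementary inequality $-\alpha\int(\alpha V_\alpha h-h)^2\chi^2\,d\mu\le 0$ with the hypothesis \eqref{equationlstarhmuhat} applied to $\chi^2\alpha V_\alpha f_n$, which bounds $\mathcal{E}^0_1(\chi\alpha V_\alpha h,\chi\alpha V_\alpha h)$ uniformly in $\alpha$; Banach--Alaoglu gives a weak limit $\widehat h\in D(\mathcal{E}^0)$ along $\alpha_k\to\infty$, and a separate $O(1/\alpha_k)$ estimate (again using $\|\alpha V_\alpha h\|_\infty\le\|h\|_{L^\infty(B_r)}$ and the uniform energy bound) identifies $\widehat h=\chi h$, which is precisely the claimed local regularity. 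If you want to salvage your route you would have to replace the Dirichlet-problem duality by this resolvent approximation of $h$ (or supply an equally concrete mechanism producing the $L^2(\mu)$-gradient bound). Your final step — deriving the form identity for $u\in C_0^\infty$ via $L^0$ and extending to $\widehat H^{1,2}_0(\mathbb{R}^d,\mu)_0$ by approximation, checking $\psi\beta^{\rho,A,\psi}\in L^2_{loc}(\mu)$ — is fine and agrees with the paper's (brief) treatment of that part.
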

\begin{proof}
Consider $h \in L_{loc}^{\infty}(\mathbb{R}^d)$ as in the statement and let $r>0$ and $\chi \in C_0^{\infty}(\mathbb{R}^d)$ with $\text{supp}(\chi) \subset B_r$ be arbitrarily given. To show the assertion, it is enough to show that $\chi h \in \widehat{H}^{1,2}_0(\mathbb{R}^d, \mu)$ since the rest of the assertion follows from integration by parts and the approximation of $u\in \widehat{H}^{1,2}(\mathbb{R}^d, \mu)_0$ as in Lemma \ref{applempn} by  $C_0^{\infty}(\mathbb{R}^d)$-functions. 
Let $L^{A} = \frac12 \sum_{i,j=1}^d a_{ij} \partial_i \partial_j$ 
and $p \in (d, \infty)$ be fixed. By \cite[Theorems 8.5.3, 8.5.6]{Kry08} and Sobolev's embedding, there exists a constant $c_{d,p}>0$ which only depends on $d$, $p$ and there exist constants $\lambda_0>0$ and $N>0$ which depend only on $\lambda_{B_r}$, $\Lambda_{B_r}$, $L_{B_r}$, $p$ and $d$ such that if $\alpha \geq \lambda_0$ and $f \in L^p(B_r)$, then there exists a unique $V_{\alpha} f \in H^{2,p}(B_r) \cap H^{1,2}_0(B_r) \cap C(\overline{B}_r)$ such that
\begin{equation*} 
(\alpha - L^{A}) V_{\alpha} f= f \quad \text{ on } \; B_r
\end{equation*}
and 
\begin{equation} \label{lpcomplete}
\| V_{\alpha} f \|_{C(\overline{B}_r)} \leq  c_{d,p}N \Big (  1 + \frac{1}{\alpha}  \Big ) \| f\|_{L^p(B_r)}.
\end{equation}
and it follows from \cite[Theorem 11.2.1]{Kry08} that for any $f \in L^{\infty}(B_r)$ and $\alpha \geq \lambda_0$,
\begin{equation} \label{submarkres}
\| \alpha V_{\alpha} f \|_{C(\overline{B}_r)} \leq \|f\|_{L^{\infty}(B_r)}.
\end{equation}
Meanwhile, it follows from \cite[Theorem 6.8]{Gilbarg} and \cite[Theorem 8.5.3]{Kry08} that for any $\alpha \geq \lambda_0$ and $g \in C_0^{\infty}(\mathbb{R}^d)$ we have 
\begin{equation} \label{glbscha}
V_{\alpha}g \in C^2(\overline{B}_r) \cap H^{1,2}_0(B_r).
\end{equation}
Since $h \in L^{\infty}(B_r)$, using a mollification approximation of $h 1_{B_r}$, there exists a sequence of functions $(f_n)_{n \geq 1} \subset C_0^{\infty}(\mathbb{R}^d)$ with $\text{supp}(f_n) \subset B_{2r}$ such that $\lim_{n \rightarrow \infty} f_n = h$, a.e. on $B_r$ and that
\begin{equation} \label{submarkes2}
 \| f_n \|_{L^{\infty}(B_r)} \leq \|h \|_{L^{\infty}(B_r)}, \quad \text{ for all $n \geq 1$}.
\end{equation}
Using \eqref{glbscha}, \eqref{lpcomplete}, and Lebesgue's theorem, we obtain that
 \begin{equation} \label{ptwiselimit}
V_{\alpha} f_n \in C^2(\overline{B}_r)\; \text{ for all $n \geq 1$}, \quad \quad  \lim_{n \rightarrow \infty}V_{\alpha} f_n =V_{\alpha} h \;\; \text{ in $C_b(\overline{B}_r)$}.
\end{equation}
Moreover, \eqref{submarkres} and \eqref{submarkes2} imply that
\begin{equation*} 
\|\alpha V_{\alpha} f_n \|_{L^{\infty}(B_r)} \leq
\|f_n\|_{L^{\infty}(B_r)} \leq\|h\|_{L^{\infty}(B_r)},\;\;\quad 
\|\alpha V_{\alpha} h \|_{L^{\infty}(B_r)} \leq \|h\|_{L^{\infty}(B_r)}.
\end{equation*}
Observe that $\chi \alpha V_{\alpha} f_n \in C_0^2(\mathbb{R}^d) \subset D(L^0) \subset D(\mathcal{E}^0)$, and hence by \eqref{submarkres}
\begin{align}
&\mathcal{E}^0 (\chi \alpha V_{\alpha} f_n, \chi \alpha V_{\alpha} f_n ) = -\int_{\mathbb{R}^d} L^0 (\chi \alpha V_{\alpha} f_n) \cdot (\chi \alpha V_{\alpha} f_n) d  \widehat{\mu} \nonumber \\
&=-\int_{\mathbb{R}^d} \frac{1}{\psi} L^{A} \left( \chi \alpha V_{\alpha} f_n \right) \cdot (\chi \alpha V_{\alpha} f_n) d \widehat{\mu} - \int_{\mathbb{R}^d} \langle  \beta^{\rho, A, \psi}, \nabla (\chi \alpha V_{\alpha} f_n)  \rangle\, (\chi \alpha V_{\alpha} f_n) d\widehat{\mu}  \nonumber \\
&= - \int_{\mathbb{R}^d} \chi (L^{A} \chi) \cdot(\alpha V_{\alpha} f_n)^2 d\mu  - \int_{\mathbb{R}^d}  L^{A} (\alpha V_{\alpha} f_n) \cdot (\chi^2 \alpha V_{\alpha} f_n) d \mu  \nonumber \\
&\qquad  - \int_{\mathbb{R}^d} \langle A \nabla \chi, \nabla \alpha V_{\alpha} f_n\rangle \chi \alpha V_{\alpha} f_n d \mu  - \int_{\mathbb{R}^d} \langle  \beta^{\rho, A}, \nabla (\chi \alpha V_{\alpha} f_n)  \rangle\, (\chi \alpha V_{\alpha} f_n) d \mu \nonumber \\
&= - \int_{\mathbb{R}^d} \chi (L^{A} \chi) \cdot(\alpha V_{\alpha} f_n)^2 d\mu  -\alpha \int_{\mathbb{R}^d} (\alpha V_{\alpha} f_n - f_n) \chi^2 \alpha V_{\alpha} f_n   d \mu \nonumber  \\
&\qquad  - \int_{\mathbb{R}^d} \langle A \nabla \chi, \nabla (\chi \alpha V_{\alpha} f_n) \rangle\alpha V_{\alpha} f_n d \mu 
 + \int_{\mathbb{R}^d} \langle A \nabla \chi, \nabla \chi \rangle (\alpha V_{\alpha} f_n)^2 d \mu  \nonumber \\
 &\qquad \quad - \int_{\mathbb{R}^d} \langle  \beta^{\rho, A}, \nabla (\chi \alpha V_{\alpha} f_n)  \rangle\, (\chi \alpha V_{\alpha} f_n) d \mu  \label{intermedien}   \\
 & \leq  \| \chi \|_{L^{1}(B_r, \mu)} \| L^{A} \chi \|_{L^{\infty}(B_r)} \|h\|^2_{L^{\infty}(B_r)} + 2\alpha \|h\|^2_{L^{\infty}(B_r)} \|\chi^2 \|_{L^1(B_r, \mu)}\nonumber \\
& \quad + 2 \| \chi \|_{L^{\infty}(B_r)} \| h \|_{L^{\infty}(\mathbb{R}^d)}  \mathcal{E}^0 (\chi, \chi)^{1/2} \cdot \mathcal{E}^0 (\chi \alpha V_{\alpha} h, \chi \alpha V_{\alpha} h)^{1/2}+ 2\| h  \|^2_{L^{\infty}(B_r)} \mathcal{E}(\chi, \chi) \nonumber \\
 & \quad \quad \quad + \| \beta^{\rho, A} \|_{L^2(B_r, \mu)}  \| h \|_{L^{\infty}(B_r)} 
  \sqrt{2} (\lambda_{B_r})^{-1/2}  \mathcal{E}(\chi \alpha V_{\alpha} f_n, \chi \alpha V_{\alpha} f_n)^{1/2} \nonumber  \\
 &  \leq  C_1+ C_2 \mathcal{E}^0 (\chi \alpha V_{\alpha}f_n, \chi \alpha V_{\alpha}f_n    )^{1/2}, \nonumber
\end{align}
where $C_1=\| \chi \|_{L^{1}(B_r, \mu)} \| L^{A} \chi \|_{L^{\infty}(\mathbb{R}^d)} \|h \|^2_{L^{\infty}(\mathbb{R}^d)} + 2\alpha \|h\|^2_{L^{\infty}(B_r)} \|\chi^2 \|_{L^1(B_r)}+2\| h \|^2_{L^{\infty}(B_r)} \mathcal{E}(\chi, \chi)$ and \\
$C_2=2 \| \chi \|_{L^{\infty}(B_r)} \| h \|_{L^{\infty}(\mathbb{R}^d)} \mathcal{E}^0(\chi, \chi)^{1/2} +\| \beta^{\rho, A} \|_{L^2(B_r, \mu)}  \| h \|_{L^{\infty}(B_r)} \sqrt{2} (\lambda_{B_r})^{-1/2}$.
Therefore,
$$
 \mathcal{E}_1^0 (\chi \alpha V_{\alpha}f_n, \chi \alpha V_{\alpha}f_n    ) \leq 2C_1+C_2^2 + \|h\|^2_{L^{\infty}(\mathbb{R}^d)} \| \chi \|^2_{L^2(B_r, \widehat{\mu})}.
$$
Thus, using the Banach-Alaoglu theorem and \eqref{ptwiselimit}, we obtain that $\chi \alpha V_{\alpha} h \in D(\mathcal{E}^0)$ and
$$
\lim_{n \rightarrow \infty} \chi \alpha V_{\alpha} f_n = \chi \alpha V_{\alpha} h, \quad \; \text{ weakly in $D(\mathcal{E}^0)$}.
$$
Note that
$$
-\alpha \int_{\mathbb{R}^d} (\alpha V_{\alpha} h - h ) (\alpha V_{\alpha} h - h ) \chi^2 d\mu \leq 0,
$$
and hence
\begin{align}
&- \alpha \int_{\mathbb{R}^d} (\alpha V_{\alpha} h - h) \cdot \alpha V_{\alpha} h \cdot \chi^2 d\mu \leq - \alpha \int_{\mathbb{R}^d} (\alpha V_{\alpha} h - h) h \chi^2 d\mu \nonumber \\
&= - \lim_{n \rightarrow \infty} \alpha \int_{\mathbb{R}^d} (\alpha V_{\alpha} f_n - f_n) h \chi^2 d\mu = \lim_{n \rightarrow \infty} - \int_{\mathbb{R}^d} L^{A} (\alpha V_{\alpha} f_n) h \chi^2 d\mu \nonumber \\
& = \lim_{n \rightarrow \infty} \big (- \int_{\mathbb{R}^d} L^A (\chi^2 \alpha V_{\alpha} f_n)\, h  \, d\mu + \int_{\mathbb{R}^d} \langle A \nabla (\chi^2), \nabla \alpha V_{\alpha} f_n \rangle h d\mu + \int_{\mathbb{R}^d} L^{A} (\chi^2) \cdot \alpha V_{\alpha} f_n \cdot h\, d \mu  \big )\nonumber \\
& \underset{\text{by }\eqref{equationlstarhmuhat}}{=} \lim_{n \rightarrow \infty}  \big (\int_{\mathbb{R}^d} \psi c (\chi^2 \alpha V_{\alpha} f_n) h\, d\mu + \int_{\mathbb{R}^d} \langle \psi \mathbf{G}, \nabla (\chi^2 \alpha V_{\alpha} f_n) \rangle h\, d\mu+ 2 \int_{\mathbb{R}^d} \langle A \nabla \chi, \nabla \alpha V_{\alpha} f_n \rangle \chi h d\mu\nonumber  \\
&\qquad \quad + \int_{\mathbb{R}^d} L^{A} (\chi^2) \cdot \alpha V_{\alpha} f_n \cdot h\, d \mu \big ) \nonumber \\
&= \lim_{n \rightarrow \infty} \big (\int_{\mathbb{R}^d} c (\chi^2 \alpha V_{\alpha} f_n) h\,d\widehat{\mu} + \int_{\mathbb{R}^d} \langle\psi \mathbf{G}, \nabla (\chi \alpha V_{\alpha} f_n) \rangle \chi h\, d\mu+ 2 \int_{\mathbb{R}^d} \langle A \nabla \chi, \nabla (\chi \alpha V_{\alpha} f_n) \rangle  h d\mu  \nonumber \\
&\quad \quad + \int_{\mathbb{R}^d} L^{A} (\chi^2) \cdot \alpha V_{\alpha} f_n \cdot h\, d \mu + \int_{\mathbb{R}^d} \langle \psi \mathbf{G}, \nabla \chi \rangle \chi \alpha V_{\alpha} f_n  \cdot  hd\mu  - 2 \int_{\mathbb{R}^d} \langle A \nabla \chi, \nabla \chi \rangle \alpha V_{\alpha} f_n \cdot h \,d\mu \big ) \nonumber  \\
&= \int_{\mathbb{R}^d} c (\chi^2 \alpha V_{\alpha} h)  h\, d \widehat{\mu} + \int_{\mathbb{R}^d} \langle \psi \mathbf{G}, \nabla (\chi \alpha V_{\alpha} h) \rangle \chi h\, d\mu+ 2 \int_{\mathbb{R}^d} \langle A \nabla \chi, \nabla (\chi \alpha V_{\alpha} h)\rangle  h d\mu \nonumber \\
&\qquad \quad + \int_{\mathbb{R}^d} L^{A} (\chi^2) \cdot \alpha V_{\alpha} h \cdot h\, d \mu + \int_{\mathbb{R}^d} \langle  \psi \mathbf{G}, \nabla \chi \rangle \chi \alpha V_{\alpha} h d\mu   - 2 \int_{\mathbb{R}^d} \langle A \nabla \chi, \nabla \chi \rangle \alpha V_{\alpha} h \cdot  h   \,d\mu  \nonumber \\
&\leq \|\chi^2 c\|_{L^1(B_r, \widehat{\mu})} \|h\|^2_{L^{\infty}(\mathbb{R}^d)} + \|\psi \mathbf{G} \|_{L^2(B_r, \mu)} \|\chi h \|_{L^{\infty}(\mathbb{R}^d)} \sqrt{2}(\lambda_{B_r})^{-1/2} \mathcal{E}^0(\chi \alpha V_{\alpha} h, \chi \alpha V_{\alpha} h )^{1/2} \nonumber  \\
& \quad +4 \mathcal{E}^0 (\chi, \chi)^{1/2} \|h\|_{L^{\infty}(\mathbb{R}^d)} \mathcal{E}^0 (\chi \alpha V_{\alpha} h, \chi \alpha V_{\alpha} h  )^{1/2}  + \| L^{A} (\chi^2) \|_{L^1(B_r, \mu)} \|h\|^2_{L^{\infty}(\mathbb{R}^d)}\nonumber \\
& \qquad  + \| \psi \mathbf{G} \|_{L^1(B_r, \mu)} \|\chi \nabla \chi \|_{L^{\infty}(\mathbb{R}^d)} \| h \|_{L^{\infty}(\mathbb{R}^d)} + 4 \mathcal{E}^0 (\chi, \chi) \| h\|^2_{L^{\infty}(\mathbb{R}^d)} \nonumber \\
& =C_3 \mathcal{E}^0 (\chi \alpha V_{\alpha} h, \chi \alpha V_{\alpha} h)^{1/2} + C_4, \label{criticineq}
\end{align}
where 
$C_3:=\|\psi \mathbf{G} \|_{L^2(B_r, \mu)} \|\chi h \|_{L^{\infty}(\mathbb{R}^d)} \sqrt{2}(\lambda_{B_r})^{-1/2} +4 \mathcal{E}^0 (\chi, \chi)^{1/2} \|h\|_{L^{\infty}(\mathbb{R}^d)}$ \\
and $C_4:=\|\chi^2 c\|_{L^1(B_r, \widehat{\mu})} \|h\|^2_{L^{\infty}(\mathbb{R}^d)}+ \| L^{A} (\chi^2) \|_{L^1(B_r, \mu)} \|h\|^2_{L^{\infty}(\mathbb{R}^d)} + \| \psi \mathbf{G} \|_{L^1(B_r, \mu)} \|\chi \nabla \chi \|_{L^{\infty}(\mathbb{R}^d)} \| h \|_{L^{\infty}(\mathbb{R}^d)} + 4 \mathcal{E}^0 (\chi, \chi) \| h\|^2_{L^{\infty}(\mathbb{R}^d)}$. Using \eqref{intermedien}, the weak convergence, and \eqref{criticineq}, it follows that
\begin{align*}
&\mathcal{E}^0(\chi \alpha V_{\alpha} h, \chi \alpha V_{\alpha} h ) \leq \liminf_{n \rightarrow \infty} \mathcal{E}^0(\chi \alpha V_{\alpha} f_n, \chi \alpha V_{\alpha} f_n ) \\
 &=- \int_{\mathbb{R}^d} \chi (L^{A} \chi) \cdot(\alpha V_{\alpha} h)^2 d\mu  -\alpha \int_{\mathbb{R}^d} (\alpha V_{\alpha} h - h) \chi^2 \alpha V_{\alpha} h   d \mu \nonumber  \\
 &  \qquad - \int_{\mathbb{R}^d} \langle A \nabla \chi, \nabla (\chi \alpha V_{\alpha} h) \rangle \chi \alpha V_{\alpha} h d \mu + \int_{\mathbb{R}^d} \langle A \nabla \chi, \nabla \chi \rangle (\alpha V_{\alpha} h)^2 d \mu  \nonumber \\
 &\qquad \quad - \int_{\mathbb{R}^d} \langle  \beta^{\rho, A}, \nabla (\chi \alpha V_{\alpha} h)  \rangle\, (\chi \alpha V_{\alpha} h) d \mu  \\
 & \leq \|h\|^2_{L^{\infty}(\mathbb{R}^d)} \| \chi L^A \chi \|_{L^1(B_r, \mu)}+C_3 \mathcal{E}^0 (\chi \alpha V_{\alpha} h, \chi \alpha V_{\alpha} h)^{1/2} + C_4 \\
&+ 2 \| \chi \|_{L^{\infty}(B_r)} \| h \|_{L^{\infty}(\mathbb{R}^d)}  \mathcal{E}^0 (\chi, \chi)^{1/2} \cdot \mathcal{E}^0 (\chi \alpha V_{\alpha} h, \chi \alpha V_{\alpha} h)^{1/2}+ 2\| h  \|^2_{L^{\infty}(B_r)} \mathcal{E}(\chi, \chi) \nonumber \\
 & \quad \quad \quad + \| \beta^{\rho, A} \|_{L^2(B_r, \mu)}  \| h \|_{L^{\infty}(B_r)} 
  \sqrt{2} (\lambda_{B_r})^{-1/2}  \mathcal{E}^0(\chi \alpha V_{\alpha} h, \chi \alpha V_{\alpha} h)^{1/2}  \\
 & \le  C_5 \mathcal{E}^0 (\chi \alpha V_{\alpha}h, \chi \alpha V_{\alpha}h)^{1/2} +C_6,
\end{align*}
where $C_5= C_2+C_3$ and  $C_6=C_1+C_4$. Thus,
$$
\mathcal{E}^0_1 (\chi \alpha V_{\alpha} h, \chi \alpha V_{\alpha} h) \leq C_5^2 + 2C_6 + \|h\|_{L^{\infty}(\mathbb{R}^d)} \| \chi \|^2_{L^2(B_r, \widehat{\mu})}=:C_7,
$$
where $C_7$ is a constant independent of $\alpha$. Hence, by the Banach-Alaoglu theorem, there exist $\widehat{h} \in D(\mathcal{E}^0)$
and an increasing sequence $(\alpha_k)_{k \geq 1}$ in $[\lambda_0, \infty)$
with $\alpha_k \nearrow \infty$ as $k \rightarrow \infty$
such that 
\begin{equation*} 
\lim_{k \rightarrow \infty} \chi \alpha_k V_{\alpha_k} h = \widehat{h} \quad \text{ weakly in $D(\mathcal{E}^0)$}.
\end{equation*}
Now let $u \in C_0^{\infty}(\mathbb{R}^d)$ be arbitrarily given.
Then, 
\begin{align*}
&  \big | \int_{\mathbb{R}^d} (\widehat{h}- \chi h) u d\mu\big |  = \lim_{k \rightarrow \infty} \big |  \int_{\mathbb{R}^d} \chi (\alpha_k V_{\alpha_k} h - h) u d\mu \big |  = \lim_{k \rightarrow \infty} \lim_{n \rightarrow \infty} \big |  \int_{\mathbb{R}^d} \chi (\alpha_k V_{\alpha_k} f_n - f_n) u d\mu \big |  \\
& = \lim_{k \rightarrow \infty} \lim_{n \rightarrow \infty} \big |  \int_{\mathbb{R}^d} \chi (L^A V_{\alpha_k} f_n) u d\mu \big | 
=\lim_{k \rightarrow \infty} \lim_{n \rightarrow \infty} \big |  \int_{\mathbb{R}^d} \chi \Big(  \psi L^0 (V_{\alpha_k} f_n) - \langle \beta^{\rho, A}, \nabla V_{\alpha_k} f_n   \rangle     \Big) u d\mu \big |  \\
& =  \lim_{k \rightarrow \infty} \lim_{n \rightarrow \infty} \big |  \int_{\mathbb{R}^d} \psi L^0(\chi u) \cdot V_{\alpha_k} f_n d\mu - \int_{\mathbb{R}^d} \langle \beta^{\rho, A}, \nabla (\chi V_{\alpha_k} f_n) \rangle u d\mu  + \int_{\mathbb{R}^d} \langle \beta^{\rho, A}, \nabla \chi \rangle V_{\alpha_k} f_n \cdot u d\mu \big | \\
&= \lim_{k \rightarrow \infty}\big |  \int_{\mathbb{R}^d} \psi L^0(\chi u) \cdot V_{\alpha_k} h \, d\mu - \int_{\mathbb{R}^d} \langle \beta^{\rho, A}, \nabla (\chi V_{\alpha_k} h) \rangle u \,d\mu  + \int_{\mathbb{R}^d} \langle \beta^{\rho, A}, \nabla \chi \rangle V_{\alpha_k} h \cdot u \,d\mu \big | 
\\
&\underset{\text{by }\eqref{submarkres}}{\leq}   \lim_{k \rightarrow \infty} \frac{1}{\alpha_k}\| \psi L^0 (\chi u) \|_{L^1(B_r, \mu)}\| \alpha_k V_{\alpha_k} h\|_{L^{\infty}(B_r)} + \frac{1}{\alpha_k} \| \beta^{\rho, A}  \|_{L^2(B_r, \mu)} \sqrt{2} \lambda_{B_r}^{-1/2} \mathcal{E}^0 (\chi \alpha_k V_{\alpha_k} h,\chi \alpha_k V_{\alpha_k} h)^{1/2} \\
& \qquad + \frac{1}{\alpha_k}\| \beta^{\rho, A}\|_{L^1(B_r, \mu)} \|\chi \|_{L^{\infty}(B_r)} \|\alpha_kV_{\alpha_k} h \|_{L^{\infty}(B_r)} \\
&\leq \lim_{k \rightarrow \infty} \frac{1}{\alpha_k} \big (\|  \psi L^0 (\chi u) \|_{L^1(B_r, \mu)} \|h\|_{L^{\infty}(B_r)} + \| \beta^{\rho, A}  \|_{L^2(B_r, \mu)} \sqrt{2} \lambda_{B_r}^{-1/2}  C_7 + \| \beta^{\rho, A}\|_{L^1(B_r, \mu)} \|\chi \|_{L^{\infty}(B_r)} \|h\|_{L^{\infty}(B_r)}\big ) \\
&=0.
\end{align*} 
Therefore, we get $\chi h = \widehat{h} \in D(\mathcal{E}^0)$, so that $\chi  h\in \widehat{H}^{1,2}_0(\mathbb{R}^d, \mu)$, as desired.
\end{proof}

\begin{cor}\label{cor4.4} 
Assume {\bf (A)} and the H\"{o}lder continuity of Theorem \ref{regulsthm}. Let $(\overline{L}, D(\overline{L}))$, $(L, D(L^0)_{0,b})$ and $(\overline{T}_t)_{t>0}$ be defined as in Theorem \ref{mainijcie}. Then, $(L, C_0^{\infty}(\mathbb{R}^d))$ is $L^1(\mathbb{R}^d, \widehat{\mu})$-unique if and only if  $(L, D(L^0)_{0,b})$ is $L^1(\mathbb{R}^d, \widehat{\mu})$-unique.
\end{cor}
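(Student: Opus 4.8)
The plan is to run both implications through the Hahn--Banach reformulation of $L^1(\R^d,\widehat\mu)$-uniqueness recalled right before Theorem \ref{analycharin}, which applies verbatim to $\mathcal D=C_0^\infty(\R^d)$ and to $\mathcal D=D(L^0)_{0,b}$ (each is dense in $L^1(\R^d,\widehat\mu)$ and contained in $D(L^0)_{0,b}$). One direction is immediate: if some $\alpha>0$ witnesses the criterion for $C_0^\infty(\R^d)$, then the same $\alpha$ witnesses it for $D(L^0)_{0,b}$, since $\int_{\R^d}(\alpha-L)u\,h\,d\widehat\mu=0$ for all $u\in D(L^0)_{0,b}$ implies the same for all $u\in C_0^\infty(\R^d)\subset D(L^0)_{0,b}$, forcing $h=0$. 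So I would focus on the converse: $L^1(\R^d,\widehat\mu)$-uniqueness of $(L,D(L^0)_{0,b})$ implies that of $(L,C_0^\infty(\R^d))$.

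For that, fix $\alpha_0>0$ provided by the criterion for $(L,D(L^0)_{0,b})$ and let $h\in L^\infty(\R^d)$ satisfy $\int_{\R^d}(\alpha_0-L)u\,h\,d\widehat\mu=0$ for all $u\in C_0^\infty(\R^d)$; the aim is $h=0$ a.e. Since $Lu=\tfrac12\sum_{i,j=1}^d\tfrac1\psi a_{ij}\partial_i\partial_j u+\langle\mathbf{G},\nabla u\rangle$ on $C_0^\infty(\R^d)$ with $\mathbf{G}:=\beta^{\rho,A,\psi}+\mathbf{B}$, this orthogonality reads $\int_{\R^d}(\tfrac12\sum_{i,j=1}^d\tfrac1\psi a_{ij}\partial_i\partial_j u+\langle\mathbf{G},\nabla u\rangle-\alpha_0 u)\,h\,d\widehat\mu=0$, i.e.\ it is exactly hypothesis \eqref{equationlstarhmuhat} of Theorem \ref{regulsthm} with this $\mathbf{G}$ and $c\equiv-\alpha_0$. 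Its structural requirements hold: $c\in L^1_{loc}(\R^d,\widehat\mu)$ trivially, $\mathbf{G}\in L^2_{loc}(\R^d,\R^d,\widehat\mu)$ and $\psi\mathbf{G}\in L^2_{loc}(\R^d,\R^d,\mu)$ by {\bf (A)} (as already observed in Section \ref{framework} for $\beta^{\rho,A,\psi}$, and by definition for $\mathbf{B}$), and the local Hölder continuity of the $a_{ij}$ is precisely the extra hypothesis of the corollary. Theorem \ref{regulsthm} then yields $h\in\widehat H^{1,2}_{loc}(\R^d,\mu)$ and, because $\mathbf{G}-\beta^{\rho,A,\psi}=\mathbf{B}$,
\begin{equation}\label{eq:wkeqnh}
\mathcal E^0_{\alpha_0}(v,h)-\int_{\R^d}\langle\mathbf{B},\nabla v\rangle\,h\,d\widehat\mu=0\qquad\text{for all }v\in\widehat H^{1,2}_0(\R^d,\mu)_0 .
\end{equation}

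It then remains to upgrade the orthogonality $\int_{\R^d}(\alpha_0-L)v\,h\,d\widehat\mu=0$ from $v\in C_0^\infty(\R^d)$ to all $v\in D(L^0)_{0,b}$, whereupon the criterion at $\alpha_0$ gives $h=0$ a.e.\ and hence $L^1(\R^d,\widehat\mu)$-uniqueness of $(L,C_0^\infty(\R^d))$. Given $v\in D(L^0)_{0,b}$, it is compactly supported and lies in $D(L^0)\subset D(\mathcal E^0)$, so by Lemma \ref{applempn} $v\in\widehat H^{1,2}_0(\R^d,\mu)_0$ and may be inserted into \eqref{eq:wkeqnh}. On the other hand, testing $\mathcal E^0(v,\cdot)=-\int_{\R^d}L^0v\,(\cdot)\,d\widehat\mu$ against $C_0^\infty(\R^d)$-functions supported away from $\mathrm{supp}\,v$ shows $L^0v=0$ a.e.\ off $\mathrm{supp}\,v$; picking $\chi\in C_0^\infty(\R^d)$ with $\chi\equiv1$ on a neighbourhood of $\mathrm{supp}\,v$ (so $\chi h\in\widehat H^{1,2}_0(\R^d,\mu)\subset D(\mathcal E^0)$ since $h\in\widehat H^{1,2}_{loc}(\R^d,\mu)$) gives $-\int_{\R^d}L^0v\,h\,d\widehat\mu=-\int_{\R^d}L^0v\,(\chi h)\,d\widehat\mu=\mathcal E^0(v,\chi h)=\tfrac12\int_{\R^d}\langle\widehat A\nabla v,\nabla h\rangle\,d\widehat\mu$. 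Combined with $Lv=L^0v+\langle\mathbf{B},\nabla v\rangle$ this yields $\int_{\R^d}(\alpha_0-L)v\,h\,d\widehat\mu=\mathcal E^0_{\alpha_0}(v,h)-\int_{\R^d}\langle\mathbf{B},\nabla v\rangle h\,d\widehat\mu$, which vanishes by \eqref{eq:wkeqnh}. The genuinely non-trivial ingredient is Theorem \ref{regulsthm}, the elliptic-regularity step that promotes the merely bounded $h$ to an element of $\widehat H^{1,2}_{loc}(\R^d,\mu)$; the remaining, and the most delicate, part is this last localization-and-integration-by-parts bookkeeping, which must accommodate the fact that $h$ belongs to the form domain only locally.
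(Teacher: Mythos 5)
Your proposal is correct and follows essentially the same route as the paper's proof: the trivial direction via inclusion of domains, and for the converse an application of Theorem \ref{regulsthm} with $\mathbf{G}=\beta^{\rho,A,\psi}+\mathbf{B}$ and constant $c$, followed by integration by parts (localization with a cutoff, locality of $L^0 v$ and $\nabla v$) to pass from test functions in $C_0^{\infty}(\R^d)$ to $D(L^0)_{0,b}$, and finally the Hahn--Banach criterion for $(L,D(L^0)_{0,b})$. The differences are cosmetic: the paper fixes $\alpha=1$ and leaves the final integration-by-parts step implicit, which you spell out, and your appeal to Lemma \ref{applempn} for $D(L^0)_{0,b}\subset\widehat{H}^{1,2}_0(\R^d,\mu)_0$ (and the inclusion into $D(\mathcal{E}^0)$ for compactly supported elements) is the same shortcut the paper itself uses.
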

\begin{proof}
It directly follows from the definition that if $(L, C_0^{\infty}(\mathbb{R}^d))$ is $L^1(\mathbb{R}^d, \widehat{\mu})$-unique, then $(L, D(L^0)_{0,b})$ is $L^1(\mathbb{R}^d, \widehat{\mu})$-unique. \\
Assume that $(L, D(L^0)_{0,b})$ is $L^1(\mathbb{R}^d, \widehat{\mu})$-unique. Moreover, assume that there exists $h \in L^{\infty}(\mathbb{R}^d)$ such that
$$
\displaystyle \int_{\mathbb{R}^d} (1-L) u \cdot h\, d \widehat{\mu}=0 \quad \text{ for all $u \in C_0^{\infty}(\mathbb{R}^d)$}.
$$
To show that $(L, C_0^{\infty}(\mathbb{R}^d))$ is $L^1(\mathbb{R}^d, \widehat{\mu})$-unique, it suffices to show that $h=0$ a.e. (see beginning of Section \ref{l1closeduni}). Using Theorem \ref{regulsthm} where $\mathbf{G}$ and $c$ are replaced by $\beta^{\rho, A, \psi} + \mathbf{B}$ and $-1$, respectively, we obtain that $h \in \widehat{H}^{1,2}_{loc}(\mathbb{R}^d, \mu)$ and 
$$
\mathcal{E}_1^0 (u, h) - \int_{\mathbb{R}^d} \langle \mathbf{B}, \nabla u \rangle h \, d \widehat{\mu} = 0 \quad \text{ for all $u \in \widehat{H}^{1,2}_0(\mathbb{R}^d, \mu)_0$}.
$$
Using integration by parts on the left hand side above, we get
$$
\displaystyle \int_{\mathbb{R}^d} (1-L) u \cdot h\, d \widehat{\mu}=0 \quad \text{ for all $u \in D(L^0)_{0,b}$}.
$$
Since $(L, D(L^0)_{0,b})$ is $L^1(\mathbb{R}^d, \widehat{\mu})$-unique, we finally get $h=0$ a.e, as desired.
\end{proof}

\section{Uniqueness in law} \label{conditionaluniquness}
In this section we show how to use the results on $L^1(\mathbb{R}^d, \widehat{\mu})$-uniqueness of Section \ref{l1closeduni} to obtain a conditional uniqueness in law result. We will assume throughout Section \ref{conditionaluniquness} that {\bf (A)} holds. We fix  a right process (for the definition of right process used here, cf. e.g. \cite[Definition 3.5]{LST22})
$$
\widetilde{\mathbb{M}}= (\widetilde{\Omega}, \widetilde{\mathcal{F}},  (\widetilde{\mathcal{F}}_t)_{t \geq 0},  (\widetilde{X}_t)_{t \geq 0}, (\widetilde{\mathbb{P}}_x)_{x \in \mathbb{R}^d \cup \{\Delta \}}  )
$$
with state space $\mathbb{R}^d$. For a $\sigma$-finite measure $\nu$ on $(\mathbb{R}^d, \mathcal{B}(\mathbb{R}^d))$, we define a measure $\widetilde{\mathbb{P}}_{\nu}$ on  $(\widetilde{\Omega}, \widetilde{\mathcal{F}})$ by
$$
\widetilde{\mathbb{P}}_{\nu} (\cdot) := \int_{\mathbb{R}^d} \widetilde{\mathbb{P}}_x (\cdot) \nu(dx).
$$
Let $\widetilde{\mathbb{E}}_{\nu}$ denote the expectation with respect to  $\widetilde{\mathbb{P}}_{\nu}$. We assume that $\widehat{\mu}$ (as defined in{\bf (A)}) is  {\bf a sub-invariant measure} for $\widetilde{\mathbb{M}}$, i.e.
\begin{equation}\label{subinvprocdef}
\widetilde{\mathbb{P}}_{\widehat{\mu}} (\widetilde{X}_t\in A) \le \widehat{\mu}(A), \quad A \in \mathcal{B}(\mathbb{R}^d), \ t\ge 0.  
\end{equation}
If in the above inequality \lq\lq $\le$\rq\rq\ is replaced by  \lq\lq $=$\rq\rq, then $\widehat{\mu}$ is called {\bf invariant measure} for $\widetilde{\mathbb{M}}$. Let 
$$
p^{\widetilde{\mathbb{M}}}_t f(x) = \widetilde{\mathbb{E}}_x [ f(\widetilde{X}_t)], \qquad f \in \mathcal{B}_b(\mathbb{R}^d), \,x \in \mathbb{R}^d,\, t>0,
$$
where $\widetilde{\mathbb{E}}_x$ is the expectation with respect to $\widetilde{\mathbb{P}}_x$. A direct consequence of the sub-invariance of $\widehat{\mu}$ for $\widetilde{\mathbb{M}}$ and the right continuity of the sample paths of  $\widetilde{\mathbb{M}}$ is that $(p^{\widetilde{\M}}_t)_{t \geq 0}|_{L^1(\R^d, \widehat{\mu})_b}$ uniquely extends to a sub-Markovian $C_0$-semigroup of contractions $(S_t)_{t \geq 0}$ on $L^1(\R^d, \widehat{\mu})$ (see the proof of \cite[Proposition 3.57]{LST22}). Let $(\widetilde{R}_{\alpha})_{\alpha>0}$ be the resolvent of $(S_t)_{t>0}$. Then, for each $\alpha>0$ and $f \in L^1(\mathbb{R}^d, \widehat{\mu})$
$$
\widetilde{R}_{\alpha} f(x) = \int_{0}^{\infty} e^{-\alpha t} S_t f(x) ds, \quad  \text{ for $\widehat{\mu}$-a.e. $x \in \mathbb{R}^d$.  }
$$
Let $w \in \mathcal{B}_b(\mathbb{R}^d)$, $w\ge 0$ and $w\widehat{\mu}:=wd \widehat{\mu}$. Using the $L^1(\mathbb{R}^d, \widehat{\mu})$-contraction property of $(\widetilde{R}_{\alpha})_{\alpha>0}$, we get
\begin{align}
\widetilde{\mathbb{E}}_{w\widehat{\mu}}\Big [  \int_0^t |f|(\widetilde{X}_s)ds \Big ]&\le \|w\|_{L^{\infty}(\mathbb{R}^d)} \int_{\mathbb{R}^d} \widetilde{\mathbb{E}}_{x}\Big [  \int_0^t |f|(\widetilde{X}_s)ds \Big] \widehat{\mu}(dx) \nonumber  \\
&\le e^{t}\|w\|_{L^{\infty}(\mathbb{R}^d)} \int_{\mathbb{R}^d} \widetilde{\mathbb{E}}_{x}\Big [   \int_0^{\infty}  e^{-s} |f|(\widetilde{X}_s)ds \Big] \widehat{\mu}(dx) \nonumber \\
&= e^t \|w\|_{L^{\infty}(\mathbb{R}^d)} \int_{\mathbb{R}^d} \widetilde{R}_1 |f| \, d\widehat{\mu}  \nonumber \\
& \leq e^t \|w\|_{L^{\infty}(\mathbb{R}^d)} \|f\|_{L^1(\mathbb{R}^d, \widehat{\mu})}. \label{eq:subinv 1}
\end{align}
Thus, for each $f \in L^1(\mathbb{R}^d, \widehat{\mu})$, $(\int_0^t f(\widetilde{X}_s) ds)_{t \geq 0}$ is well-defined $\widetilde{\mathbb{P}}_{w \widehat{\mu}}$-a.e. and $\widetilde{\mathbb{P}}_{w \widehat{\mu}}$-a.e. independent of the chosen Borel measurable version $f \in L^1(\mathbb{R}^d, \widehat{\mu})$.

\begin{defn}\label{defnofmartin}
(i) We say that $\widetilde{\mathbb{M}}$ has {\bf continuous paths with respect to $\widehat{\mu}$}, if 
$$
\widetilde{\P}_x(t\mapsto \widetilde{X}_t \text{ is continuous from }[0,\infty)\text{ to }\R^d)=1\ \text{for a.e. }x\in \R^d,
$$ 
or equivalently $\widetilde{\P}_{v \widehat{\mu}}(t\mapsto \widetilde{X}_t \text{ is continuous from }[0,\infty)\text{ to }\R^d)=1$ for any $v \in L^1(\R^d,\widehat{\mu})_b$, $v\ge 0$, such that $\int_{\mathbb{R}^d} v d\widehat{\mu} = 1$.\\
(ii) We say that {\bf $\widetilde{\mathbb{M}}$ solves the martingale problem for $(L, C_0^{\infty}(\mathbb{R}^d))$ with respect to $\widehat{\mu}$}, if for all $u \in C_0^{\infty}(\mathbb{R}^d)$,
$\widetilde{M}_t^u:=u(\widetilde{X}_t)- u(\widetilde{X}_0) - \int_0^t Lu (\widetilde{X}_s) \, ds$, $t \geq 0$ is a continuous $(\widetilde{\mathcal{F}}_t)_{t \geq 0}$-martingale under $\widetilde{\mathbb{P}}_{v \widehat{\mu}}$ for any $v \in L^1(\R^d,\widehat{\mu})_b$, $v\ge 0$, such that $\int_{\mathbb{R}^d} v d\widehat{\mu} = 1$. \\
(Note here that by the discussion above, in particular \eqref{eq:subinv 1}, $(\int_0^t Lu (\widetilde{X}_s) \, ds)_{t\ge 0}$ is $\widetilde{\mathbb{P}}_{v\widehat{\mu}}$-a.s.  well-defined and continuous, independently of the Borel measurable version $Lu\in L^{2}(\R^d,\widehat{\mu})_0\subset L^{1}(\R^d,\widehat{\mu})$ chosen.)\\
(iii) We say that {\bf $\widetilde{\mathbb{M}}$ solves \eqref{weaksolution} with respect to $\widehat{\mu}$}, if $\widetilde{\mathbb{M}}$ has continuous paths with respect to $\widehat{\mu}$ and $\widetilde{\mathbb{M}}$ solves \eqref{weaksolution} with respect to $\widetilde {\P}_{v\widehat{\mu}}$ (instead of $\P_x$) for any $v \in L^1(\R^d,\widehat{\mu})_b$, $v\ge 0$, such that $\int_{\mathbb{R}^d} v d\widehat{\mu} = 1$.\\
(Note here that analogously to \cite[Lemma 3.17(iii)]{LST22}, if $\widetilde{\mathbb{M}}$ has continuous paths with respect to $\widehat{\mu}$, then $(\int_0^t f(\widetilde{X}_s) \, ds)_{t\ge 0}$ is $\widetilde{\mathbb{P}}_{v\widehat{\mu}}$-a.s.  well-defined and continuous, for any $v \in L^1(\R^d,\widehat{\mu})_b$, $v\ge 0$, such that $\int_{\mathbb{R}^d} v d\widehat{\mu} = 1$, independently of the Borel measurable version $f\in  L^{1}_{loc}(\R^d,\widehat{\mu})$ chosen. In particular, the stochastic integral and drift parts of \eqref{weaksolution} are well-defined $\widetilde{\mathbb{P}}_{v\widehat{\mu}}$-a.s. for any $v \in L^1(\R^d,\widehat{\mu})_b$, $v\ge 0$, such that $\int_{\mathbb{R}^d} v d\widehat{\mu} = 1$, if $\widetilde{\mathbb{M}}$ solves \eqref{weaksolution} with respect to $\widehat{\mu}$.)\\
(iv) We say that uniqueness in law holds for \eqref{weaksolution} with respect to $\widehat{\mu}$, if uniqueness in law holds for \eqref{weaksolution} with initial condition $v\widehat{\mu}$ for any $v \in L^1(\R^d,\widehat{\mu})_b$, $v\ge 0$, such that $\int_{\mathbb{R}^d} v d\widehat{\mu} = 1$. (As we shall see in Theorem \ref{uniqueness in  law}(ii), uniqueness in law of \eqref{weaksolution} with respect to $\widehat{\mu}$ is equivalent to uniqueness in law of \eqref{weaksolution} with respect to $\delta_x$ for $x\in \R^d\setminus N$, where $\widehat{\mu}(N)=0$.)
\end{defn}
If $\widetilde{\mathbb{M}}$ solves the martingale problem for $(L, C_0^{\infty}(\mathbb{R}^d))$ with respect to $\widehat{\mu}$ and
$(L, C_0^{\infty}(\mathbb{R}^d))$ is $L^1(\mathbb{R}^d, \widehat{\mu})$-unique, then exactly as in the proof of \cite[Proposition 3.57]{LST22} it follows that
\begin{equation} \label{eq:3.40*}
S_t f =\overline{T}_tf \; \text{ in $L^1(\R^d, \widehat{\mu})$,\quad  for all $f\in L^1(\R^d,\widehat{\mu})$, $t\ge 0$},
\end{equation}
and in particular, in this case, $\widehat{\mu}$ is an invariant measure for $\widetilde{\M}$ and $(\overline{T}_t)_{t>0}$.
\begin{proposition}\label{eq MP and weak sol}
Suppose that {\bf (A)} holds. Let $\widetilde{\mathbb{M}}$ be a right process with sub-invariant measure $\widehat{\mu}$ and continuous paths with respect to $\widehat{\mu}$. 
Then $\widetilde{\mathbb{M}}$ solves \eqref{weaksolution} with respect to $\widehat{\mu}$, if and only if 
$\widetilde{\mathbb{M}}$ solves the martingale problem for $(L, C_0^{\infty}(\mathbb{R}^d))$ with respect to $\widehat{\mu}$.
\end{proposition}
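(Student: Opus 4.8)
The plan is to prove both implications by ordinary stochastic calculus carried out directly under the probability measures $\widetilde{\mathbb{P}}_{v\widehat{\mu}}$ (with $v\in L^1(\R^d,\widehat{\mu})_b$, $v\ge 0$, $\int_{\R^d}v\,d\widehat{\mu}=1$), using two structural facts: continuity of the paths of $\widetilde{\mathbb{M}}$ with respect to $\widehat{\mu}$ forces $\zeta=\infty$, hence non-explosion, $\widetilde{\mathbb{P}}_{v\widehat{\mu}}$-a.s.; and by \eqref{eq:subinv 1} (together with a localization by exit times from balls) every occupation-time integral $\int_0^t f(\widetilde{X}_s)\,ds$ with $f\in L^1_{loc}(\R^d,\widehat{\mu})$ is $\widetilde{\mathbb{P}}_{v\widehat{\mu}}$-a.s.\ well-defined and independent of the Borel version of $f$; in particular $\widetilde{X}$ spends zero time in $dx$-null sets. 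I will also use throughout that $\widehat{A}=\tfrac1\psi A$ is locally bounded, since $\tfrac1\psi\in L^{\infty}_{loc}(\R^d)$ and $A$ is locally bounded by the ellipticity bound.

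For the implication ``$\widetilde{\mathbb{M}}$ solves \eqref{weaksolution} with respect to $\widehat{\mu}$ $\Rightarrow$ $\widetilde{\mathbb{M}}$ solves the martingale problem for $(L,C_0^{\infty}(\R^d))$'': fix $v$ and a driving Brownian motion $W$ as in Definition \ref{defnofmartin}(iii), so that $\widetilde{X}$ is a continuous semimartingale with decomposition \eqref{weaksolution} under $\widetilde{\mathbb{P}}_{v\widehat{\mu}}$. For $u\in C_0^{\infty}(\R^d)$, It\^o's formula (valid since $\widehat{\sigma}$ is locally bounded and $\widetilde{X}$ continuous) gives
\[
u(\widetilde{X}_t)-u(\widetilde{X}_0)=\sum_{i,j}\int_0^t \partial_i u(\widetilde{X}_s)\,\widehat{\sigma}_{ij}(\widetilde{X}_s)\,dW_s^j+\int_0^t\Big(\tfrac12\text{trace}(\widehat{\sigma}\widehat{\sigma}^T\nabla^2u)+\langle\mathbf{G},\nabla u\rangle\Big)(\widetilde{X}_s)\,ds.
\]
Since $\widehat{\sigma}\widehat{\sigma}^T=\widehat{A}$ off a $dx$-null set and $\widetilde{X}$ spends zero time there, the drift integrand equals $Lu(\widetilde{X}_s)$ for Lebesgue-a.e.\ $s$, so that $\widetilde{M}^u_t=u(\widetilde{X}_t)-u(\widetilde{X}_0)-\int_0^tLu(\widetilde{X}_s)\,ds$ equals the stochastic integral, a continuous local martingale whose quadratic variation $\int_0^t\langle\widehat{A}\nabla u,\nabla u\rangle(\widetilde{X}_s)\,ds$ is bounded by $\|\langle\widehat{A}\nabla u,\nabla u\rangle\|_{L^{\infty}(\R^d)}\,t$ (a bound that is finite because $\nabla u$ is compactly supported and $\widehat{A}$ locally bounded); hence $\widetilde{M}^u$ is a true $\widetilde{\mathbb{P}}_{v\widehat{\mu}}$-martingale and the martingale problem holds.

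For the converse ``martingale problem $\Rightarrow$ \eqref{weaksolution}'': fix $v$ and work under $\widetilde{\mathbb{P}}_{v\widehat{\mu}}$. First localize: for each $n$ choose $u_n\in C_0^{\infty}(\R^d)$ with $u_n(x)=x_i$ on a neighbourhood of $\overline{B}_n$; since then $Lu_n=g_i$ on $B_n$, and writing $\sigma_n$ for the exit time of $\widetilde{X}$ from $B_n$, the hypothesis that $\widetilde{M}^{u_n}$ is a martingale yields $\widetilde{M}^{u_n}_{t\wedge\sigma_n}=N^i_{t\wedge\sigma_n}$ with $N^i_t:=\widetilde{X}^i_t-\widetilde{X}^i_0-\int_0^tg_i(\widetilde{X}_s)\,ds$; because $\sigma_n\uparrow\infty$ $\widetilde{\mathbb{P}}_{v\widehat{\mu}}$-a.s.\ (continuity of paths), $N^i$ is a continuous local martingale. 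Next compute its bracket: for $u\in C_0^{\infty}(\R^d)$ the martingale problem for $u$ and for $u^2$, the elementary identity $Lu^2=2uLu+\langle\widehat{A}\nabla u,\nabla u\rangle$, and integration by parts applied to the continuous semimartingale $u(\widetilde{X}_\cdot)=u(\widetilde{X}_0)+\widetilde{M}^u+\int_0^\cdot Lu(\widetilde{X}_s)\,ds$ together force $\langle\widetilde{M}^u\rangle_t=\int_0^t\langle\widehat{A}\nabla u,\nabla u\rangle(\widetilde{X}_s)\,ds$ (their difference is a continuous local martingale of finite variation, hence null); polarizing and localizing with the $u_n$ gives $\langle N^i,N^j\rangle_t=\int_0^t\widehat{A}_{ij}(\widetilde{X}_s)\,ds=\int_0^t(\widehat{\sigma}\widehat{\sigma}^T)_{ij}(\widetilde{X}_s)\,ds$. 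Finally apply the standard representation theorem for continuous local martingales with a prescribed, possibly degenerate, matrix bracket (as used in the proof of \cite[Theorem 3.22]{LST22}): on a possibly enlarged probability space there is a $d$-dimensional Brownian motion $W$ with $N^i_t=\sum_j\int_0^t\widehat{\sigma}_{ij}(\widetilde{X}_s)\,dW^j_s$, which is precisely \eqref{weaksolution} under $\widetilde{\mathbb{P}}_{v\widehat{\mu}}$; since $\widetilde{\mathbb{M}}$ has continuous paths with respect to $\widehat{\mu}$ and $v$ was arbitrary, $\widetilde{\mathbb{M}}$ solves \eqref{weaksolution} with respect to $\widehat{\mu}$.

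The main obstacle is the degeneracy of $\widehat{\sigma}$: as $\tfrac1\psi$ may vanish, $\widehat{\sigma}$ need not be invertible, so the driving Brownian motion cannot be recovered by inverting $\widehat{\sigma}$ and one must instead invoke the representation theorem on a (possibly enlarged) space; a secondary point requiring care is that all the localization and bracket computations have to be run genuinely under the mixed measures $\widetilde{\mathbb{P}}_{v\widehat{\mu}}$ rather than under the $\widetilde{\mathbb{P}}_x$, which is harmless here precisely because the martingale hypotheses and the occupation-time estimate \eqref{eq:subinv 1} are already formulated under $\widetilde{\mathbb{P}}_{v\widehat{\mu}}$.
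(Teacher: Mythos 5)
Your proposal is correct and takes essentially the same route as the paper: the paper likewise obtains the quadratic variation of $\widetilde{M}^u$ via the identity $Lu^2=2uLu+\langle\widehat{A}\nabla u,\nabla u\rangle$ and then concludes with the representation theorem for continuous local martingales with (possibly degenerate) prescribed bracket, exactly as in Theorem \ref{prop:3.1.6}(ii),(iii) (i.e. \cite[Proposition 3.19, Theorem 3.22]{LST22}), while the converse is classical It\^o calculus under $\widetilde{\mathbb{P}}_{v\widehat{\mu}}$. The only cosmetic difference is that you justify the true-martingale property of $\widetilde{M}^u$ through the global boundedness of $\langle\widehat{A}\nabla u,\nabla u\rangle$, whereas the paper invokes the integrability estimate \eqref{eq:subinv 1}; both are valid.
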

\begin{proof}Fix arbitrary $v \in L^1(\R^d,\widehat{\mu})_b$, $v\ge 0$, such that $\int_{\mathbb{R}^d} v d\widehat{\mu} = 1$. If $\widetilde{\mathbb{M}}$ solves the martingale problem for $(L, C_0^{\infty}(\mathbb{R}^d))$ with respect to $\widehat{\mu}$, it follows exactly as in Theorem \ref{prop:3.1.6}(ii) that for $u\in C_0^{\infty}(\R^d)$, the quadratic variation process $\langle \widetilde{M}^u \rangle$ of $\widetilde{M}^u$ satisfies 
\begin{equation}\label{quadvarvmuhat}
\langle \widetilde{M}^u \rangle_t=\int_0^t \langle \widehat{A}\nabla u, \nabla u\rangle(\widetilde{X}_s)ds, \quad t\ge 0, \quad\widetilde{\mathbb{P}}_{v \widehat{\mu}}\text{-a.s.}
\end{equation}
for any $v \in L^1(\R^d,\widehat{\mu})_b$, $v\ge 0$, such that $\int_{\mathbb{R}^d} v d\widehat{\mu} = 1$. Therefore, by \eqref{eq:subinv 1} $(\widetilde{M}_t ^u)_{t\ge 0}$ is $\widetilde{\mathbb{P}}_{v \widehat{\mu}}$-square integrable and we  can obtain that $\widetilde{\mathbb{M}}$ solves \eqref{weaksolution} with respect to $\widehat{\mu}$ exactly as in Theorem \ref{prop:3.1.6}(iii) (where $\P_x$  is replaced by $\widetilde{\mathbb{P}}_{v \widehat{\mu}}$) since $\widetilde{\mathbb{M}}$ has continuous paths with respect to $\widehat{\mu}$. \\
Conversely, let $\widetilde{\mathbb{M}}$ solve \eqref{weaksolution} with respect to $\widehat{\mu}$ and fix an arbitrary $v \in L^1(\R^d,\widehat{\mu})_b$, $v\ge 0$, such that $\int_{\mathbb{R}^d} v d\widehat{\mu} = 1$ and $u\in C_0^{\infty}(\R^d)$. Then by classical stochastic calculus with respect to $\widetilde{\mathbb{P}}_{v \widehat{\mu}}$, $(\widetilde{M}_t ^u)_{t\ge 0}$ is a continuous local $\widetilde{\mathbb{P}}_{v \widehat{\mu}}$-martingale up to infinity, starting from zero, with quadratic variation as in \eqref{quadvarvmuhat}. Since the quadratic variation in \eqref{quadvarvmuhat} is $\widetilde{\mathbb{P}}_{v \widehat{\mu}}$-integrable by \eqref{eq:subinv 1}, we obtain that $(\widetilde{M}_t ^u)_{t\ge 0}$ is a $\widetilde{\mathbb{P}}_{v \widehat{\mu}}$-square integrable (hence integrable) martingale and the assertion follows.
\end{proof}\\
The following is our main theorem about conditional uniqueness in law:
\begin{theorem}\label{uniqueness in  law}
(i) Suppose that {\bf (A)} holds and that
$(L, C_0^{\infty}(\mathbb{R}^d))$ is $L^1(\mathbb{R}^d, \widehat{\mu})$-unique (see the definition at the beginning of Section \ref{l1closeduni}).  Let $\widetilde{\mathbb{M}}$ be a right process with sub-invariant measure $\widehat{\mu}$ and continuous paths with respect to $\widehat{\mu}$ and suppose that $\widetilde{\mathbb{M}}$ solves \eqref{weaksolution} with respect to $\widehat{\mu}$. Then $T_t1=1$ for all $t> 0$, and $\widehat{\mu}$ is an invariant measure for $\widetilde{\M}$ and $\M$ as in Proposition \ref{prop:3.1.6}, and $\M$ solves \eqref{weaksolution} with respect to $\widehat{\mu}$ (cf. Theorem \ref{prop:3.1.6}(iii)). Moreover, for any $v \in L^1(\R^d,\widehat{\mu})_b$, $v\ge 0$, such that $\int_{\mathbb{R}^d} v d\widehat{\mu} = 1$, the laws of $\widetilde{\M}$ and $\M$ with initial distribution $v\widehat{\mu}$ coincide, i.e. 
\[
\widetilde {\P}_{v\widehat{\mu}} \circ \widetilde{X}^{-1}=\P_{v\widehat{\mu}} \circ X^{-1}  \quad \text{on } \  \mathcal{B}(C([0, \infty), \R^d)).
\] 
In particular, if $\widehat{\mu}$ is a probability measure, then (choosing $v\equiv 1$) $\widetilde {\P}_{\widehat{\mu}}$ and $\P_{\widehat{\mu}}$ are additionally to the above mentioned statements stationary measures for $\widetilde{\M}$ and $\M$ respectively.\\
(ii) Under the assumptions of (i) there exists a set $N\in \mathcal{B}(\R^d)$ with $\widehat{\mu}(N)=0$, such that
\[
\widetilde {\P}_{x} \circ \widetilde{X}^{-1}=\P_{x} \circ X^{-1}  \quad \text{on } \  \mathcal{B}(C([0, \infty), \R^d))
\] 
for all $x\in \R^d\setminus N$.
\end{theorem}
\begin{proof}(i) The given statements are all an easy consequence of our previous explanations. We only briefly explain the statement concerning the laws of $\widetilde{\M}$ and $\M$. For this fix $v \in L^1(\R^d,\widehat{\mu})_b$, $v\ge 0$, such that $\int_{\mathbb{R}^d} v d\widehat{\mu} = 1$.
The one dimensional marginals of $\widetilde{\M}$ and $\M$ with initial distribution $v\widehat{\mu}:=v d\widehat{\mu}$ coincide by  \eqref{eq:3.40*}. Indeed, using \eqref{eq:3.40*} we get for any $t\ge 0$, $f\in \mathcal{B}_b(\R^d)$
\begin{equation*} \label{eq:subinv 1bis}
\widetilde{\mathbb{E}}_{v\widehat{\mu}}\big [ f(\widetilde{X}_t)\big ]=\int_{\R^d}S_t f(x) v(x)\widehat{\mu}(dx)=\int_{\R^d}T_tf (x)v(x)\widehat{\mu}(dx)=\mathbb{E}_{v\widehat{\mu}}\big [ f(X_t)\big ].
\end{equation*}
Using 
the latter, the Markov property and induction, we then see that the finite dimensional marginals of $\widetilde{X}$ and $X$ with initial distribution $v  \widehat{\mu}$ coincide, i.e. for any $0 \le t_1 < \ldots < t_n, \ t_i \in [0,\infty)$, $f_i \in \mathcal{B}_b(\R^d)$, $1\le i\le n, \ n \ge 1$, 
\begin{eqnarray}\label{finite dim distr} 
\widetilde{\mathbb{E}}_{v\widehat{\mu}}\big [ f_1(\widetilde{X}_{t_1})\cdot \ldots \cdot f_n(\widetilde{X}_{t_n})\big ]  	
&=&\int_{\mathbb{R}^d} S_{t_1}	(f_1 S_{t_2-t_1}(f_2 \cdots S_{t_{n-1}-t_{n-2}}(f_{n-1} S_{t_n-t_{n-1}} f_n))) v d\widehat{\mu}	\nonumber \\	
&=&\int_{\mathbb{R}^d} T_{t_1}	(f_1 T_{t_2-t_1}(f_2 \cdots T_{t_{n-1}-t_{n-2}}(f_{n-1} T_{t_n-t_{n-1}} f_n))) v d\widehat{\mu}			\nonumber	\\
&=&\mathbb{E}_{v\widehat{\mu}}\big [ f_1(X_{t_1})\cdot \ldots \cdot f_n(X_{t_n})\big ].
\end{eqnarray}
(ii) Let $\mathcal{D}$ be any countable dense subset of $C_0(\R^d)$ with respect to the uniform norm. By the measurability of $x\mapsto \P_{x}(A)$ and  $x\mapsto \widetilde{\P}_{x}(\widetilde{A})$ for $A\in \sigma(X_t\,|\, t\ge 0)$ and $\widetilde{A}\in \sigma(\widetilde{X}_t\,|\, t\ge 0)$, and \eqref{finite dim distr}, there exists $N\in \mathcal{B}(\R^d)$, such that 
\begin{equation}\label{eq1aex}
\widetilde{\mathbb{E}}_{x}\big [ f_1(\widetilde{X}_{t_1})\cdot \ldots \cdot f_n(\widetilde{X}_{t_n})\big ]=
\mathbb{E}_{x}\big [ f_1(X_{t_1})\cdot \ldots \cdot f_n(X_{t_n})\big ]
\end{equation}
for any $0 \le t_1 < \ldots < t_n, \ t_i \in \mathbb{Q}\cap [0,\infty)$, $f_i \in \mathcal{D}$, $1\le i\le n, \ n \ge 1$, and
\begin{equation}\label{eq2aex}
\P_x(t\mapsto X_t \text{ is continuous from }[0,\infty)\text{ to }\R^d)=1=\widetilde{\P}_x(t\mapsto \widetilde{X}_t \text{ is continuous from }[0,\infty)\text{ to }\R^d)
\end{equation}
for any $x\in \R^d\setminus N$. By simple approximation, using \eqref{eq2aex}, \eqref{eq1aex} extends to arbitrary $t_i \in [0,\infty)$ and $f_i\in C_0(\R^d)$ (and then $f_i\in C_b(\R^d)$), $1\le i\le n$. Now the assertion follows applying a monotone class theorem to the multiplicative class $C_b(\R^d)$. 
\end{proof}\\
The following corollary is an immediate consequence of Theorems \ref{prop:3.1.6} and \ref{uniqueness in  law}.
\begin{cor}\label{wellposedness}
Suppose that {\bf (A)} holds and consider $\widehat{\mu}$ as defined there. Suppose further that
$(L, C_0^{\infty}(\mathbb{R}^d))$ is $L^1(\mathbb{R}^d, \widehat{\mu})$-unique (see the definition at the beginning of Section \ref{l1closeduni}), and that $T_t1=1$ for all $t> 0$. Then there exists a right process $\mathbb{M}$ that solves \eqref{weaksolution} with respect to $\widehat{\mu}$ and uniqueness in law holds with respect to $\widehat{\mu}$ among all right processes $\widetilde{\mathbb{M}}$ with sub-invariant measure $\widehat{\mu}$ and continuous paths with respect to $\widehat{\mu}$.
\end{cor}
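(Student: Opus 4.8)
The plan is to read off both assertions by assembling Proposition \ref{Huntex}, Theorem \ref{prop:3.1.6}, Proposition \ref{eq MP and weak sol} and Theorem \ref{uniqueness in  law}, so that the corollary becomes essentially a matter of bookkeeping.

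For existence, I would first take the Hunt process $\M$ furnished by Proposition \ref{Huntex}. Since by hypothesis $T_t1=1$ for all $t>0$, the semigroup $(T_t)_{t>0}$ is conservative, which by Theorem \ref{prop:3.1.6}(iii) is equivalent to $\M$ being non-explosive for strictly $\mathcal{E}$-q.e., hence for a.e., starting point. By Proposition \ref{Huntex} the sample paths of $\M$ are continuous for strictly $\mathcal{E}$-q.e., hence for a.e., starting point, so $\M$ has continuous paths with respect to $\widehat{\mu}$ in the sense of Definition \ref{defnofmartin}(i). Moreover $\widehat{\mu}$ is a sub-invariant measure for $(\overline{T}_t)_{t>0}$, hence for $\M$ (in fact invariant, as $T_t1=1$), so that the contraction estimate \eqref{eq:subinv 1} is available for $\M$. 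Applying Theorem \ref{prop:3.1.6}(i) to $u\in C_0^{\infty}(\R^d)\subset D(L_2)$ and integrating the martingale identity over an initial distribution $v\widehat{\mu}$, with $v\in L^1(\R^d,\widehat{\mu})_b$, $v\ge 0$, $\int_{\R^d}v\,d\widehat{\mu}=1$, one sees, using \eqref{eq:subinv 1} and the boundedness of $u$ to secure the integrability of $u(X_t)$ and of $\int_0^tLu(X_s)\,ds$, that $\M$ solves the martingale problem for $(L,C_0^{\infty}(\R^d))$ with respect to $\widehat{\mu}$ in the sense of Definition \ref{defnofmartin}(ii). Proposition \ref{eq MP and weak sol}, applied to the right process $\widetilde{\mathbb{M}}=\M$, which has sub-invariant measure $\widehat{\mu}$ and continuous paths with respect to $\widehat{\mu}$, then yields that $\M$ solves \eqref{weaksolution} with respect to $\widehat{\mu}$. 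Thus $\M$ is itself a right process of the type considered in the statement.

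For uniqueness in law, I would let $\widetilde{\mathbb{M}}$ be an arbitrary right process with sub-invariant measure $\widehat{\mu}$ and continuous paths with respect to $\widehat{\mu}$ that solves \eqref{weaksolution} with respect to $\widehat{\mu}$. Since $(L,C_0^{\infty}(\R^d))$ is $L^1(\R^d,\widehat{\mu})$-unique by hypothesis, Theorem \ref{uniqueness in  law}(i) applies verbatim and gives $\widetilde{\P}_{v\widehat{\mu}}\circ\widetilde{X}^{-1}=\P_{v\widehat{\mu}}\circ X^{-1}$ on $\mathcal{B}(C([0,\infty),\R^d))$ for every admissible $v$, where $\M$ is the process constructed in the previous paragraph. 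As this identification holds for every such $\widetilde{\mathbb{M}}$, any two processes in the stated class share, for each admissible initial condition $v\widehat{\mu}$, the law of $\M$, and therefore coincide in law; by Definition \ref{defnofmartin}(iv) this is exactly uniqueness in law for \eqref{weaksolution} with respect to $\widehat{\mu}$.

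I do not expect a genuine obstacle, as the corollary merely repackages the two cited theorems. The one place deserving some care is the first step: upgrading the pointwise (quasi-everywhere-in-$x$) solution property of $\M$ supplied by Theorem \ref{prop:3.1.6} to the \lq\lq with respect to $\widehat{\mu}$\rq\rq\ formulation of Definition \ref{defnofmartin}(iii). Routing this through the martingale-problem characterisation of Proposition \ref{eq MP and weak sol}, together with the uniform integrability furnished by \eqref{eq:subinv 1}, is cleaner than trying to handle the stochastic-integral part of \eqref{weaksolution} directly under $\P_{v\widehat{\mu}}$.
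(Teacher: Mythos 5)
Your assembly is correct and follows essentially the paper's intended route: the paper treats this corollary as an immediate consequence of Theorem \ref{prop:3.1.6} (existence, with $T_t1=1$ giving non-explosion and hence the solution property with respect to $\widehat{\mu}$) and Theorem \ref{uniqueness in  law} (identification of laws), and your detour through Proposition \ref{eq MP and weak sol} is only a harmless repackaging of the existence step. One small inaccuracy, which your argument never uses: invariance of $\widehat{\mu}$ for $(\overline{T}_t)_{t>0}$ does not follow from $T_t1=1$ (that is equivalent to conservativeness of the dual semigroup, cf. Theorem \ref{theoconserva}), but rather from the assumed $L^1(\mathbb{R}^d,\widehat{\mu})$-uniqueness via Theorem \ref{analycharin}; only sub-invariance is needed for \eqref{eq:subinv 1} and Proposition \ref{eq MP and weak sol}.
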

\noindent
The following theorem provides our main explicit result on conditional uniqueness in law of \eqref{weaksolution}.
\begin{theorem}\label{weakexfairlygeneral 2}
Assume {\bf (A)} and that the functions $a_{ij}$, $1\le i,j\le d$, are locally H\"{o}lder continuous on $\mathbb{R}^d$ and suppose further that one of the following additional assumptions  (a) or (b) holds:
\begin{itemize}
\item[(a)] There exists a constant $M>0$ and $N_0 \in \mathbb{N}$, such that for a.e. $x \in \mathbb{R}^d \setminus \overline{B}_{N_0}$
\begin{equation}\label{growthconser 2}
-\frac{\langle \widehat{A}(x)x, x\rangle }{\|x\|^2} + \frac12 {\rm trace}(\widehat{A}(x))  + \langle\beta^{\rho, A,\psi}(x), x \rangle  +\langle\mathbf{B}(x), x \rangle  \leq M \|x\|^2 \left( \ln \|x\| +1   \right)
\end{equation}
and 
\begin{equation}\label{growthconser 3}
-\frac{\langle \widehat{A}(x)x, x\rangle }{\|x\|^2} + \frac12 {\rm trace}(\widehat{A}(x))  + \langle\beta^{\rho, A,\psi}(x), x \rangle  -\langle\mathbf{B}(x), x \rangle  \leq M \|x\|^2 \left( \ln \|x\| +1   \right).
\end{equation}
\item[(b)] $\widehat{\mu}$ is finite and either \eqref{growthconser 2} or \eqref{growthconser 3} holds.
\end{itemize}
Then $(L, C_0^{\infty}(\mathbb{R}^d))$ is $L^1(\mathbb{R}^d, \widehat{\mu})$-unique and $T_t1=1$, $t> 0$, where $(T_t)_{t>0}$ is defined in the paragraph following Theorem \ref{pojjjde}. In particular, uniqueness in law holds for \eqref{weaksolution} in the sense of Corollary \ref{wellposedness} (cf. also Definition  \ref{defnofmartin}(iv)).
\end{theorem}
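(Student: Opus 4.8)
The plan is to reduce the assertion, via the chain of equivalences already established, to checking the hypotheses of Theorem \ref{theoconserva}. Recall that by Theorem \ref{analycharin} the $(\overline{T}_t)_{t>0}$-invariance of $\widehat{\mu}$ is equivalent to $L^1(\R^d,\widehat{\mu})$-uniqueness of $(L,D(L^0)_{0,b})$, which by Corollary \ref{cor4.4} (here the local H\"older continuity of the $a_{ij}$ enters) is in turn equivalent to $L^1(\R^d,\widehat{\mu})$-uniqueness of $(L,C_0^{\infty}(\R^d))$. Hence it suffices to show that, under (a) or (b), $\widehat{\mu}$ is $(\overline{T}_t)_{t>0}$-invariant \emph{and} $(T_t)_{t>0}$ is conservative (i.e.\ $T_t1=1$, $t>0$); the final \lq\lq in particular\rq\rq\ then follows from Corollary \ref{wellposedness} (cf.\ Definition \ref{defnofmartin}(iv)).

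The bridge to Theorem \ref{theoconserva} is the explicit Lyapunov function $u(x):=\ln(\|x\|^2\vee N_0^2)+2$ already used in the proof of Proposition \ref{weakexfairlygeneral}(ii): it lies in $C^2(\R^d\setminus \overline{B}_{N_0})\cap C(\R^d)$, is nonnegative, and tends to $\infty$ as $\|x\|\to\infty$. A direct computation gives, a.e.\ on $\R^d\setminus\overline{B}_{N_0}$,
\[
Lu=-2\frac{\langle\widehat{A}(x)x,x\rangle}{\|x\|^4}+\frac{\mathrm{trace}(\widehat{A}(x))}{\|x\|^2}+\frac{2\langle\beta^{\rho,A,\psi}(x)+\mathbf{B}(x),x\rangle}{\|x\|^2},
\]
and the analogous formula for $L'u$ with $\mathbf{B}$ replaced by $-\mathbf{B}$, where $L,L'$ are as in \eqref{definition of L}, \eqref{definition of L'}. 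Since $Mu=2M(\ln\|x\|+1)$ on $\R^d\setminus\overline{B}_{N_0}$, multiplying by $\|x\|^2/2$ shows that $Lu\le Mu$ a.e.\ on $\R^d\setminus\overline{B}_{N_0}$ is precisely \eqref{growthconser 2}, and $L'u\le Mu$ a.e.\ on $\R^d\setminus\overline{B}_{N_0}$ is precisely \eqref{growthconser 3}. Therefore, by Theorem \ref{theoconserva}(ii), \eqref{growthconser 2} implies $(T_t)_{t>0}$ is conservative and $\widehat{\mu}$ is $(\overline{T}'_t)_{t>0}$-invariant; by Theorem \ref{theoconserva}(i), \eqref{growthconser 3} implies $(T'_t)_{t>0}$ is conservative and $\widehat{\mu}$ is $(\overline{T}_t)_{t>0}$-invariant.

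Under assumption (a) both \eqref{growthconser 2} and \eqref{growthconser 3} hold, so Theorem \ref{theoconserva} yields at once $T_t1=1$ and $(\overline{T}_t)_{t>0}$-invariance of $\widehat{\mu}$, and we are done. Under (b), $\widehat{\mu}$ is finite and only one of the two growth conditions is available. If \eqref{growthconser 2} holds, then $(T_t)_{t>0}$ is conservative and $\widehat{\mu}$ is $(\overline{T}'_t)_{t>0}$-invariant, which (since $1\in L^1(\R^d,\widehat{\mu})$ by finiteness) means $\overline{T}'_t1=1$; using the duality $\int_{\R^d}\overline{T}_tu\cdot v\,d\widehat{\mu}=\int_{\R^d}u\cdot\overline{T}'_tv\,d\widehat{\mu}$ (which follows from Remark \ref{resestdual}, first for $u,v\in L^1(\R^d,\widehat{\mu})_b$ and then by density for $u\in L^1(\R^d,\widehat{\mu})$) with $v\equiv 1$ gives $\int_{\R^d}\overline{T}_tu\,d\widehat{\mu}=\int_{\R^d}u\,\overline{T}'_t1\,d\widehat{\mu}=\int_{\R^d}u\,d\widehat{\mu}$, i.e.\ $\widehat{\mu}$ is $(\overline{T}_t)_{t>0}$-invariant. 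If instead \eqref{growthconser 3} holds, then $\widehat{\mu}$ is $(\overline{T}_t)_{t>0}$-invariant directly, and the same duality argument with the roles of $(\overline{T}_t)_{t>0}$ and $(\overline{T}'_t)_{t>0}$ interchanged gives $T_t1=\overline{T}_t1=1$. In either case of (b) we again obtain both required properties, and the proof is complete.

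There is no serious analytic obstacle: all the substance is contained in Theorems \ref{analycharin}, \ref{theoconserva} and Corollaries \ref{cor4.4}, \ref{wellposedness}. The only step requiring a little care is, under (b), the passage from conservativeness of one semigroup to invariance of $\widehat{\mu}$ for the other; here finiteness of $\widehat{\mu}$ is essential, as it makes the constant function $1$ an admissible test function in the duality identity of Remark \ref{resestdual}.
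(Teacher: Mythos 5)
Your proposal is correct and follows essentially the same route as the paper: the Lyapunov function $u(x)=\ln(\|x\|^2\vee N_0^2)+2$ together with Theorem \ref{theoconserva}(i),(ii) translates \eqref{growthconser 2} and \eqref{growthconser 3} into conservativeness of $(T_t)_{t>0}$ and $(T'_t)_{t>0}$ (equivalently, invariance of $\widehat{\mu}$ for the respective dual semigroup), and the $L^1(\R^d,\widehat{\mu})$-uniqueness then comes from Theorem \ref{analycharin} and Corollary \ref{cor4.4}, with the final assertion from Corollary \ref{wellposedness}. The only place you deviate is case (b): the paper disposes of it by citing \cite[Remark 2.13]{LST22} (for finite $\widehat{\mu}$, conservativeness of one of the two dual semigroups implies conservativeness of both), whereas you prove this fact inline via the duality of Remark \ref{resestdual}, using that $1\in L^1(\R^d,\widehat{\mu})_b$ when $\widehat{\mu}$ is finite. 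This is a correct, self-contained substitute; two small points worth making explicit are that the deduction \lq\lq invariance of $\widehat{\mu}$ for $(\overline{T}'_t)_{t>0}$ plus finiteness yields $\overline{T}'_t1=1$\rq\rq\ also uses sub-Markovianity ($\overline{T}'_t1\le 1$, so the nonnegative function $1-\overline{T}'_t1$ has zero integral), and that passing from the resolvent duality of Remark \ref{resestdual} to the semigroup duality $\int\overline{T}_tu\cdot v\,d\widehat{\mu}=\int u\cdot\overline{T}'_tv\,d\widehat{\mu}$ requires the standard Laplace-transform (or exponential-formula) argument; both are routine, so there is no gap.
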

\begin{proof}
Using Theorem \ref{theoconserva}(i) and (ii), the proof of the conservativeness of $(T_t)_{t>0}$ and $(T'_t)_{t>0}$ in case of (a) is analogous to the proof of Proposition \ref{weakexfairlygeneral}(ii). If $\widehat{\mu}$ is finite, it is enough to show that one semigroup is conservative in order to conclude that both, $(T_t)_{t>0}$ and $(T'_t)_{t>0}$, are conservative (cf. \cite[Remark 2.13]{LST22}). But the condition \eqref{growthconser 2} means that $(T_t)_{t>0}$ is conservative and the condition \eqref{growthconser 3} means that $(T'_t)_{t>0}$ is conservative. 
Thus the conservativeness of both, $ (T_t)_{t>0}$ and $(T'_t)_{t>0}$, also follows in the case of (b).
Now,  the $L^1(\mathbb{R}^d, \widehat{\mu})$-uniqueness of $(L, C_0^{\infty}(\mathbb{R}^d))$ follows from Theorem \ref{theoconserva}(i), Theorem \ref{analycharin} and Corollary \ref{cor4.4}.
\end{proof}
\centerline{}
\noindent
The following example illustrates among others Corollary \ref{wellposedness} and Theorem \ref{weakexfairlygeneral 2}, in particular, we consider a finite $\widehat{\mu}$.

\begin{example}\label{example section 5}
(i) Let $A=id$, $\bold{B}=0$, $\rho(x)=e^{-\|x\|^2}$, $\psi \in L^1_{loc}(\mathbb{R}^d)$ with $\psi >0$ a.e. and $\frac{1}{\psi} \in L^{\infty}_{loc}(\mathbb{R}^d)$, and $\rho \psi \in L^1(\mathbb{R}^d)$, i.e. $\widehat{\mu}$ is a finite measure. Then {\bf (A)} holds and $\beta^{\rho, A, \psi} =-x\frac{1}{\psi}(x)$. Thus \eqref{growthconser 2} writes as
\begin{equation} \label{growthcondi}
\big ( \frac{d}{2}-1 \big ) -  \|x\|^2 \leq M \psi(x) \|x\|^2 \left(  \ln \|x\| +1 \right) \quad \text{ for a.e.  $x\in \mathbb{R}^d \setminus \overline{B}_{N_0}$}.
\end{equation}
But for sufficiently large $N_0$, \eqref{growthcondi} always holds. Therefore by Theorem \ref{weakexfairlygeneral 2} $(T_t)_{t>0}$ is conservative and $(L, C_0^{\infty}(\mathbb{R}^d))$ is $L^1(\mathbb{R}^d, \widehat{\mu})$-unique.\\ 
(ii) Let $A,  \mathbf{B}$ and $\widehat{\mu}$ be as in {\bf (A)}. Assume that there exist constants $N_0 \in \mathbb{N}$ and $c, M>0$, such that
$$
\widehat{\mu}(B_{4n} \setminus B_{2n}) \leq (4n)^c \quad \text{ for all $n \geq 1$}
$$
and that
\begin{equation} \label{growtdivfr}
\frac{1}{\psi}(x) \frac{\langle A(x)x, x \rangle}{\|x\|^2} +  \left| \langle \mathbf{B}(x), x  \rangle \right | \leq M \|x\|^2 \ln (\|x\|  +1) \quad \text{ for a.e. $x \in \mathbb{R}^d \setminus \overline{B}_{N_0}$}.
\end{equation}
Then, by a direct consequence of \cite[Corollary 10(i)]{GT17} (see also \cite[Proposition 3.31]{LST22}), $(T_t)_{t>0}$ and $(T'_t)_{t>0}$  are conservative and 
therefore using Theorem \ref{analycharin} and Corollary \ref{cor4.4}, we obtain that $(L, C_0^{\infty}(\mathbb{R}^d))$ is $L^1(\mathbb{R}^d, \widehat{\mu})$-unique and $\widehat{\mu}$ is an invariant measure (but possibly infinite).  A typical example for a divergence-free vector field $\bold{B}$ (with respect to $\widehat{\mu}$) that satisfies {\bf (A)} is
$$
\beta^{\rho, C^T, \psi} =  \frac{1}{2\psi}{\rm div}\,C +\frac{1}{2\psi\rho} C^T \nabla \rho,
$$
where $C=(c_{ij})_{1 \leq i,j \leq d}$ is an anti-symmetric matrix, i.e. $c_{ij}=-c_{ji}$,  of locally bounded functions with $\beta^{\rho, C, \psi} \in L^2_{loc}(\mathbb{R}^d, \mathbb{R}^d, \widehat{\mu})$, $\psi\beta^{\rho, C, \psi} \in L^2_{loc}(\mathbb{R}^d, \mathbb{R}^d, \mu)$ (${\rm div}\,C\in L^1_{loc}(\mathbb{R}^d, \mathbb{R}^d)$). Indeed, for each $f \in C_0^{\infty}(\mathbb{R}^d)$
$$
\int_{\mathbb{R}^d} \langle \beta^{\rho, C, \psi}, \nabla f \rangle d\widehat{\mu} = -\frac12\sum_{i,j=1}^d \int_{\mathbb{R}^d}   \rho c_{ij} \partial_i \partial_j f dx -\frac12\int_{\mathbb{R}^d}\langle  C \nabla f,  \nabla \rho \rangle dx +\frac12\int_{\mathbb{R}^d}\langle  C^T \nabla \rho,  \nabla f \rangle dx= 0.
$$

\item[(iii)] Here we present an explicit  example satisfying the conditions in (ii) above such that additionally $\widehat{\mu}$ is finite: we let $A=id$, $\rho(x)=e^{-\|x\|^2}$, $\psi(x)=\frac{1}{\|x\|^{\alpha}} \phi(x)$, $x \in \mathbb{R}^d$ with $\alpha \in [0, d)$ and $\phi \in L^{\infty}(\mathbb{R}^d)$ with $\frac{1}{\phi} \in L^{\infty}_{loc}(\mathbb{R}^d)$ and $\phi(x)>0$ for a.e. $x \in \mathbb{R}^d$. In particular the growth condition on $\widehat{\mu}$ of (ii) is satisfied. Let $\beta \in \mathbb{R}$ with $2(\beta+1)<d$ and
$v(x):= 2+\cos\left( \frac{1}{\|x\|^{\beta}}  \right)$, $x \in \mathbb{R}^d \setminus \{0\}$, $v(0):=2$. Then, $v \in H^{1,2}_{loc}(\mathbb{R}^d)$ with 
$$
\nabla v (x) =  \beta \sin \Big ( \frac{1}{\|x\|^{\beta}} \Big ) \frac{1}{\|x\|^{\beta+1}} \frac{1}{\|x\|} x, \quad \text{for a.e. $x \in \mathbb{R}^d$}.
$$
Now define an anti-symmetric matrix of functions $C=(c_{ij})_{1 \leq i,j \leq d}$ where $c_{1d}=v$, $c_{d1}=-v$ and $c_{ij}=0$ for all $(i,j) \neq (1,d)$, $(d,1)$. Define
$$
\bold{B} := \beta^{\rho, C, \psi} = \frac{1}{2\psi} {\rm div} C +\frac{1}{2\psi\rho} C^T \nabla  \rho.
$$
Observe that
$$
\frac12 {\rm div} C  = \frac12 (-\partial_d v, 0, \ldots, 0, \partial_1 v)  = \frac{\beta}{2} \sin  \big(  \frac{1}{\|x\|^{\beta}}   \big) \frac{1}{\|x\|^{\beta+2}}  \big( -x_d, 0, \ldots, 0, x_1  \big)
$$
and
$$
\frac{1}{2\psi\rho } C^T \nabla  \rho =-\frac{\|x\|^{\alpha}}{\phi(x)} C^T x = \frac{\|x\|^{\alpha}}{\phi(x)} v \cdot (-x_d, 0, \ldots, 0, x_1).
$$
Then, we can check that $\langle \bold{B}(x), x \rangle =0$ for a.e. $x \in \mathbb{R}^d$. Finally, assume that there exist constants $N_0 \in \mathbb{N}$ and $M>0$, such that
\begin{equation*} \label{gtgrowthcon}
 \|x\|^{\alpha} \frac{1}{\phi(x)}  \leq M \|x\|^2 \ln (\|x\|  +1) \quad \text{ for a.e. $x \in \mathbb{R}^d \setminus \overline{B}_{N_0}$}.
\end{equation*}
Then, $\alpha \leq 2$ and the growth condition \eqref{growtdivfr} is satisfied, so that now all conditions of (ii) are met. Thus $(T_t)_{t>0}$ is conservative and $(L, C_0^{\infty}(\mathbb{R}^d))$ is $L^1(\mathbb{R}^d, \widehat{\mu})$-unique. 
The diffusion coefficient of $L$, $\widehat{A} = \frac{1}{\psi} A = \|x\|^{\alpha} \frac{1}{\phi}(x) id$, $\alpha \in [0, d)$ can be discontinuous and additionally degenerate at $0$. The drift coefficient of $L$ is expressed as
\begin{align*}
\mathbf{G} &=\beta^{\rho, A,\psi} + \mathbf{B} = \frac{1}{2\rho \psi} \nabla \rho+ \frac{1}{2\psi}{\rm div}\,C +\frac{1}{2\rho \psi} C^T \nabla \rho \\
&= \frac{\|x\|^{\alpha}}{\phi(x)} \Big (-x +\Big( \frac{\beta}{2} \sin \big(  \frac{1}{\|x\|^{\beta}}   \big) \frac{1}{\|x\|^{\beta+2}} +  \Big(  2+\cos \big( \frac{1}{\|x\|^{\beta}}   \big)  \Big) \Big) \cdot (-x_d, 0, \ldots, 0, x_1)\Big )
\end{align*}
where the $L^2$-singularities of $\mathbf{G}$ arise at $x=0$  in the in the term related to $\frac{\beta}{2} \sin  \big(  \frac{1}{\|x\|^{\beta}}   \big) \frac{1}{\|x\|^{\beta+2}}$ if $\beta \neq 0$ and $\alpha-\beta-1<0$.
\end{example}

\section{Auxiliary Results}\label{aux}
\begin{lemma}  \label{proptrunc} \rm
Let $B$ be an open ball in $\mathbb{R}^d$ and $f \in H^{1,1}(B)$. Let $f_n = (f \wedge n) \vee (-n)$. Then, 
$$
\nabla f_n= 
\left\{\begin{matrix}
0, \quad &\text{$dx$-a.e.} \; \{ f   \geq n \} \\
\nabla f, \quad &\qquad \text{$dx$-a.e.} \; \{ -n <f<n \} \\
0  \quad  &\;\text{$dx$-a.e.} \; \{ f   \leq  -n \}.
\end{matrix}\right.
$$
Thus, $\lim_{n \rightarrow \infty} f_n(x) = f(x)$ and
$\lim_{n \rightarrow \infty}\nabla f_n(x) = \nabla f(x)$ for a.e. $x \in B$. Moreover,
$$
\lim_{n \rightarrow \infty} f_n = f \quad \text{ in $H^{1,1}(B)$.}
$$
\end{lemma}
\noindent
\begin{proof}
Let $g_n = f \wedge n$. Then, $g_n =n-(f-n)^-$, hence it follows from \cite[Theorem 4.4(iii)]{EG15} that
$g_n \in H^{1,1}(B)$ and
$$
\nabla g_n= 
\left\{\begin{matrix}
0, \quad &\text{$dx$-a.e.} \; \{ f  \geq n \} \\
\nabla f  \quad  &\;\text{$dx$-a.e.} \; \{ f  < n \}.
\end{matrix}\right.
$$
Likewise, by  \cite[Theorem 4.4(iii)]{EG15} $f_n = g_n \vee (-n) = (g_n+n)^+ -n
\in H^{1,1}(B)$ and
$$
\nabla f_n= 
\left\{\begin{matrix}
\nabla g_n, \quad &\text{$dx$-a.e.} \; \{ g_n  >-n \} \\
0  \quad  &\;\text{$dx$-a.e.} \; \{ g_n   \leq  -n \}.
\end{matrix}\right.
$$
Note that $\{ g_n >-n \} = \{ f>-n \}$ and $\{ g_n \leq -n \} = \{ f \leq -n \}$, hence
$$
\nabla f_n= 
\left\{\begin{matrix}
0, \quad &\text{$dx$-a.e.} \; \{ f   \geq n \} \\
\nabla f, \quad &\qquad \text{$dx$-a.e.} \; \{ -n <f<n \} \\
0  \quad  &\;\text{$dx$-a.e.} \; \{ f   \leq  -n \}.
\end{matrix}\right.
$$
Thus, $\lim_{n \rightarrow \infty} f_n(x) = f(x)$ and
$\lim_{n \rightarrow \infty}\nabla f_n(x) = \nabla f(x)$ for a.e. $x \in B$, and hence by Lebesgue's theorem
$$
\lim_{n \rightarrow \infty} f_n = f \quad \text{ in $H^{1,1}(B)$.}
$$
\end{proof}

\begin{lemma} \label{basicprop}
Assume {\bf (A)}. Then,
\begin{eqnarray*}
\mathcal{E}^0(f,g) =   - \int_{\R^d} \big( \frac12 {\rm trace} (\widehat{A}\nabla^2f ) + \langle \beta^{\rho, A, \psi}, \nabla f \rangle \big) g \,d \widehat{\mu}, \quad \text{  for all }  f, g \in C_0^{\infty}(\mathbb{R}^d).
\end{eqnarray*}
\end{lemma}
\noindent
\begin{proof}
Let $f,g \in C_0^{\infty}(\mathbb{R}^d)$.
For each $k \geq1$, let $A_k=(a_{ij, k})_{1 \leq i,j \leq d}$ be the matrix of smooth functions defined by 
$a_{ij,k} = a_{ij} *\zeta_k$ where $\zeta_k(x) = k^d \zeta(kx)$ and  $\zeta$ is a standard mollifier on $\mathbb{R}^d$. For each $n, m \geq 1$, let
$\rho_n:= (\rho \wedge n) \vee (-n)$ and $\rho_{n,m}= \rho_n * \zeta_m$. Then, using integration by parts, Lemma \ref{proptrunc} and Lebesgue's theorem, we obtain that
\begin{eqnarray*}
\mathcal{E}^0(f,g) &=& \lim_{n \rightarrow \infty} \frac12 \int_{\R^d} \langle \rho_n  A \nabla f, \nabla g \rangle dx = \lim_{n \rightarrow \infty} \lim_{m \rightarrow \infty} \frac12 \int_{\R^d} \langle \rho_{n,m}  A \nabla f, \nabla g \rangle dx \\
&=&  \lim_{n \rightarrow \infty} \lim_{m \rightarrow \infty}\lim_{k \rightarrow \infty} \frac12 \int_{\R^d} \langle \rho_{n,m}  A_k \nabla f, \nabla g \rangle dx              \\
&=& \lim_{n \rightarrow \infty} \lim_{m \rightarrow \infty} \lim_{k \rightarrow \infty} -\frac12 \int_{\R^d} \left( \rho_{n,m} \, \text{trace} (A_k\nabla^2f ) + \langle \rho_{n,m} \nabla A^T_k +  A^T_k \nabla \rho_{n,m}, \nabla f \rangle \right) g \,dx \\
&=&  \lim_{n \rightarrow \infty} \lim_{m \rightarrow \infty}   -\frac12 \int_{\R^d} \left( \rho_{n,m} \, \text{trace} (A \nabla^2f ) + \langle \rho_{n,m} \cdot \text{div} A +  A^T\nabla \rho_{n,m}, \nabla f \rangle \right) g \,dx \\
&=& \lim_{n \rightarrow \infty}   -\frac12 \int_{\R^d} \left( \rho_{n} \, \text{trace} (A \nabla^2f ) + \langle \rho_{n} \cdot \text{div} A +  A^T\nabla \rho_n, \nabla f \rangle \right) g \,dx \\
&=&   - \int_{\R^d} \big( \frac12 \text{trace} (\widehat{A}\nabla^2f ) + \langle \beta^{\rho, A, \psi}, \nabla f \rangle \big) g \,d \widehat{\mu}.
\end{eqnarray*}

\end{proof}
\centerline{}
The standard arguments for the derivation of the following proposition are well-known (see e.g. \cite{Da85, Eberle}), but we provide the proof here for the reader's convenience.
\begin{proposition} \label{extlempt}
Assume {\bf (A)} and consider the framework in Section \ref{framework}.
 Then, the following hold:
\begin{itemize}
\item[(i)]
$(T^0_t)_{t>0}$ restricted to $L^1(\R^d, \widehat{\mu})_b$ can be extended to a sub-Markovian $C_0$-semigroup of contractions $(\overline{T}_t^0)_{t>0}$ with generator $(\overline{L}^0, D(\overline{L}^0) )$  on $L^1(\R^d, \widehat{\mu})$. If $f \in D(L^0)$ and $f, L^0 f \in L^1(\R^d, \widehat{\mu})$, then $f \in D(\overline{L}^0)$ and $\overline{L}^0 f = L^0 f$. Set $\mathcal{A}:= \{ u \in D(L^0) \cap L^1(\R^d, \widehat{\mu}) : L^0 u \in L^1(\R^d, \widehat{\mu}) \}$. Then $(\overline{L}^0, D(\overline{L}^0))$ is  the closure of $(L^0, \mathcal{A})$ on $L^1(\R^d, \widehat{\mu})$. 
\item[(ii)]
For a bounded open subset $V$ of $\R^d$, $(T^{0,V}_t)_{t>0}$ restricted to $L^1(V, \widehat{\mu})_b$ can be extended to a sub-Markovian $C_0$-semigroup of contractions $(\overline{T}^{0,V}_t)_{t>0}$  on $L^1(V, \widehat{\mu})$. Also if $f \in D(L^{0,V})$ and $f, \, L^{0,V} f \in L^1(V, \widehat{\mu})$, then $f \in D(\overline{L}^{0,V})$ and $\overline{L}^{0,V} f = L^{0,V} f$. Finally, the closure of $(L^{0,V}, D(L^{0,V}))$ on $L^1(V, \widehat{\mu})$ is $(\overline{L}^{0,V}, D(\overline{L}^{0,V}))$.
\end{itemize}
\end{proposition}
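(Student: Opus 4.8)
The plan is to prove (i) in full detail and then to observe that (ii) follows by the identical scheme, in fact more easily. The argument splits into three parts: (a) a contraction estimate allowing the extension of $(T^0_t)_{t>0}$ to $L^1(\R^d,\widehat{\mu})$; (b) strong continuity of the extension; (c) identification of its generator together with the closure statement. For (a) I would use that $(\mathcal{E}^0,D(\mathcal{E}^0))$ is symmetric, so $T^0_t$ is self-adjoint on $L^2(\R^d,\widehat{\mu})$, and that sub-Markovianity gives $\|T^0_t g\|_{L^{\infty}(\R^d)}\le\|g\|_{L^{\infty}(\R^d)}$; then for $u\in L^1(\R^d,\widehat{\mu})_b\ (\subset L^2(\R^d,\widehat{\mu}))$, $L^1$--$L^{\infty}$ duality together with $\int_{\R^d}T^0_t u\cdot g\,d\widehat{\mu}=\int_{\R^d}u\cdot T^0_t g\,d\widehat{\mu}$ yields $\|T^0_t u\|_{L^1(\R^d,\widehat{\mu})}\le\|u\|_{L^1(\R^d,\widehat{\mu})}$. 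Since $L^1(\R^d,\widehat{\mu})_b$ is dense, $T^0_t|_{L^1(\R^d,\widehat{\mu})_b}$ extends to a contraction $\overline{T}^0_t$ on $L^1(\R^d,\widehat{\mu})$, the semigroup and sub-Markov properties passing to the limit; and applying the estimate to $u\ge 0$ gives the sub-invariance $\int_{\R^d}T^0_t u\,d\widehat{\mu}\le\int_{\R^d}u\,d\widehat{\mu}$.

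\textbf{Strong continuity.} It suffices to show $\|T^0_t u-u\|_{L^1(\R^d,\widehat{\mu})}\to 0$ as $t\to 0$ for $u\in L^1(\R^d,\widehat{\mu})_b$, and splitting $u=u^+-u^-$ we may assume $u\ge 0$. From the $L^2$-strong continuity, $T^0_t u\to u$ in $L^2(\R^d,\widehat{\mu})$, hence in $\widehat{\mu}$-measure. Since $(u-T^0_t u)^+\le u\in L^1(\R^d,\widehat{\mu})$, dominated convergence (first along subsequences, then in full since every subsequence has such a sub-subsequence) gives $\int_{\R^d}(u-T^0_t u)^+\,d\widehat{\mu}\to 0$; and from $(T^0_t u-u)^+=(T^0_t u-u)+(u-T^0_t u)^+$ together with $\int_{\R^d}(T^0_t u-u)\,d\widehat{\mu}\le 0$ (sub-invariance) we get $\int_{\R^d}(T^0_t u-u)^+\,d\widehat{\mu}\le\int_{\R^d}(u-T^0_t u)^+\,d\widehat{\mu}\to 0$. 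Adding, $\|T^0_t u-u\|_{L^1(\R^d,\widehat{\mu})}\to 0$. Strong continuity on all of $L^1(\R^d,\widehat{\mu})$ then follows from density and uniform contractivity by the usual $3\varepsilon$-argument; let $(\overline{L}^0,D(\overline{L}^0))$ denote the generator of $(\overline{T}^0_t)_{t>0}$, which inherits sub-Markovianity.

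\textbf{Generator and closure.} First I would record the consistency $\overline{T}^0_t=T^0_t$ on $L^1(\R^d,\widehat{\mu})\cap L^2(\R^d,\widehat{\mu})$, obtained by approximating $f$ there by its truncations $f_n=(f\wedge n)\vee(-n)\in L^1(\R^d,\widehat{\mu})_b$ (converging to $f$ in both $L^1$ and $L^2$) and passing to the limit via convergence in measure; consequently also $\overline{G}^0_\alpha=G^0_\alpha$ on $L^1\cap L^2$. Now if $f\in D(L^0)$ with $f,L^0f\in L^1(\R^d,\widehat{\mu})$, then $f,L^0f\in L^1\cap L^2$, and the identity $T^0_tf-f=\int_0^t T^0_s L^0f\,ds$ becomes $\overline{T}^0_tf-f=\int_0^t \overline{T}^0_s L^0f\,ds$ in $L^1(\R^d,\widehat{\mu})$; dividing by $t$ and letting $t\to 0$ (using continuity of $s\mapsto\overline{T}^0_s L^0f$ at $0$) gives $f\in D(\overline{L}^0)$ and $\overline{L}^0f=L^0f$. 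Hence $(L^0,\mathcal{A})\subset(\overline{L}^0,D(\overline{L}^0))$, so $(L^0,\mathcal{A})$ is closable with closure $(\widetilde{L},D(\widetilde{L}))\subset(\overline{L}^0,D(\overline{L}^0))$. For the reverse inclusion, for $g\in L^1(\R^d,\widehat{\mu})_b$ one has $G^0_1g\in D(L^0)$, $G^0_1g=\overline{G}^0_1g\in L^1(\R^d,\widehat{\mu})$, and $L^0G^0_1g=G^0_1g-g\in L^1(\R^d,\widehat{\mu})$, so $G^0_1g\in\mathcal{A}$ with $(1-L^0)G^0_1g=g$; thus $(1-L^0)(\mathcal{A})$ contains the dense set $L^1(\R^d,\widehat{\mu})_b$. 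Since $\widetilde{L}$, being a restriction of the generator $\overline{L}^0$, is closed and dissipative, $(1-\widetilde{L})(D(\widetilde{L}))$ is closed and dense, hence all of $L^1(\R^d,\widehat{\mu})$; comparing the bijections $1-\widetilde{L}$ and $1-\overline{L}^0$ onto $L^1(\R^d,\widehat{\mu})$ and using $\widetilde{L}\subset\overline{L}^0$ forces $D(\overline{L}^0)\subset D(\widetilde{L})$, i.e. $\widetilde{L}=\overline{L}^0$.

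\textbf{Part (ii) and the main obstacle.} For (ii), since $V$ is bounded we have $\widehat{\mu}(V)<\infty$, so $L^2(V,\widehat{\mu})\hookrightarrow L^1(V,\widehat{\mu})$ continuously and $L^1(V,\widehat{\mu})_b=L^{\infty}(V)\subset L^2(V,\widehat{\mu})$; the contraction estimate is proved as above, the strong continuity in $L^1(V,\widehat{\mu})$ is then immediate from the $L^2$-strong continuity, and every $f\in D(L^{0,V})$ automatically satisfies $f,L^{0,V}f\in L^1(V,\widehat{\mu})$, so that the analogue of the set $\mathcal{A}$ is $D(L^{0,V})$ itself; the generator and closure arguments carry over verbatim. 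I expect the only genuinely non-formal step to be the $L^1$-strong continuity in (i): the contraction estimate and the generator bookkeeping are routine once the consistency of the various extensions is carefully set up, whereas the strong continuity essentially uses the sub-invariance of $\widehat{\mu}$ together with the measure-theoretic subsequence/dominated-convergence argument above.
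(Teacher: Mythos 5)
Your proposal is correct and follows the same overall scheme as the paper's proof: the $L^1$-contraction via symmetry and sub-Markovianity, strong continuity on a dense subclass, the fundamental theorem of calculus for Bochner integrals to obtain $\mathcal{A}\subset D(\overline{L}^0)$ with $\overline{L}^0=L^0$ there, and a resolvent-range argument for the closure statement. Two steps differ in execution. For the contraction estimate, note that the pairing $\int_{\R^d} T^0_t u\, g\,d\widehat{\mu}=\int_{\R^d} u\, T^0_t g\,d\widehat{\mu}$ requires $g\in L^2(\R^d,\widehat{\mu})$, and since $\widehat{\mu}$ may be infinite a general $g\in L^{\infty}(\R^d)$ is not admissible; you should test against bounded $g$ of compact support (the paper takes $g=1_{B_n}$ and lets $n\to\infty$), which is a one-line fix. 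For the strong continuity, the paper reduces to $f$ supported in a set $A$ with $\widehat{\mu}(A)<\infty$ and combines the Cauchy-Schwarz bound $\int_A|T^0_tf-f|\,d\widehat{\mu}\le\widehat{\mu}(A)^{1/2}\|T^0_tf-f\|_{L^2(\R^d,\widehat{\mu})}$ with the observation that no mass escapes $A$, i.e. $\int_{\R^d\setminus A}|T^0_tf|\,d\widehat{\mu}\le\|f\|_{L^1(\R^d,\widehat{\mu})}-\int_A|T^0_tf|\,d\widehat{\mu}\to 0$; you instead use positivity, the domination $(u-T^0_tu)^+\le u$, convergence in measure, and sub-invariance. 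Both are valid, and your variant avoids the finite-measure support reduction. Finally, your closure argument (dissipativity and closed range of $1-\widetilde{L}$, together with $G^0_1\big(L^1(\R^d,\widehat{\mu})_b\big)\subset\mathcal{A}$ giving dense range) is an explicit unfolding of the paper's one-line claim that $\overline{G}^0_1\big(C_0^{\infty}(\R^d)\big)\subset\mathcal{A}$ is dense in $D(\overline{L}^0)$ with respect to the graph norm; the content is equivalent, yours being spelled out in more detail, and your treatment of (ii) via $\widehat{\mu}(V)<\infty$ matches the paper's "analogous" remark.
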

\begin{proof}
(i) Since $(\mathcal{E}^0, D(\mathcal{E}^0))$ is a symmetric Dirichlet from, $(T^{0}_t)_{t>0}$ is symmetric on $L^2(\mathbb{R}^d, \widehat{\mu})$ and sub-Markovian. Let $f \in L^1(\R^d, \widehat{\mu})_b$ with $f \geq 0$. Using the sub-Markovian property of $(T^0_t)_{t>0}$
\begin{eqnarray*}
\int_{\R^d}T^0_t f d  \widehat{\mu}  &=& \lim_{n \rightarrow \infty} \int_{\R^d} T^0_t f \cdot 1_{B_n}d  \widehat{\mu} =\lim_{n \rightarrow \infty} \int_{\R^d} f \cdot T_t^0 1_{B_n} d \widehat{\mu} \leq \int_{\R^d} f d \widehat{\mu}.
\end{eqnarray*}
Thus, for each $f \in L^1(\mathbb{R}^d, \widehat{\mu})$ 
\begin{align*}
\int_{\R^d} |T^0_t f| d \widehat{\mu}  &= \int_{\mathbb{R}^d}|T^0_t f^+ - T^0_t f^-| d \widehat{\mu} \leq  \int_{\mathbb{R}^d} T^0_t f^+ d \widehat{\mu} +\int_{\mathbb{R}^d} T_t^0 f^- d \widehat{\mu} \\
& \leq \int_{\mathbb{R}^d} f^+ d \widehat{\mu} + \int_{\mathbb{R}^d} f^- d \widehat{\mu} = \int_{\mathbb{R}^d}|f| d\widehat{\mu}.
\end{align*}
Since $L^1(\R^d, \widehat{\mu})_b$ is dense in $L^1(\R^d, \widehat{\mu})$, $(T_t^0)_{t>0}$ restricted to $L^1(\R^d, \widehat{\mu})_b$ uniquely extends to a sub-Markovian contraction semigroup $(\overline{T}^0_t)_{t>0}$
on $L^1(\R^d, \widehat{\mu})$.
Define 
\begin{eqnarray*}
\mathcal{D}:= \{g \in  L^{\infty}(\R^d): g \geq 0 \text{ and there exists } A \in \mathcal{B}(\R^d) \text{ with  } \widehat{\mu}(A) < \infty  \text{ and } g = 0  \text{ on } \R^d \setminus A \}.
\end{eqnarray*}
Since $ \mathcal{D}$ is dense in $L^1(\R^d, \widehat{\mu})^+$,\, $\mathcal{D}-\mathcal{D}$ is dense in $L^1(\R^d, \widehat{\mu})$. Let $f \in \mathcal{D}-\mathcal{D}$. Then, there exists $A \in \mathcal{B}(\R^d)$ with $\widehat{\mu}(A)< \infty$, such that $\text{supp}(f) \subset A$ and $f \in L^1(\R^d, \widehat{\mu})_b$. By strong continuity of $(T^0_t)_{t>0}$ on $L^2(\R^d, \widehat{\mu})$
$$
\lim_{t \rightarrow 0+}\int_{\R^d} 1_A |T^0_t f| d  \widehat{\mu} =  \int_{\R^d} 1_A  |f|  d \widehat{\mu} = \|f\|_{L^1(\R^d, \widehat{\mu})},
$$
hence using the contraction property on $L^1(\R^d, \widehat{\mu})$,
\begin{eqnarray*}
0 &\leq& \int_{\R^d} 1_{\R^d \setminus A} \,|T^0_t f| d \widehat{\mu}  =  \int_{\R^d} |T^0_t f | d \widehat{\mu} - \int_{\R^d} 1_{A} |T^0_t f| d \widehat{\mu}  \\
&\leq& \|f\|_{L^1(\R^d, \widehat{\mu})} - \int_{\R^d} 1_A |T^0_t f|d \widehat{\mu} \longrightarrow 0 \;\; \text{ as } t \rightarrow 0+.
\end{eqnarray*}
Therefore
\begin{eqnarray*}
\lim_{t \rightarrow 0+} \int_{\R^d} |T^0_t f -f | d \widehat{\mu} &=& \lim_{t \rightarrow 0+} \big(\int_{\R^d} 1_A |T^0_t f - f| d \widehat{\mu} + \int_{\R^d} 1_{\R^d \setminus A} |T^0_t f | d \widehat{\mu} \big) \\
&\leq& \widehat{\mu}(A)^{1/2} \lim_{t \rightarrow 0+} \|T_t f - f \|_{L^2(\R^d, \widehat{\mu})} =0. 
\end{eqnarray*}
By the denseness of $\mathcal{D}-\mathcal{D}$ in $L^1(\R^d, \widehat{\mu})$, we get the strong continuity of $(\overline{T}_t^0)_{t>0}$ on $L^1(\R^d, \widehat{\mu})$.
Now let $f \in D(L^0)$ and $f,\, L^0 f \in L^1(\R^d, \widehat{\mu})$. Then $f \in L^{1}(\R^d, \widehat{\mu}) \cap L^2(\R^d, \widehat{\mu})$, $L^0 f \in  L^{1}(\R^d, \widehat{\mu}) \cap L^2(\R^d, \widehat{\mu})$, hence we get \,$\overline{T}_t^0 f = T^0_t f$, \,  $\overline{T}_t^0 L^0f = T^0_t L^0f$\; for every $t>0$. Using the Fundamental Theorem of Calculus for Bochner integrals and the strong continuity of $(\overline{T}_t^0)_{t>0}$ on $L^1(\R^d, \widehat{\mu})$
\begin{eqnarray*}
\frac{\overline{T}_t^0 f- f}{t} &=& \frac{T_t^0 f- f}{t}  = \frac{1}{t} \int_{0}^t T^0_s L^0 f \,ds  \\
&=& \frac{1}{t} \int_{0}^t \overline{T}_s^0 L^0 f \,ds   \longrightarrow L^0 f \quad \text{ in } L^1(\R^d, \widehat{\mu}) \;\,\text{ as } t \rightarrow 0+.
\end{eqnarray*}
Consequently, $f \in D(\overline{L}^0)$ and $\overline{L}^0 f = L^0 f$.  Let $(\overline{G}_{\alpha}^0)_{\alpha>0}$ be the resolvent generated by $(\overline{L}^0, D(\overline{L}^0))$.\;
Let $\mathcal{C}:=\big \{ \overline{G}_{1}^0 g :  g \in C_0^{\infty}(\R^d) \big \}$. Then $\mathcal{C} \subset \mathcal{A}$ and one can directly check that $\mathcal{C}  \subset D(\overline{L}^0)$ is dense with respect to the graph norm $\| \cdot \|_{D(\overline{L}^0)}$, hence it completes our proof.\\
(ii) It is analogous to the case of (i).
\end{proof}

\begin{lemma}\label{bddapxlem}
Consider the framework in Section \ref{framework}.  Let $V$ be a bounded open subset of $\R^d$ and $f \in \widehat{H}^{1,2}_0(V, \mu)_b$. Then there exists a sequence $(f_n)_{n \geq 1} \subset C_0^{\infty}(V)$ and a constant $M>0$ such that $\|f_n\|_{L^{\infty}(V)} \leq M$ for all $n \geq 1$ and
\begin{eqnarray*}
&&\lim_{n \rightarrow \infty} f_n = f \quad \text{ in } \; \widehat{H}^{1,2}_0(V, \mu), \;\qquad \lim_{ n \rightarrow \infty} f_n =f \quad \text{a.e. on } V.
\end{eqnarray*}
\end{lemma}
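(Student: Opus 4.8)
The plan is to start from an approximating sequence provided by the very definition of $\widehat{H}^{1,2}_0(V,\mu)$, and then enforce uniform boundedness by composing with a fixed smooth truncation that acts as the identity on the essential range of $f$. Concretely: since $f\in \widehat{H}^{1,2}_0(V,\mu)$, Remark \ref{inclupro}(ii) and the discussion following Definition \ref{defmaydirichletspace} furnish a sequence $(g_n)_{n\ge 1}\subset C_0^{\infty}(V)$ with $g_n\to f$ in $H^{1,2}_0(V,\mu)$ and in $L^2(V,\widehat{\mu})$; passing to a subsequence I may also assume $g_n\to f$ a.e.\ on $V$. Put $M_0:=\|f\|_{L^{\infty}(V)}$ and fix $\phi\in C^{\infty}(\mathbb{R})$ with $\phi(t)=t$ for $|t|\le M_0+1$, $\|\phi'\|_{L^{\infty}(\mathbb{R})}\le 1$ and $\|\phi\|_{L^{\infty}(\mathbb{R})}\le M_0+2=:M$ (such a $\phi$ is obtained, e.g., by mollifying the Lipschitz truncation $t\mapsto (t\wedge M)\vee(-M)$). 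Define $f_n:=\phi(g_n)$. Then $f_n\in C_0^{\infty}(V)$, because $\phi(0)=0$ forces $\mathrm{supp}(f_n)\subset \mathrm{supp}(g_n)$, a compact subset of $V$, and $\|f_n\|_{L^{\infty}(V)}\le M$ by construction.

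It remains to check the three claimed convergences. Pointwise convergence: $\phi$ is continuous, $g_n\to f$ a.e., and $\phi(f)=f$ a.e.\ since $|f|\le M_0$ a.e., so $f_n\to f$ a.e.\ on $V$. Convergence in $L^2(V,\widehat{\mu})$: $|f_n-f|=|\phi(g_n)-\phi(f)|\le |g_n-f|$ a.e.\ (as $\phi$ is $1$-Lipschitz), and $g_n\to f$ in $L^2(V,\widehat{\mu})$. For the gradients, the classical chain rule gives $\nabla f_n=\phi'(g_n)\nabla g_n$ on $V$; since $|f|\le M_0<M_0+1$ we have $\phi'(f)=1$ a.e., so
\[
\nabla f_n-\nabla f=\phi'(g_n)\,(\nabla g_n-\nabla f)+\big(\phi'(g_n)-\phi'(f)\big)\nabla f .
\]
The first term tends to $0$ in $L^2(V,\mu)$ because $|\phi'|\le 1$ and $\nabla g_n\to \nabla f$ in $L^2(V,\mu)$; the second tends to $0$ in $L^2(V,\mu)$ by dominated convergence, using $\phi'(g_n)\to\phi'(f)$ a.e.\ and $|(\phi'(g_n)-\phi'(f))\nabla f|\le 2|\nabla f|\in L^2(V,\mu)$. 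Hence $\nabla f_n\to\nabla f$ in $L^2(V,\mu)$.

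Finally, by Remark \ref{inclupro}(ii) the space $\widehat{H}^{1,2}_0(V,\mu)$ embeds into $H^{1,2}_0(V,\mu)\cap L^2(V,\widehat{\mu})$, so its norm is $\big(\int_V\|\nabla\,\cdot\,\|^2\,d\mu+\int_V(\cdot)^2\,d\widehat{\mu}\big)^{1/2}$ with $\nabla$ the genuine weak gradient; the two convergences just established then give $f_n\to f$ in $\widehat{H}^{1,2}_0(V,\mu)$, and the lemma follows. The only delicate point — hence the main, though soft, obstacle — is precisely this identification of the $\widehat{H}^{1,2}_0(V,\mu)$-norm with weak gradients: because that norm is a priori defined through limits of approximating $C_0^{\infty}$-sequences, one must first pass through the embedding into $H^{1,2}_0(V,\mu)$ before the chain-rule and dominated-convergence manipulations above can legitimately be carried out in $L^2(V,\mu)$. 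Everything else is routine.
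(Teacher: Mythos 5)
Your proof is correct, and it shares the paper's basic device (compose the canonical approximating sequence $(g_n)\subset C_0^\infty(V)$ with a fixed smooth truncation that is the identity on $[-\|f\|_{L^\infty}-1,\|f\|_{L^\infty}+1]$ and vanishes at $0$), but the way you obtain convergence in $\widehat{H}^{1,2}_0(V,\mu)$ is genuinely different. The paper takes $\varphi\in C_0^\infty(\R)$ (so $\|\varphi'\|_{L^\infty}$ is in general larger than $1$), from which it only extracts a uniform bound $\sup_n\|\nabla\varphi(g_n)\|_{L^2(V,\mu)}<\infty$, and then upgrades this to strong convergence by Banach--Alaoglu plus Banach--Saks, so the final sequence consists of Ces\`aro means of a subsequence. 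You instead choose the truncation $\phi$ to be $1$-Lipschitz and equal to the identity on the essential range of $f$, which lets you prove strong convergence directly: $|\phi(g_n)-\phi(f)|\le|g_n-f|$ handles the $L^2(V,\widehat{\mu})$-part, and the splitting $\nabla f_n-\nabla f=\phi'(g_n)(\nabla g_n-\nabla f)+(\phi'(g_n)-\phi'(f))\nabla f$ together with dominated convergence handles the gradient part in $L^2(V,\mu)$. Your route is more elementary (no weak compactness, no Ces\`aro means, and the whole sequence converges a.e.), at the cost of having to choose the truncation as a contraction; the paper's route is marginally more robust since it only needs $\|\varphi'\|_{L^\infty}<\infty$. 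The one point you rightly single out --- that on a bounded $V$ the $\widehat{H}^{1,2}_0(V,\mu)$-norm can be computed with the genuine gradient, i.e. $\|u\|^2=\int_V\|\nabla u\|^2\,d\mu+\int_V u^2\,d\widehat{\mu}$, via the embedding into $H^{1,2}_0(V,\mu)\cap L^2(V,\widehat{\mu})$ from Remark \ref{inclupro}(ii) and the discussion after Definition \ref{defmaydirichletspace} --- is exactly the justification the paper itself relies on (it also yields your starting sequence converging simultaneously in $H^{1,2}_0(V,\mu)$ and $L^2(V,\widehat{\mu})$), so no gap remains.
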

\begin{proof}
Take $(g_n)_{n \geq 1} \subset C_0^{\infty}(V)$ such that 
\begin{equation} \label{appchar}
\lim_{n \rightarrow \infty} g_n = f \; \text{ in } \widehat{H}^{1,2}_0(V, \mu)  \;\; \text{ and  }\; \lim_{n \rightarrow \infty} g_n  = f \;\; \;\text{ a.e. on  } V.
\end{equation}
Define $\varphi \in C^{\infty}_0(\R)$ such that $\varphi(t) = t$\, if\, $|t| \leq \|f\|_{L^{\infty}(\R^d)}+1$ and $\varphi(t)= 0$ \, if \, $|t|\geq \|f\|_{L^{\infty}(\R^d)}+2$. \; Let $M:= \|\varphi \|_{L^{\infty}(\R)}$ and $\widetilde{f}_n:= \varphi(g_n)$. Then $\widetilde{f}_n \in C_0^{\infty}(V)$ and \;$\| \widetilde{f}_n\|_{L^{\infty}(V)} \leq M$ for all $n \geq 1$.
By Lebesgue's Theorem and \eqref{appchar}, 
$$
\; \quad  \lim_{n \rightarrow \infty} \widetilde{f}_n =\lim_{n \rightarrow \infty} \varphi (g_n) = \varphi(f)  = f \quad \text{ in } L^2(V, \widehat{\mu}) \;\; \text{ and  }\; \;\; \lim_{n \rightarrow \infty} \widetilde{f}_n  = f \;\; \;\text{ a.e. on  } V.
$$
Using the chain rule and \eqref{appchar}
\begin{eqnarray*}
\sup_{n \geq 1} \| \nabla \widetilde{f}_n \|_{L^{2}(V, \widehat{\mu})} 
&\leq& \|\varphi'\|_{L^{\infty}(\R)}  \sup_{n \geq 1} \|\nabla g_n \|_{L^2(V, \widehat{\mu})} \ <\infty.\\
\end{eqnarray*}
Thus by the Banach-Alaoglu Theorem and the Banach-Saks Theorem, there exists a subsequence of $(\widetilde{f}_n)_{n \geq 1}$, say again $(\widetilde{f}_n)_{n \geq 1}$, such that for the Ces\`aro mean
$$
f_N:= \frac{1}{N} \sum_{n=1}^{N} \widetilde{f}_n \longrightarrow f \;\; \;\text{ in } \;  \widehat{H}^{1,2}_0(V, \mu) \quad \text{ as } N \rightarrow \infty.
$$
Note that $f_N \in C_0^{\infty}(V)$, $\|f_N\|_{L^{\infty}(V)} \leq M$ for all $N \in \N$. Thus, $(f_n)_{n \geq 1}$ is the desired sequence.
\end{proof}

\begin{lemma} \label{teclemma}
Assume {\bf (A)} and consider the framework in Section \ref{framework}. Let $V$ be a bounded open subset of $\mathbb{R}^d$ and let $w \in \widehat{H}^{1,2}_0(V, \mu)_b$. Then, $w^+ \in \widehat{H}^{1,2}_0(V, \mu)_b$. Moreover,
\begin{equation*} 
\int_{V} \langle \mathbf{B}, \nabla w \rangle w^+ d \widehat{\mu} = \int_{V} \langle \mathbf{B}, \nabla w^+ \rangle w^+ d \widehat{\mu}.
\end{equation*}
\end{lemma}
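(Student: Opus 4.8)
The plan is to prove the two assertions separately but by the same device. First I would show $w^+\in\widehat{H}^{1,2}_0(V,\mu)_b$: since $t\mapsto t^+$ is a normal contraction (it vanishes at $0$ and is $1$-Lipschitz) and normal contractions operate on $\widehat{H}^{1,2}_0(V,\mu)$ — as already used in the proof of Lemma~\ref{existlem}(iv), cf.\ \cite{MR} — we get $w^+\in\widehat{H}^{1,2}_0(V,\mu)$, and the bound $0\le w^+\le\|w\|_{L^\infty(V)}$ yields $w^+\in\widehat{H}^{1,2}_0(V,\mu)_b$. If one prefers a self-contained argument, one invokes Lemma~\ref{bddapxlem} to get a uniformly bounded sequence $(w_n)\subset C_0^\infty(V)$ with $w_n\to w$ in $\widehat{H}^{1,2}_0(V,\mu)$ and a.e., smooths $t\mapsto t^+$ by $\varphi_\varepsilon\in C^2(\R)$ with $\varphi_\varepsilon(0)=0$, $0\le\varphi_\varepsilon'\le 1$, $\varphi_\varepsilon'\nearrow 1_{(0,\infty)}$, and passes to the limit ($n\to\infty$ then $\varepsilon\to 0$) using Banach--Alaoglu together with Banach--Saks exactly as in the proof of Lemma~\ref{bddapxlem}; this simultaneously identifies the weak gradient formula used below.

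Next I would record the chain rule $\nabla w^+=1_{\{w>0\}}\nabla w$ (valid $dx$-a.e., hence $\mu$-a.e.\ and $\widehat{\mu}$-a.e.\ since $\mu$ and $\widehat{\mu}$ are equivalent to Lebesgue measure by Remark~\ref{inclupro}(i)); this is standard and also follows from the approximation above, or from Proposition~\ref{proptrunc} applied to $w$ and $-w$. Since moreover $w^+=1_{\{w>0\}}w$, multiplying gives the pointwise identity $w^+\,\nabla w^+=1_{\{w>0\}}\,w\,\nabla w=w^+\,\nabla w$ $\widehat{\mu}$-a.e. Both integrals in the statement are finite: $w^+\in L^\infty(V)$, $\nabla w,\nabla w^+\in L^2(V,\mu)$ and $\psi\mathbf{B}\in L^2_{loc}(\R^d,\R^d,\mu)$, so
\[
\int_V |\langle \mathbf{B},\nabla w\rangle|\,w^+\,d\widehat{\mu}
=\int_V |\langle \psi\mathbf{B},\nabla w\rangle|\,w^+\,d\mu
\le \|w^+\|_{L^\infty(V)}\,\|\psi\mathbf{B}\|_{L^2(V,\mu)}\,\|\nabla w\|_{L^2(V,\mu)}<\infty,
\]
and likewise with $\nabla w$ replaced by $\nabla w^+$. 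Integrating the pointwise identity over $V$ against $\widehat{\mu}$ then gives $\int_V\langle\mathbf{B},\nabla w\rangle w^+\,d\widehat{\mu}=\int_V\langle\mathbf{B},\nabla w^+\rangle w^+\,d\widehat{\mu}$.

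The only genuinely delicate point is the passage from smooth functions to $w$: one must be careful that $w^+$ really lies in $\widehat{H}^{1,2}_0(V,\mu)$ — the closure of $C_0^\infty(V)$ in a norm that pairs the $L^2(\widehat{\mu})$-norm of the function with the $L^2(\mu)$-norm of the gradient, i.e.\ a two-weight space — and that the $L^2(V,\mu)$-limit of the approximating gradients is indeed $1_{\{w>0\}}\nabla w$. I expect to handle this with the ingredients already present: uniform boundedness of the $w_n$ (Lemma~\ref{bddapxlem}), the bound $0\le\varphi_\varepsilon'\le 1$ (so $\varphi_\varepsilon(w_n)\in C_0^\infty(V)$ has gradient $\varphi_\varepsilon'(w_n)\nabla w_n$ bounded in $L^2(V,\mu)$), dominated convergence for the $L^2(\widehat{\mu})$-part, Banach--Alaoglu/Banach--Saks to produce strongly convergent Cesàro means, and $\tfrac1\psi\in L^\infty_{loc}(\R^d)$ to compare the two weights. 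Alternatively, one may observe that the identity is unchanged upon replacing $\mathbf{B}$ by $\psi\mathbf{B}$ and $\widehat{\mu}$ by $\mu$, and that $w\in H^{1,2}_0(V,\mu)$ by Remark~\ref{inclupro}(ii), so the whole argument for the integral identity may be run inside the single-weight space $H^{1,2}_0(V,\mu)$, where stability under $t\mapsto t^+$ and the chain rule are entirely classical; the statement $w^+\in\widehat{H}^{1,2}_0(V,\mu)_b$ is then obtained from the approximation scheme described above.
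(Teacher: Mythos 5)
Your first assertion is fine and is proved exactly the way the paper itself would do it: $t\mapsto t^+$ is a normal contraction, normal contractions operate on $\widehat{H}^{1,2}_0(V,\mu)$ (this is used verbatim in the proof of Lemma~\ref{existlem}(iv)), and boundedness is immediate. The genuine problem is the justification of your central pointwise identity $w^+\nabla w^+=w^+\nabla w$. Proposition~\ref{proptrunc} is a statement about $H^{1,1}(B)$ with respect to Lebesgue measure and does not apply here: for $w\in\widehat{H}^{1,2}_0(V,\mu)$ the symbol $\nabla w$ only denotes the $L^2(V,\mu)$-limit of $\nabla w_n$ along an approximating sequence from $C_0^\infty(V)$, and since $\rho$ is merely positive a.e.\ (it may degenerate), such a $w$ need not lie in $H^{1,1}_{loc}(V,dx)$ and its abstract gradient is not a priori a distributional gradient, so classical Sobolev truncation results do not transfer. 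The same objection hits your fallback that the chain rule is ``entirely classical'' in $H^{1,2}_0(V,\mu)$: that is again a weighted completion with possibly degenerate weight. As written, the key identity is therefore asserted rather than proved.

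The gap is repairable with the tools you already list, because you only need $\nabla w^+=\nabla w$ $\mu$-a.e.\ on $\{w>0\}$: the factor $w^+$ annihilates $\{w\le 0\}$. Taking bounded $w_n\to w$ from Lemma~\ref{bddapxlem} and smoothing $t\mapsto t^+$ by $\varphi_\varepsilon$, the gradients $\varphi_\varepsilon'(w_n)\nabla w_n$ are bounded in $L^2(V,\mu)$, Banach--Alaoglu/Banach--Saks identify the limit, and on $\{w>0\}$ one has $\varphi_\varepsilon'(w_n)\to 1$ a.e.\ while $\nabla w_n\to\nabla w$ strongly; the troublesome level set $\{w=0\}$, where $\varphi_\varepsilon'(w_n)$ has no a.e.\ limit and one would otherwise need the non-trivial fact $\nabla w=0$ a.e., is irrelevant here precisely because of the factor $w^+$ --- this point must be said explicitly. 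Alternatively, invoke the Dirichlet-form functional calculus (e.g.\ \cite{BH}) instead of \cite{EG15}/Proposition~\ref{proptrunc}. For comparison, the paper sidesteps any pointwise identification for $w$: it proves the identity for the smooth approximants $f_n$ (where \cite[Theorem 4.4(iii)]{EG15} genuinely applies) and passes to the limit in the two integrals, using the weak convergence $f_n^+\to w^+$ in $\widehat{H}^{1,2}_0(V,\mu)$ together with a Cauchy--Schwarz/dominated-convergence splitting; once you fill in the limiting argument above, your route and the paper's rest on the same machinery.
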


\noindent
\begin{proof}
By Lemma \ref{bddapxlem}, we can take a sequence $(f_n)_{n \geq 1} \subset C_0^{\infty}(V)$ and a constant $M_1>0$ such that 
$$
\sup_{n \geq 1} \|f_n\|_{L^{\infty}(V)} \leq M_1,   \; \;\; \lim_{n \rightarrow \infty} f_n = w \; \; \text{a.e.}  \;\; \text{ and } \; \lim_{n \rightarrow \infty} f_n =w \; \text{ in $H^{1,2}_0(V, \widehat{\mu})$}.
$$
Moreover, there exists a constant $M_2>0$ such that 
$$
\|f^+_n \|_{\widehat{H}^{1,2}_0(V, \mu)} \leq \|f_n \|_{\widehat{H}^{1,2}_0(V, \mu)} \leq M_2.
$$
By the Banach-Alaoglu Theorem, there exists a subsequence of $(f^{+}_n)_{n \geq 1}$, say again $(f^+_n)_{n \geq 1}$ such that
$$
\lim_{n \rightarrow \infty} f^+_n = w^+ \; \text{ weakly in  $\widehat{H}^{1,2}_0(V, \mu)$} \;\; \text{ and }\;\; \lim_{n \rightarrow \infty} f^+_n = w^+ \; \text{ a.e.}
$$
By \cite[Theorem 4.4 (iii)]{EG15},
\begin{equation} \label{appdivme}
\int_{V} \langle \mathbf{B}, \nabla f_n \rangle f_n^+ d  \widehat{\mu} = \int_{V} \langle \mathbf{B}, \nabla f_{n}^+ \rangle f^{+}_{n}d  \widehat{\mu}.
\end{equation}
For the convergence of the right hand side, observe that
\begin{eqnarray*}
&&\big | \int_{V} \langle \mathbf{B}, \nabla w^+ \rangle w^{+}d \widehat{\mu}  - \int_{V} \langle \mathbf{B}, \nabla f_{n}^+ \rangle f^{+}_{n}d \widehat{\mu} \big |  \\
&\leq&  \underbrace{ \big | \int_{V }  \langle \mathbf{B}, \nabla (w^{+}-f^+_n )\rangle w^{+} d \widehat{\mu} \big |}_{=:I_n} + \underbrace{\big | \int_{V} \langle \mathbf{B}, \nabla f_{n}^+ \rangle (w^{+}- f^{+}_{n}) d \widehat{\mu}   \big |}_{=:J_n}.
\end{eqnarray*}
Since $\lim_{n \rightarrow \infty} f^+_n = w^+ \, \text{ weakly in } \widehat{H}^{1,2}_0(V, \mu)$, we have $\lim_{n \rightarrow \infty} I_n = 0$. Using the Cauchy-Schwarz inequality, it holds that
\begin{eqnarray*}
J_n &\leq& \int_{V} \|\mathbf{B}\| \|\nabla f_n^+\|\, |w^+ - f^+_n| d\widehat{\mu} \\
&\leq&  \big( \int_{V} \|\nabla f_n^+\|^2\, |w^+ -f^+_n|\, \rho dx \big)^{1/2} \big(\int_{V}  \|\psi \mathbf{B}\|^2 |w^+ - f^+_n| \,\rho dx \big)^{1/2} \\
&\leq& (M+\|w \|_{L^{\infty}(V)})^{1/2} \sup_{n \geq 1} \|f_n^+\|_{\widehat{H}^{1,2}_{0}(V, \mu)}
 \big(\int_{V}  \|\psi \mathbf{ B}\|^2 |w^+ - f^+_n| d\mu \big)^{1/2} \\
&& \quad \longrightarrow 0 \;\; \text{ as } n \rightarrow \infty
\end{eqnarray*}
by Lebesgue's Theorem. The left hand side of \eqref{appdivme} can be treated similarly. We thus obtain
\begin{equation*} 
\int_{V} \langle \mathbf{B}, \nabla w \rangle w^+ d \widehat{\mu} = \int_{V} \langle \mathbf{B}, \nabla w^+ \rangle w^+ d \widehat{\mu}.
\end{equation*}
\end{proof}

\begin{lemma}\label{applempn}
Consider the framework in Section \ref{framework}.  Let $f \in \widehat{H}^{1,2}_0(\R^d, \mu)_{0}$ and $V$ be a bounded open subset of $\R^d$ with $\text{supp}(f) \subset V$. Then $f \in \widehat{H}^{1,2}_0(V, \mu)$ and there exists $(g_n)_{n \geq 1} \subset C_0^{\infty}(V)$ such that 
\begin{eqnarray*}
&&\lim_{n \rightarrow \infty} g_n = f \quad \text{ in } \; \widehat{H}^{1,2}_0(\R^d, \mu), \;\qquad \lim_{ n \rightarrow \infty} g_n =f \quad \text{a.e. on } \R^d.
\end{eqnarray*}
Moreover, if $f \in \widehat{H}^{1,2}_0(\R^d, \mu)_{0,b}$ with $\text{supp}(f) \subset V$, then there exists $(f_n)_{n \geq 1} \subset C_0^{\infty}(V)$ and a constant $M>0$ such that $\|f_n\|_{L^{\infty}(V)} \leq M$ for all $n \geq 1$ and
\begin{eqnarray*}
&&\lim_{n \rightarrow \infty} f_n = f \quad \text{ in } \; \widehat{H}^{1,2}_0(\R^d, \mu), \;\qquad \lim_{ n \rightarrow \infty} f_n =f \quad \text{a.e. on } \R^d.
\end{eqnarray*}
\end{lemma}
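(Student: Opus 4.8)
The plan is a cutoff‑and‑truncate argument. Since $\mathrm{supp}(f)$ is compact and contained in the open set $V$, I would first fix an open set $W$ with $\mathrm{supp}(f)\subset W\subset\overline W\subset V$ and a cutoff $\chi\in C_0^\infty(V)$ with $0\le\chi\le 1$ and $\chi\equiv 1$ on $W$, so that $\chi f=f$ and $\nabla\chi\equiv 0$ on $W$. To prove the first claim, pick $(u_n)\subset C_0^\infty(\R^d)$ witnessing $f\in\widehat H_0^{1,2}(\R^d,\mu)$ in the sense of \eqref{closablet}, i.e. $u_n\to f$ in $L^2(\R^d,\widehat\mu)$ and $(\nabla u_n)$ Cauchy in $L^2(\R^d,\mu)$ (so $\nabla u_n\to\nabla f$ in $L^2(\R^d,\mu)$, the limit being independent of the sequence by closability); passing to a subsequence, also $u_n\to f$ a.e. Set $g_n:=\chi u_n\in C_0^\infty(V)$. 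Then $g_n\to\chi f=f$ in $L^2(\R^d,\widehat\mu)$ and a.e., while $\nabla g_n=\chi\nabla u_n+u_n\nabla\chi$, where $\chi\nabla u_n\to\chi\nabla f$ in $L^2(\R^d,\mu)$. For the remaining term, on the compact set $K:=\mathrm{supp}(\nabla\chi)$ one has $\frac{1}{\psi}\in L^\infty(K)$, hence $L^2(K,\widehat\mu)\subset L^2(K,\mu)$ by Remark \ref{inclupro}(ii), so $u_n\to f$ in $L^2(K,\mu)$ and thus $u_n\nabla\chi\to f\nabla\chi=0$ in $L^2(\R^d,\mu)$, the limit vanishing because $\nabla\chi\equiv 0$ on $W\supset\mathrm{supp}(f)$. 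Therefore $\nabla g_n\to\chi\nabla f$ in $L^2(\R^d,\mu)$.

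It follows at once that $(g_n)$ is $\widehat H_0^{1,2}(V,\mu)$‑Cauchy and converges to $f$ in $L^2(V,\widehat\mu)$, whence $f\in\widehat H_0^{1,2}(V,\mu)$; and, regarding $(g_n)$ as a sequence in $C_0^\infty(\R^d)$, it is $\widehat H_0^{1,2}(\R^d,\mu)$‑Cauchy and converges to $f$ in $L^2(\R^d,\widehat\mu)$, so by completeness $g_n\to f$ in $\widehat H_0^{1,2}(\R^d,\mu)$ (the $\widehat H_0^{1,2}$‑limit being identified through its $L^2(\widehat\mu)$‑representative). This proves the first assertion with $g_n=\chi u_n$. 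For the second, I would truncate the $g_n$ as in Lemma \ref{bddapxlem}: fix $\varphi\in C_0^\infty(\R)$ with $\varphi(0)=0$, $\varphi(t)=t$ for $|t|\le\|f\|_{L^\infty(\R^d)}+1$ and $\varphi\equiv 0$ for $|t|\ge\|f\|_{L^\infty(\R^d)}+2$; put $M:=\|\varphi\|_{L^\infty(\R)}$ and $f_n:=\varphi(g_n)\in C_0^\infty(V)$, so $\|f_n\|_{L^\infty(V)}\le M$. Since $\widehat\mu(V)<\infty$, $|f_n|\le M$ and $g_n\to f$ a.e., dominated convergence gives $f_n\to\varphi(f)=f$ in $L^2(\R^d,\widehat\mu)$ and a.e. For the gradients, $\nabla f_n=\varphi'(g_n)\nabla g_n$ and $\varphi'(g_n)\nabla g_n-\nabla f=\varphi'(g_n)(\nabla g_n-\nabla f)+(\varphi'(g_n)-1)\nabla f$; the first summand has $L^2(\R^d,\mu)$‑norm at most $\|\varphi'\|_{L^\infty}\|\nabla g_n-\nabla f\|_{L^2(\R^d,\mu)}\to 0$, and the second tends to $0$ in $L^2(\R^d,\mu)$ by dominated convergence, since $\varphi'(g_n)\to\varphi'(f)=1$ a.e. and $|\varphi'(g_n)-1|\,|\nabla f|\le(\|\varphi'\|_{L^\infty}+1)|\nabla f|\in L^2(\R^d,\mu)$. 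Hence $f_n\to f$ in $\widehat H_0^{1,2}(\R^d,\mu)$ and a.e., which is the second assertion. (Alternatively, once $f\in\widehat H_0^{1,2}(V,\mu)_b$ is known one may quote Lemma \ref{bddapxlem} on $V$ directly and observe that on functions supported in $V$ the $\widehat H_0^{1,2}(V,\mu)$‑ and $\widehat H_0^{1,2}(\R^d,\mu)$‑norms agree.)

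The only genuinely delicate point is the $u_n\nabla\chi$ term: one must pass from $L^2(\widehat\mu)$‑ to $L^2(\mu)$‑convergence on the compact support of $\nabla\chi$ — this is exactly where $\frac{1}{\psi}\in L^\infty_{loc}(\R^d)$ is used — and then exploit that $\nabla\chi$ vanishes near $\mathrm{supp}(f)$ so that the limit is zero. Everything else is routine bookkeeping between the two function spaces and the two measures $\mu$ and $\widehat\mu$, with the single precaution of extracting an a.e. convergent subsequence of $(u_n)$ at the start so that a.e. convergence survives both the multiplication by $\chi$ and the truncation by $\varphi$.
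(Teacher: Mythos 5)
Your proof is correct, and it reaches the conclusion by a genuinely more direct route than the paper. The paper also multiplies an approximating sequence by a cutoff $\chi\in C_0^\infty(V)$ with $\chi\equiv 1$ near $\mathrm{supp}(f)$, but it only establishes a uniform bound $\sup_n\|\nabla(\chi\widetilde g_n)\|_{L^2(V,\mu)}<\infty$ and then invokes the Banach--Alaoglu and Banach--Saks theorems, passing to Ces\`aro means of a subsequence to obtain strong convergence in $\widehat H^{1,2}_0(V,\mu)$; for the bounded case it refers back to the analogous Ces\`aro-mean truncation argument of Lemma \ref{bddapxlem}. You instead prove strong $L^2(\mu)$-convergence of the gradients outright: the term $\chi\nabla u_n$ converges by the Cauchy property of the gradients (closability making $\nabla f$ well defined, exactly as the paper uses elsewhere), and the term $u_n\nabla\chi$ converges to $f\nabla\chi=0$ because $\nabla\chi$ vanishes on a neighborhood of $\mathrm{supp}(f)$ and on the compact support of $\nabla\chi$ one may pass from $L^2(\widehat\mu)$- to $L^2(\mu)$-convergence via $\frac1\psi\in L^\infty_{loc}$ (Remark \ref{inclupro}(ii)); this is precisely the point where the two measures interact, and you isolate it correctly. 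Likewise your truncation step gets $\nabla\varphi(g_n)=\varphi'(g_n)\nabla g_n\to\nabla f$ in $L^2(\mu)$ by dominated convergence rather than by another weak-compactness argument. What your approach buys is the elimination of the compactness/Ces\`aro machinery and an explicit approximating sequence ($g_n=\chi u_n$, $f_n=\varphi(g_n)$); what the paper's buys is that it never needs $\nabla\chi$ to vanish near $\mathrm{supp}(f)$ for the gradient estimate, only a uniform bound, at the cost of heavier (but standard) functional-analytic tools. The small precautions you flag — extracting an a.e.\ convergent subsequence at the start, requiring $\varphi(0)=0$ so that $\varphi(g_n)\in C_0^\infty(V)$, and identifying the $\widehat H^{1,2}_0$-limit through its $L^2(\widehat\mu)$-representative (again closability) — are exactly the right ones, so I see no gap.
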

\begin{proof}
Let $W$ be an open subset of $\R^d$ satisfying $\text{supp}(f) \subset W \subset \overline{W} \subset V$.
Take a cut-off function $\chi \in C_0^{\infty}(\R^d)$ satisfying $\text{supp}(\chi) \subset V$ and $\chi \equiv 1$ on $W$. Since $f \in \widehat{H}^{1,2}_0(\R^d, \mu)$, there exists $\widetilde{g}_n \in C_0^{\infty}(\R^d)$ such that
$$
\lim_{n \rightarrow \infty} \widetilde{g}_n =f \;\; \text{ in } \widehat{H}^{1,2}_0(\R^d, \mu).
$$
Thus $\chi \widetilde{g}_n \in C_0^{\infty}(V)$ with  $\text{supp}(\chi g_n) \subset V$ and
\begin{align*}
\|\chi \widetilde{g}_n -  f  \|_{L^{2}(V, \widehat{\mu})} &=\|\chi \widetilde{g}_n -  f  \|_{L^{2}(\R^d, \widehat{\mu})} = \|\chi \widetilde{g}_n -  \chi f  \|_{L^{2}(\R^d, \widehat{\mu})} \\
&\leq \| \chi \|_{L^{\infty}(\R^d)} \|\widetilde{g}_n - f \|_{L^2(\R^d, \widehat{\mu})} \longrightarrow 0 \;\; \text{ as } n \rightarrow \infty.
\end{align*}
Note that $\chi \widetilde{g}_n \in C_0^{\infty}(V) \subset \widehat{H}^{1,2}_0(V,\mu)$ and
\begin{eqnarray*}
\sup_{n \geq 1} \|\nabla (\chi \widetilde{g}_n) \|_{L^2(V, \mu)} &=& \sup_{n \geq 1} \left(\| \widetilde{g}_n \nabla \chi  \|_{L^2(V, \mu)} + \|\chi \nabla \widetilde{g}_n  \|_{L^2(V, \mu)}\right) \\
&\leq& \sup_{n \geq 1} \Big(\frac{\| \nabla \chi \|_{L^{\infty}(V)}  }{\sqrt{\inf_V \psi}} \|\widetilde{g}_n\|_{L^2(\R^d, \widehat{\mu})} + \| \chi \|_{L^{\infty}(V)} \|\nabla \widetilde{g}_n\|_{L^2(\R^d, \mu)}\Big) \\
&<& \infty.
\end{eqnarray*}
By the Banach-Alaoglu Theorem and  the Banach-Saks Theorem, $f \in \widehat{H}^{1,2}_0(V, \mu)$ and there exists a subsequence of $(\chi \widetilde{g}_n)_{n \geq 1}$, say again $(\chi \widetilde{g}_n)_{n \geq 1}$, such that for the Ces\`aro mean
$$
g_N:= \frac{1}{N} \sum_{n=1}^{N} \chi \widetilde{g}_n \longrightarrow f \;\; \;\text{ in } \;  \widehat{H}^{1,2}_0(V, \mu) \quad \text{ as } N \rightarrow \infty.
$$
Take a subsequence of $(g_n)_{n \geq 1}$, say again $(g_n)_{n \geq 1}$, such that $\lim_{n \rightarrow \infty} g_n =f$\, a.e.
The last assertion holds by the same line of arguments  as in the proof of Lemma \ref{bddapxlem}.
\end{proof}

\begin{lemma} \label{tecpjp}
Consider the framework in Section \ref{framework}. Let $V_1$, $V_2$ be bounded open subsets of $\R^d$ satisfying $\overline{V}_1 \subset V_2$. Assume $f \in \widehat{H}^{1,2}_0(V_2, \mu)$, $g \in \widehat{H}_0^{1,2}(V_1, \mu)$ with $g =0$ on $V_2 \setminus V_1$. If $0 \leq f \leq g$, then $f \in \widehat{H}^{1,2}_0(V_1, \mu)$.
\end{lemma}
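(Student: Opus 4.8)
The plan is to approximate the dominating function $g$ by nonnegative cut-offs with compact support in $V_1$, to push the corresponding truncations of $f$ down into $\widehat H^{1,2}_0(V_1,\mu)$ along these cut-offs, and finally to recover $f$ itself by a weak-compactness argument in the Hilbert space $\widehat H^{1,2}_0(V_2,\mu)$.

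First I would note that $g\ge f\ge 0$ a.e., choose $(g_n)_{n\ge 1}\subset C_0^\infty(V_1)$ with $g_n\to g$ in $\widehat H^{1,2}_0(V_1,\mu)$, and — passing to a subsequence, using that $\widehat H^{1,2}_0(V_1,\mu)$-convergence entails $L^2(V_1,\widehat\mu)$-convergence — assume $g_n\to g$ a.e. Replacing $g_n$ by $g_n^+$, these still lie in $\widehat H^{1,2}_0(V_1,\mu)$ (normal contractions operate on the symmetric Dirichlet form $(\mathcal E^{0,V_1},D(\mathcal E^{0,V_1}))$, whose domain is $\widehat H^{1,2}_0(V_1,\mu)$), they have support inside the compact set $\text{supp}(g_n)\subset V_1$, they satisfy $\mathcal E^{0,V_1}(g_n^+,g_n^+)\le\mathcal E^{0,V_1}(g_n,g_n)$, and $g_n^+\to g^+=g$ a.e.

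Next I would set $h_n:=f\wedge g_n^+$. Since $f,g_n^+\in\widehat H^{1,2}_0(V_2,\mu)=D(\mathcal E^{0,V_2})$, the lattice property of the symmetric Dirichlet form $\mathcal E^{0,V_2}$ gives $h_n\in\widehat H^{1,2}_0(V_2,\mu)$, with $0\le h_n\le f$ and $\text{supp}(h_n)\subset\text{supp}(g_n^+)$, which is compact in $V_1$. The key sub-step is then: every $w\in\widehat H^{1,2}_0(V_2,\mu)$ with $\text{supp}(w)$ compact in $V_1$ already lies in $\widehat H^{1,2}_0(V_1,\mu)$. To see this, take $u_k\in C_0^\infty(V_2)$ with $u_k\to w$ in $H^{1,2}_0(V_2,\mu)$ and in $L^2(V_2,\widehat\mu)$ (available by Remark~\ref{inclupro}(ii) together with the discussion following Definition~\ref{defmaydirichletspace}), fix $\chi\in C_0^\infty(V_1)$ with $\chi\equiv 1$ on $\text{supp}(w)$, and use $\nabla(\chi u_k)=\chi\nabla u_k+u_k\nabla\chi$ to check that $\chi u_k\in C_0^\infty(V_1)$ converges to $\chi w=w$ in the $\widehat H^{1,2}_0(V_1,\mu)$-norm. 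Applying this with $w=h_n$ gives $h_n\in\widehat H^{1,2}_0(V_1,\mu)$ for every $n$.

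Finally I would pass to the limit. From $\|h_n\|_{L^2(V_2,\widehat\mu)}\le\|f\|_{L^2(\widehat\mu)}$ and the inequality $\mathcal E^{0,V_2}(u\wedge v,u\wedge v)\le\mathcal E^{0,V_2}(u,u)+\mathcal E^{0,V_2}(v,v)$, valid for symmetric Dirichlet forms, combined with $\mathcal E^{0,V_2}(g_n^+,g_n^+)=\mathcal E^{0,V_1}(g_n^+,g_n^+)\le\mathcal E^{0,V_1}(g_n,g_n)\to\mathcal E^{0,V_1}(g,g)<\infty$, the sequence $(h_n)$ is bounded in $\widehat H^{1,2}_0(V_2,\mu)$. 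By the Banach--Alaoglu theorem a subsequence converges weakly in $\widehat H^{1,2}_0(V_2,\mu)$; since $h_n\to f\wedge g=f$ a.e.\ and, by dominated convergence, in $L^2(V_2,\widehat\mu)$, the weak limit must be $f$. As $\widehat H^{1,2}_0(V_1,\mu)$ is a closed, hence weakly closed, subspace of $\widehat H^{1,2}_0(V_2,\mu)$ and each $h_n$ belongs to it, we conclude $f\in\widehat H^{1,2}_0(V_1,\mu)$. The step I expect to require the most care is the compactly-supported sub-step: one must make sure that $\widehat H^{1,2}_0$-convergence over $V_2$ genuinely supplies the $H^{1,2}_0(V_2,\mu)$- and $L^2(V_2,\widehat\mu)$-convergence needed to pass the Leibniz rule to the limit, so that multiplication by $\chi$ really yields an approximating sequence from $C_0^\infty(V_1)$.
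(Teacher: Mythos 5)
Your argument is correct and follows essentially the same route as the paper: approximate $g$ by $C_0^\infty(V_1)$-functions, take the minima $f\wedge g_n$ (supported compactly in $V_1$, handled by the Dirichlet-form lattice property), localize them into $\widehat{H}^{1,2}_0(V_1,\mu)$, derive a uniform $\widehat{H}^{1,2}_0$-bound, and conclude by weak compactness. The only cosmetic differences are that you reprove the localization sub-step (which is exactly the paper's Lemma \ref{applempn}, there done via a Ces\`aro/Banach--Saks argument rather than your strong-convergence Leibniz computation) and that you pass to $g_n^+$, which the paper avoids by writing $f\wedge g_n=\frac{f+g_n}{2}-\frac{|f-g_n|}{2}$.
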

\begin{proof}
Take $(g_n)_{n \geq 1} \subset C_0^{\infty}(V_2)$ satisfying $\text{supp}(g_n) \subset V_1$ for all $n \in \N$ and 
$$
\lim_{n \rightarrow \infty} g_n = g \quad \text{ in } \widehat{H}^{1,2}_0(V_2, \mu).
$$
\;Observe that for all $n \in \N$
$$
\text{supp}(f \wedge g_n) \subset V_1 \;\text{ and }\; f \wedge g_n = \frac{f+g_n}{2}-\frac{|f-g_n|}{2}\in \widehat{H}^{1,2}_0(V_2, \mu).
$$
By Lemma \ref{applempn}, $f \wedge g_n \in \widehat{H}^{1,2}_0(V_1, \mu)$ for all $n \in \N$.
\;Moreover 
\begin{eqnarray*}
\lim_{n \rightarrow \infty} f \wedge g_n  = \lim_{n \rightarrow \infty} \big(\frac{f+g_n}{2}-\frac{|f-g_n|}{2} \big)=\frac{f+g}{2}-\frac{|f-g|}{2} = f \wedge g = f  \;\; \text{ in }  L^2(V_1, \widehat{\mu}).
\end{eqnarray*}
Since $\big(  \langle \cdot,  \cdot \rangle_{\widehat{H}^{1,2}_0(V_2, \mu)}, \widehat{H}^{1,2}_0(V_2, \mu)  \big)$ is a Dirichlet form,
\begin{eqnarray*}
&\, & \sup_{n \geq 1} \|f \wedge g_n \|_{\widehat{H}^{1,2}_0(V_1, \mu)} \\
&=&  \sup_{n \geq 1} \|f \wedge g_n \|_{\widehat{H}^{1,2}_0(V_2, \mu)}  \\
&=&  \sup_{n \geq 1} \big\| \frac{f+g_n}{2}-\frac{|f-g_n|}{2}   \big \|_{\widehat{H}^{1,2}_0(V_2, \mu)} \\
&\leq& \frac{1}{2} \sup_{n \geq 1} \big( \|f\|_{\widehat{H}^{1,2}_0(V_2, \mu)}  +\|g_n\|_{\widehat{H}^{1,2}_0(V_2, \mu)} +\big \| |f| \big\|_{\widehat{H}^{1,2}_0(V_2, \mu)} +\big \||g_n| \big \|_{\widehat{H}^{1,2}_0(V_2, \mu)}    \big)   \\
&\leq& \sup_{n \geq 1} \big( \|f\|_{\widehat{H}^{1,2}_0(V_2, \mu)}  +\|g_n\|_{\widehat{H}^{1,2}_0(V_2, \mu)}  \big) < \infty.
\end{eqnarray*}
Thus by the Banach-Alaoglu Theorem,  $f \in \widehat{H}^{1,2}_0(V_1, \mu)$. \vspace{-0.4em}
\end{proof}

\begin{lemma} \label{appenlem}
Assume {\bf (A)} and consider the framework in Section \ref{framework}. Let $\alpha>0$ and $h \in L_{loc}^{\infty}(\mathbb{R}^d)$ satisfy
$$
\int_{\mathbb{R}^d} (\alpha-L)u \cdot h\, d\widehat{\mu}=0 \quad \text{ for all $u \in D(L^0)_{0,b}$}.
$$
Then, $h \in \widehat{H}^{1,2}_{loc}(\mathbb{R}^d, \mu) \cap  L_{loc}^{\infty}(\mathbb{R}^d)$ and it holds 
\begin{align} \label{invinequ2-1}
\mathcal{E}_{\alpha}^0(u, h) - \int_{\mathbb{R}^d} \langle \bold{B}, \nabla u \rangle h \, d \widehat{\mu}=0 \quad \text{ for all $u \in \widehat{H}^{1,2}_0(\mathbb{R}^d, \mu)_0$}.
\end{align}
\end{lemma}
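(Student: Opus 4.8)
The plan is to prove that $\chi h\in\widehat{H}^{1,2}_0(\mathbb{R}^d,\mu)$ for every $\chi\in C_0^\infty(\mathbb{R}^d)$; once this is known, $h\in\widehat{H}^{1,2}_{loc}(\mathbb{R}^d,\mu)$, and the identity \eqref{invinequ2-1} follows — as at the beginning of the proof of Theorem \ref{regulsthm} — by integration by parts (for $u\in C_0^\infty(\mathbb{R}^d)$ one has $\int(L^0u)h\,d\widehat\mu=-\mathcal{E}^0(u,h)$ by the defining relation for $L^{0,V}$, since $h\in\widehat{H}^{1,2}_{loc}$) together with the approximation of $u\in\widehat{H}^{1,2}_0(\mathbb{R}^d,\mu)_0$ by $C_0^\infty$-functions of Lemma \ref{applempn}. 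The overall scheme is that of the proof of Theorem \ref{regulsthm}, but since under {\bf (A)} alone no Schauder-type $C^2$-estimate is available, I would replace Krylov's local solution operators by the resolvents $G^{0,V}_\beta$ of the \emph{symmetric} local Dirichlet forms $\mathcal{E}^{0,V}$ on balls $V\supset\text{supp}\,\chi$, which are defined under {\bf (A)} alone.

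First I would fix such a $V$ and set, for $\beta>0$, $w_\beta:=\beta G^{0,V}_\beta(h1_V)\in D(L^{0,V})$. Since $\mathcal{E}^{0,V}$ is a Dirichlet form, $\|w_\beta\|_{L^\infty(V)}\le\|h\|_{L^\infty(V)}$, and by strong continuity $w_\beta\to h1_V$ in $L^2(V,\widehat\mu)$ as $\beta\to\infty$. The structural point that makes everything work is that $\chi^2 w_\beta\in D(L^0)_{0,b}$: it is bounded with compact support, and testing against $\widehat{H}^{1,2}_0(V,\mu)$-functions (using $w_\beta\in D(L^{0,V})$, $\chi^2\in C_0^\infty(V)$ and Proposition \ref{basicprop}) one obtains the Leibniz identity
\[
L^0(\chi^2 w_\beta)=\chi^2 L^{0,V}w_\beta+w_\beta L^0(\chi^2)+\langle\widehat{A}\nabla(\chi^2),\nabla w_\beta\rangle,
\]
all three terms lying in $L^2(\mathbb{R}^d,\widehat\mu)$ — for the last one because $\widehat{A}\nabla(\chi^2)\,d\widehat\mu=A\rho\,\nabla(\chi^2)\,dx$ is bounded on $\text{supp}\,\chi$ while $\nabla w_\beta\in L^2(V,\mu)$ and $\tfrac1\psi\in L^\infty_{loc}$. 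Hence $\chi^2 w_\beta$ is an admissible test function in the hypothesis on $h$. (This is precisely where the present argument diverges from, and is simpler than, that of Theorem \ref{regulsthm}: there the Hölder hypothesis was used only to make the corresponding cut-off resolvent $C^2$, hence a legitimate test function in the relation defining $h$.)

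The heart is then a Caccioppoli estimate with constants independent of $\beta$. Using the pure gradient identity $\mathcal{E}^0(\chi w_\beta,\chi w_\beta)=\tfrac12\int\langle\widehat{A}\nabla\chi,\nabla\chi\rangle w_\beta^2\,d\widehat\mu+\mathcal{E}^{0,V}(w_\beta,\chi^2 w_\beta)$, then $\mathcal{E}^{0,V}(w_\beta,\chi^2 w_\beta)=-\int_V(L^{0,V}w_\beta)\chi^2 w_\beta\,d\widehat\mu=\beta\int_V(h-w_\beta)\chi^2 w_\beta\,d\widehat\mu\le\beta\int_V(h-w_\beta)\chi^2 h\,d\widehat\mu$ (the last step because $\beta\int(h-w_\beta)^2\chi^2\,d\widehat\mu\ge0$), and finally $\beta(h-w_\beta)=-L^{0,V}w_\beta$ together with the Leibniz identity, I would reduce to estimating $-\int_{\mathbb{R}^d}L^0(\chi^2 w_\beta)\,h\,d\widehat\mu$ plus two manifestly $\beta$-bounded terms. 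Here one invokes the hypothesis on $h$ with $u=\chi^2 w_\beta$ to replace it by $-\alpha\int\chi^2 w_\beta h\,d\widehat\mu+\int\langle\mathbf{B},\nabla(\chi^2 w_\beta)\rangle h\,d\widehat\mu$. Every resulting term is controlled by $\|h\|_{L^\infty(V)}$, $\|\chi\|_{C^2}$, $\widehat\mu(\text{supp}\,\chi)$, $\|\mathbf{B}\|_{L^2(\text{supp}\,\chi,\widehat\mu)}$, $\|\psi\mathbf{B}\|_{L^2(\text{supp}\,\chi,\mu)}$, the fixed $\alpha$, and $\mathcal{E}^0(\chi,\chi)$, the only $\beta$-dependence entering through $\mathcal{E}^0(\chi w_\beta,\chi w_\beta)^{1/2}$ to the \emph{first} power — the gradient terms $\int\langle\widehat{A}\nabla(\chi^2),\nabla w_\beta\rangle h$ and $\int\langle\mathbf{B},\nabla w_\beta\rangle\chi^2 h$ being split by Cauchy--Schwarz so that $\nabla w_\beta$ carries the factor $\chi$, giving $\int\chi^2\langle\widehat{A}\nabla w_\beta,\nabla w_\beta\rangle\,d\widehat\mu\le 4\,\mathcal{E}^0(\chi w_\beta,\chi w_\beta)+\mathrm{const}$. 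This produces $\mathcal{E}^0(\chi w_\beta,\chi w_\beta)\le C_1\,\mathcal{E}^0(\chi w_\beta,\chi w_\beta)^{1/2}+C_2$ with $C_1,C_2$ independent of $\beta$, hence $\sup_{\beta>0}\mathcal{E}^0_1(\chi w_\beta,\chi w_\beta)<\infty$. Since $\chi w_\beta\to\chi h$ in $L^2(\mathbb{R}^d,\widehat\mu)$, Banach--Alaoglu gives $\chi h\in D(\mathcal{E}^0)\subset\widehat{H}^{1,2}_0(\mathbb{R}^d,\mu)$, and $\chi$ was arbitrary.

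I expect the main obstacle to be technical bookkeeping rather than a conceptual difficulty: verifying the Leibniz identity for $L^0$ and, in particular, that $\chi^2 w_\beta\in D(L^0)_{0,b}$ (tracking the $\psi$- and $\rho$-weights so as to stay inside $L^2(\widehat\mu)$), and then checking that every constant in the Caccioppoli estimate is genuinely independent of the resolvent parameter $\beta$, so that the passage $\beta\to\infty$ can be carried out — the delicate point there being the Cauchy--Schwarz split that keeps the dangerous gradient term at first power so that it can be absorbed into the left-hand side.
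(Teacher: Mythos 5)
Your proposal is correct, but it takes a genuinely different route from the paper. The paper's own proof of Lemma \ref{appenlem} is purely soft functional analysis, modeled on Step 2 of Proposition \ref{first1}: using the product rule $L^0(\chi u)=\chi L^0u+uL^0\chi+\langle\widehat{A}\nabla\chi,\nabla u\rangle$ for $u\in D(L^0)_b$, the hypothesis on $h$ turns $u\mapsto \int\langle\mathbf{B},\nabla(u\chi)\rangle h\,d\widehat{\mu}+\int\langle\widehat{A}\nabla\chi,\nabla u\rangle h\,d\widehat{\mu}+\int uL^0\chi\, h\,d\widehat{\mu}$ into an $\mathcal{E}^0_{\alpha}$-continuous functional; the Riesz representation theorem on $(D(\mathcal{E}^0),\mathcal{E}^0_{\alpha})$ gives $v\in D(\mathcal{E}^0)$ representing it, and $\int(\alpha-L^0)u\,(\chi h-v)\,d\widehat{\mu}=0$ for all $u\in D(L^0)_b$ forces $\chi h=v$ by density of $(\alpha-L^0)(D(L^0)_b)$ in $L^2(\mathbb{R}^d,\widehat{\mu})$, after which \eqref{invinequ2-1} drops out of the Riesz relation --- no approximating resolvents, no Caccioppoli estimate, no Banach--Alaoglu. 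Your argument instead transplants the scheme of Theorem \ref{regulsthm}, correctly observing that Krylov's solution operators (the only place the H\"older hypothesis enters there) can be replaced under {\bf (A)} alone by the Dirichlet-form resolvents $G^{0,V}_{\beta}$, and extracts $\chi h\in D(\mathcal{E}^0)$ from a $\beta$-uniform energy bound. The paper's route is shorter and estimate-free; yours is more constructive and makes explicit why no regularity of $A$ is needed. The one step you must genuinely verify is $\chi^2w_{\beta}\in D(L^0)_{0,b}$ together with the Leibniz formula: it does hold --- expand $\mathcal{E}^0(\chi^2w_{\beta},v)$ for $v\in D(\mathcal{E}^0)_b$, integrate by parts against $L^{0,V}w_{\beta}$ (using $\chi^2v\in\widehat{H}^{1,2}_0(V,\mu)$) and against $L^0(\chi^2)$, and check $\langle\widehat{A}\nabla(\chi^2),\nabla w_{\beta}\rangle\in L^2(\mathbb{R}^d,\widehat{\mu})$ via $\tfrac1\psi\in L^{\infty}_{loc}$, local boundedness of $A$ and $\nabla w_{\beta}\in L^2(V,\mu)$ (your stated justification of this integrability is imprecise, but these are the facts that carry it) --- and it sits at the same level as the product rule for $L^0(\chi u)$ that the paper itself uses without proof. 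Finally, the inclusion $D(\mathcal{E}^0)\subset\widehat{H}^{1,2}_0(\mathbb{R}^d,\mu)$ you invoke is only valid (via local ellipticity and a cut-off) for compactly supported elements such as $\chi h$, which is all you need and is exactly how the paper uses it too.
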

 \begin{proof}
 Let $\alpha>0$ and $h \in L_{loc}^{\infty}(\mathbb{R}^d)$ be as in the statement.
 Let $\chi \in C_0^{\infty}(\mathbb{R}^d)$  and $u \in D(L^0)_{b}$. Take an open ball $B$ so that $\text{supp}(\chi) \subset B$. Then, $\chi u \in D(L^0)_{0,b}$ with $\text{supp}(\chi u) \subset B$ and
$$
L^0 (\chi u)= \chi L^0 u + u L^0 \chi + \langle A \nabla \chi, \nabla u\rangle.
$$
Thus, for each $u \in D(L^0)_b$ it holds that
\begin{align}
&\int_{\mathbb{R}^d} (\alpha-L^0) u \cdot (\chi h) d \widehat{\mu} \nonumber \\
\quad &= \int_{\mathbb{R}^d} (\alpha-L^0) (u \chi ) \cdot h\, d\widehat{\mu} + \int_{\mathbb{R}^d} \langle \widehat{A} \nabla \chi, \nabla u \rangle h\,d\widehat{\mu} + \int_{\mathbb{R}^d}  u L^0 \chi \cdot h \, d\widehat{\mu} \nonumber  \\
&= \int_{\mathbb{R}^d} \langle \bold{B}, \nabla (u\chi) \rangle h \, d \widehat{\mu}+ \int_{\mathbb{R}^d} \langle \widehat{A} \nabla \chi, \nabla u \rangle h\,d\widehat{\mu} + \int_{\mathbb{R}^d}  u L^0 \chi \cdot h \, d\widehat{\mu}.  \label{equse}
\end{align}
Define a map $\mathcal{T}:D(\mathcal{E}^0) \rightarrow \mathbb{R}$ by 
$$
\mathcal{T} u = \int_{\mathbb{R}^d} \langle \bold{B}, \nabla (u\chi) \rangle h \, d \widehat{\mu}+ \int_{\mathbb{R}^d} \langle \widehat{A} \nabla \chi, \nabla u \rangle h\,d\widehat{\mu} + \int_{\mathbb{R}^d}  u L^0 \chi \cdot h \, d\widehat{\mu}, \quad u \in D(\mathcal{E}^0).
$$
Then, $\mathcal{T}$ is continuous with respect to the norm $\| \cdot \|_{D(\mathcal{E}^0)}$ since
\begin{align*}
| \mathcal{T} u | &\leq  \| \psi \bold{B} \|_{L^2(\mathbb{R}^d, \mu)}  \|\chi \|_{H^{1,\infty}(B)}\|h\|_{L^{\infty}(B)} \cdot \Big(\lambda_{B}^{-\frac12} +\alpha^{-1} \left\| \frac{1}{\psi} \right\|_{L^{\infty}(B)} \Big)  \mathcal{E}^0_{\alpha}(u,u)^{\frac12}  \\
&\quad +  \|h\|_{L^{\infty}(\mathbb{R}^d)} \mathcal{E}^0(\chi, \chi)^{\frac12} \mathcal{E}^0(u,u)^{\frac12} + \|h\|_{L^{\infty}(\mathbb{R}^d)} \|L^0 \chi \|_{L^2(\mathbb{R}^d, \widehat{\mu})} \|u\|_{L^2(\mathbb{R}^d, \widehat{\mu})}.
\end{align*}
By the Riesz representation theorem, there exists $v \in D(\mathcal{E}^0)$ such that
\begin{equation}\label{rieszre}
\mathcal{E}_{\alpha}^{0}(u,v) = \mathcal{T}u \quad \text{ for all } u \in D(\mathcal{E}^0),
\end{equation}
which together with \eqref{equse} imply
\begin{equation} \label{den13}
\int_{\mathbb{R}^d} ( \alpha -L^{0})u \cdot  (\chi h-v) d \widehat{\mu} = 0  \quad \text{ for all } u \in D(L^{0})_b.
\end{equation}
Since $(L^{0}, D(L^{0}))$ is the generator of the symmetric Dirichlet form $(\mathcal{E}^0,D(\mathcal{E}^0))$, we have $ (\alpha-L^{0})(D(L^{0})_b) \subset L^2(\mathbb{R}^d, \widehat{\mu})$ densely. 
Therefore, \eqref{den13} implies $\chi h-v = 0$. In particular, $\chi h =v \in D(\mathcal{E}^0)$, so that $h\in \widehat{H}^{1,2}_{loc}(\mathbb{R}^d, \mu)$. Moreover, it follows from \eqref{rieszre} that
\begin{align} \label{intiden}
\mathcal{E}_{\alpha}^0(u, \chi h) = \int_{\mathbb{R}^d} \langle \bold{B}, \nabla (u\chi) \rangle h \, d \widehat{\mu}+ \int_{\mathbb{R}^d} \langle \widehat{A} \nabla \chi, \nabla u \rangle h\,d\widehat{\mu} + \int_{\mathbb{R}^d}  u L^0 \chi \cdot h \, d\widehat{\mu} \quad \text{ for all $u \in D(\mathcal{E}^0)$}.
\end{align}
Therefore, for each $u \in \widehat{H}^{1,2}_0(\mathbb{R}^d, \widehat{\mu})_0 \subset D(\mathcal{E}^0)$, choosing a function $\chi \in C_0^{\infty}(\mathbb{R}^d)$ with $\chi = 1$ on $\text{supp} (|u| dx)$,
we obtain \eqref{invinequ2-1}
from \eqref{intiden}, as desired.
\end{proof}

\text{}\\
\text{}\\
\centerline{}
Haesung Lee\\
Department of Mathematics and Big Data Science  \\
Kumoh National Institute of Technology \\
Gumi, Gyeongsangbuk-do 39177, South Korea \\
E-mail: fthslt@kumoh.ac.kr \\ \\ 
Gerald Trutnau\\
Department of Mathematical Sciences \\
and Research Institute of Mathematics\\
Seoul National University\\
1 Gwanak-ro, Gwanak-gu,
Seoul 08826, South Korea  \\
E-mail: trutnau@snu.ac.kr
\end{document}